\documentclass[12pt]{article}
\usepackage{amsmath}
\usepackage{amssymb}
\usepackage{theorem}
\usepackage{enumerate} 
\usepackage{graphicx, tikz} 
\usepackage{hyperref}

\sloppy
\pagestyle{plain}

\numberwithin{equation}{section}

 \textheight=8.5in
\textwidth=6.0in
%\addtolength{\topmargin}{-.3in}
\addtolength{\oddsidemargin}{-.25in}

\newtheorem{thm}{Theorem}[section]
\newtheorem{lemma}[thm]{Lemma}
\newtheorem{sublemma}[thm]{Sublemma}
\newtheorem{prop}[thm]{Proposition}
\newtheorem{cor}[thm]{Corollary}
{\theorembodyfont{\rmfamily}
\newtheorem{defn}[thm]{Definition}

\newtheorem{rmk}[thm]{Remark}
}

\newcommand{\hQ}{{\widehat Q}}

\newcommand{\hB}{{\widehat B}}
\newcommand{\hA}{{\widehat A}}
\newcommand{\hE}{{\widehat E}}

\newcommand{\hPsi}{{\widehat \Psi}}

\newcommand{\tQ}{{\widetilde Q}}

\newcommand{\qed}{\hfill \mbox{\raggedright \rule{.07in}{.1in}}}
 
\newenvironment{proof}{\vspace{1ex}\noindent{\bf
Proof}\hspace{0.5em}}{\hfill\qed\vspace{1ex}}
\newenvironment{pfof}[1]{\vspace{1ex}\noindent{\bf Proof of
#1}\hspace{0.5em}}{\hfill\qed\vspace{1ex}}

\def\R{\mathbb{R}}

\def\Z{\mathbb{Z}}
\def\C{\mathbb{C}}

\def\cB{\mathcal{B}}

\newcommand{\eps}{{\epsilon}}
\newcommand{\Leb}{\operatorname{Leb}}

\newcommand{\supp}{\operatorname{supp}}

\newcommand{\diam}{\operatorname{diam}}

\title{Strong mixing for the periodic Lorentz gas flow with infinite horizon
	% and MLLT for the Sinai Billiard flow in infinite Horizon
}

\author{
{Fran\c{c}oise P\`ene}
\thanks{Univ Brest, Universit\'e de Brest, LMBA,
UMR CNRS 6205, 
6 avenue Le Gorgeu, 29238 Brest cedex, France}
\and
Dalia Terhesiu
\thanks{Mathematisch Instituut,
University of Leiden, Niels Bohrweg 1, 2333 CA Leiden, Netherlands}
}

\begin{document}

 \maketitle
 
\begin{abstract}
We establish strong mixing for the $\mathbb Z^d$-periodic, infinite horizon, Lorentz gas flow for continuous observables with compact support.
The essential feature of this natural class of observables is that their support may contain points with  infinite  free flights. 
Dealing with such a class of functions is a serious challenge
and there is no analogue of it in the finite horizon case.
The mixing result for the aforementioned class of functions
is obtained via new results: 1) mixing for
continuous observables with compact support consisting of configurations 
at a bounded time from the closest collision;
 2) a tightness-type result
that allows us to control the configurations with long free flights.
To prove 1), we establish a mixing local limit theorem
for the Sinai billiard flow with infinite horizon, previously an open question.
As far as we know, our approach to the tightness result has no analogue in the literature. 
\end{abstract}
 
\section{Introduction and Main result}\label{sec:intro}
We are interested in mixing for the continuous time  dynamics of the $\mathbb Z^d$-periodic Lorentz gas ($d\in\{1,2\}$). This model has been introduced by
Lorentz in \cite{Lorentz05} to model the diffusion of electrons in a low conductive metal. 
It describes the behaviour of a point particle moving at unit speed in the plane $\mathcal D_2:=\mathbb R^2$ (when $d=2$) or on the tube $\mathcal D_1:=\mathbb R\times \mathbb T$ (when $d=1$, writing as usual $\mathbb T:=\mathbb R/\mathbb Z$ for the one-dimensional torus) between a $\mathbb Z^d$-periodic locally finite configuration of convex obstacles with disjoint closures and $\mathcal C^3$ boundary (with non null curvature), with elastic collisions on them (pre-collisional and post-collisional angles being equal). We write $\Omega_d$ for the set of possible positions, that is the set of positions in $\mathcal D_d$ that are not inside an obstacle.

The set of configurations is the set $\widetilde{\mathcal M}$  of
couples of position and unit velocity $(q,\vec v)\in \Omega_d\times \mathbb S^1$, identifying pre-collisional and post-collisional vectors at a collision time (rigorously, $\widetilde{\mathcal M}$ is the quotient of $\Omega_d\times \mathbb S^1$ by the equivalence relation identifying pre- and post-collisional vectors).   
The Lorentz gas flow $(\Phi_t)_t$ maps a configuration $(q,\vec v)$ (corresponding to a couple position and velocity at time 0) to the configuration $\Phi_t(q,\vec v)=(q_t,\vec v_t)$ corresponding to the couple position and velocity at time $t$ of
a particle that was at time $0$ at position $q$ with velocity $\vec v$. 
This flow $(\Phi_t)_t$ preserves the infinite Lebesgue measure $\widetilde\nu$ on $\Omega_d\times\mathbb S^1$, normalized so that $\widetilde\nu\left((\Omega_2\cap[0,1[^2)\times\mathbb S^1\right)=1$ if $d=2$ and so that $\widetilde\nu\left((\Omega_1\cap([0,1[\times\mathbb T))\times\mathbb S^1\right)=1$ if $d=1$.
It is natural to consider also the dynamics at collision times. The space $\widetilde M$ for this dynamics is the set
of configurations $(q,\vec v)\in\widetilde{\mathcal M}$ with $q\in\partial \Omega_d$. The collision map $\widetilde T:\widetilde M\to\widetilde M$, that maps a configuration at a collision time to the configuration at the next collision time, is referred to as the Lorentz gas map and preserves an infinite measure $\widetilde\mu$ absolutely continuous with respect to the Lebesgue measure. Let us write $\widetilde W_t:\widetilde{\mathcal M}\rightarrow \mathcal D_d$ for the map corresponding to the displacement up to time $t$~:
\[
\forall (q,\vec v)\in\widetilde{\mathcal M},\quad 
\Phi_t(q,\vec v)=(q_t,\vec v_t)\quad\Rightarrow\quad \widetilde W_t(q,\vec v)=q_t-q\, .
\] 
If $d=1$ we set $\widetilde W'_t:\widetilde{\mathcal M}\rightarrow \mathbb R$ for the first coordinate of $\widetilde W_t$. 
If $d=2$, $\widetilde W_t$ takes its values in $\mathbb R^2$, we then just set $\widetilde W'_t=\widetilde W_t$. In both cases,  $\widetilde W'_t$ is the natural projection of $\widetilde W_t$
on $\mathbb R^d$.

%$d$-first coordinates of $\Phi_t$. 
 
When every trajectory touches eventually at least one obstacle, 
 % flight time $\tau$ between consecutive collisions is bounded, 
 we speak of \emph{finite horizon} Lorentz gas. In the finite horizon case, it follows from \cite{BS81,BCS91} that 
 $\widetilde W_t$ satisfies a standard central Limit Theorem meaning that $(\widetilde W'_t/\sqrt{t})_t$ converges strongly in distribution\footnote{In this article, the strong convergence in distribution means the convergence in distribution with respect to any probability measure absolutely continuous with respect to the Lebesgue measure.}, as $t\rightarrow +\infty$, to a centered Gaussian random variable with non degenerate variance matrix given by an infinite sum.\\
 % When the	flight time $\tau$  is unbounded,  
 When there exists at least a trajectory that never touches an obstacle, we speak of \emph{infinite horizon} Lorentz gas. In this article, we focus on the "fully dimensional" infinite horizon case, meaning that there exist at least $d$ non parallel unbounded trajectories touching no obstacle. 
 In this case it follows from \cite{SV07} by Sz\'asz and Varj\'u (see subsection~\ref{subsec:mllt}, in particular Proposition~\ref{prop:cltflow}
 for details) that\footnote{The notation $\Longrightarrow \mathcal N(0,C)$ means the strong convergence in distribution to a Gaussian random variable of distribution $\mathcal N(0,C)$, that is centered with variance matrix $C$.}
 \begin{equation}\label{eq:Sigma}
  \frac{\widetilde W'_t}{\sqrt{t\log t}}\Longrightarrow \mathcal N(0,\Sigma)\, ,\quad \mbox{as }t\rightarrow +\infty\, ,
 \end{equation}
 where $\Sigma$ is a $d$-dimensional definite positive 
 symmetric matrix which, furthermore, is given by an explicit formula in terms of the configuration of obstacles  (recalled at the beginning of Section~\ref{sec:proofjCLT}).
% We refer to Sections~\ref{sec:MLLT} for precise definition of flight time.

We are interested here in the question of strong mixing. 
We recall that an infinite measure preserving system $(\hat X,\hat T,\hat\mu)$ is said to be strongly mixing if there exist
a sequence $\widehat{a}_n\to\infty$ and a class of integrable functions $f,g$ so that
\begin{equation}\label{def:mix}
 \widehat{a}_n\int_{\hat M}f.g\circ \hat T^n\, d\hat\mu
\to\int_{\hat M}f\, d\hat\mu
\int_{\hat M}g\, d\hat\mu\, ,
\end{equation} 
as $n\rightarrow +\infty$. 
The sequence $\widehat{a}_n$ gives the speed of convergence to $0$ of $\int_{\hat M}f.g\circ \hat T^n\, d\hat\mu$. The first such rate was obtained in~\cite{Thaler00} for a very restrictive
class of intermittent maps preserving an infinite measure. This was later generalized to larger
classes of such maps in~\cite{MT12} and~\cite{Gouezel11}.
For other notions of mixing in the infinite measure set up (such as local-global and global-global) 
were introduced in~\cite{Lenci10} (see also~\cite{DNL21,DNP22} and references therein).

In the set up of the \emph{discrete} time Lorentz gas $(\widetilde M,\widetilde T,\widetilde\mu)$, mixing in the sense of~\eqref{def:mix} is well understood in both finite and infinite horizon case and it is a direct consequence of a mixing local limit theorem (MLLT) for the cell change (see e.g. \cite[Section 3]{Pene18b}).
For the finite horizon case, we refer to~\cite{SV04} for the key LLT (which can be generalized in MLLT and thus provides mixing) and
to~\cite{Pene18} for expansions of any order. In the
much more difficult set up of infinite horizon case, we refer
to~\cite{SV07} for LLT (which, again, leads to MLLT and  mixing) and to~\cite{PeneTerhesiu21} for error terms. 
There is a plethora of limit theorems known in the discrete time set up with finite horizon case. Some results are also known for the discrete time Lorentz gas with infinite horizon case (in particular,~\cite{SV07,ChDo09} and more recently,~\cite{PeneTerhesiu21}).
%: .

Mixing for \emph{continuous} time Lorentz gas
$(\widetilde{\mathcal M},(\Phi_t)_t,\widetilde\nu)$ is seriously more challenging.
Even in the set up of the \emph{finite} horizon Lorentz gas flow, mixing in the sense of~\eqref{def:mix} was open
until the work of~\cite{DN20} and very recently, expansions of any order have been obtained in \cite{DNP22}. Strictly speaking, the work~\cite{DN20} focused on a mixing local limit theorem (MLLT) for the Sinai billiard flow with \emph{finite} horizon, but as in \cite{DNP22}, mixing in the sense of~\eqref{def:mix} and MLLT are equivalent.
For related, but weaker, results on MLLT  for group extensions
of suspension flows with bounded roof function, not applicable as such to Sinai billiards we refer to~\cite{AT20}.

Nothing is known about the mixing for the Lorentz gas flow with \emph{infinite} horizon. In this paper we address this open question and establish
\begin{thm}\label{cor:mixingrate}
	For any 
	continuous compactly supported functions $f,g:\Omega_d\times\mathbb S^1 \rightarrow\mathbb R$,
	\begin{equation}\label{mixing2}
	\int_{\widetilde{\mathcal M}}f.g\circ 
	\Phi_t
	\, d\widetilde\nu\sim \frac {\int_{\widetilde{\mathcal M}}f\, d\widetilde\nu \int_{\widetilde{\mathcal M}}g\, d\widetilde\nu}{(2\pi t\log t\det(\Sigma))^{\frac d2}}\, ,\quad\mbox{as }t\rightarrow +\infty \, ,
	\end{equation}
where $\Sigma$ is the variance matrix appearing in~\eqref{eq:Sigma}.
\end{thm}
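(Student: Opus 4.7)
The plan follows the two-step strategy announced in the abstract: combine a new mixing local limit theorem (MLLT) for the infinite-horizon billiard flow with a tightness estimate controlling configurations of long free flight. I would represent $(\Phi_t)_t$ as a suspension over the collision map $\widetilde T$ with roof $\tau$ (free-flight time), so that every point of $\widetilde{\mathcal M}$ has coordinates $(x,s)$ with $x\in\widetilde M$ and $0\leq s<\tau(x)$. Fix a large parameter $L>0$ and decompose each observable as $f=f_L+f^L$, where $f_L(x,s):=f(x,s)\mathbf 1_{\{s\leq L,\ \tau(x)-s\leq L\}}$ is supported on configurations at time at most $L$ from both the previous and the next collision; decompose $g$ similarly. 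The two pieces are handled by genuinely different methods.

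For the bounded-flight piece, pass to the $\mathbb Z^d$-quotient dynamics: the correlation $\int f_L\cdot g_L\circ\Phi_t\,d\widetilde\nu$ decomposes as a double sum over $n$ (the number of collisions in time $t$) and $k\in\mathbb Z^d$ (the value of the cell-change cocycle $\kappa_n$) of discrete-time correlations of the quotient collision map. The discrete-time MLLT with $(n\log n)^{-d/2}$ asymptotic is already available from \cite{SV07,PeneTerhesiu21}; I would promote it to a flow MLLT via a spectral analysis of the characteristic-function-twisted transfer operators $L_\xi$ associated with the quotient collision map, on a suitable anisotropic Banach space, combined with a Dolgopyat-type estimate excluding non-trivial eigenvalues of $L_\xi$ on the unit circle for $\xi\neq 0$. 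The unbounded, heavy-tailed roof forces a modified Nagaev--Guivarc'h expansion around $\xi=0$ that reproduces the non-standard $\sqrt{t\log t}$ normalization (the variance logarithm coming from the infinite second moment of $\kappa_1$), and one also has to handle boundary terms from the last free flight being cut off at time $t$.

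For the remainder $\int f^L\cdot g\circ\Phi_t\,d\widetilde\nu$ and its symmetric counterpart, the goal of the tightness step is to show that these are $o\bigl((t\log t)^{-d/2}\bigr)$ as $L\to\infty$, uniformly in large $t$. The main tool is the heavy-tail asymptotic $\widetilde\mu(\tau>s)\sim c/s$ coming from the corridor geometry, together with the observation that a configuration in $\supp f^L$ moves ballistically along a corridor direction between two consecutive collisions, so its contribution at time $t$ is controlled by the joint measure of experiencing a long free flight and of landing, modulo a $\mathbb Z^d$-translate, in $\supp g$. Combining the two pieces, I would first choose $L$ large enough to make the remainders $\epsilon$-small after multiplying by $(t\log t)^{d/2}$, then apply the flow MLLT to $f_L,g_L$, and finally let $L\to\infty$; dominated convergence replaces $\int f_L\,d\widetilde\nu$ by $\int f\,d\widetilde\nu$ and yields the claimed asymptotic.

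The hardest step is the flow MLLT. In the finite-horizon case \cite{DN20} relied on exponential mixing of the billiard flow in an anisotropic framework, but in the infinite-horizon case $\tau$ has infinite second moment and the standard spectral perturbation for suspension flows breaks down: one must carry out a delicate expansion of $L_\xi$ around $\xi=0$ that matches the $\sqrt{t\log t}$ scaling, while simultaneously preserving compatibility with the non-uniform hyperbolicity near grazing collisions and with the slow decay of $\tau$ along corridor directions. Marrying these two analytic difficulties is where the essential work of the paper lies.
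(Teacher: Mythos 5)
Your two-step outline (bounded-flight piece plus tightness remainder, combined by first letting $L\to\infty$ and then $t\to\infty$) matches the global architecture of the paper: the bounded-flight piece corresponds to Corollary~\ref{cor:easymix} and the remainder to Theorem~\ref{lem:tight}. For the bounded-flight piece, your idea of summing over the number of collisions $n$ and the cell-change $k$ and applying a local limit theorem for the collision map is also the right one. But there are two issues, the second of which is a genuine gap.

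On the MLLT step: what is actually needed is a \emph{joint} local limit theorem, with error terms, for the $(d+1)$-dimensional cocycle $\widehat\Psi_n=(\kappa_n,\widetilde\tau_n)$ of the collision map, together with a joint local large deviation estimate (Lemmas~\ref{lem:jointllt0}, \ref{lem:jointllt}, \ref{lem:lld}); the roof function $\tau$ enters as an extra coordinate in the map-side LLT, not through the flow transfer operator. You phrase the spectral input as ``a Dolgopyat-type estimate excluding non-trivial eigenvalues of $L_\xi$ on the unit circle for $\xi\neq0$.'' That is stronger than what is used and is also a mischaracterization of the mechanism: because one restricts to test functions $h$ with compactly supported Fourier transform, the only spectral input away from $\xi=0$ is a compact-set aperiodicity argument (Lemma~\ref{nonarithmeticity}, i.e.\ non-arithmeticity of $(\kappa,\tau)$), plus upper semicontinuity of the spectral radius; no uniform oscillatory estimate for $|\xi|\to\infty$ is required. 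Your ``modified Nagaev--Guivarc'h expansion'' is in the right spirit, but the crucial point you do not articulate is that the expansion must be carried out jointly in $(\kappa,\widetilde\tau)$ by rewriting $\widehat\Psi$ as the $\mathbb Z^{d+1}$-valued function $\Upsilon=(\pi_d(\overline K),|\overline K|-\mathbb E[|\overline K|])$ plus a bounded H\"older coboundary, so that the known tail and double-probability estimates for the cell change can be reused.

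The real gap is in the tightness step. You assert that the remainder is controlled by the heavy-tail asymptotic $\widetilde\nu(\tau>s)\sim c/s$ together with ballistic motion along corridors. This cannot work: the tail only gives $\widetilde\nu(B_{R_0})\ll 1/R_0$, so the naive bound
\[
\widetilde\nu\bigl(B_{R_0}\cap\Phi_{-t}(B_0)\bigr)\le\widetilde\nu(B_{R_0})\ll 1/R_0
\]
is uniform in $t$ but does \emph{not} decay like $a_t^{-d}=(t\log t)^{-d/2}$, so multiplying by $a_t^d$ sends it to infinity. The statement that must be proved is the double-limit estimate
\[
\lim_{R_0\to\infty}\limsup_{t\to\infty}a_t^d\,\widetilde\nu\bigl(B_{R_0}\cap\Phi_{-t}(B_0)\bigr)=0\, ,
\]
which requires extracting the $a_t^{-d}$ factor from correlation decay, not from the tail alone. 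In the paper this is the most delicate part of the argument: it involves a large-deviation bound for the collision count $\nu(N_t\le c_1t)=\mathcal O((\log t)/t)$ (Proposition~\ref{prop:L}, which is itself new), separate treatment of configurations with two long free flights among the nearest $K\log t$ collisions (Sublemma~\ref{twolongff}), a combinatorial decomposition indexed by the two corridor directions of the first and last free flight (Lemma~\ref{lem:decomp}), and finally the joint MLLT with quantitative error terms from Lemma~\ref{lem:jointllt0} together with Corollary~\ref{coro:llt0cor} and the joint LLD to sum over $(n,a,b)$ in Lemma~\ref{B1'}. None of this is captured by, or can be replaced with, a tail estimate, and the need for \emph{error terms} in the LLT is dictated precisely by this summation. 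Your proposal would need to be substantially rebuilt at this point to close the argument.
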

Theorem~\ref{cor:mixingrate} gives mixing for observables 
with support that may contain configurations with infinite free flights.
In the set up of the Lorentz gas flow with infinite horizon, this class of observables is the natural one. Theorem~\ref{cor:mixingrate} can be rephrased in terms of vague convergence (see comments after Corollary~\ref{cor:easymix}).
The main ingredients, which are new and important results on their own, used in the proof of Theorem~\ref{cor:mixingrate} are 
\begin{enumerate}
 \item Strong mixing for observables $f,g$ with 
 supports uniformly 'close' to a collision time (Corollary~\ref{cor:easymix} of Proposition~\ref{prop:MLLTkappa0}), i.e. the supports of $f,g$ are at a bounded time of the closest collision time (either in the past or in the future). This mixing result is an easy consequence of a MLLT for the Sinai billiard flow with infinite horizon.
 The  present MLLT, Proposition~\ref{prop:MLLTkappa0} and its variant Theorem~\ref{thm:main}, are 
 first main results and are established
 via two joint local limit results for the Sinai billiard map on the cell change function and flight time together: a) Joint MLLT, Lemma~\ref{lem:jointllt}; b) Joint Local Large Deviation,
 Lemma~\ref{lem:lld}.
 
 \item A tightness type result, Theorem~\ref{lem:tight}, that allows $f,g$ to have  any compact support in $\widetilde{\mathcal M}$. In particular, the supports of $f,g$ can contain 
 configurations of particles that will never hit an obstacle.
 The proof of Theorem~\ref{lem:tight} provided in Section~\ref{sec:proofmixing} exploits a very delicate decomposition
 of the type of possible free flights along with Joint MLLT \emph{with good error terms} for the Sinai billiard map
 (as in Lemma~\ref{lem:jointllt0}  and Corollary~\ref{coro:llt0cor}), 
 combined with a series of subtle new estimates (including a large deviation estimate).
 We emphasize that the Joint MLLT with \emph{error terms} (not only the JMLLT mentioned in the above item, and even with sharper error terms than the one obtained in~\cite{PeneTerhesiu21} in the non joint MLLT) together with several other new technical estimates are required ingredients for the proof of Theorem~\ref{lem:tight}.
% A strategy of the proof of Theorem~\ref{lem:tight} is provided in Section~\ref{sec:proofmixing}. 
 %We emphasize that the proof of this result exploits several new ideas, and that several new technical estimates are obtained throughout the proof. 
\end{enumerate}

We conclude the introductory section with a very brief summary of the various  results along with an outline of the paper.

In Section~\ref{sec:MLLT}, we introduce most of the required notations, and state MLLTs for the Sinai billiard flow
as in Proposition~\ref{prop:MLLTkappa0} for the cell change function, and
as in Theorem~\ref{thm:main} for the flight function. In Section~\ref{sec:MLLT}
we also record a consequence of Proposition~\ref{prop:MLLTkappa0},
namely  Corollary~\ref{cor:easymix} that proves mixing for  continuous observables with compact support consisting of configurations 
at a bounded time from the closest collision; in short, this gives mixing
for continuous observables supported on a region on which the free flights (either in the past or in the future) are uniformly bounded.

In Section~\ref{sec:MLLT+LLD}, we state the joint limit results (Joint CLT, Joint MLLT with \emph{error terms}, Joint LLD) for the Sinai billiard map, 
for the couple formed by the  cell change function with the flight time. 
Using the statement of these key technical ingredients, in Sections~\ref{sec:proofMLLT} and~\ref{sec:proofmixing} we prove
Theorem~\ref{thm:main}, and Theorem~\ref{cor:mixingrate}.

The proofs of the technical key results stated in Section~\ref{sec:MLLT+LLD} are included in Sections~\ref{sec:proofjCLT},~\ref{sec:proofjMLLT} and in Appendix~\ref{sec:jLLD}. 
While the joint LLD, Lemma~\ref{lem:lld}, follows by slightly modifying  the proof of LLD for the cell change function obtained in~\cite{MPT},
all the other technical results obtained in this paper,
namely, the Joint CLT and the Joint MLLT with \emph{error terms}
stated in Section~\ref{sec:MLLT+LLD} are new and require serious new ideas and work.

In Section~\ref{sec:proofmixing} we state and prove the tightness result Theorem~\ref{lem:tight}. 
At the beginning of Section~\ref{sec:proofmixing}, we use the statement of 
Theorem~\ref{lem:tight} to complete the proof of Theorem~\ref{cor:mixingrate}.
The role and the novelty of Theorem~\ref{lem:tight} has been already summarized in item 2 above. Finally, we mention that, in Section~\ref{sec:proofmixing},
as a by product of certain technical lemmas we obtain a large deviation result, namely Proposition~\ref{prop:L}, which is of independent interest.

\vspace{-2ex}
\paragraph{Notation}
We use ``big O'' and $\ll$ notation interchangeably, writing $b_n=\mathcal O(c_n)$ or $b_n\ll c_n$
if there are constants $C>0$, $n_0\ge1$ such that
$b_n\le Cc_n$ for all $n\ge n_0$.
As usual, $b_n=o(c_n)$ means that there exists $\varepsilon_n$ such that, for all $n$ large enough, $b_n=c_n\varepsilon_n$ and $\lim_{n\rightarrow +\infty}\varepsilon_n=0$
and $b_n\sim c_n$ means that $b_n=c_n+o(c_n)$. 
Unless otherwise specified, given $x\in\R^d$, we let $|x|$ be the usual Euclidean norm of $x$. Throughout this article, when $d=1$, we identify  $\mathbb Z^1$ (resp. $\mathbb R^1$) with $\mathbb Z\times\{0\}$ (resp. $\mathbb R\times\{0\}$). In particular, for any $(q,z)\in\Omega_1\times\mathbb R$, the notation $q+z$ means $q+(z,0)$.

\section{MLLT for the Sinai billiard flow and mixing for the $\mathbb Z^d$-extension flow}
\label{sec:MLLT}
\subsection{Notations and previous results}\label{sec:obstacles}
Let $d\in\{1,2\}$.
The domain $\Omega_d$ of the $\mathbb Z^d$-periodic Lorentz gas is given by $\Omega_d:=\mathcal D_d\setminus
\bigcup_{i=1}^I\bigcup_{\ell\in\mathbb Z^d}(\mathcal O_i+\ell)$  where  $\mathcal O_1,...,\mathcal O_I$ is a nonempty finite family of convex open sets with $\mathcal C^3$ boundary of non null curvature such that the obstacles $\mathcal O_i+\ell$ have pairwise disjoint closures. We recall that we are interested in the fully dimensional infinite horizon and so assume throughout that
the interior of the billiard domain $\Omega_d$ contains at least $d$ unbounded corridors (made of unbounded parallel lines) the direction of which are not parallel to each other.
\subsubsection*{Sinai billiard}
Quotienting the system $(\widetilde{\mathcal M},(\Phi_t)_t)$ by $\mathbb Z^d$ (for the position), we obtain
the 
%This system is a $\mathbb Z^d$-extension of the associated 
Sinai billiard flow $(\mathcal M,(\phi_t)_t)$ (see \cite{Sinai70}) which describes the evolution of point particles moving at unit speed in $\Omega:=\Omega_d/\mathbb Z^d=\mathbb T^2\setminus \bigcup_{i=1}^I\overline{\mathcal O_i}$  with elastic reflection off $\partial\Omega$ (where $\overline{\mathcal O_i}$ is the image of $\mathcal O_i$ by the canonical projection $p_d:\mathcal D_d\rightarrow \mathbb T^2$). The flow $(\phi_t)_t$ preserves the probability measure $\nu$ on $\mathcal M=(\Omega\times\mathbb S^1)/\equiv$ that is proportional to the Lebesgue measure,
where $\equiv$ is the equivalence relation identifying pre- and post-collisional vectors. 
The Poincar\'e map
of $\phi_t$ with Poincar\'e section $\partial\Omega\times\mathbb S^1$ is the Sinai billiard map $(M,T,\mu)$,
where the two-dimensional phase space $M=\{(q,\vec v)\in\mathcal M:q\in\partial\Omega\}$ (position in $\partial\Omega$ and unit post-collisional velocity vector) is identified with  $\partial\Omega\times (-\pi/2,\pi/2)$ (we parametrise here the post-collisional velocity vector by its angle with the normal to $\partial\Omega$). 
This map $T$ sends a post-collisional vector to the post-collisional vector corresponding to the next collision.
This map preserves the probability measure $\mu$ with density 
$\cos\varphi/(2|\partial\Omega|)$ at the point $(q,\varphi)\in\partial\Omega\times[-\frac \pi 2,\frac\pi 2]$. 
The flight time between consecutive collisions
is the return time of $(\phi_t)_t$ to $M$ and we denote it by $\tau:M\to\R_{+}$.
In this notation, we 
have the following identification
\begin{align*}
T(x)&=(\phi_\tau)(x)=\phi_{\tau(x)}(x)\, .
\end{align*}
We set $\tau_n:=\sum_{k=0}^{n-1}\tau\circ T^k$, with the usual convention $\tau_0:=0$. 
For any $x\in \mathcal M$, we set $N_t(x)\in\mathbb N_0$ for the
\emph{collisions number} in the time interval $(0,t]$ starting from the configuration $x$. We observe that, for $x\in M$, this quantity satisfies
\begin{equation}\label{eq:lapn}
\tau_{N_t(x)}(x)\le t<\tau_{N_t(x)+1}(x)\, .
\end{equation}
Furthermore, for all $x\in M$ and all $u\in[0,\tau(x))$ and any $t\in [0,+\infty)$, $N_{t}(\phi_u(x))=N_{t+u}(x)$.
With these notations, the Sinai billiard flow $(\mathcal M,(\phi_t)_t,\nu)$ is isomorphic to the
suspension flow  $(\widehat{\mathcal M},(\widehat{\phi}_t)_t,\widehat\nu)$, given by
\begin{align*}
\widehat{\mathcal M}&=\{(x,u)\in M\times[0,+\infty):0\le u< \tau(x)\}\\
%/\sim\text{ with }(x,\tau(x))\sim(Tx,0)\\
\widehat\phi_s(x,u)&=(T^{N_{s+u}(x)}(x),s+u-\tau_{N_{s+u}(x)}(x))\\
\widehat\nu&=(\mu\times \Leb)/{\mu(\tau)},\quad\mbox{where  }\mu(\tau):=\int_M\tau\,d\mu
\, ,
\end{align*}
via the isomorphism $(x,u)\in\widehat{\mathcal M}\mapsto \phi_u(x)\in\mathcal M$ (this map is injective, its image is the set of configurations in $\mathcal M$ that do not belong to an infinite free flight).
%The identifications in the previous two displayed equations hold up to a set of null Lebesgue measure (corresponding to configurations that will never hit an obstacle).
%With these identifications, the Lebesgue measure $\nu$ is represented by .

\subsubsection*{$\mathbb Z^d$-extension and cell change function}
We recall that the $\mathbb Z^d$-periodic Lorentz gas map $(\widetilde M,\widetilde T,\widetilde\mu)$ can be represented by the $\mathbb Z^d$-extension of the Sinai
billiard map $(M,T,\mu)$ by the \emph{cell change} function $\kappa$ that can be defined as follows. 
For any $\ell\in\mathbb Z^d$, we call $\ell$-cell
the set $\mathcal C_\ell$ of configurations $(q,v)\in\widetilde M$
such that $q\in\bigcup_{i=1}^I(\partial O_i+\ell)$. 
Because of the $\mathbb Z^d$-periodicity of the model, there exists $\kappa:M\rightarrow\mathbb Z^d$, called the \emph{cell change function}, such that 
\begin{equation}\label{eq:defkappa}
\widetilde x=(q,\vec v)\in\mathcal C_\ell\quad\Rightarrow\quad \widetilde T(x)\in\mathcal C_{\ell+\kappa(p_d(q),\vec v)}\, .
\end{equation}
Note that, for any $\widetilde x\in\widetilde M$, there exists a unique $x=((q,\vec v),\ell)\in M\times\mathbb Z^d$ such that\footnote{Recall that, if  $d=1$,
	we identify $\mathbb Z^1$ with $\mathbb Z\times\{0\}$, meaning that  for any $q'\in\mathcal D_1$ and any $\ell\in\mathbb Z^1$, the notation $q'+\ell$ means $q'+(\ell,0)$.} 
$\widetilde x=(p_{d,0}^{-1}(q)+\ell,\vec v)$, where $p_{d,0}$ denotes the restriction of $p_d$ to $\bigcup_{i=1}^I\partial \mathcal O_i$ ($\vec v$ is the velocity of $\widetilde x$, setting $\widetilde q$ for the position of $\widetilde x$, $(q,\ell)$ is such that $q=p_{d}(\widetilde q)$ and $\widetilde x\in\mathcal C_\ell$). 
Formula~\eqref{eq:defkappa} can be rewritten under the form
\begin{equation}\label{Zdextension}
\forall ((q,\vec v),\ell)\in M\times\mathbb Z^d,\ 
T(q,\vec v)=(q',\vec v')\ \Rightarrow\ \widetilde T(p_{d,0}^{-1}(q)+\ell,\vec v) = \left(p_{d,0}^{-1}(q')+\ell+\kappa(q,\vec v),\vec v'\right)\, .
\end{equation}
This gives the identification of $(\widetilde M,\widetilde T,\widetilde\mu)$
by the $\mathbb Z^d$-extension of $(M,T,\mu)$ by 
$\kappa:M\rightarrow \mathbb Z^d$. 
A direct and classical induction ensures that, for any
$ ((q,\vec v),\ell)\in M\times\mathbb Z^d$ and any $n\in\mathbb N$,
\begin{equation}\label{Zdextensionforn}
T^n(q,\vec v)=(q'_n,\vec v'_n)\ \Rightarrow\  \widetilde T^n(p_{d,0}^{-1}(q)+\ell,\vec v)= \left(p_{d,0}^{-1}(q'_n)+\ell+\kappa_n(q,\vec v),\vec v'_n\right)\, ,
\end{equation}
where we set $\kappa_n:=\sum_{j=0}^{n-1}\kappa\circ T^j$.

\subsection{Mixing for the Lorentz gas seen as a suspension flow}
We will use crucially the fact established in the previous section that $(\widetilde{\mathcal M},(\Phi_t)_t,\widetilde\nu)$ can be represented
as a suspension flow by $(x,\ell)\mapsto \tau(x)$ over   $(\widetilde M,\widetilde T,\widetilde\mu)$ which itself can be represented as a $\mathbb Z^d$ extension of $(M,T,\mu)$
by $\kappa$. Thus, we can represent $\widetilde {\mathcal M}$
by $\widehat{\mathcal M}\times\mathbb Z^d$. 
In this part, we state a mixing local limit theorem for
$\kappa_n$ and see how we can  use it to easily derive
Theorem~\ref{cor:mixingrate} in the case of functions $f,g$ with support at a bounded time from a collision, i.e. for functions that are compactly supported in 
$\widehat{\mathcal M}\times\mathbb Z^d$. As detailed in Section~\ref{sec:proofmixing}, these functions form a much more restrictive class than the ones of Theorem~\ref{cor:mixingrate}.
To state these results,  we shall introduce two classes  of
sets $\mathcal F$ (resp. $\widetilde{\mathcal F}$) that will correspond to the set of measurable sets of configurations in $\mathcal M$ (resp.  $\widetilde{\mathcal M}$) with previous collision in some fixed subset of $M$ (resp. some fixed cell of $\widetilde M$), at some time in a fixed bounded time interval.
\begin{defn}\label{defsetF}
Let $\mathcal F$ be the  class of measurable subsets $A$ of $\mathcal M$ of the form $A=\phi_I(A_0)=\{\phi_u(x),\, x\in A_0,\, u\in I\}$ that are represented in $\widehat{\mathcal M}$ 
by $A_0\times I\subset\widehat{\mathcal M}$ (implying that $I\subset [0,\inf_{A_0}\tau)$), with $A_0\subset M$ a measurable set satisfying $  \mu(\partial A_0)=0$ and with $I$ a bounded interval.\\
Let $\widetilde{\mathcal F}$ be the set of subsets of
$\widetilde{\mathcal M}$ corresponding to $A_0\times I\times \{\ell\}\subset\widehat{\mathcal M}\times\mathbb Z^d$, with
$\phi_I(A_0)\in\mathcal F$ and $\ell\in\mathbb Z^d$, that is
sets of the form
\[\left\{\Phi_u(p_{d,0}^{-1}(q)+\ell,\vec v)\, :\, (q,\vec v)\in A_0, u\in I\right\}\, \mbox{ with }\phi_{I}(A_0)\in\mathcal F,\ \ell\in\mathbb Z^{d}\, .
\] 
\end{defn}
We state now a MLLT for $\kappa_{N_t}$ defined on $\mathcal M$
by
\[
\forall (x,u)\in\widehat{\mathcal M},\ \forall t\in[0,+\infty),\quad\kappa_{N_t}(\phi_u(x)):=\kappa_{N_t(\phi_u(x))}(x)=\kappa_{N_{t+u}(x)}(x)\, .
\]
This observable $\kappa_{N_t}$ will be understood as the cell change during the time interval $(0,t]$.
\begin{prop}\label{prop:MLLTkappa0}
Let $A,B\in\mathcal F$ and let $K$ be a bounded subset of $\mathcal D_d$ 
with 
$Leb(\partial K)=0$. 
Then
\begin{align}
\forall \ell\in\mathbb Z^d,\quad (t\log t)^{\frac d2}\nu&\left(A\cap\{\phi_t \in B,\, 
\kappa_{N_t}=\ell\}\right)\sim	\widetilde g_d\left(0\right)\nu(A)\nu(B)\, ,\label{eq:mixingflow000}
\end{align}
as $t\to\infty$, where $\widetilde g_d$ is the density of the $d$-dimensional Gaussian distribution $\mathcal N(0,\Sigma)$
appearing in~\eqref{eq:Sigma}.
\end{prop}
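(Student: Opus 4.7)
The strategy is to reduce the MLLT for the suspension flow $(\phi_t)_t$ to a joint MLLT for the underlying Sinai billiard map $(M,T,\mu)$ applied to the pair $(\kappa_n,\tau_n)$.

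\emph{Unfolding.} Writing $A$ as $A_0\times I_A$ and $B$ as $B_0\times I_B$ in the suspension coordinates of $\widehat{\mathcal M}$, a point $\phi_u(x)\in A$ satisfies $\phi_t(\phi_u(x))\in B$ precisely when there is a unique $n\ge 0$ with $T^n x\in B_0$ and $\tau_n(x)\in t+u-I_B$; this $n$ equals $N_{t+u}(x)$, so the cell-change condition becomes $\kappa_n(x)=\ell$. Hence
\[
\nu\!\left(A\cap\{\phi_t\in B,\,\kappa_{N_t}=\ell\}\right) \;=\; \frac{1}{\mu(\tau)}\sum_{n\ge 0}\int_{A_0}\int_{I_A} \mathbf{1}_{T^n x\in B_0}\,\mathbf{1}_{\tau_n(x)\in t+u-I_B}\,\mathbf{1}_{\kappa_n(x)=\ell}\,du\,d\mu(x).
\]

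\emph{Application of the joint MLLT.} By the ergodic theorem the constraint $\tau_n\in t+I_A-I_B$ forces $n\approx t/\mu(\tau)$, and the contribution of $|n-t/\mu(\tau)|\gg\sqrt t$ is discarded using a large-deviation bound for $\tau_n$ together with the joint LLD of Lemma~\ref{lem:lld}. For $n$ in the dominant window I would invoke the joint MLLT for $(\kappa_n,\tau_n)$ from Lemma~\ref{lem:jointllt} (with the error terms of Lemma~\ref{lem:jointllt0}) applied to the pair of bounded observables $\mathbf{1}_{A_0}$ on $x$ and $\mathbf{1}_{B_0}$ on $T^n x$; the boundary hypothesis $\mu(\partial A_0)=\mu(\partial B_0)=0$ allows approximation from both sides by continuous test functions up to arbitrarily small $\mu$-mass, and the mixing part of the MLLT decorrelates the two, yielding
\[
\int \mathbf{1}_{A_0}(x)\,\mathbf{1}_{B_0}(T^n x)\,\mathbf{1}_{\kappa_n=\ell}\,\mathbf{1}_{\tau_n-t\in J}\,d\mu \;\sim\; \mu(A_0)\,\mu(B_0)\,\widehat p_n(\ell,\,t-n\mu(\tau))\,|J|,
\]
with $\widehat p_n$ the joint density of the limit of $(\kappa_n,\tau_n-n\mu(\tau))$.

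\emph{Assembly and main obstacle.} Integrating out $u\in I_A$ produces a factor $|I_A|$, and $|J|=|I_B|$ emerges from the $\tau_n$-slot. Summing $\widehat p_n(\ell,t-n\mu(\tau))$ over the window in $n$ collapses the joint density to the marginal density of $\kappa_n$ at $\ell$; with $n\asymp t/\mu(\tau)$ and $\ell$ fixed, the infinite-horizon $\sqrt{n\log n}$ scaling contributes the intrinsic $(t\log t)^{-d/2}$ normalisation so that this marginal evaluates to $\widetilde g_d(0)$ to leading order. Combined with $\nu(A)=|I_A|\mu(A_0)/\mu(\tau)$ and the analogous identity for $\nu(B)$, this reproduces~\eqref{eq:mixingflow000}. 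The principal technical obstacle is the middle step: one must combine the joint MLLT for $(\kappa_n,\tau_n)$ with the mixing decorrelation of $\mathbf{1}_{A_0}$ and $\mathbf{1}_{B_0}\circ T^n$ \emph{uniformly} over a window of $n$ of size $\sim\sqrt t$, with an error negligible compared to the target rate $(t\log t)^{-d/2}$. This is precisely why the error-term version Lemma~\ref{lem:jointllt0} (rather than the bare limit in Lemma~\ref{lem:jointllt}) is indispensable.
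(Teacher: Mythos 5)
Your proposal follows essentially the same route as the paper: Proposition~\ref{prop:MLLTkappa0} is obtained from Proposition~\ref{LLTkappaNt}, whose proof unfolds the suspension into a sum over the lap number $n$, splits into a dominant window $|n-t/\mu(\tau)|\le L a_t$ (treated via the joint MLLT, Lemma~\ref{lem:jointllt}) and a tail (treated via the joint LLD, Lemma~\ref{lem:lld}), and then collapses the Riemann sum of the joint Gaussian density to the marginal $\widetilde g_d(0)$. One small correction: for this particular proposition the paper invokes Lemma~\ref{lem:jointllt} (whose uniformity in $|z|\le L a_n$ already handles the window) rather than the error-term version Lemma~\ref{lem:jointllt0}, which is instead indispensable for the tightness result Theorem~\ref{lem:tight}.
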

This result is contained in a more general MLLT stated in Proposition~\ref{LLTkappaNt} (applied with $w_t=\ell$, $w=0$, $K=\{0\}$).
An immediate consequence of Proposition~\ref{prop:MLLTkappa0} is the following light version
of Theorem~\ref{cor:mixingrate} for compactly supported observables in the 'extended suspension' 
$\widehat{\mathcal M}\times\Z^d$; in particular, the supports of these functions only 
contain configurations that have hit or will hit an obstacle in a bounded time.
\begin{cor}\label{cor:easymix}
	Let $n\in\mathbb N$, setting
	\[
	E_{\pm n}=\left\{\Phi_{\pm u}(q+\ell,\vec v)\in \widetilde{\mathcal M}\, :\, q\in\bigcup_{i=1}^I\partial O_i,\  u\in[0,n],\ \ell\in\mathbb Z^d,\ |\ell|\le |n|\right\}\, ,
	\]
then, for any $f,g:\widetilde{\mathcal M}\rightarrow \mathbb R$ that are $\mu$-a.e. continuous functions and supported respectively in $E_{-n}$ and in $E_n$,
	\begin{equation}\label{mixKrick}
	\int_{\widetilde{\mathcal M}}f.g\circ 
	\Phi_t
	\, d\widetilde\nu\sim \frac {\int_{\widetilde{\mathcal M}}f\, d\widetilde\nu \int_{\widetilde{\mathcal M}}g\, d\widetilde\nu}{(2\pi t\log t\det(\Sigma))^{\frac d2}}\, ,
	\end{equation}
as $t\rightarrow +\infty$.
\end{cor}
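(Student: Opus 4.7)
The plan is to reduce the claim to Proposition~\ref{prop:MLLTkappa0}, exploiting the two decisive features of the hypothesis: (i) the supports of $f,g$ meet only finitely many $\mathbb Z^d$-cells (those with $|\ell|\le n$), and (ii) the ``time from the closest collision'' is uniformly bounded by $n$, so the supports are identified with compact subsets of $\widehat{\mathcal M}\times\mathbb Z^d$ and the tightness argument of Theorem~\ref{lem:tight} is not needed.

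\textbf{Step 1: Cell decomposition.} Using the $\mathbb Z^d$-extension identification of Section~\ref{sec:obstacles}, write $f=\sum_{|\ell|\le n}f_\ell$ and $g=\sum_{|\ell'|\le n}g_{\ell'}$, where $f_\ell$ (respectively $g_{\ell'}$) is the restriction of $f$ (respectively $g$) to the $\ell$-cell (respectively $\ell'$-cell). Each restriction translates to a $\nu$-a.e.\ continuous, compactly supported function $\bar f_\ell$, $\bar g_{\ell'}$ on $\mathcal M$, whose support lies in $\{\phi_u(x):x\in M,\,|u|\le n\}$. Rewriting the mixing integral in these coordinates yields the finite sum
\[
\int_{\widetilde{\mathcal M}}f\cdot g\circ\Phi_t\,d\widetilde\nu
=\sum_{|\ell|,|\ell'|\le n}\int_{\mathcal M}\bar f_\ell(y)\,\bar g_{\ell'}(\phi_t(y))\,\mathbf 1_{\{\kappa_{N_t}(y)=\ell'-\ell\}}\,d\nu(y).
\]

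\textbf{Step 2: Approximation and MLLT.} For every $\eta>0$ and every $\ell$, approximate $\bar f_\ell$ uniformly from below and above by finite step functions $f_\ell^-\le\bar f_\ell\le f_\ell^+$, each a linear combination of indicators of sets $A\in\mathcal F$ (Definition~\ref{defsetF}), with $\int(f_\ell^+-f_\ell^-)\,d\nu<\eta$; this is standard, since the sets $\phi_I(A_0)$ with $\mu(\partial A_0)=0$ and $I$ a small open interval form a generating $\pi$-system for the Borel $\sigma$-algebra of $\mathcal M$ modulo the $\nu$-null set of infinite free flights, $\bar f_\ell$ is $\nu$-a.e.\ continuous, and its support is compact. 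Perform the same approximation for $\bar g_{\ell'}$. For each indicator pair $(\mathbf 1_A,\mathbf 1_B)$ with $A,B\in\mathcal F$, Proposition~\ref{prop:MLLTkappa0} applied with the fixed cell-change value $\ell'-\ell$ gives
\[
\nu\bigl(A\cap\{\phi_t\in B,\,\kappa_{N_t}=\ell'-\ell\}\bigr)\sim \widetilde g_d(0)\,\nu(A)\,\nu(B)\quad\text{as }t\to+\infty.
\]
Summing over the finitely many indices and the finitely many terms in the step-function expansions, and writing $f^\pm=\sum_\ell f_\ell^\pm$, $g^\pm=\sum_{\ell'}g_{\ell'}^\pm$, one obtains
\[
\int f^\pm\cdot g^\pm\circ\Phi_t\,d\widetilde\nu
\sim \widetilde g_d(0)\,\Bigl(\int f^\pm\,d\widetilde\nu\Bigr)\Bigl(\int g^\pm\,d\widetilde\nu\Bigr).
\]

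\textbf{Step 3: Sandwich and conclude.} Decompose $f,g$ into positive and negative parts to reduce to the nonnegative case. The sandwich $f^-\le f\le f^+$, $g^-\le g\le g^+$ combined with Step~2 gives upper and lower asymptotics for $\int f\cdot g\circ\Phi_t\,d\widetilde\nu$ differing by $O(\eta)\,\widetilde g_d(0)$ (using $|\int f|,|\int g|<\infty$); letting $\eta\to 0$ and identifying $\widetilde g_d(0)=(2\pi t\log t\det\Sigma)^{-d/2}$ yields~\eqref{mixKrick}. The only point requiring minor attention is that the MLLT of Proposition~\ref{prop:MLLTkappa0} is applied with uniformly bounded cell-change values $|\ell'-\ell|\le 2n$, which is harmless since each individual asymptotic is for a fixed such value and there are only finitely many of them; no genuine obstacle arises, precisely because the hypothesis confines the supports of $f,g$ to the ``non-infinite free flight'' part of $\widetilde{\mathcal M}$ where the MLLT for $\kappa_{N_t}$ is the natural tool.
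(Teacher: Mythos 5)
Your argument correctly identifies the mechanism: reduce to the MLLT for $\kappa_{N_t}$ (Proposition~\ref{prop:MLLTkappa0}) via an approximation of $f,g$ by simple functions built from the family $\widetilde{\mathcal F}$. That is exactly the content of the Krickeberg-mixing argument the paper invokes. The difficulty lies in how you treat $f$, which is supported in $E_{-n}$, not $E_n$.

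The gap is in Step~1--2. The suspension coordinates $\widehat{\mathcal M}\times\mathbb Z^d$ are adapted to \emph{forward} flow: a point is represented as $(y,s,\ell)$ where $y\in M$ is the previous collision, $s\in[0,\tau(y))$ is the time elapsed since that collision, and $\ell$ is the cell of that previous collision. The sets in $\mathcal F$ and $\widetilde{\mathcal F}$ are accordingly ``bottom-of-the-fiber'' rectangles $\phi_I(A_0)$ with $I$ a bounded interval in $[0,\inf_{A_0}\tau)$. For $g$ supported in $E_n$ (forward), the corresponding $(y,s,\ell)$ indeed satisfies $s\le n$ and $|\ell|\le n$, so your decomposition and approximation work. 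But for $f$ supported in $E_{-n}$ (backward), the configuration $\Phi_{-u}(q+\ell,\vec v)$ has its previous collision at $\widetilde T^{-1}(q+\ell,\vec v)$, which lies in cell $\ell-\widetilde\kappa(\widetilde T^{-1}(q+\ell,\vec v))$; since $\widetilde\kappa$ is unbounded in the infinite-horizon case, the cell coordinate of $\supp(f)$ in $\widehat{\mathcal M}\times\mathbb Z^d$ is unbounded, and the $s$-coordinate is $\tau(y)-u$ with $\tau(y)$ unbounded. Consequently your Step~1 identity $f=\sum_{|\ell|\le n}f_\ell$ is false, and the assertion in Step~2 that $\supp(\bar f_\ell)\subset\{\phi_u(x):|u|\le n\}$ is also false: the support of $\bar f_\ell$ sits near the \emph{top} of the fibers, not the bottom, and cannot be covered by finitely many sets in $\mathcal F$ without losing mass.

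The paper closes precisely this gap with the flow-shift trick: since $\Phi$ is invertible, $f\circ\Phi_{-n}$ is supported in $E_n$, so one writes $\int f\cdot g\circ\Phi_t\,d\widetilde\nu=\int (f\circ\Phi_{-n})\cdot(g\circ\Phi_{t-n})\,d\widetilde\nu$ and observes $(t-n)\log(t-n)\sim t\log t$. After this reduction both observables are supported in $E_n$, and the Krickeberg approximation you describe goes through. Your proof is missing this reduction and cannot succeed without it or an equivalent device.
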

\begin{proof}
	Let $A,B$ be two sets belonging to $\widetilde{\mathcal F}$ corresponding to respectively $A_0\times I\times\{\ell_0\}$ and $B_0\times J\times\{\ell'_0\}$
	in $\widehat{\mathcal M}\times\mathbb Z^d$. We observe that
\[
	\widetilde\nu\left(A\cap\Phi_{-t}(B)\right)=\nu(\phi_I(A_0)\cap \{\phi_t\in \phi_J(B_0),\, \kappa_{N_t}=\ell'_0-\ell_0\})\, .
\]
	Thus, it follows from~\eqref{eq:mixingflow000} that
\[
(t\log t)^{\frac d2}\widetilde\nu\left(A\cap\Phi_{-t}(B)\right)\sim \widetilde g_d(0)\nu(\phi_I(A_0))\nu(\phi_J(A_0))=\widetilde g_d(0)\widetilde\nu(A)\widetilde\nu(B)\, .
\]	
This result extends directly to any finite union  $A,B\subset\widetilde{\mathcal M}$ of sets belonging to  $\widetilde{\mathcal F}$, implying 
	Krickeberg mixing as defined in~\cite{Krickeberg67}
	for the family of sets $(E_n)_{n\ge 1}$.  
	It follows from~\cite[Section 2]{Krickeberg67} (see also,~\cite[Section 9]{MT17} for the Krickeberg argument written for suspension flows)
	that~\eqref{mixKrick} holds true for any $f,g$ supported in some
	$E_n$ and $\mu$-almost everywhere continuous.
	To end the proof of Corollary~\ref{cor:easymix}, we notice that, $\Phi$ being invertible, if $f$ is supported in $E_{-n}$, then $f\circ\Phi_{-n}$ is supported on $E_n$ and we finally conclude with the use the following formula
	\[
	\int_{\widehat{\mathcal M}}f.g\circ 
	\Phi_t\, d\widetilde\nu=\int_{\widehat{\mathcal M}}f\circ \Phi_{-n}.g\circ 
	\Phi_{t-n}\, d\widetilde\nu\, ,
	\]
	since $(t-n)\log(t-n)\sim t\log t$.
\end{proof}

The mixing result in Corollary~\ref{cor:easymix} can be rephrased in terms of the vague convergence of
the family of $\mu_t$ to $\mu\otimes\mu$ where $\mu_t$ is  the measure on $(\widehat{\mathcal M})^2$
defined by $\mu_t(A'\times B')=\mu(A'\cap \Phi_{-t}B')$ for  $A',B'\in\widetilde{\mathcal F}$ (this is a consequence of the Portmanteau theorem  as in, for instance,~\cite{Resnick}),
and the same applies for Theorem~\ref{cor:mixingrate}.

\begin{rmk}
	
	We remark that mixing 
	of the type of Corollary~\ref{cor:easymix} has been previously obtained
	in~\cite{T22} for $\Z$-extensions of Gibbs Markov semiflows with roof and displacement functions in the domain of a nonstandard CLT. The method of proof in~\cite{T22} is very 
	different; in particular, it does not go via a MLLT for the base map.
\end{rmk}

\subsection{MLLT for the infinite horizon Sinai flow}
\label{subsec:mllt}
In this section we state the MLLT for a natural cocycle of
the Sinai billiard flow, which corresponds to the displacement.

\subsubsection*{Free flight}
Due to the $\mathbb Z^d$-periodicity, the \emph{free flight}
$\widetilde V: \widetilde M\rightarrow \mathcal D_d$ which is defined by 
\begin{equation}\label{deftildeV}
\forall (q,\vec v)\in \widetilde M,\quad 
\widetilde T(q,\vec v)=(\widetilde q,\vec v_1)\quad \Rightarrow\quad 
\widetilde V(q,\vec v)=\widetilde q-q
\end{equation}
goes to the quotient by $\mathbb Z^d$, i.e. there exists $V:M\rightarrow \mathcal D_d$ such that 
\begin{equation}\label{deftildeV0}
\widetilde V(q,\vec v)=V\left(p_d(q),\vec v\right)\, .
\end{equation}
When $d=2$, this quantity is related to the flight time $\tau$ via the following identity
\begin{equation}\label{linktauV}
\mbox{if }d=2\, ,\quad \tau=|V|\, .
\end{equation}
Let us show that the free flight $V$ is cohomologous to the cell change $\kappa$.
It follows from \eqref{Zdextension},~\eqref{deftildeV} and~\eqref{deftildeV0} that, for all $x=(q,\vec v)\in M$, if $T(q,\vec v)=(q',\vec v')$, then
\begin{align}
\nonumber V(x)&= \widetilde V\left(p_{d,0}^{-1}(q),\vec v\right)\\
&=p_{d,0}^{-1}(q')+\kappa(q,\vec v)-p_{d,0}^{-1}(q)=\kappa(x)+H_0(T(x))-H_0(x)\, ,\label{coboundPsi}
\end{align}
with $H_0(q,\vec v)=p_{d,0}^{-1}(q)$.
% (again the sum $\ell+q''$ with $\ell\in\mathbb Z$ and $q''\in\mathcal D_1$ is understood as the sum $(\ell,0)+q''$).\\
Proceeding as for $\widetilde W_t$ in Section~\ref{sec:intro}, if $d=1$ we set $V':\mathcal M\rightarrow \mathbb R$ for the first coordinate of $V$, and if $d=2$,
$V$ takes its values in $\mathbb R^2$, we then just set $V'=V$. The following nonstandard CLT
 was proved  in~\cite{SV07}
for $V'$:
\begin{equation}\label{eq:cltV}
a_n^{-1}\sum_{j=0}^{n-1} V'\circ T^j\implies \mathcal N(0,\Sigma_0)\, ,
\end{equation}
where $a_n=\sqrt{n\log n}$ and where $\Sigma_0\in\R^{d\times d}$ is a positive-definite symmetric $d$-dimensional matrix (see~\eqref{defSigma0dim2} and~\eqref{defSigma0dim1}) for precise formulas).
An important ingredient of~\cite{SV07} is that $V$
lies in the domain of a nonstandard CLT; that is, there exists
$c > 0$ such that 
\begin{equation}\label{tailV}
\mu(|V | > t)\sim ct^{-2}\, .
\end{equation}

\subsubsection*{Displacement function $W_t$}
We have already defined in Section~\ref{sec:intro} the displacement function $\widetilde W_t:\widetilde{\mathcal M}\rightarrow\mathcal D_d$ and $\widetilde W'_t:\widetilde{\mathcal M}\rightarrow\mathbb R^d$ its projection on $\mathbb R^d$. 
Due to the $\mathbb Z^d$-periodicity of our model, both displacement functions
go the quotient by $\mathbb Z^d$, i.e. there exists 
$W_t:\mathcal M\rightarrow \mathcal D_d$ and $W'_t:\mathcal M\rightarrow\mathbb R^d$ such that
\[
\forall (q,\vec v)\in\widetilde{\mathcal M},\quad
\widetilde W_t(q,\vec v)=W_t(p_d(q),\vec v)\quad\mbox{and}\quad
\widetilde W'_t(q,\vec v)=W'_t(p_d(q),\vec v)\, .
\]
Observe that $W_t$ is a cocycle:
\begin{equation}\label{Wt_cocycle}
\forall x\in \mathcal M,\ \forall t,s\ge 0,\quad  W_{t+s}(x)=W_s(x)+W_t(\phi_s(x))\, 
\end{equation}
and that
\begin{equation}\label{eq:xi}
\forall x=(q,\vec v)\in M, \quad V(x)=W_{\tau}(x):=W_{\tau(x)}(x)\, .
\end{equation}
Thus the nonstandard CLT for $V'$ stated in~\eqref{eq:cltV} implies a nonstandard CLT for $W'_t$  via the relation~\eqref{eq:xi} 
together with the classical scheme of lifting 
limit theorems from the induced map to the original system (map or flow) ~\cite{MT04, Gouezel07}. This leads to the following result where we use the notation $a_t:=\sqrt{t\log t}$.
\begin{prop}[CLT~\cite{SV07}] \label{prop:cltflow}
As $t\rightarrow+\infty$, $	a_t^{-1}W'_t\implies \mathcal N(0,\Sigma)$
	where $\Sigma\in\R^{d\times d}$, $\Sigma=\Sigma_0/\mu(\tau)^{1/2}$
	with $\Sigma_0$ as in~\eqref{eq:cltV}.
	%is a positive-definite symmetric $d$-dimensional matrix.
\end{prop}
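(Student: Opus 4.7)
The plan is to lift the induced-level CLT \eqref{eq:cltV} for $\sum_j V'\circ T^j$ to the flow observable $W'_t$, via the suspension representation and the cocycle identity \eqref{Wt_cocycle}. This follows the standard scheme of Melbourne--T\"or\"ok and Gou\"ezel~\cite{MT04, Gouezel07}, adapted to the non-standard normalization $a_t=\sqrt{t\log t}$. For $\widetilde\nu$-a.e.\ $x\in\mathcal M$ I represent $x=\phi_u(x_0)$ with $x_0\in M$, $0\le u<\tau(x_0)$. The cocycle identity gives $W'_t(\phi_u(x_0))=W'_{t+u}(x_0)-W'_u(x_0)$, with $|W'_u(x_0)|\le u<\tau(x_0)<\infty$ a.e., so it suffices to prove the CLT for $W'_{t+u}(x_0)$ with $x_0$ distributed (essentially) as $\mu$. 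Setting $n(t,x_0):=N_{t+u}(x_0)$ and using \eqref{eq:xi}, the flight decomposition yields
\[ W'_{t+u}(x_0)=\sum_{j=0}^{n(t,x_0)-1}V'(T^j x_0)+R_t(x_0),\qquad |R_t(x_0)|\le \tau\circ T^{n(t,x_0)}(x_0). \]

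Next I control the remainder $R_t$. Birkhoff's theorem applied to $\tau$ gives $n(t,\cdot)/(t/\mu(\tau))\to 1$ in probability. From the tail bound \eqref{tailV} (combined with \eqref{linktauV} when $d=2$, and the analogous polynomial tail on $\tau$ when $d=1$), Markov's inequality yields $\mu(\tau\circ T^n>s)\ll s^{-2}$ uniformly in $n$, which together with the exponential decay of correlations of $T$ shows $R_t=o(a_t)$ in probability.

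Finally I apply an Anscombe-type extension of \eqref{eq:cltV}. Since $n(t,\cdot)$ is asymptotically deterministic and $a_n=\sqrt{n\log n}$ is regularly varying of index $1/2$, the induced CLT transfers to the random index:
\[ a_{n(t,\cdot)}^{-1}\sum_{j=0}^{n(t,\cdot)-1}V'\circ T^j \Longrightarrow \mathcal N(0,\Sigma_0). \]
Combined with the deterministic asymptotic $a_{n(t,\cdot)}/a_t\to\mu(\tau)^{-1/2}$ (immediate from $n\log n\sim(t/\mu(\tau))\log t$) and Slutsky, together with the remainder estimate from step two, this yields $a_t^{-1}W'_t\Longrightarrow\mathcal N(0,\Sigma)$ with $\Sigma$ as in the statement.

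The main obstacle is the remainder control: the tail $\mu(\tau>s)\sim cs^{-2}$ is borderline $L^2$, so the last-flight term $\tau\circ T^{n(t,\cdot)}$ is \emph{a priori} comparable in magnitude to the fluctuations $a_t$ of the sum itself, and one must combine the polynomial tail with the mixing of $T$ (typically via a truncation-plus-maximal-inequality argument, exploiting that $n(t,\cdot)$ concentrates around $t/\mu(\tau)$) to rule out an anomalously long flight at the tagged index. The Gou\"ezel/Melbourne--T\"or\"ok lifting framework is designed precisely to package this estimate together with the Anscombe-type time change under regularly varying, rather than purely polynomial, normalizations.
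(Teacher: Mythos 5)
Your approach---suspension representation, flight decomposition into a random-index Birkhoff sum of $V'$ plus a last-flight remainder, tail control of the remainder, and an Anscombe-type random-index CLT under the regularly varying normalization $a_n=\sqrt{n\log n}$---is precisely the classical lifting scheme from~\cite{MT04,Gouezel07} that the paper invokes without writing out, so the method is the same. One substantive point needs fixing in your final step: multiplying a centered Gaussian vector $\mathcal N(0,\Sigma_0)$ by the scalar $\mu(\tau)^{-1/2}$ scales its \emph{covariance} by $\mu(\tau)^{-1}$, so the argument you give actually yields $\Sigma=\Sigma_0/\mu(\tau)$, not $\Sigma_0/\mu(\tau)^{1/2}$. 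This is consistent with the identity $\widetilde g_d(w)=(\mu(\tau))^{d/2}\int_{\mathbb R}g_{d+1}(w\sqrt{\mu(\tau)},z)\,dz$ used later in the paper (Lemma~\ref{lem:S1S2}(a)), which exhibits $\widetilde g_d$ as the density of $\mathcal N(0,\Sigma_0/\mu(\tau))$; the exponent $1/2$ in the Proposition's displayed formula is therefore a misprint, and your proposal currently repeats it rather than catching the scalar-versus-covariance bookkeeping. A smaller remark: the remainder control does not need decay of correlations. Once $C$ is chosen so that $|N_{t+u}-t/\mu(\tau)|\le Ca_t$ with probability tending to $1$ (via the CLT for $\tau_n$), a union bound over the $O(a_t)$ candidate indices together with $T$-invariance of $\mu$ and $\mu(\tau>\eps a_t)\ll(\eps a_t)^{-2}$ gives $\PP(\tau\circ T^{N_{t+u}}>\eps a_t)\ll a_t\cdot(\eps a_t)^{-2}=O(\eps^{-2}a_t^{-1})\to 0$.
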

Let $v_0:\mathcal M\to\mathbb S^1$ be the velocity map which is given by $v_0(q,\vec v)=\vec v$. Note that
\begin{equation}\label{eq:Wt}
\mbox{if }d=2,\quad W_t:=\int_0^tv_0\circ\phi_s\, ds \, .
\end{equation}
If $d=1$,
then $W_t$ is the equivalent class in $\mathcal D_1$ (that is, $W_t$ is the canonical projection) of the  ergodic integral 
$\int_0^t v_0\circ\phi_s\,ds$.

\subsubsection*{MLLT for the displacement function}
Let us see that, due to~\eqref{eq:xi},  the coboundary equation~\eqref{coboundPsi} for $V-\kappa$ leads to a similar equation involving $W$.
We consider the function $H_1:\mathcal M
\rightarrow \mathcal D_d$ mapping $\mathbf x\in\mathcal M$
to the position of its representant in $\mathcal D_d$ with previous collision in $\mathcal C_0$, that is
\begin{equation}\label{def:H1}
\forall ((q,\vec v),u)\in\widehat{\mathcal M},\quad 
H_1(\phi_u(q,\vec v))=\mathfrak p\left(\Phi_u\left(p_{d,0}^{-1}(q),\vec v\right)\right)
=p_{d,0}^{-1}(q)+W_u(q,\vec v)\, ,
\end{equation}
where $\mathfrak p:\widetilde{\mathcal M}\rightarrow\Omega_d$ is the natural projection.
In other words, if $d=2$, then 
\begin{equation}\label{eq:H1}
H_1(\phi_u(q,\vec v))=H_0(q,\vec v)+W_u(q,\vec v)=p_{d,0}^{-1}(q)+u\vec v\, ;
\end{equation} 
if $d=1$, $H_1(\phi_u(q,\vec v))$ is the class of $p_{d,0}^{-1}(q)+u\vec v$ in $\mathcal D_1$. 
Recall that we set $N_t:\mathcal M\rightarrow \mathbb N_0$ for the \emph{collisions number} in the time interval $(0,t]$ (see in particular \eqref{eq:lapn}). 
The above defined function $H_1$ satisfies the following important property:
%\footnote{Here again, if $d=1$, for any $\ell\in\mathbb Z$ and any $q'\in\mathcal D_1$, the sum $\ell+q'$ means $(\ell,0)+q'$.}
\begin{equation}\label{eq:Wcobound}
\forall (x=(q,\vec v),u)\in \widehat {\mathcal M}\, ,\quad
W_t(\phi_u(x))=\kappa_{N_{t+u}(x)}(x)+H_1(\phi_t(\phi_u(x)))-H_1(\phi_u(x))\, .
\end{equation}
Indeed, setting $N:=N_{t+u}(x)$, we notice that \begin{equation}\label{phitphiu}
\phi_t(\phi_u(x))=\phi_{u'}(q', \vec w),\quad \mbox{with }u':=u+t-\tau_{N}(x),\ (q',\vec w):=T^{N}(x)=\phi_{\tau_N}(x)\, ,
\end{equation}
and $(T^N(x),u')$ is in $\widehat{\mathcal M}$.
Therefore, it follows from~\eqref{Wt_cocycle} and~\eqref{eq:xi} that
\begin{align*}
W_t(\phi_u(x))&=W_{t+u}(x)-W_u(x)=W_{u'}(\phi_{\tau_N}(x))+W_{\tau_N}(x)-W_u(x)\\
&=W_{u'}(T^N(x))+W_{\tau_N}(x)-W_u(x)\\
&=W_{u'}(T^N(x))+V_{N}(x)-W_u(x)\, .
\end{align*}
Finally, using~\eqref{coboundPsi} and~\eqref{eq:H1}, we obtain that
\begin{align*}
W_t(\phi_u(x))
&=W_{u'}(T^N(x))+\kappa_{N}(x)+H_0(T^N(x))-H_0(x)-W_u(x)\\
&=H_1(\phi_t(\phi_u(x)))+\kappa_{N}(x)-H_1(\phi_u(x))\, ,
\end{align*}
as announced.
\\[3mm]
Recall that $a_t = \sqrt{t \log t}$.

 \begin{thm}[MLLT for $W_t$]
 \label{thm:main}
 Let $A,B\in\mathcal F$ and let $K$ be a bounded subset of $\mathcal D_d$ 
   with 
 $Leb(\partial K)=0$. Let $w\in\R^d$ and let $w_t\in\mathbb R^d$
 such that $\lim_{t\rightarrow +\infty}w_t/a_t=w$. 
 Then\footnote{Again, in this formula, if $d=1$, the notation $w_t+K$ means
	$(w_t,0)+K$ and $\mathbb Z^d$ means $\mathbb Z\times\{0\}$.}
\begin{align}\nonumber
a_t^d\nu&\left(A\cap\{\phi_t \in B,\,
W_t\in w
_t+K\}\right)\\
&\sim	\widetilde g_d\left(w
%\sqrt{\mu(\tau)}
\right)\int_{A\times B}\#((K+w_t+H_1(\mathbf{x})-H_1(\mathbf{y}))\cap\mathbb Z^d)\, d\nu(\mathbf{x})\, d\nu(\mathbf{y})\, ,\label{eq:mixingflow}
\end{align}
as $t\to\infty$, where $\widetilde g_d$ is the density of the $d$-dimensional Gaussian distribution $\mathcal N(0,\Sigma)$
appearing in Proposition~\ref{prop:cltflow} and where $H_1$
is the function that has been defined in~\eqref{def:H1}.
\end{thm}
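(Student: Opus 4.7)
The plan is to reduce this flow-level MLLT to the corresponding statement for the discrete cocycle $\kappa_{N_t}$ (Proposition~\ref{LLTkappaNt}) via the coboundary identity~\eqref{eq:Wcobound}. Writing a typical $\mathbf{x}\in\mathcal{M}$ as $\phi_u(x)$ with $(x,u)\in\widehat{\mathcal M}$, that identity gives
\[
W_t(\mathbf{x})\in w_t+K\ \Longleftrightarrow\ \kappa_{N_{t+u}(x)}(x)\in w_t+K+H_1(\mathbf{x})-H_1(\phi_t(\mathbf{x})).
\]
Since $A,B\in\mathcal F$ are bounded, $H_1$ stays in a bounded region on $A\cup B$, and since $K$ is bounded, only $O(1)$ values of $\ell:=\kappa_{N_t}$ can contribute, and every such $\ell$ satisfies $\ell/a_t\to w$. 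Hence
\[
\mathbf{1}_A\,\mathbf{1}_{\phi_t\in B}\,\mathbf{1}_{W_t\in w_t+K}
=\sum_{\ell\in\mathbb{Z}^d}\mathbf{1}_A\,\mathbf{1}_{\phi_t\in B}\,\mathbf{1}_{\kappa_{N_t}=\ell}\,\mathbf{1}_{\ell\in w_t+K+H_1(\mathbf{x})-H_1(\phi_t\mathbf{x})},
\]
an $O(1)$-term sum at every point.

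I would next partition $A=\bigsqcup_i A^{(i)}$ and $B=\bigsqcup_j B^{(j)}$ into finitely many pieces of $\mathcal F$ on which $H_1$ oscillates by at most some $\delta>0$, fixing reference points $\mathbf{x}^{(i)}\in A^{(i)}$, $\mathbf{y}^{(j)}\in B^{(j)}$ with $h^{(i)}:=H_1(\mathbf{x}^{(i)})$, $h^{(j)}:=H_1(\mathbf{y}^{(j)})$. On a product piece $A^{(i)}\times B^{(j)}$ the indicator appearing above is sandwiched, modulo exceptional sets of small measure, between
\[
\mathbf{1}_{\ell\in w_t+K^{-\delta}+h^{(i)}-h^{(j)}}\ \le\ \mathbf{1}_{\ell\in w_t+K+H_1(\mathbf{x})-H_1(\phi_t\mathbf{x})}\ \le\ \mathbf{1}_{\ell\in w_t+K^{+\delta}+h^{(i)}-h^{(j)}},
\]
where $K^{\pm\delta}$ denote the inner and outer $\delta$-thickenings of $K$. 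Because these bracketing indicators no longer depend on $(\mathbf{x},\phi_t\mathbf{x})$, Proposition~\ref{LLTkappaNt} applied to each of the $O(1)$ relevant $\ell$ yields, uniformly,
\[
\nu\bigl(A^{(i)}\cap\{\phi_t\in B^{(j)},\,\kappa_{N_t}=\ell\}\bigr)\sim a_t^{-d}\,\widetilde g_d(w)\,\nu(A^{(i)})\,\nu(B^{(j)}),
\]
since $\ell/a_t\to w$ for every such $\ell$.

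Summing over the admissible $\ell$ produces the counting factor $\#\bigl((w_t+K^{\pm\delta}+h^{(i)}-h^{(j)})\cap\mathbb{Z}^d\bigr)$, and then summing over $i,j$ yields a Riemann sum that, as the partition is refined, converges to $\int_{A\times B}\#\bigl((w_t+K+H_1(\mathbf{x})-H_1(\mathbf{y}))\cap\mathbb{Z}^d\bigr)\,d\nu(\mathbf{x})\,d\nu(\mathbf{y})$, the announced right-hand side.

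The main technical obstacle is the discontinuity of the lattice-point counting function in its affine argument: it is only upper semicontinuous in the translate, which is precisely why the $\delta$-thickenings $K^{\pm\delta}$ are needed. The hypothesis $\Leb(\partial K)=0$ is then invoked to show that, as $\delta\to 0$, the inner and outer counting functions agree for $\nu\otimes\nu$-a.e. pair $(\mathbf{x},\mathbf{y})$, so that the squeeze closes. A secondary concern is verifying that the partition pieces can be chosen so as to remain in $\mathcal F$ (in particular with $\mu(\partial A^{(i)}_0)=0$), which is routine given the freedom in choosing the partition.
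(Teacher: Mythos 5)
Your proposal is correct and takes essentially the same route as the paper: reduce via the coboundary identity~\eqref{eq:Wcobound} to the discrete statement (Proposition~\ref{LLTkappaNt}), partition $A$ and $B$ into small pieces of $\mathcal F$, bracket the lattice-counting set by inner/outer approximations, and close the squeeze using the Lipschitz continuity of $H_1$ together with $\Leb(\partial K)=0$. The only cosmetic difference is that the paper defines the bracketing sets $K_{i,j,m}^{\pm}$ directly via quantifiers over the product pieces rather than via uniform $\delta$-thickenings, but the estimates coincide.
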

The proof of Theorem~\ref{thm:main} is provided in Section~\ref{sec:proofMLLT}, and will appear as a consequence of an analogous result (Proposition~\ref{LLTkappaNt}) stated for $\kappa_{N_t}$ instead of $W_t$.

\section{Statements of the Joint LLT with error term and the joint LLD for the billiard map}\label{sec:MLLT+LLD}
Let $d\in\{0,1,2\}$. 
In this section we state the main technical results that will be used in the proofs of Theorem~\ref{thm:main} (MLLT for the Sinai flow) and Theorem~\ref{cor:mixingrate} (mixing for the Lorentz gas), including those used in the proof of  the key tightness-type result 
Theorem~\ref{lem:tight} (stated in Section~\ref{sec:proofmixing}).
%, which, we recall is a key result for the proof of Theorem~\ref{cor:mixingrate}. The main technical results needed for the proof of Theorem~\ref{thm:main} are Lemma~\ref{lem:jointllt} and Lemma~\ref{lem:lld}. The main technical results needed for the proof of Theorem~\ref{lem:tight}  are Lemma~\ref{lem:jointllt0} and Corollary~\ref{coro:llt0cor}.
We are interested in joint MLLT  and LLD for the pair
 \[
\widehat{\Psi}=\widehat{\Psi}^{(d)}:=(\kappa,\widetilde\tau):M\to\mathcal D_d\times\mathbb R\, ,\quad\mbox{with }\widetilde\tau:=\tau-\mu(\tau)\, ,\quad\mbox{if }d\in\{1,2\}
\]
or for
\[
\widehat \Psi=\widehat\Psi^{(0)}:=\widetilde\tau\, ,\quad\mbox{if }d=0\, .
\]
Note that $\int_M \widehat\Psi\,d\mu{=0}$. When $d\in\{1,2\}$, our joint limit results are related to the fact that
that the sums of $(\kappa\circ T^k,\widetilde\tau\circ T^k)_k$ satisfies a CLT with nonstandard normalization $a_n^{-1}$. 
In particular, as clarified in the proof of Sublemma~\ref{sub:asl} below, the vector $\widehat\Psi$ is so that $\mu(|\widehat\Psi|>t)\sim ct^{-2}$.
As usual, we write $\widehat\Psi_n=\sum_{j=0}^{n-1}\widehat\Psi\circ T^j$
and similarly for $V_n,\,\tau_n, \widetilde\tau_n$.
We start with a nondegenerate CLT with nonstandard scaling for $\widehat\Psi_n$.

\begin{lemma}[Joint CLT for the billiard map] \label{lem:clt}
 $a_n^{-1}\widehat\Psi_n\implies \mathcal N(0,\Sigma_{d+1})$ as $n\to\infty$, 
where $\Sigma_{d+1}\in\R^{(d+1)\times(d+1)}$ is positive-definite (see~\eqref{Sigmad+1} for an explicit formula).
\end{lemma}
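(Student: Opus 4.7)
The strategy is to reduce the joint nonstandard CLT to a one-dimensional statement via the Cramér–Wold device, following the spectral/Young-tower machinery developed by Szász and Varjú~\cite{SV07} (which already gives the $\kappa$-marginal for $d\in\{1,2\}$), and then to verify positive-definiteness of $\Sigma_{d+1}$ by an explicit corridor computation. By Cramér–Wold, it suffices to show that for every $\xi\in\R^{d+1}$,
\[
a_n^{-1}\,\xi\cdot\widehat\Psi_n\implies\mathcal N(0,\sigma^2(\xi)),
\]
and that $\sigma^2(\xi)>0$ for every $\xi\neq 0$; the matrix $\Sigma_{d+1}$ is then defined by $\xi^T\Sigma_{d+1}\xi=\sigma^2(\xi)$.

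The first step is a joint tail estimate
\[
\mu\bigl(|\xi\cdot\widehat\Psi|>t\bigr)\sim \sigma^2(\xi)/(2t^2),\qquad t\to\infty,
\]
obtained by a corridor decomposition. Using the coboundary relation~\eqref{coboundPsi}, I may replace $\kappa$ by $V$ modulo a bounded, tail-irrelevant correction. On the region of $M$ corresponding to a long free flight of length $L$ through the $i$-th corridor of unit direction $\vec v_i\in\mathbb S^1$, one has $V=L\vec v_i$ and $\tau=L$ (by~\eqref{linktauV} when $d=2$; for $d=1$, $\tau=L$ while the $\R$-projection $V'=L\cos\theta_i$). Setting $\vec w_i^{\pm}:=(\pm\vec v_i',1)\in\R^{d+1}$ for the two traversal directions (with $\vec v_i'$ the projection of $\vec v_i$ to $\R^d$; the case $d=0$ is simpler as the first coordinate disappears), the single-corridor tail $\mu(|V|>t\text{ in corridor }i)\sim c_i t^{-2}$ from~\cite{SV07} combines to yield
\[
\sigma^2(\xi)=\sum_i c_i\bigl[(\xi\cdot\vec w_i^+)^2+(\xi\cdot\vec w_i^-)^2\bigr].
\]

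Given this tail, the scalar nonstandard CLT for $a_n^{-1}\xi\cdot\widehat\Psi_n$ follows from the perturbed-transfer-operator analysis on the Young tower exactly as in~\cite{SV07}: the only input needed is a $\sim c\,t^{-2}$ tail with known constant together with the standard exponential decay of correlations for the billiard map. Cramér–Wold then promotes this to the claimed joint convergence.

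The main difficulty is verifying the non-degeneracy of $\Sigma_{d+1}$. A direct computation yields
\[
\vec w_i^+(\vec w_i^+)^T+\vec w_i^-(\vec w_i^-)^T=2\begin{pmatrix}\vec v_i'(\vec v_i')^T & 0\\ 0 & 1\end{pmatrix},
\]
so
\[
\Sigma_{d+1}=2\sum_i c_i\begin{pmatrix}\vec v_i'(\vec v_i')^T & 0 \\ 0 & 1\end{pmatrix},
\]
is block-diagonal (the $\kappa$–$\widetilde\tau$ cross-covariance cancels between the two traversal directions of each corridor). When $d=0$, the bottom-right scalar $2\sum_i c_i>0$ since at least one corridor exists. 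When $d\ge 1$, the fully dimensional hypothesis provides at least $d$ pairwise non-parallel corridor directions $\vec v_i'\in\R^d$, so the top-left block $2\sum_i c_i\vec v_i'(\vec v_i')^T$ has rank $d$. Combined with the positive bottom-right entry, $\Sigma_{d+1}$ is positive definite.
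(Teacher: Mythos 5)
Your high-level strategy (Cramér–Wold reduction to a scalar nonstandard CLT) is reasonable, and your identification of the corridor vectors and the block structure of $\Sigma_{d+1}$ is essentially right (the paper works with $(\pi_d(w),|w|)$ for prime $w$ rather than your unit vectors $(\pm\vec v_i',1)$, but these agree up to a re-parametrization of the tail constants, and your cross-covariance cancellation follows from the time-reversal symmetry pairing $(L,w)\leftrightarrow(L',-w)$). However, there is a serious gap in the middle step: you assert that the scalar nonstandard CLT for $a_n^{-1}\,\xi\cdot\widehat\Psi_n$ ``follows from the perturbed-transfer-operator analysis \ldots exactly as in~\cite{SV07}: the only input needed is a $\sim c\,t^{-2}$ tail with known constant together with the standard exponential decay of correlations.'' This is not what the SV07/B\'alint--Gou\"ezel machinery requires. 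That machinery is carried out for a function that descends to the quotient Young tower and is \emph{constant on partition elements}; moreover, beyond the tail, it requires the ``double probability'' estimate of~\cite[Propositions 11--12]{SV07} (recalled in~\eqref{controltimelog}) and an aperiodicity argument, none of which are mere corollaries of the tail asymptotic plus exponential mixing. The scalar observable $\xi\cdot\widehat\Psi = \xi'\cdot\kappa + \xi_{d+1}\widetilde\tau$ is real-valued and manifestly not constant on partition elements, so none of the SV07 apparatus applies to it off the shelf.

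The paper's proof handles exactly this issue, and it is the nontrivial content of Section~\ref{sec:proofjCLT}: via the coboundary relation~\eqref{coboundPsibis}, $\overline\Psi$ is decomposed as $\Upsilon+\Theta$ (see~\eqref{eq:rewr}), where $\Upsilon=(\pi_d(\overline K),|\overline K|-\mathbb E[|\overline K|])$ is $\Z^{d+1}$-valued, constant on partition elements, and inherits both the tail estimate~\eqref{eq:tail} and the double-probability estimate~\eqref{eq:svd} from $\widetilde\kappa$, while $\Theta$ is bounded Lipschitz and hence only produces $O(|t|^2)$ corrections in the eigenvalue expansion (Sublemma~\ref{sub:asl}). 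Your proposal needs an analogous decomposition (or some other device to put $\xi\cdot\widehat\Psi$ within reach of the tower spectral theory) before the Cramér–Wold step can actually be carried out; as written, the key lemma you invoke would not apply.
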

This result is proved in Section~\ref{sec:proofjCLT} by adapting the proof of the CLT for $V_n$ established in~\cite{SV07} via~\cite{BalintGouezel06}, writing $\widehat\Psi_n$ as a function of the two dimensional cell change plus a Lipschitz function.

Let us state a MLLT for $\widehat\Psi_n$ with a uniform error term.
We write $\Lambda_{d+1}$ for the Haar measure on $\mathbb Z^d\times\mathbb R$ given by the product of the counting measure on $\mathbb Z^d$ and of the Lebesgue measure on $\mathbb R$.
\begin{lemma}[Joint MLLT for the billiard map]
Let $p>2$ and $R>0$. 
	\label{lem:jointllt0}
%	Let $C>0$.
	We take $\mathfrak a_n$ such that $\mathfrak a_n^2 = 2n\log (\mathfrak a_n) \sim n \log n =  a_n^2$.
%Let $\kappa>100$.
Assume $G,H:M\rightarrow\mathbb R$ are two bounded dynamically H\"older continuous functions and that $h:\mathbb Z^d\times\mathbb R\rightarrow\mathbb R$ is integrable with compactly supported
Lipschitz Fourier transform $\widehat h:\mathbb T^d\times \mathbb R\rightarrow\mathbb C$.
There exists $a_0>0$ (depending only on $p$ and on the H\"older exponent of $G$ and $H$) such that, for all $k_n<n/4$, 
\begin{align*}
	\mathbb E_{\mu}&\left[G. 
	h(\widehat\Psi_n-L
	).H\circ T^n\right]\\
	&
	= \mathfrak a_n^{-d-1}  \mathbb E_{\mu}\left[H\right]\mathbb E_{\mu}\left[G\right]\left(g_{d+1}\left(\frac L{\mathfrak a_n}
	\right)\int_{\mathbb Z^d\times\mathbb R}h\, d\Lambda_{d+1} +\mathcal O\left((\log n)^{-1}+\frac{k_n}n
	\right)\right)\\
	&\quad+\mathcal O\left(
	e^{-a_0k_n}
	\Vert G\Vert_{Holder} \Vert H\Vert_{Holder}
	+
	a_n^{-d-2}
	(k_n\Vert G\Vert_{L^1}\Vert H\Vert_{L^{p}}+
	\Vert H\Vert_{L^1}
	 \Vert \widehat\Psi_{2k_n}.G\Vert_{L^1(\mu)})\right),
\end{align*}
uniformly in $L
%(\ell,W)
\in\mathbb Z^d\times\mathbb R$, in $(n,k_n)$ as above, and in $h$ such that  $\text{supp}(\widehat h)\subset B(0,R)$ and $\Vert\widehat h\Vert_{Lipschitz}\le R$,  where $g_{d+1}$ is the density for the $(d+1)$-dimensional Gaussian in Lemma~\ref{lem:clt}.\\
\end{lemma}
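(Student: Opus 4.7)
The plan is a Fourier-inversion/spectral-perturbation argument, following the MLLT-with-error scheme of~\cite{PeneTerhesiu21} adapted to the joint observable $\widehat\Psi$. Write $\mathcal L$ for the transfer operator of $T$ on $(M,\mu)$ and, for $\xi\in\mathbb T^d\times\mathbb R$, set $\mathcal L_\xi G := \mathcal L(e^{-i\langle\xi,\widehat\Psi\rangle}G)$. Since $\widehat h$ has compact support, Fourier inversion on $\mathbb Z^d\times\mathbb R$ gives
\[
\mathbb E_\mu\!\big[G\cdot h(\widehat\Psi_n-L)\cdot H\circ T^n\big] = \frac{1}{(2\pi)^{d+1}}\int \widehat h(\xi)\,e^{i\langle\xi,L\rangle}\int_M H\cdot \mathcal L_\xi^n G\,d\mu\,d\xi.
\]
On the dynamically H\"older Banach space of~\cite{BalintGouezel06} (which underlies Lemma~\ref{lem:clt}), $\mathcal L_\xi$ is quasi-compact for small $\xi$, with a simple leading eigenvalue $\lambda(\xi)$, rank-one projector $\Pi_\xi$, and a uniformly contracting complement.

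I would split the $\xi$-integral at $|\xi|=\delta$. The large-frequency region $\delta<|\xi|\le R$ (with $R$ from $\supp\widehat h$) produces an $O(\rho^n)$ contribution from quasi-compactness and non-arithmeticity of $\widehat\Psi$, absorbable into the $e^{-a_0 k_n}$ error. On $|\xi|\le\delta$ I use $\mathcal L_\xi^n=\lambda(\xi)^n\Pi_\xi+N_\xi^n$ together with the infinite-horizon expansion
\[
\lambda(\xi) = 1-\tfrac12\langle\Sigma_{d+1}\xi,\xi\rangle\log(1/|\xi|) + o(|\xi|^2\log(1/|\xi|)),
\]
which follows from $\mu(|\widehat\Psi|>t)\sim ct^{-2}$ and is the mechanism behind Lemma~\ref{lem:clt}. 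Changing variables $\xi=\eta/\mathfrak a_n$ and using $\mathfrak a_n^2=2n\log\mathfrak a_n$ gives $\lambda(\eta/\mathfrak a_n)^n\to e^{-\langle\Sigma_{d+1}\eta,\eta\rangle/2}$; combined with $\Pi_\xi G\to(\int G\,d\mu)\mathbf 1$, dominated convergence identifies the leading term $\mathfrak a_n^{-d-1}g_{d+1}(L/\mathfrak a_n)\,\mathbb E_\mu[G]\,\mathbb E_\mu[H]\int h\,d\Lambda_{d+1}$, with a relative $O((\log n)^{-1})$ remainder controlled by the remainder in the expansion of $\lambda$.

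The explicit error terms come from a block decomposition of length $k_n$. Writing $\widehat\Psi_n=\widehat\Psi_{k_n}+\widehat\Psi_{n-2k_n}\circ T^{k_n}+\widehat\Psi_{k_n}\circ T^{n-k_n}$ and approximating $G,H$ by functions constant on dynamical cylinders of depth $k_n$ produces the $e^{-a_0 k_n}\Vert G\Vert_{Holder}\Vert H\Vert_{Holder}$ error via dynamical H\"older continuity. A first-order Taylor expansion $\Pi_\xi G = \int G\,d\mu + i\langle\xi,u(G)\rangle + O(|\xi|^2)$ turns $u(G)$ into a linear functional of size $\Vert \widehat\Psi_{2k_n}G\Vert_{L^1}$ (the $2k_n$ being the combined endpoint-block length), yielding the $a_n^{-d-2}\Vert H\Vert_{L^1}\Vert \widehat\Psi_{2k_n}G\Vert_{L^1}$ term; the companion $a_n^{-d-2}k_n\Vert G\Vert_{L^1}\Vert H\Vert_{L^p}$ term arises from the three-block cross interactions estimated by H\"older with $p>2$, which is needed because $\widehat\Psi$ has only a logarithmically finite second moment.

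The main obstacle will be the sharpness of the remainder in $\lambda(\xi)^n$ near $\xi=0$: the $\log(1/|\xi|)$ factor in the nonstandard expansion makes both the next-order correction and the integration over the rescaled variable $\eta$ delicate, and uniformity in $L$ and in $h$ (with $\widehat h$ Lipschitz and supported in $B(0,R)$) must be tracked throughout. A secondary technical point is verifying the joint spectral framework for $\widehat\Psi=(\kappa,\widetilde\tau)$ rather than $\kappa$ alone; when $d=2$, this is plausible because by~\eqref{linktauV} the difference $\widetilde\tau-|\kappa|$ is a bounded correction fitting within the Young-tower setup of~\cite{BalintGouezel06,SV07}.
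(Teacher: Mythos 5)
Your overall strategy --- Fourier inversion, spectral perturbation of the (twisted) transfer operator, a split of the frequency integral at a small threshold, and a block/$k_n$-approximation to produce the $e^{-a_0k_n}\|G\|_{Holder}\|H\|_{Holder}$ error --- is the same as the paper's, and the paper does indeed carry this out on the Young tower (which, contrary to what your phrasing suggests, is where the B\'alint--Gou\"ezel spectral framework lives; one must pass from $M$ to $\Delta$, then to $\overline\Delta$, handling the coboundary $\chi$ in \eqref{coboundPsibis}, and the ``dynamical cylinder'' approximation is exactly Step~1 of the paper's proof).

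There are, however, two concrete gaps in the way you produce the error terms. First, you assert a first-order Taylor expansion $\Pi_\xi G=\int G\,d\mu+i\langle\xi,u(G)\rangle+O(|\xi|^2)$. This cannot hold: $\widehat\Psi$ has tail $\mu(|\widehat\Psi|>t)\sim ct^{-2}$, so it is only in $L^{2-\varepsilon}$, and the spectral projector is \emph{not} twice (nor, in a useful norm, even once) differentiable at $0$. What the paper proves and what actually drives the $a_n^{-d-2}$ errors is only the Lipschitz-type bound $\|\Pi_t-\Pi_0\|_{\mathcal B\to L^q(\mu_{\overline\Delta})}=O(|t|)$ for $q<2$ (Sublemma~\ref{sunl:pi}), combined with the approximation of $\mathbb E_{\mu_{\overline\Delta}}[P_t^{3k_n}(G_{(k_n)}(t))]$ by $\mathbb E_\mu[G]$, which is where $\|\widehat\Psi_{2k_n}G\|_{L^1}$ actually enters --- not from a derivative $u(G)$. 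Second, the expansion you quote for the leading eigenvalue, $1-\lambda_\xi=\langle\Sigma_{d+1}\xi,\xi\rangle\log(1/|\xi|)+o(|\xi|^2\log(1/|\xi|))$, suffices for the CLT but is \emph{not} quantitative enough to give the $O((\log n)^{-1})$ remainder in the leading term: after the rescaling $\xi=\eta/\mathfrak a_n$ with $\mathfrak a_n^2=2n\log\mathfrak a_n$, the $o(\cdot)$ yields only $o(1)$, whereas the stated $O((\log n)^{-1})$ requires the sharper expansion $1-\lambda_t=\log(1/|t|)\langle\Sigma_{d+1}t,t\rangle+O(|t|^2)$ proved in Sublemma~\ref{sunl:lamb}. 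Both of these estimates are the hard technical core of the lemma, and your proposal currently substitutes for them statements that are either false (the quadratic remainder for $\Pi_\xi$) or too weak (the $o$-remainder for $\lambda_\xi$).
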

This result is proved in Section~\ref{sec:proofjMLLT}. The scheme of its proof follows the one of the MLLT established in~\cite[Theorem 2.2]{PeneTerhesiu21}
but there are at least two main differences.
First, here we need to obtain a Joint MLLT, which is different from
~\cite[Theorem 2.2]{PeneTerhesiu21} which was a MLLT with error terms for the cell change function. The new ingredients needed to deal with the Joint MLLT (with error) are summarized in Section~\ref{sec:proofjCLT}.
Second, the error term obtained in~\cite{PeneTerhesiu21}  is not sharp enough for the present purposes. To establish Lemma~\ref{lem:jointllt0}, we need to be much more careful with the error terms all throughout the proof and this requires 
 entirely new estimates, all obtained in Section~\ref{sec:proofjMLLT}.

% and also in some technical intermediate estimates established in  Section~\ref{sec:proofjCLT} (our Sublemma~\eqref{sunl:pi} below holds true for $L^q$ with $q<2$ arbitrarily close to 2 contrarily to \cite[Proposition 5.3]{PeneTerhesiu21}).

A consequence of Lemma~\ref{lem:jointllt0} is 
\begin{cor}\label{coro:llt0cor}
Under the assumptions of Lemma~\ref{lem:jointllt0}, then
\begin{align*}
\mathbb E_{\mu}&\left[G. 
h(\widehat\Psi_n-L
).H\circ T^n\right]-\mathbb E_\mu[G]\mathbb E_\mu[H] \mathbb E_\mu[h(\widehat\Psi_n-L
)]\\
	&=\mathcal O\left(
e^{-a_0k_n}
\Vert G\Vert_{Holder} \Vert H\Vert_{Holder}
+
a_n^{-d-2}
(k_n\Vert G\Vert_{L^1}\Vert H\Vert_{L^{p}}+
\Vert H\Vert_{L^1}
\Vert \widehat\Psi_{2k_n}.G\Vert_{L^1(\mu)})
\right)\, .
\end{align*}
\end{cor}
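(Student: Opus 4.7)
The plan is to apply Lemma~\ref{lem:jointllt0} twice and subtract. In the first application, take $G,H$ as given. In the second, specialise to $G\equiv H\equiv \mathbf{1}$, producing an expansion for $\mathbb E_\mu[h(\widehat\Psi_n-L)]$. The crucial point is that the leading Gaussian term $\mathfrak a_n^{-d-1}g_{d+1}(L/\mathfrak a_n)\int h\, d\Lambda_{d+1}$ and its subleading $\mathfrak a_n^{-d-1}\mathcal O((\log n)^{-1})$ companion in Lemma~\ref{lem:jointllt0} both appear multiplied by the prefactor $\mathbb E_\mu[G]\mathbb E_\mu[H]$. Consequently, after multiplying the second expansion by $\mathbb E_\mu[G]\mathbb E_\mu[H]$ and subtracting it from the first, these two principal contributions cancel identically, and only the error terms from the two applications remain.

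The error from the first application is already in the form the Corollary asserts. It therefore remains to absorb the contribution $\mathbb E_\mu[G]\mathbb E_\mu[H]\cdot\mathcal E_2$, where $\mathcal E_2$ denotes the error from the second application:
\begin{equation*}
\mathcal E_2 \;=\; \mathcal O\bigl(e^{-a_0 k_n}+a_n^{-d-2}\bigl(k_n+\Vert\widehat\Psi_{2k_n}\Vert_{L^1(\mu)}\bigr)\bigr).
\end{equation*}
A short preliminary calculation shows $\Vert\widehat\Psi_{2k_n}\Vert_{L^1(\mu)}\le 2k_n\Vert\widehat\Psi\Vert_{L^1(\mu)}=\mathcal O(k_n)$, where $\widehat\Psi\in L^1(\mu)$ follows at once from the tail bound $\mu(|\widehat\Psi|>t)\sim ct^{-2}$ mentioned before Sublemma~\ref{sub:asl}. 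Hence $\mathcal E_2=\mathcal O(e^{-a_0 k_n}+a_n^{-d-2}k_n)$.

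To absorb $|\mathbb E_\mu[G]\mathbb E_\mu[H]|\cdot\mathcal E_2$ into the right-hand side of the Corollary, I would apply the trivial bounds $|\mathbb E_\mu[G]|\le\Vert G\Vert_{Holder}$ and $|\mathbb E_\mu[H]|\le\Vert H\Vert_{Holder}$ on the $e^{-a_0 k_n}$ piece, and the probability-measure bounds $|\mathbb E_\mu[G]|\le\Vert G\Vert_{L^1}$, $|\mathbb E_\mu[H]|\le\Vert H\Vert_{L^p}$ (since $\mu(M)=1$ and $p>2$) on the $a_n^{-d-2}k_n$ piece. Each resulting bound is dominated by one of the two summands already present inside the $\mathcal O$ of the Corollary's error, so the subtraction closes. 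No serious obstacle is expected: the whole proof is an algebraic consequence of Lemma~\ref{lem:jointllt0}, exploiting only the fact that its main term factorises as a product of three $\mu$-integrals.
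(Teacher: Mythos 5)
Your proposal has a genuine gap at the claimed cancellation of the $\mathcal O((\log n)^{-1})$ terms. When you apply Lemma~\ref{lem:jointllt0} once with $(G,H)$ and once with $(\mathbf 1,\mathbf 1)$, the lemma only tells you that in each case the bracket is $g_{d+1}(L/\mathfrak a_n)\int h\, d\Lambda_{d+1}+E$ with $|E|\ll (\log n)^{-1}$; it does \emph{not} assert that $E$ is the same quantity in the two applications, and the uniformity clause in the lemma statement covers $L$, $(n,k_n)$, and $h$ only, not $G,H$. So after subtracting, the Gaussian piece cancels identically, but the two $\mathcal O((\log n)^{-1})$ pieces merely combine into a residual of size $\mathfrak a_n^{-d-1}(\log n)^{-1}|\mathbb E_\mu[G]\mathbb E_\mu[H]|$. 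This is \emph{not} absorbed by the error $a_n^{-d-2}k_n\|G\|_{L^1}\|H\|_{L^p}$ that the Corollary asserts: comparing, the ratio is $k_n a_n^{-1}\log n\sim k_n\sqrt{\log n/n}$, which tends to $0$ for the choices $k_n=K\log n$ actually used later in Section~\ref{sec:proofmixing}. In other words, for small $k_n$ your residual term dominates the stated error bound, so the statement cannot be deduced from Lemma~\ref{lem:jointllt0} treated as a black box along your route.

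The paper sidesteps this by a different, and slicker, decomposition: writing
\[
\mathbb E_{\mu}\bigl[G\,h(\widehat\Psi_n-L)\,H\circ T^n\bigr]-\mathbb E_\mu[G]\mathbb E_\mu[H]\mathbb E_\mu\bigl[h(\widehat\Psi_n-L)\bigr]
=\mathbb E_{\mu}\bigl[(G-\mathbb E_\mu[G])\,h(\widehat\Psi_n-L)\,H\circ T^n\bigr]+\mathbb E_{\mu}[G]\,\mathbb E_{\mu}\bigl[h(\widehat\Psi_n-L)\,(H-\mathbb E_{\mu}[H])\circ T^n\bigr],
\]
and applying Lemma~\ref{lem:jointllt0} to each summand. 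Because each summand involves a \emph{centred} observable ($G-\mathbb E_\mu[G]$ or $H-\mathbb E_\mu[H]$), the prefactor $\mathbb E_\mu[\cdot]\mathbb E_\mu[\cdot]$ in the lemma's main term vanishes, so \emph{both} the Gaussian piece and the opaque $\mathcal O((\log n)^{-1})$ companion disappear at once. Only the lemma's genuine error terms survive, and those are handled exactly as you do in your last paragraph (using $\|\widehat\Psi_{2k_n}\|_{L^1}=\mathcal O(k_n)$ and trivial $L^1$/$L^p$/H\"older bounds on $\mathbb E_\mu[G]$, $\mathbb E_\mu[H]$). Your argument would become valid only if you also proved that the $\mathcal O((\log n)^{-1})$ term in Lemma~\ref{lem:jointllt0} is in fact independent of $G,H$ — which happens to be visible in the proof of that lemma (it comes from the integral~\eqref{intlambda}, which does not involve $G,H$) but is not part of the lemma's statement; the paper's centering trick makes this excursion unnecessary.
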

\begin{proof}
We observe that
\begin{align*}
\mathbb E_{\mu}&\left[G. 
h(\widehat\Psi_n-L
).H\circ T^n\right]-\mathbb E_\mu[G]\mathbb E_\mu[H] \mathbb E_\mu\left[h(\widehat\Psi_n-L
)\right]\\
&=\mathbb E_{\mu}[(G-\mathbb E_\mu[G]). 
h(\widehat\Psi_n-L
).H\circ T^n]
)]+\mathbb E_{\mu}[G]\, \mathbb E_{\mu}[ 
h(\widehat\Psi_n-L
).(H-\mathbb E_{\mu}[H])\circ T^n]
)]\, ,
\end{align*}
and we apply Lemma~\ref{lem:jointllt0} to the two terms of the right hand side of the above equality,
since $\Vert\widehat\Psi_{2k_n}\Vert_{L^1(\mu)}=\mathcal O(k_n)$,  the function $\widehat\Psi$ being integrable.
\end{proof}

The following MLLT for $\widehat\Psi_n$ will be shown (in Section~\ref{sec:proofjMLLT}) from Lemma~\ref{lem:jointllt0}. 
\begin{lemma}
\label{lem:jointllt}
Let $A_0,B_0\subset M$ be measurable sets such that $\mu(\partial A_0)=\mu(\partial B_0)=0$. Let $K\subset
\mathbb Z^d\times\mathbb R
$ be a bounded set with $\Lambda_{d+1}(\partial K)=0$ (boundary in $\mathbb Z^d\times\mathbb R$).
Then, for any $L>0$,
\begin{equation}\label{eq:jointLLDunif}
a_n^{d+1}\mu\left(A_0\cap T^{-n}(B_0)\cap\{\widehat\Psi_n(x)\in z +K\}\right)- g_{d+1}\left(\frac{z}{a_n}\right)\mu(A_0)\mu(B_0)\Lambda_{d+1}(K)
\end{equation}
converges to 0 uniformly in $z\in\mathbb Z^d\times\mathbb R\, :\, |z|\le L a_n$, as $n\rightarrow +\infty$.
\end{lemma}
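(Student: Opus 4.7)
The plan is to deduce Lemma~\ref{lem:jointllt} from Lemma~\ref{lem:jointllt0}
by a standard smoothing-and-sandwiching argument, setting $L=z$ in the notation of
Lemma~\ref{lem:jointllt0}. Since $\mathfrak{a}_n/a_n\to 1$, one has
$a_n^{d+1}\mathfrak{a}_n^{-d-1}\to 1$ and, by uniform continuity of $g_{d+1}$ on compact sets,
$\sup_{|z|\le La_n}|g_{d+1}(z/\mathfrak{a}_n)-g_{d+1}(z/a_n)|\to 0$, so the two normalizations
are interchangeable in the range of interest.

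For $\epsilon>0$, I would approximate as follows. Using $\mu(\partial A_0)=\mu(\partial B_0)=0$,
choose dynamically H\"older continuous $G^\pm,H^\pm\in[0,1]$ with $G^-\le\mathbf{1}_{A_0}\le G^+$,
$H^-\le\mathbf{1}_{B_0}\le H^+$ and $\mu(G^+-G^-)+\mu(H^+-H^-)<\epsilon$ (their H\"older norms
may blow up as $\epsilon\to 0$, but this will be absorbed by the factor $e^{-a_0k_n}$ below).
For $K\subset\mathbb Z^d\times\mathbb R$ bounded, decompose $K=\bigsqcup_j\{j\}\times K_j$ as
a finite union with $\operatorname{Leb}(\partial K_j)=0$, and approximate each $\mathbf{1}_{K_j}$
from above and below by convolving, in the $\mathbb R$-variable, a $\delta$-thickening, resp.\
$\delta$-erosion, of $K_j$ with an $L^1$-normalized band-limited mollifier (e.g.\ a dilate of
$c\,\mathrm{sinc}^2$, whose Fourier transform is a compactly supported Lipschitz triangle).
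This produces $h^\pm\colon\mathbb Z^d\times\mathbb R\to\mathbb R$ with $h^-\le\mathbf{1}_K\le h^+$,
$\int(h^+-h^-)\,d\Lambda_{d+1}<\epsilon$ for $\delta$ small, and $\widehat{h^\pm}$ Lipschitz and
compactly supported on $\mathbb T^d\times\mathbb R$ (the $\mathbb T^d$-factor being a trigonometric
polynomial since $K$ meets only finitely many $\mathbb Z^d$-fibres).

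Apply Lemma~\ref{lem:jointllt0} with $G=G^\pm$, $H=H^\pm$, $h=h^\pm$, $L=z$ and
$k_n=\lceil(\log n)^2\rceil$. Multiplied by $a_n^{d+1}$, the main term converges uniformly on
$|z|\le La_n$ to $g_{d+1}(z/a_n)\,\mu(G^\pm)\mu(H^\pm)\int h^\pm\,d\Lambda_{d+1}$.
The three error contributions, after multiplication by $a_n^{d+1}$, are:
(i) $a_n^{d+1}e^{-a_0 k_n}\|G^\pm\|_{Holder}\|H^\pm\|_{Holder}$, which vanishes because
$e^{-a_0(\log n)^2}$ beats every polynomial in $n$ for fixed $\epsilon$;
(ii) $a_n^{-1}k_n\,\mu(G^\pm)\|H^\pm\|_{L^p}\to 0$ since $k_n=o(a_n)$;
(iii) $a_n^{-1}\|H^\pm\|_{L^1}\|\widehat\Psi_{2k_n}G^\pm\|_{L^1}=O(a_n^{-1}k_n)\to 0$, using
$\|\widehat\Psi_{2k_n}\|_{L^1}=O(k_n)$ and $G^\pm\in[0,1]$.
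Sandwiching
\[
\mathbb E_\mu\bigl[G^-\,h^-(\widehat\Psi_n-z)\,H^-\!\circ T^n\bigr]\le
\mu\bigl(A_0\cap T^{-n}B_0\cap\{\widehat\Psi_n\in z+K\}\bigr)\le
\mathbb E_\mu\bigl[G^+\,h^+(\widehat\Psi_n-z)\,H^+\!\circ T^n\bigr],
\]
multiplying by $a_n^{d+1}$, letting $n\to\infty$ and then $\epsilon\to 0$ gives the conclusion,
since $|\mu(G^\pm)\mu(H^\pm)\int h^\pm\,d\Lambda_{d+1}-\mu(A_0)\mu(B_0)\Lambda_{d+1}(K)|=O(\epsilon)$
uniformly.

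The main obstacle is the Fourier-analytic construction of the band-limited approximants $h^\pm$
of $\mathbf{1}_K$: the pointwise sandwich of $\mathbf{1}_K$, the $L^1$-smallness of $h^+-h^-$
and the Lipschitz/compact-support constraints on $\widehat{h^\pm}$ must be secured simultaneously.
Once this is in hand, the rest amounts to a careful bookkeeping of the error terms of
Lemma~\ref{lem:jointllt0}, uniformly in $z\in\mathbb Z^d\times\mathbb R$ with $|z|\le La_n$.
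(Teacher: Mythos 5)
Your reduction to Lemma~\ref{lem:jointllt0} is sound in outline, and most of the bookkeeping (replacing $\mathfrak a_n$ by $a_n$, the choice $k_n=\lceil(\log n)^2\rceil$ so that $e^{-a_0k_n}$ beats any polynomial for fixed Hölder norm, the approximation of $\mathbf{1}_{A_0},\mathbf{1}_{B_0}$ by dynamically Hölder functions with controlled $L^1$ error) is correct and essentially parallels what happens in the paper, where the approximating functions are indicators of unions of connected components of $M\setminus\bigcup_{|k|\le m}T^{-k}(\mathcal S_0)$. The genuine gap lies in the construction of the band-limited majorant/minorant pair $h^\pm$ of $\mathbf{1}_K$. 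Convolving a $\delta$-thickening of $K_j$ with an $L^1$-normalized $\mathrm{sinc}^2$-type kernel $\varphi$ does \emph{not} yield $h^+\ge\mathbf{1}_{K_j}$: since $\varphi$ has unbounded support, a fixed fraction $\eta=\int_{|y|>\delta}\varphi$ of its mass always escapes the thickened set, so $h^+$ only satisfies $h^+\ge 1-\eta$ on $K_j$. This can still be patched by rescaling ($(1-\eta)^{-1}h^+$ works since $\varphi\ge0$). But the minorant fails irreparably with this construction: $\mathbf{1}_{K_j^{-\delta}}*\varphi$ is strictly positive at points outside $K_j$ (again because of the kernel's tails), and subtracting the constant $\eta$ to fix pointwise positivity destroys integrability. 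The correct tool here is the Beurling--Selberg extremal function machinery, which does produce band-limited majorants and minorants of intervals with controlled $L^1$ defect, but these are emphatically not convolutions and require a nontrivial additional input that your proposal does not supply. You correctly flagged this as ``the main obstacle'', but the suggested resolution does not close it.

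The paper takes a different and arguably cleaner route for the passage from smooth $h$ to the set $K$, and it is worth contrasting. After establishing the convergence of $a_n^{d+1}\mathbb E_\mu[\mathbf 1_{A_0}\,h(\widehat\Psi_n-z_n)\,\mathbf 1_{B_0}\circ T^n]$ for smooth $h$ with compactly supported Lipschitz Fourier transform, uniformly in $|z_n|\le La_n$ (by essentially the monotone sandwich you describe for $A_0,B_0$), the paper does \emph{not} sandwich $\mathbf 1_K$. Instead it applies this smooth-$h$ convergence to the one-parameter family $g_{\delta,\theta}(x)=e^{i\langle\theta,x\rangle}h_\delta(x)$ with $h_\delta$ the Polya kernel, for every $\theta$. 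Convergence for all $\theta$ is convergence of characteristic functions, hence yields weak convergence of the normalized measures $\mathfrak m_n$ (the image under $\widehat\Psi_n-z_n$ of $a_n^{d+1}\mathbf 1_{A_0\cap T^{-n}B_0}\mu$, weighted by $h_\delta$), and then a Portmanteau/continuity-set argument ($\Lambda_{d+1}(\partial K)=0$ and $h_\delta$ bounded above and below on $K$) transfers this to $\mathfrak m_n(K)$, which is the statement of the lemma. This avoids any extremal-function construction. So: your plan can be made to work, but only after replacing the $\mathrm{sinc}^2$ convolution with Beurling--Selberg extremal functions, at the cost of significant extra machinery; the paper's characteristic-function route buys the same conclusion with cheaper tools.
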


The joint LLD we shall need is
\begin{lemma}[Joint LLD for the billiard map] \label{lem:lld}
Let $U\subset\R^{d+1}$ be an open ball.
Then
\begin{equation}\label{eq:LLD} 
\mu(\widehat\Psi_n\in z+U)\ll \frac{n}{a_n^{d+1}} \frac{\log(2+|z|)}{1+|z|^2}
\end{equation} 
uniformly in $ n\ge1$ and $z\in\R^{d+1}$. %Also, $\mu(\tau_n\in y+\hat U)\ll \frac{n}{a_n} \frac{\log|y|}{1+|y|^2}$ for $\hat U\subset\R$, uniformly in $ n\ge1$ and $y\in\hat U$.
\end{lemma}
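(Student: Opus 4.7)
The plan is to follow the structure of the LLD proof for the cell change function in~\cite{MPT}, which establishes the analogous bound
\[
\mu(\kappa_n \in z + U') \ll \frac{n}{a_n^{d}}\frac{\log|z|}{1+|z|^2}
\qquad(U'\subset\R^d\text{ an open ball}),
\]
and to modify it so as to incorporate the extra $\widetilde\tau$-component of $\widehat\Psi=(\kappa,\widetilde\tau)$. I would split the range of $z$ into a ``bulk'' regime $|z|\le Ca_n$ and a ``tail'' regime $|z|>Ca_n$, handling each with a different tool.

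In the bulk regime, I would apply the joint MLLT of Lemma~\ref{lem:jointllt} directly: since $g_{d+1}(z/a_n)$ is uniformly bounded and $\Lambda_{d+1}(U)<\infty$, this yields $\mu(\widehat\Psi_n\in z+U)=O(a_n^{-(d+1)})$, which is dominated by the right-hand side of~\eqref{eq:LLD} on this range (reading $\log|z|$ as $\log(e+|z|)$ so the RHS is bounded below by a positive constant when $|z|\le Ca_n$). In the tail regime I would deploy a one-big-jump argument: since the typical size of $\widehat\Psi_n$ is $a_n\ll|z|$, the event $\{\widehat\Psi_n\in z+U\}$ essentially forces a single summand $\widehat\Psi\circ T^j$ to be of order $|z|$, the remaining $n-1$ summands being confined to a ball of radius $\ll|z|$ by the Gaussian scaling. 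A union bound over $j\in\{0,\ldots,n-1\}$, combined with the joint tail estimate $\mu(|\widehat\Psi|>t)\sim ct^{-2}$ (already exploited in the proof of Sublemma~\ref{sub:asl}) and an MLLT-type localization of the remaining sum inside the bounded set $U$, delivers a contribution of order $n/|z|^{2}$. The extra $\log|z|$ arises from the finer two-scale decomposition in~\cite{MPT} separating ``one big jump'' from ``several medium-sized jumps'', which I would retain verbatim.

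The main technical modification of the~\cite{MPT} argument is to set up the Nagaev--Guivarc'h spectral analysis of the transfer operator for the joint Fourier variable $(\xi,s)\in\T^d\times\R$, that is, for the twisted operator $\hR_{(\xi,s)}g:=\hR(e^{i(\xi\cdot\kappa+s\widetilde\tau)}g)$ rather than only the $\xi$-twisted operator. The existence of a dominant eigenvalue with the correct quadratic expansion at the origin is encoded in Lemma~\ref{lem:clt} together with the non-degeneracy of $\Sigma_{d+1}$; the uniform spectral gap for $(\xi,s)$ away from the origin and the required aperiodicity in the new variable $s$ (using that $\widetilde\tau$ has strictly positive variance and is not cohomologous to a constant, hence not jointly lattice-valued with $\kappa$) are precisely what is proved in the course of Lemmas~\ref{lem:jointllt0} and~\ref{lem:jointllt}. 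The main obstacle I anticipate is verifying that the sharp tail and truncation estimates of~\cite{MPT} remain valid along the cone $\{\widetilde\tau\approx|\kappa|\}$ on which the joint tail of $\widehat\Psi$ concentrates (via $\tau=|V|$ for $d=2$ and $V-\kappa=H_0\circ T-H_0$ with $H_0$ bounded); once this is checked, the remainder of the~\cite{MPT} argument transfers without further change.
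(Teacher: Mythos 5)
Your high-level strategy is on the right track: the paper does indeed prove Lemma~\ref{lem:lld} by adapting the LLD argument of~\cite{MPT} from $\kappa$ to $\widehat\Psi=(\kappa,\widetilde\tau)$, and it splits the range of $z$ into a bulk part handled by the joint LLT (your first regime) and tail parts requiring harder work. You are also correct that the tail analysis runs through the Nagaev--Guivarc'h spectral machinery for the jointly $(\xi,s)$-twisted transfer operator, and that aperiodicity of $(\kappa,\widetilde\tau)$ is needed (this is exactly Lemma~\ref{nonarithmeticity}).

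However, there is a genuine gap in your proposal, and it is precisely the part you dismiss as transferring ``without further change.'' The LLD for the range $a_n\le|z|\le e^{\varepsilon_1 n}$ in~\cite{MPT} does not come from a one-big-jump probabilistic decomposition combined with MLLT; it comes from a sharp $C^{1}$-with-H\"older-modulus estimate on the dominant eigenvalue $\lambda_t$ and projector $\Pi_t$ of the twisted operator (the analogue here is Lemma~\ref{lem:key}, with the explicit modulus $M_{\mathbf{b}}(t,h)$), and this is a strictly stronger input than anything contained in Lemmas~\ref{lem:jointllt0} or~\ref{lem:jointllt}. The $\log|z|$ factor in~\eqref{eq:LLD} is extracted from this operator estimate via Fourier inversion, not from ``several medium-sized jumps.'' A one-big-jump argument as you sketch it would need to localize the remaining $n-1$ summands to a bounded window $U$ at the LLT rate conditionally on a big jump at some time $j$; making that rigorous for the billiard map is essentially equivalent to redoing the Fourier analysis, not a shortcut around it. Only the extreme tail $n\ll\log|z|$ is handled by a direct probabilistic estimate (using the integer-valuedness of $(\kappa,\lfloor\widetilde\tau\rfloor)$, the tail bound~\eqref{tailkappa}, and the double-jump estimate~\eqref{controltimelog}), and there the resulting bound is $n^{-q}|z|^{-2}$ for any $q$, with no $\log$ factor at all.

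The second, and in this paper the central, obstacle you miss entirely: unlike $\overline\kappa$, the function $\overline\tau$ is \emph{not} constant on Young's partition elements, but only locally Lipschitz in the ultrametric. All of the operator-norm estimates underlying~\cite[Lemma 4.1]{MPT} (expressed here through the renewal operators $\hQ,\hA,\hB,\hE$) were proved in~\cite{MPT} for a function constant on atoms, which kills the local Lipschitz constant $K_u$. Allowing $K_u\ne0$ is what forces the paper to reprove Propositions~\ref{prop:g}--\ref{prop:furtherB} from scratch, tracking $K_u$ through each transfer-operator application. Your proposal asserts the opposite of what needs to be shown here, and does not indicate how you would carry the extra Lipschitz contributions through the renewal-operator estimates.
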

The proof of this result, given in Section~\ref{sec:pfkey} is a more or less obvious adaptation of the proof of~\cite{MPT} with the additional complication
that $\widehat\Psi_n,\tau_n$ are non-lattice valued. As already mentioned in the introduction,
this is the only result of the current paper that does not require any novelty, but just a straighforward adaptation with some care.

 \section{Proof of MLLT for the Sinai flow (Proposition~\ref{prop:MLLTkappa0} and  Theorem~\ref{thm:main})}\label{sec:proofMLLT}
In this section we assume $d=1$ or $d=2$. 
In this section, we complete the proof of Theorem~\ref{thm:main} by stating and proving the following result (assuming, for the moment, the statement of the results
stated in Section~\ref{sec:MLLT+LLD}). 
\begin{prop}\label{LLTkappaNt}
Let $A,B\in\mathcal F$.
Let $K$ be a bounded subset of $\mathbb R^{d}$, let $w\in\mathbb R^{d}$ and 
$w_t\in
%\mathbb Z^d
\mathbb R^{d}$ be such that $\lim_{t\rightarrow +\infty}w_t/a_t= w$. Then
\begin{equation}\label{LLTFLOWkappa}
a_t^{d}\ \nu\left(A\cap\{\phi_t \in B,\,
\kappa_{N_t}\in w
_t+K\}\right)
\sim  \widetilde g_d\left(w
%\sqrt{\mu(\tau)}
\right)\nu(A)\nu(B)\#((K+w_t)\cap\mathbb Z^d)\, ,
\end{equation}
where $\widetilde g_d$ is the density of the Gaussian limit of
Proposition~\ref{prop:cltflow} and with the following natural convention
\[
\forall (x,u)\in\widehat{\mathcal M},\quad (\kappa_{N_t})(\phi_u(x)):=\kappa_{N_{t+u}(x)}(x)\, .
\]
\end{prop}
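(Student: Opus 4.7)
My plan is to lift the joint uniform MLLT for the billiard map (Lemma~\ref{lem:jointllt}) to the suspension flow and then convert the sum over the collision count $n$ into a one-dimensional Riemann sum that produces the marginal $d$-dimensional Gaussian density in the $\kappa$-direction. Concretely, I would represent $A=\phi_I(A_0)$ and $B=\phi_J(B_0)$ inside $\widehat{\mathcal M}$ as $A_0\times I$ and $B_0\times J$. Because $J\subset[0,\inf_{B_0}\tau)$, the collision count $n=N_{t+u}(x)$ is uniquely determined by the two constraints $T^n x\in B_0$ and $\tau_n(x)\in t+u-J$. Rewriting $|I\cap(\tau_n-t+J)|=\int_\R\mathbf 1_I(s)\,\mathbf 1_{t+s-J}(\tau_n)\,ds$ and applying Fubini yields
\[
\nu\bigl(A\cap\{\phi_t\in B,\ \kappa_{N_t}\in w_t+K\}\bigr)
=\frac{1}{\mu(\tau)}\int_I\sum_{n\ge 0}\mu\bigl(A_0\cap T^{-n}B_0\cap E_n(s)\bigr)\,ds,
\]
where $E_n(s)=\{\kappa_n\in w_t+K,\ \widetilde\tau_n\in t+s-n\mu(\tau)-J\}$. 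Writing $w_t=\lfloor w_t\rfloor+\{w_t\}$, the event $E_n(s)$ coincides with $\{\widehat\Psi_n\in z_{n,s}+K^\flat\}$, where $z_{n,s}=(\lfloor w_t\rfloor,t+s-n\mu(\tau))$ and $K^\flat=\bigl((\{w_t\}+K)\cap\Z^d\bigr)\times(-J)$ is a bounded subset of $\Z^d\times\R$ with $\Lambda_{d+1}$-measure $\#((w_t+K)\cap\Z^d)\cdot|J|$ and $\Lambda_{d+1}$-negligible boundary.

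Second, I split the $n$-sum at $|n-n_0|\le Ra_n$ with $n_0=\lfloor t/\mu(\tau)\rfloor$ and $R$ a large parameter. Inside this main window, Lemma~\ref{lem:jointllt} applies uniformly: $a_n\sim a_t/\sqrt{\mu(\tau)}$ makes $\lfloor w_t\rfloor/a_n\to w\sqrt{\mu(\tau)}$ uniformly, and $(t+s-n\mu(\tau))/a_n$ stays in a bounded range. The sum then becomes a Riemann sum with step $\mu(\tau)/a_n$ for $\int g_{d+1}(w\sqrt{\mu(\tau)},\xi)\,d\xi$ restricted to $|\xi|\le R\mu(\tau)$; letting $t\to\infty$ and then $R\to\infty$, this integral converges to the marginal density $g_d^{(\kappa)}(w\sqrt{\mu(\tau)})=\mu(\tau)^{-d/2}\widetilde g_d(w)$, the second identity being the variance relation between $\Sigma_0$ and $\Sigma$ from Proposition~\ref{prop:cltflow}. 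Combining the $1/\mu(\tau)$ from the suspension, the $|I|$ from the outer $s$-integration, and the identity $|I||J|\mu(A_0)\mu(B_0)/\mu(\tau)^2=\nu(A)\nu(B)$ reproduces the right-hand side of~\eqref{LLTFLOWkappa}, modulo the tail.

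Third, for $|n-n_0|>Ra_n$ I would invoke the joint LLD Lemma~\ref{lem:lld}: since $|z_{n,s}|\gtrsim|n-n_0|\mu(\tau)$, the total tail contribution is bounded above by $\sum_{m\ge Ra_n}(n/a_n^{d+1})(\log m/m^2)\ll n\log(Ra_n)/(Ra_n^{d+2})$, which using $a_t^2=t\log t$ is of order $\log(Ra_t)/(Ra_t^d\log t)$. Letting $R\to\infty$ after $t\to\infty$ makes this $o(a_t^{-d})$ in both $d=1$ and $d=2$. The main technical obstacle is precisely this $R$-coordination: $R$ must be large enough to make the truncated Gaussian integral approximate the full one, yet the LLD tail must still beat $a_t^{-d}$. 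The $\log m/m^2$ decay provided by Lemma~\ref{lem:lld} is exactly what makes this possible, which is why a mere LLT-type upper bound on $\mu(\widehat\Psi_n\in z+K^\flat)$ would not suffice and why the large-deviation estimate is an essential input.
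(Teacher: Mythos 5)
Your proposal is correct and follows essentially the same route as the paper's proof: the suspension decomposition into a sum over the collision count $n$, use of the joint MLLT (Lemma~\ref{lem:jointllt}) in the central window $|n-t/\mu(\tau)|\lesssim a_t$ to produce a Riemann sum approximating the marginal Gaussian density, and use of the joint LLD (Lemma~\ref{lem:lld}) to control the tail; the paper simply prefers to first prove the case $w_t\in\mathbb Z^d$, $K=\{0\}$ and then extend by summing over the (boundedly many) integer points of $w_t+K$, which is a cleaner way to handle the $t$-dependence of your $K^\flat=\bigl((\{w_t\}+K)\cap\mathbb Z^d\bigr)\times(-J)$ under a lemma stated for a fixed $K$. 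One imprecision worth noting: the paper splits the tail into $ct\le n$ with $|n-t/\mu(\tau)|>La_t$ (Sublemma~\ref{lem:1a}, where $n/a_n^{d+1}\ll t/a_t^{d+1}$ uniformly and your bound is valid) and $n<ct$ (Sublemma~\ref{lem:2}, where $|z|\approx t$ rather than $|z|\approx|n-t/\mu(\tau)|$ and the resulting estimate $t^{-(d+1)/2}(\log t)^{-(d-1)/2}=o(a_t^{-d})$ has a different form), so your single formula $n\log(Ra_n)/(Ra_n^{d+2})$ with $n\approx t$ substituted is not literally uniform over the whole tail, although the final conclusion is unaffected.
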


Using Proposition~\ref{LLTkappaNt} we complete

\begin{pfof}{Theorem~\ref{thm:main}}
We first recall that we need to show that
\begin{align}
a_t^{d}\nu&\left( A\cap\{\phi_t \in B,\, 
W_t\in w
_t+K\}\right)\sim	\widetilde g_d\left(w
%\sqrt{\mu(\tau)}
\right)
\mathcal I(A\times B,w_t)\, ,\label{LLTFLOWV}
\end{align}
where $K$ is so that $Leb(\partial K)=0$, where $H_1$ is as in~\eqref{def:H1} and where we set
\[
\mathcal I(A\times B,w_t):=
\int_{A\times B}\#((K+w_t+H_1(\mathbf{x})-H_1(\mathbf{y}))\cap\mathbb Z^d)\, d\nu(\mathbf{x})\, d\nu(\mathbf{y})\, .
\]
For any positive integer $m$, we partition $A$ (resp. $B$) in a finite number of atoms $A_{k,m}\in\mathcal F$ and  $B_{k,m}\in\mathcal F$ of diameter at most $1/m$, and consider the sets
\[
K_{i,j,m}^-:=\left\{z\in \mathbb R^d\, :\, \forall (x,y)\in A_{i,m}\times B_{j,m},\ z+H_1(y)-H_1(x)\in K\right\}
\]
and 
\[
K_{i,j,m}^+:=\left\{ z\in\mathbb R^d\, :\, \exists (x,y)\in A_{i,m}\times B_{j,m},\ z+H_1(y)-H_1(x)\in K\right\}
\, .
\]
Note that $H_1$ is Lipschitz continuous
in $(x,u)$ and bounded on $A$ and $B$.
It follows from~\eqref{eq:Wcobound} that
\begin{align}
\nu&\left( A\cap\{\phi_t \in B,\, 
W_t\in w
_t+K\}\right)\nonumber\\
&= \nu\left(
A\cap\{
\phi_t 
\in B,\, 
\kappa_{N_t}+H_1\circ\phi_t-H_1\in w
_t+K\right)\nonumber\\
&\le  \sum_{i,j}\nu\left( A_{i,m}\cap\{\phi_t\in B_{j,m},\, 
\kappa_{N_t}\in w_t+ K_{i,j,m}^+\}\right)\label{MAJO1}
\end{align}
and, analogously,
\begin{align}
\nu&\left( A\cap\{\phi_t \in B,\, 
W_t\in w
_t+K\}\right)\ge \sum_{i,j} \nu\left( A_{i,m}\cap\{\phi_t \in B_{j,m},\, 
\kappa_{N_t}\in w_t+K_{i,j,m}^-\}\right)\, .\label{MAJO2}
\end{align}
By Proposition~\ref{LLTkappaNt},
\begin{align}\nonumber
\nu&\left( A_{i,m}\cap\{ \phi_t \in B_{j,m},\, 
\kappa_{N_t}\in w_t+ K_{i,j,m}^\pm\}\right)\\
&\sim
a_t^{-d} \widetilde g_{d}(w)
\#((w_t+K_{i,j,m}^\pm)\cap\mathbb Z^d)\nu(A_{i,m})\nu(B_{j,m})\, .\label{MAJO3}
\end{align}
Furthermore,
\begin{align}
\#((w_t+K_{i,j,m}^-)\cap\mathbb Z^d)\nu(A_{i,m})\nu(B_{j,m})&\le
\mathcal I(A_{i,m}\times B_{j,m},w_t)\label{MAJO4}\\
\mathcal I(A_{i,m}\times B_{j,m},w_t)&\le \#((w_t+K_{i,j,m}^+)\cap\mathbb Z^d)\nu(A_{i,m})\nu(B_{j,m})\, .\label{MAJO5}
\end{align}
Let $(x,y,z)\in\bigcup_{i,j}\left(A_{i,m}\times  B_{j,m}\times\left(
\mathbb Z^d\cap(w_t+(K_{i,j,m}^+\setminus K_{i,j,m}^-))\right)\right)$. 
Then there exist $\mathbf{x},\mathbf{x'}\in A_{i,m}$ and $\mathbf{y},\mathbf{y'}\in B_{j,m}$ such that $z-w_t+H_1(\mathbf{y})-H_1(\mathbf{x})\in K$ but $z-w_t+H_1(\mathbf{y'})-H_1(\mathbf{x'})\not\in K$.
Recall that $H_1$ is Lipschitz. 
Thus the above conditions means that $z\in\mathbb Z^d$ and that $(x,y)$ is at distance smaller than $1/m$ of $\mathcal E_{w_t-z}:=\{(\mathbf x,\mathbf y)\ :\, H_1(\mathbf{y})-H_1(\mathbf{x})\in
   w_t-z+\partial K\}$.
This $z$ should be one of the elements of $\mathbb Z^d$ contained in the ball of radius $\sup_A| H_1|+\sup_B| H_1|+\sup_{s\in K}|s|$ around $w_t$.
But, for each such $z$, the measure of this neighbourhood of $\mathcal E_{w_t-z}$ satisfies 
\begin{align*}
\nu^{\otimes 2}\left((\mathcal E_{w_t-z})^{[1/ m]} \right)
&\le \sup_{|u|\le 3\sup_{A\cup B}|H_1|+\sup_{s\in K}|s|} \nu\left( H_1^{-1}\left((u+\partial K)\right)^{[1/m]} \right)\\
&\le \sup_{|u|\le 3\sup_{A\cup B}|H_1|+\sup_{s\in K}|s|} Leb\left(\left((u+\partial K)\right)^{[1/m]} \right)\, ,
\end{align*}
which converges to $0$ as $m\rightarrow +\infty$ since $Leb(\partial K)=0$.
Since the number of possible $z$ is uniformly bounded, we have proved
that 
\begin{equation}\label{MAJO6}
\lim_{m\rightarrow +\infty}\sup_t\sum_{i,j}
\#((w_t+(K_{i,j,m}^+\setminus K_{i,j,m}^-))\cap\mathbb Z^d)\nu(A_{i,m})\nu(B_{j,m})=0\, .
\end{equation}
The desired conclusion~\eqref{LLTFLOWV} follows from~\eqref{MAJO1},~\eqref{MAJO2},~\eqref{MAJO3},~\eqref{MAJO4},~\eqref{MAJO5} and~\eqref{MAJO6}.
\end{pfof}

\subsection{Proof of Proposition~\ref{LLTkappaNt}}

Recall that $A=\phi_I(A_0)$ and $B=\phi_J(B_0)$ with $A_0,B_0\subset M$
such that $\mu(\partial A_0)=\mu(\partial B_0)=0$ and $I,J\subset\mathbb R$ two bounded intervals.
We start by proving the lemma for $w_t\in\mathbb Z^d$ and $K=\{0\}$. 
We follow a decomposition somewhat similar to~\cite[Proof of Lemma 4.3]{AN17}, see also~\cite[Proof of Theorem 3.1]{DN20} and~\cite[Proof of Theorem 1]{AT20}, with the obvious difference that one needs to figure out how to exploit
the Joint LLT~\ref{lem:jointllt} and the Joint LLD~\ref{lem:lld}.

Writing $\mathbf{x}=\phi_u(x)$ with $(x,u)\in\widehat{\mathcal M}$, we use the product structure of the measure $\widehat \nu$ and partition the set considering the different values taken by $N_t$:
%\[
\begin{align}\label{sumQn}
  \nu\left( A\cap\{\phi_t \in B,\, 
  \kappa_{N_t}=w_t\}
\right)
 = \frac 1{\mu(\tau)}\sum_{n\ge 0} \int_I Q_n(t,u)\, du\, ,
 \end{align}
%\]
where
\begin{align*}
Q_n(t,u)&:=\mu\left(A_0\cap T^{-n} B_0\cap\{
%V
\kappa_n
%\in w_t+K
=w_t,\,
\widetilde\tau_n\in u+t-n{\mu(\tau)}-J\}\right)\\
&=\mu\left(A_0\cap T^{-n}(B_0)\cap\left\{
\widehat\Psi_n  \in (w
% a
_t,
t-n{\mu(\tau)})+
%K
\{0\}\times J_u\right\}\right)\, ,
\end{align*}
with $J_u=u-J$, recalling that $\widehat\Psi_n=(\kappa_n,\widetilde\tau_n)$. 
For $L$ large, we split the sum as
\[
  \nu\left(A\cap\{\phi_t \in B,\, \kappa_{N_t}=w_t\}
  \right)= S_1(t,L)+S_2(t,L)\, ,
\]
where
\[
S_1(t,L):=\frac 1{\mu(\tau)}\sum_{n\,:\,|n-t/{\mu(\tau)}|\le La_t}\int_I Q_n(t,u)\,du\, ,
\]
\[
S_2(t,L):=\frac 1{\mu(\tau)}\sum_{n\,:\,|n-t/{\mu(\tau)}|> La_t}\int_I Q_n(t,u)\,du\, .
\]

The main ingredient needed for the Proof of Proposition~\ref{LLTkappaNt} is
\begin{lemma}
	\label{lem:S1S2}
	\begin{itemize}
		\item[(a)] $\lim_{L\to\infty}\lim_{t\to\infty} a_t^d S_1(t,L)=
		\widetilde g_d(w)\nu(A)\nu(B)$,
		%\#(K\cap\mathbb Z^d)$,
		\item[(b)] $\lim_{L\to\infty}\limsup_{t\to\infty} a_t^d S_2(t,L)=0$.
	\end{itemize}
\end{lemma}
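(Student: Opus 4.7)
My strategy is to handle (a) and (b) separately. For (a) I will apply the joint MLLT, Lemma~\ref{lem:jointllt}, term-by-term to $Q_n(t,u)$ and recognize the resulting sum over $n$ as a Riemann sum in the $\tau$-direction. For (b) I will invoke the joint LLD, Lemma~\ref{lem:lld}, to show that the tail indices contribute negligibly.

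For (a), note that $Q_n(t,u)=\mu(A_0\cap T^{-n}B_0\cap\{\widehat\Psi_n\in z_n+\{0\}\times J_u\})$ with $z_n=(w_t,t-n\mu(\tau))$, and the set $K=\{0\}\times J_u$ is a bounded subset of $\mathbb Z^d\times\R$ with $\Lambda_{d+1}$-null boundary since $\{0\}$ is open in $\mathbb Z^d$ and $J_u$ is an interval. For $|n-t/\mu(\tau)|\le La_t$ one has $|z_n|\le C(L,w)a_n$, so Lemma~\ref{lem:jointllt} gives, uniformly in $u\in I$,
\[a_n^{d+1}Q_n(t,u)=g_{d+1}(z_n/a_n)\mu(A_0)\mu(B_0)|J|+o(1).\]
Integrating over $u\in I$ and using the identifications $\mu(A_0)|I|=\mu(\tau)\nu(A)$ and $\mu(B_0)|J|=\mu(\tau)\nu(B)$, the $n$-th term of $S_1(t,L)$ becomes $\mu(\tau)\nu(A)\nu(B)\,a_n^{-(d+1)}g_{d+1}(z_n/a_n)(1+o(1))$. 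For $n$ in the range of $S_1$ we have $a_n\sim a_t/\sqrt{\mu(\tau)}$; setting $k=n-\lfloor t/\mu(\tau)\rfloor$, the abscissa $s_k=(t-n\mu(\tau))/a_n=-k\mu(\tau)/a_n$ ranges over a grid of step $\mu(\tau)/a_n\to 0$, so the sum over $n$ is a Riemann sum converging (as $t\to\infty$, then $L\to\infty$) to $(a_n/\mu(\tau))\int_\R g_{d+1}(w,s)\,ds$. The integral is the marginal of $g_{d+1}$ in the $\kappa$-direction evaluated at $w$, which by the relations between $\Sigma_{d+1}$, $\Sigma_0$ and $\Sigma$ (see Proposition~\ref{prop:cltflow}) and by accounting for the $a_t$ vs $a_n$ rescaling equals the right multiple of $\widetilde g_d(w)$; a short computation yields $a_t^d S_1(t,L)\to\widetilde g_d(w)\nu(A)\nu(B)$.

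For (b), choose an open ball $U\subset\R^{d+1}$ with $\{0\}\times J\subset U$, so that $Q_n(t,u)\le \mu(\widehat\Psi_n\in z_n+U)$. By Lemma~\ref{lem:lld} this is $\ll \frac{n}{a_n^{d+1}}\cdot\frac{\log|z_n|}{1+|z_n|^2}$. I split $S_2(t,L)$ into three parts: (i) middle indices $n\in[t/(2\mu(\tau)),2t/\mu(\tau)]$ with $|n-t/\mu(\tau)|>La_t$, where $|z_n|\ge \mu(\tau)|n-t/\mu(\tau)|\gtrsim \mu(\tau)La_t$ and $\sum_{|k|>La_t}|k|^{-2}\ll (La_t)^{-1}$, yielding a contribution $O(t\log t\cdot a_t^{-(d+1)}\cdot(La_t)^{-1})=O((La_t^d)^{-1})$; (ii) small indices $n<t/(2\mu(\tau))$, where $|z_n|\ge t/2$, giving (by elementary computation separately for $d=1$ and $d=2$) a sum of order $o(a_t^{-d})$; (iii) large indices $n>2t/\mu(\tau)$, where $|z_n|\ge n\mu(\tau)/2$, so that the tail $\sum_{n>2t/\mu(\tau)}\log n/(n\,a_n^{d+1})$ converges and is $o(a_t^{-d})$. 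Combining, $\limsup_t a_t^d S_2(t,L)\ll 1/L$, which proves (b).

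The main obstacle lies in (a), specifically in the constant bookkeeping: the marginal Gaussian density coming from $g_{d+1}$, multiplied by the Riemann step $\mu(\tau)/a_n$ and the prefactor $\mu(\tau)\cdot a_n^{-(d+1)}$, must reproduce precisely $a_t^{-d}\widetilde g_d(w)\nu(A)\nu(B)$. This requires pushing the relations $a_n\sim a_t/\sqrt{\mu(\tau)}$ and the link between $\Sigma_0$ and $\Sigma$ from Proposition~\ref{prop:cltflow} through the computation, and is the only place where the interplay between the base-map normalization $a_n$ and the flow normalization $a_t$ enters explicitly; once this constant is matched, both assertions follow cleanly from the joint MLLT and joint LLD.
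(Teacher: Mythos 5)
Your proposal follows essentially the paper's route for both parts: joint MLLT (Lemma~\ref{lem:jointllt}) plus a Riemann-sum approximation in the $\tau$-direction for (a), and the joint LLD (Lemma~\ref{lem:lld}) with range-splitting for (b); your three-way split in (b) is a harmless refinement of the paper's two-way split at $n=ct$. One small correction in (a): since $w_t/a_n = (w_t/a_t)(a_t/a_n)\to\sqrt{\mu(\tau)}\,w$ on the central range, the Riemann-sum limit is $\int_{\mathbb R}g_{d+1}(\sqrt{\mu(\tau)}\,w,s)\,ds$, not $\int_{\mathbb R}g_{d+1}(w,s)\,ds$; with this argument corrected, the constant matches via $\widetilde g_d(w)=\mu(\tau)^{d/2}\int_{\mathbb R}g_{d+1}(\sqrt{\mu(\tau)}\,w,z)\,dz$, exactly as the paper computes.
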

The proof of Lemma~\ref{lem:S1S2} is provided in the paragraph~\ref{sub:llllllll}
below.
Equipped with the statement of Lemma~\ref{lem:S1S2} we can complete

\begin{pfof}{Proposition~\ref{LLTkappaNt}}
Note that~\eqref{LLTFLOWkappa} for $w_t\in\mathbb Z^d$ and $K=\{0\}$ follows directly from Lemma~\ref{lem:S1S2} due to~\eqref{sumQn}. It remains to go from this special case to the general case.
Let $w_t\in\mathbb R^d$
and  consider a bounded subset $K$ of $\mathbb R^d$. Then $(w_t+K)\cap\mathbb Z^d$ contains at most $(\diam(K)+1)^2$ integers, we can label them $w_{t,i}$
for $i=1,...,(\diam(K)+1)^2$
 (ordering them e.g. by their first coordinate, and then by their second, and completing if necessary by the successors of the last one for this order). Then
\begin{align*}
a_t^d\ \nu\left( A\cap\{\phi_t \in B,\,
\kappa_{N_t}\in w
_t+K\}\right)&=a_t^d\!\!\! \sum_{i=1}^{\#((w_t+K)\cap\mathbb Z^d)}\nu\left( A\cap\{ \phi_t \in B,\,
\kappa_{N_t}=w_{t,i}\}\right)\\
&\sim \#((w_t+K)\cap\mathbb Z^d) \widetilde g_d(w)\nu (A)\nu(B)\, ,
\end{align*}
applying~\eqref{LLTFLOWkappa} with $K=\{0\}$ for each sequence $(w_{t,i})_t$.
Indeed, since $K$ is a bounded set,
$\lim_{t\rightarrow +\infty}w_t/a_t=w$ and $\lim_{t\rightarrow +\infty}a_t=+\infty$, we obtain that $\lim_{t\rightarrow +\infty}w_{t,i}/a_t=w$
for all $i$.
\end{pfof}

\subsection{Proof of Lemma~\ref{lem:S1S2}}
\label{sub:llllllll}
We will use Lemmas~\ref{lem:jointllt} and~\ref{lem:lld}.\\
\begin{pfof}{Lemma~\ref{lem:S1S2}(a)}
	This will follow from Lemma~\ref{lem:jointllt}.
	We consider the range $|n-t/{\mu(\tau)}|\le La_t$. 
	Since $\frac{w_t}{a_n}=\frac{w_t}{a_t}\frac{a_t}{a_n}\sim  \sqrt{\mu(\tau)}w$ and since $\frac{t-n\mu(\tau)}{a_n}$
	is bounded, it follows from
	Lemma~\ref{lem:jointllt} that
	\begin{align*}
	a_n^{d+1}\frac{Q_n(t,u)}{\mu(\tau)}&\sim \frac 1{\mu(\tau)} 
	g_{d+1}\left(\sqrt{\mu(\tau)}w
    ,\frac{t-n{\mu(\tau)}}{a_n}\right)\mu(A_0)\mu(B_0)
\Lambda_{d+1}(\{0\}\times J)\\
&\sim \frac{\mu(\tau)}{|I|
	%\, |J|
}	g_{d+1}\left(\sqrt{\mu(\tau)}w
,\frac{t-n{\mu(\tau)}}{a_n}\right)\nu(A)\nu(B)
%\lambda_{d+1}(\{0\}\times J)
\, ,
	%+O\Big(\frac{\log\log n}{\log n}\Big).
	\end{align*}
	uniformly in $n$ such that  $|n-t/{\mu(\tau)}|\le La_t$.
	Hence
%	\[
%	a_n^{d+1}\int_I Q_n(t,u)\,du=	{\mu(\tau)}^2 g_{d+1}\Big(\frac{wa_t}{a_n},\frac{t-n{\mu(\tau)}}{a_n}\Big)\hat\mu(A)\hat\mu(B)|K|	+O\Big(\frac{\log\log n}{\log n}\Big).	\]
\[
	\begin{split}
	a_t^dS_1(t,L) &\sim \sum_{n\,:\,|n-t/{\mu(\tau)}|\le La_t}\frac{(\mu(\tau))^{1+\frac d2}}{a_n}
	g_{d+1}\left(w\sqrt{\mu(\tau)},\frac{
		%\mu(\tau)(t/\mu(\tau)-n)
	       t-n\mu(\tau)}{a_n}\right)\nu(A)\nu(B)
       %\#(K\cap\mathbb Z^d)
       \, .
	\end{split}
	\]
	Approximating Riemann sums by Riemann integrals, the right hand side converges, as $t\rightarrow +\infty$, to
\[
(\mu(\tau))^{\frac d2}\int_{-L{\mu(\tau)}^{3/2}}^{L{\mu(\tau)}^{3/2}} g_{d+1}\left(w\sqrt{\mu(\tau)},z\right)\, dz\, 
\nu(A)\nu(B)
%\#(K\cap\mathbb Z^d)
\]
which itself converges to 
$\widetilde g_d(w)
\nu(A)\nu(B)
%\#(K\cap\mathbb Z^d)
$ as $L\rightarrow +\infty$, as announced,
since $\widetilde g_d(w)=(\mu(\tau))^{\frac d2}\int_{\mathbb R}g_{d+1}(w\sqrt{\mu(\tau)},z)\, dz$.
\end{pfof}

For the proof of Lemma~\ref{lem:S1S2}(b),
note that $Q_n(t,u)\le \tQ_n(t)$ where
\[
\tQ_n(t)=\sup_{u\in I} \mu\left(\widehat\Psi_n\in (w_t,t-n{\mu(\tau)})+\{0\}\times J_u\right).
\]
Lemma~\ref{lem:S1S2}(b) is an immediate consequence of the next two sublemmas.

\begin{sublemma} \label{lem:1a}
For any $c\in(0,1/\mu(\tau))$,
	\(\displaystyle
	\lim_{L\to\infty}\limsup_{t\to\infty}a_t^d\sum_{n>ct:La_t< |n-t/\mu(\tau)|}\tQ_n(t)=0\, .
	\)
\end{sublemma}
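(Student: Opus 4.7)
The plan is to bound each $\tQ_n(t)$ pointwise via the joint local large deviation estimate of Lemma~\ref{lem:lld}, and then sum. Set
\[
z_n := (w_t,\,t-n\mu(\tau))\in\mathbb{R}^{d+1}.
\]
Since $I$ and $J$ are bounded, the segment $\{(0)\}\times J_u\subset\mathbb{Z}^d\times\mathbb{R}$ can be covered, uniformly in $u\in I$, by a fixed finite number of open balls of unit radius in $\mathbb{R}^{d+1}$; applying Lemma~\ref{lem:lld} to each ball then gives
\[
\tQ_n(t)\ \ll\ \frac{n}{a_n^{d+1}}\,\frac{\log(e+|z_n|)}{1+|z_n|^2},
\]
uniformly in $n\ge 1$ and $t$. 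The hypothesis $w_t/a_t\to w$ forces $|w_t|=O(a_t)$, so whenever $L$ exceeds a threshold depending only on $w$ and $\mu(\tau)$ and $|n-t/\mu(\tau)|>La_t$, the term $|t-n\mu(\tau)|$ dominates, yielding $|z_n|\asymp|t-n\mu(\tau)|$.

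I would then split the tail at $n=At$ for a fixed $A$ with $A\mu(\tau)>2$. In the near range $ct<n\le At$, one has $a_n\asymp a_t$, hence $n/a_n^{d+1}\asymp t/a_t^{d+1}$. Reparametrizing by $k$ with $k\mu(\tau)\asymp n\mu(\tau)-t$ and using the elementary estimate
\[
\sum_{|k|>La_t}\frac{\log(e+|k|)}{1+k^2}\ \ll\ \frac{\log(La_t)}{La_t},
\]
the near-range contribution satisfies
\[
a_t^d\sum_{\substack{ct<n\le At\\ |n-t/\mu(\tau)|>La_t}}\tQ_n(t)\ \ll\ a_t^d\cdot\frac{t}{a_t^{d+1}}\cdot\frac{\log(La_t)}{La_t}\ \asymp\ \frac{t\log t}{L\,a_t^2}\ =\ \frac{1}{L},
\]
using the defining identity $a_t^2=t\log t$; this vanishes as $L\to\infty$.

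For the far range $n>At$, the choice $A\mu(\tau)>2$ ensures $|t-n\mu(\tau)|\gtrsim n$, whence $|z_n|\gtrsim n$, and Lemma~\ref{lem:lld} gives
\[
\tQ_n(t)\ \ll\ \frac{n}{(n\log n)^{(d+1)/2}}\cdot\frac{\log n}{n^2}\ =\ \frac{(\log n)^{(1-d)/2}}{n^{(d+3)/2}}.
\]
A direct comparison with an integral yields $\sum_{n>At}\tQ_n(t)\ll (\log t)^{(1-d)/2}/t^{(d+1)/2}$, and multiplying by $a_t^d=(t\log t)^{d/2}$ produces a bound of order $(\log t)^{1/2}/t^{1/2}$ (for both $d=1$ and $d=2$), which vanishes as $t\to\infty$ independently of $L$.

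The only technically delicate point is the balancing act in the near range: the factor $\log|z_n|$ appearing in the joint LLD is exactly what the anomalous normalization $a_t^2=t\log t$ is built to absorb, producing the clean $1/L$ decay. Once this cancellation is spotted, no further dynamical input beyond Lemma~\ref{lem:lld} is needed, and the estimate for the sublemma follows by direct summation.
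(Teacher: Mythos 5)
Your proof is correct and takes essentially the same approach as the paper: the key ingredient is Lemma~\ref{lem:lld} applied with $|z|\asymp|t-n\mu(\tau)|$, followed by summation of the tail $\sum\log/(1+x^2)\ll\log(a_t)/(La_t)$ and the identity $a_t^2=t\log t$. The paper handles the whole range $n>ct$ in one step by using that $n\mapsto n/a_n^{d+1}$ is decreasing (so $n/a_n^{d+1}\ll t/a_t^{d+1}$ throughout), whereas you split into a near range ($ct<n\le At$) and a far range ($n>At$); this is a cosmetic reorganization, not a different argument.
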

\begin{proof}
	In this range, $n\gg t$, so $\frac n{a_n^{d+1}}=n^{-\frac{d-1}2}(\log n)^{-\frac{d+1}2}\ll \frac t{a_t^{d+1}}$.
	Thus, for any $L$ large enough, using Lemma~\ref{lem:lld} with $|z|_\infty= |t-n\mu(\tau)|$, we obtain that
	\[
	\sum_{n>ct:La_t<  |n-t/\mu(\tau)|} \tQ_n(t) 
	\ll \frac{t}{a_t^{d+1}}
	\sum_{n>ct: La_t<|n-t/\mu(\tau)|}
	\frac{|\log|t-n\mu(\tau)||}{1+|t-n\mu(\tau)|^2} 
	\ll \frac{t}{a_t^{d+1}} \frac{\log(a_t)}{La_t}\ll \frac 1{La_t^d}\, ,
	\]
	since $\log u/u^2$ has primitive $-(1+\log u)/u$ and since $a_t^2=t\log t\sim 2t\log a_t$.
\end{proof}

\begin{sublemma} \label{lem:2}
For any $c\in(0,1/\mu(\tau))$,
\(
\lim_{L\to\infty}\limsup_{t\to\infty}a_t^d\sum_{n<ct}\tQ_n(t)=0
\).
\end{sublemma}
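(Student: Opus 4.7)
The plan is to apply the joint local large deviations estimate Lemma~\ref{lem:lld} and exploit the fact that, in the range $n<ct$ with $c<1/\mu(\tau)$, the displacement vector $z_{n,t}:=(w_t,t-n\mu(\tau))$ has norm at least of order $t$. Thus $\tQ_n(t)$ is in a genuine large deviation regime where the factor $(\log|z|)/(1+|z|^2)$ contributes a $t^{-2}\log t$ saving, strong enough to be summed over all $n<ct$ against the normalization $a_t^d$. Note that $L$ does not appear in the statement, so it suffices to show the $\limsup$ in $t$ vanishes.

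In detail, I would first note that since $I,J\subset\mathbb R$ are bounded, there exists $R_0>0$ with $\{0\}\times J_u\subset B(0,R_0)$ in $\mathbb R^{d+1}$ for every $u\in I$; enclosing $\{0\}\times J_u$ in a fixed open ball $U$ of radius $R_0+1$ and applying Lemma~\ref{lem:lld} gives
\[
\tQ_n(t)\ll \frac{n}{a_n^{d+1}}\cdot\frac{\log|z_{n,t}|}{1+|z_{n,t}|^2}.
\]
Since $c<1/\mu(\tau)$, the constant $c':=1-c\mu(\tau)$ is strictly positive, and for all $n<ct$ we have $|z_{n,t}|\ge t-n\mu(\tau)\ge c't$; moreover $|w_t|=O(a_t)=o(t)$, so $|z_{n,t}|\asymp t$ uniformly in $n<ct$ for $t$ large. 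This yields $\tQ_n(t)\ll (n/a_n^{d+1})(\log t)/t^2$ uniformly in $n<ct$. Summing, with $a_n^2=n\log n$,
\[
\sum_{n<ct}\frac{n}{a_n^{d+1}}=\sum_{n<ct}\frac{1}{n^{(d-1)/2}(\log n)^{(d+1)/2}}\ll \frac{t^{(3-d)/2}}{(\log t)^{(d+1)/2}},
\]
so that, for both $d\in\{1,2\}$,
\[
a_t^d\sum_{n<ct}\tQ_n(t)\ll (t\log t)^{d/2}\cdot\frac{\log t}{t^2}\cdot\frac{t^{(3-d)/2}}{(\log t)^{(d+1)/2}}\ll \Bigl(\frac{\log t}{t}\Bigr)^{1/2}\xrightarrow[t\to\infty]{}0,
\]
independently of $L$, giving the claim.

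There is no genuine obstacle here beyond careful bookkeeping: the only nontrivial input is Lemma~\ref{lem:lld}, and the remainder is an elementary exponent check that the saving $t^{-2}\log t$ dominates the combined growth $a_t^d\cdot t^{(3-d)/2}(\log t)^{-(d+1)/2}$ for each $d\in\{1,2\}$. The mild subtlety is to observe that the discrete $\mathbb Z^d$ component of $\widehat\Psi_n$ can still be handled by an $\mathbb R^{d+1}$-ball, since $\{0\}\times J_u$ is contained in a ball of fixed radius in $\mathbb R^{d+1}$; this makes Lemma~\ref{lem:lld}, stated for balls in $\mathbb R^{d+1}$, directly applicable.
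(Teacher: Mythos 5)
Your proof is correct and follows essentially the same route as the paper's: apply Lemma~\ref{lem:lld} in the regime $n<ct$ where $|z_{n,t}|\asymp t$, obtain the factor $(\log t)/t^2$, sum $n/a_n^{d+1}$ over $n<ct$, and verify the resulting exponent is $o(a_t^{-d})$. You merely spell out a couple of steps the paper leaves implicit (the enclosure of $\{0\}\times J_u$ in a fixed ball and the bound $|w_t|=o(t)$ ensuring $|z_{n,t}|\asymp t$), but the decomposition and the key estimate are identical.
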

\begin{proof}
	In this range, $t-n\mu(\tau)\approx t\gg n$.
Thus it follows from Lemma~\ref{lem:lld} that
\begin{align*}
\sum_{n<ct}\tQ_n(t)
&\ll \sum_{n<ct}\frac{n}{a_n^{d+1}}\frac{1+\log t}{t^2}\\
&\ll \frac{1+\log t}{t^2}\sum_{n< ct}\frac{1}{n^{\frac {d-1}2}(\log n)^{^\frac {d+1}2}}\\
&\ll \frac{1+\log t}{t^2}\frac{t^{\frac{3-d}2}}{(\log t)^{\frac {d+1}2}}\ll t^{-\frac{d+1}2}{(\log t)^{-\frac {d-1}2}}=o (a_t^{-d})\, .
\end{align*}
\end{proof}

\section{Proof of mixing for the Lorentz gas (Theorem~\ref{cor:mixingrate})}\label{sec:proofmixing}

In this section we assume $d=1$ or $d=2$. 
Corollary~\ref{cor:easymix} states mixing for 
functions with compact support in the suspension $\widehat{\mathcal{M}}\times\mathbb Z^d$.
To deal with the natural class of functions (with compact support in the manifold) in Theorem~\ref{cor:mixingrate}
we crucially rely on the following tightness-type result,
which is the most delicate part of this work.
\begin{thm}\label{lem:tight}
Let $K_0>0$ be fixed and let $B_0$ be the set of $\mathbf x\in\widetilde{\mathcal M}$ with position at a distance at most $K_0$ of the origin. For any positive integer $R_0$, let $B_{R_0}$ be the set of configurations $\mathbf x\in\widetilde{\mathcal M}$ belonging to $B_0$
% centered at $(0,0)$ of radius $K_0$ 
and with no collision during the time interval $[0,R_0)$.
%correspond to points $(x,u,\ell)\in\mathcal M\times\mathbb Z^d$ such that $u>R_0$.
Then
\[
\lim_{R_0\rightarrow +\infty}\limsup_{t\rightarrow +\infty}a_t^d\widetilde\nu\left(B_{R_0}\cap\Phi_{-t}(B_0)\right)=0\, .
\]
\end{thm}
Before proceeding to the proof of
Theorem~\ref{lem:tight}, let us see how Theorem~\ref{cor:mixingrate} follows from
 Corollary~\ref{cor:easymix} and Theorem~\ref{lem:tight}.\\
\begin{pfof}{~Theorem~\ref{cor:mixingrate}}
%In this proof we start from Corollary~\ref{cor:easymix} and Theorem~\ref{lem:tight}. 
Assume that $f$ and $g$ are nonnegative with support in $B_0$. 
We will use Corollary~\ref{cor:easymix} and the sets $E_{\pm n}$ defined therein. 
%Let $n_0$ be a positive integer such that $\bigcup_{j=1}^I \mathcal O_j\subset\mathcal D_d$ is contained in the ball of radius $n_0$ centered at the origin. Then observe, for all $n\in\mathbb N$, $B_0\setminus E_n\subset B_{n-n_0}$.
Observe that $B_0\setminus E_{-n}\subset B_n$. 
Let $(f_n)_n$ (resp.  $(g_n)_n$) be an increasing  sequence 
of continuous functions supported  in
$E_{-2n}$ (resp. $E_{2n}$)  coinciding with $f$ (resp. $g$)
on $E_{-n}$ (resp. $E_n$) and converging pointwise
to $f$ (resp. $g$).
Thus it will follow from Theorem~\ref{lem:tight} and  time-reversibility of $\Phi$ that
\begin{align}\nonumber
\lim_{n\rightarrow +\infty}\limsup_{t\rightarrow +\infty} &\left|a_t^d\int_{\widetilde{\mathcal M}}[f.g\circ \Phi_t-f_n.g_n\circ\Phi_t]\, d\widetilde\nu\right|\\
&\le \lim_{n\rightarrow +\infty}\limsup_{t\rightarrow +\infty}2\Vert f\Vert_\infty\Vert g\Vert_\infty a_t^d\widetilde\nu\left(B_{n}\cap\Phi_{-t}(B_0)\right)=0\, .\label{diffint}
\end{align}
Thus, for every $n$,
\begin{align*}
\limsup_{t\rightarrow +\infty} &\left|a_t^d\int_{\widetilde{\mathcal M}}f.g\circ \Phi_t-\widetilde g_d(0)\int_{\widetilde{\mathcal M}}f\, d\widetilde\nu\int_{\widetilde{\mathcal M}}g\, d\widetilde\nu\right|
\le\limsup_{t\rightarrow +\infty}\left|a_t^d\int_{\widetilde{\mathcal M}}[f.g\circ \Phi_t-f_n.g_n\circ\Phi_t]\, d\widetilde\nu\right|\\
&+
\limsup_{t\rightarrow +\infty}   \left|a_t^d\int_{\widetilde{\mathcal M}}f_n.g_n\circ \Phi_t\, d\widetilde{\nu}-\widetilde g_d(0)\int_{\widetilde{\mathcal M}}f_n\, d\widetilde\nu\int_{\widetilde{\mathcal M}}g_n\, d\widetilde\nu\right|\\
&+\widetilde g_d(0)\left|\int_{\widetilde{\mathcal M}}f_n\, d\widetilde\nu \int_{\widetilde{\mathcal M}}g_n\, d\widetilde\nu -\int_{\widetilde{\mathcal M}}f\, d\widetilde\nu\int_{\widetilde{\mathcal M}}g\, d\widetilde\nu\right|\\
&\le \limsup_{t\rightarrow +\infty}\left|a_t^d\int_{\widetilde{\mathcal M}}[f.g\circ \Phi_t-f_n.g_n\circ\Phi_t]\, d\widetilde\nu\right|\\
&+\widetilde g_d(0)\left|\int_{\widetilde{\mathcal M}}f_n\, d\widetilde\nu \int_{\widetilde{\mathcal M}}g_n\, d\widetilde\nu -\int_{\widetilde{\mathcal M}}f\, d\widetilde\nu\int_{\widetilde{\mathcal M}}g\, d\widetilde\nu\right|\, ,
\end{align*}
where we used Corollary~\ref{cor:easymix} applied to $f_n,g_n$ in the last inequality.
Since this holds true for any $n$, we conclude by taking the limit as $n\rightarrow +\infty$ thanks to~\eqref{diffint} and to the dominated convergence theorem. 
\end{pfof}

The rest of this section is devoted to the proof of Theorem~\ref{lem:tight}. Recall that $K_0$ is fixed and that we have to estimate $\widetilde\nu(B_{R_0}\cap \Phi_{-t}(B_0))$. The strategy of our proof is divided in two steps.
In a first step (corresponding to Subsection~\ref{Step1}), 
we explain how we can neglect "bad" configurations with first or last long free flights (Lemma~\ref{lem:B0}) or 
having a small number of collisions within the time interval $[0,t]$ (Lemmas~\ref{lem:B0'} and~\ref{B1second}). 
In a second step (corresponding to Subsection~\ref{Step2}),  we will estimate the probability of the set of "good" configurations belonging to $B_{R_0}\cap\Phi_{-t}(B_0)$ by writing (as in the proof of Proposition~\ref{LLTkappaNt} but with additional sums and complications) this set as a union of sets, the measure of which corresponds to the measure of a set that can be expressed in terms of the Sinai billiard map $T$. 

In the process of proving Theorem~\ref{lem:tight}, we obtain the following large deviation result (proved in Section~\ref{Step1}), which is interesting in its own right.

\begin{prop}\label{prop:L} For $c_1$ small enough,
\[\mu(\tau_{\lfloor c_1 t\rfloor}> t)\le\nu(N_t\le c_1t)/\min\tau=\mathcal O(\log t/t)\, .
\]
 \end{prop}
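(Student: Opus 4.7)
The proposition is a package containing two statements. I will handle the inequality between the $\mu$ and $\nu$ probabilities first, and then the $\mathcal O(\log t/t)$ bound.

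For the inequality, exploit the suspension representation $\widehat{\mathcal M}$ of the flow. If $x\in M$ satisfies $\tau_{\lfloor c_1 t\rfloor}(x)>t$, then
\[
\tau_{\lfloor c_1 t\rfloor+1}(x)=\tau_{\lfloor c_1 t\rfloor}(x)+\tau(T^{\lfloor c_1 t\rfloor}(x))>t+\min\tau,
\]
so every suspension point $\phi_u(x)$ with $u\in[0,\min\tau)$ satisfies $N_t(\phi_u(x))=N_{t+u}(x)\le\lfloor c_1 t\rfloor\le c_1 t$. Integrating over this strip against $\widehat\nu=(\mu\times\mathrm{Leb})/\mu(\tau)$ yields
\[
\nu(N_t\le c_1 t)\ge\frac{\min\tau}{\mu(\tau)}\,\mu(\tau_{\lfloor c_1 t\rfloor}>t),
\]
which is the claimed inequality up to the harmless constant $\mu(\tau)$.

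For the order bound, the plan is a Fuk--Nagaev style moderate-deviation argument for $\mu(\tau_n>t)$ with $n=\lfloor c_1 t\rfloor$, exploiting the heavy tail $\mu(\tau>s)\sim c/s^2$ (inherited from the Sz\'asz--Varj\'u estimate~\eqref{tailV} via $\tau=|V|$ when $d=2$, analogously for $d=1$). Truncate $\tau^L:=\tau\mathbf 1_{\tau\le L}$ and split
\[
\mu(\tau_n>t)\le n\,\mu(\tau>L)+\mu(\tau_n^L>t)\lesssim \frac{n}{L^2}+\mu(\tau_n^L>t).
\]
For $c_1<1/\mu(\tau)$ one has $t-n\mathbb{E}\tau^L\sim c_2 t$ for some $c_2>0$, so Chebyshev combined with the variance bound $\mathrm{Var}(\tau_n^L)\lesssim n\log L$ (which in turn combines $\mathbb{E}[(\tau^L)^2]\sim 2c\log L$ with summable decay of correlations for bounded observables of the Sinai billiard map) yields $\mu(\tau_n^L>t)\lesssim n\log L/t^2$. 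Choosing $L\sim t/\sqrt{\log t}$ balances both summands at $\mathcal O(\log t/t)$. The matching bound for $\nu(N_t\le c_1 t)$ then follows from the suspension identity
\[
\mu(\tau)\,\nu(N_t\le c_1 t)=\int_M \min\bigl(\tau(x),\max(0,\tau_{n+1}(x)-t)\bigr)\,d\mu(x),
\]
by splitting according to $\tau(x)\le L$ or $\tau(x)>L$ and using the $\mu$-estimate together with $\int\tau\mathbf 1_{\tau>L}\,d\mu=\mathcal O(1/L)$.

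The principal technical obstacle is the variance bound $\mathrm{Var}(\tau_n^L)\lesssim n\log L$: even after truncation, $\tau^L$ is discontinuous along the singularity curves of $T$ and therefore not H\"older, so exponential decay of correlations for H\"older observables does not apply directly. One circumvents this either by an $\varepsilon$-smoothing/approximation argument, or by working within the anisotropic Banach-space / Young tower framework employed in~\cite{SV07,PeneTerhesiu21}, within which flight-time-type observables are known to satisfy the required mixing estimates; I expect the second route to be cleanest since the same machinery already underlies the joint LLT and LLD (Lemmas~\ref{lem:jointllt0},~\ref{lem:lld}) used elsewhere in the paper.
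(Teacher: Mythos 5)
Your reduction from $\mu(\tau_{\lfloor c_1 t\rfloor}>t)$ to $\nu(N_t\le c_1 t)$ is essentially the paper's first step, and you are right that the suspension computation produces an extra factor $\mu(\tau)$ that the stated inequality appears to omit; harmless for the $\mathcal O$-bound. Where you diverge is the proof of $\nu(N_t\le c_1 t)=\mathcal O(\log t/t)$: the paper (Sublemma~\ref{LDNt}) bounds the $\nu$-quantity directly, recentering the lap count at time $t/2$ so that the event becomes (essentially) a \emph{product} of a backward and a forward tail probability, each controlled by $\sqrt{\log t/t}$ via the joint LLD (Lemma~\ref{lem:lld}), with the factorization obtained through the decorrelation bound~\eqref{Deco2} and a $K\log t$ buffer~\eqref{Deco1}, and with $\tau$ replaced by $|\widetilde\kappa|$ up to the bounded coboundary $H_0$. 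Only afterwards is the $\mu$-estimate deduced; you propose the opposite order, via a Fuk--Nagaev truncation for $\mu(\tau_n>t)$ followed by the suspension identity.

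The step that would fail as written is the passage from the $\mu$-bound back to $\nu(N_t\le c_1 t)$. On the slice $\{\tau\le L\}$, the crude estimate
\[
\int_{\{\tau\le L\}}\min\bigl(\tau,\max(0,\tau_{n+1}-t)\bigr)\,d\mu\;\le\;L\,\mu(\tau_{n+1}>t)
\]
with $L\sim t/\sqrt{\log t}$ gives only $\mathcal O(\sqrt{\log t})$, off by a factor $t/\sqrt{\log t}$. Replacing $L$ by $\tau$ and applying H\"older with exponents $(p,q)$ close to $(2,2)$ gives $\|\tau\mathbf 1_{\{\tau\le L\}}\|_{L^p}\,\mu(\tau_{n+1}>t)^{1/q}\lesssim(\log t/t)^{1/q}$, still larger than the target by roughly $\sqrt{t/\log t}$. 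To reach $\mathcal O(\log t/t)$ you must genuinely \emph{decouple} $\tau$ from $\tau_n\circ T$, i.e.\ show $\int\tau\,\mathbf 1_{\{\tau\le L\}}\,\mathbf 1_{\{\tau_n\circ T>t-L\}}\,d\mu\approx\mathbb E_\mu[\tau]\,\mu(\tau_n>t-L)$, which again requires the $K\log t$ buffer, the replacement of $\tau$ by $|\widetilde\kappa|$, and the decorrelation estimate~\eqref{Deco2} --- exactly the machinery the paper builds, so this is a real missing ingredient rather than a routine completion. You do flag the analogous difficulty in establishing $\mathrm{Var}(\tau_n^L)\lesssim n\log L$ (the truncation $\tau^L$ is not dynamically H\"older), and the same Young-tower/anisotropic fix underlies both gaps, but neither is automatic; in particular the suspension-step decoupling, which you do not flag, is the more delicate of the two.
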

The bound $\mathcal O(\log t/t)$ is optimal because 
$\tau$ is in the domain of non standard CLT with normalization 
$\sqrt{t\log t}$. This bound  is in accord with the optimal result  in \emph{the i.i.d.
scenario} with $\sqrt{t\log t}$ normalization, see~\cite{BBY,Roz90}.
\subsection{Notations and recalls for the proof of Theorem~\ref{lem:tight}}\label{notationtightness}
Before entering deeper in the proof, let us introduce some needed 
notations. 
Recall that $\kappa$ stands for the cell change (with values in $\mathbb Z^d$). It will be useful to consider $\widetilde\kappa:M\rightarrow\mathbb Z^2$ for the cell-change for the $\mathbb Z^2$-periodic Lorentz gas; so that $\kappa=\pi_d(\widetilde\kappa)$ where $\pi_2=Id$ and $\pi_1:\mathbb R^2\rightarrow \mathbb R$ is the canonical projection on the first coordinate. Note that, when $d=2$, $\widetilde\kappa=\kappa$. 
 We extend the definition of $\widetilde \kappa$ to $\widetilde{\mathcal M}$ by setting 
$\widetilde \kappa(\Phi_u(q,\vec v))=\widetilde \kappa(p_d(q),\vec v)$ for every $x=(q,\vec v)\in \widetilde M$
and every $u\in[0,\tau(x))$.
Let
us write $\widetilde N_t(\mathbf x)$ for the number of collisions in the time interval $(0,t]$ for a trajectory starting from $\mathbf x\in\widetilde{\mathcal M}$. 
Recall that $H_0$ is the bounded coboundary defined in~\eqref{coboundPsi}. Throughout the rest of this section we fix $c_1$ so that 
\begin{equation}\label{eq:defc1}
c_1\in(0,1/(1000\mu(\tau)))\text{ and } 2c_1\Vert H_0\Vert_\infty<1/100.
\end{equation}
We consider the constant $a_0$ appearing in Lemma~\ref{lem:jointllt0}. 
Up to decreasing if necessary its value,
it follows from e.g. \cite[Theorem 7.37, Remark 7.38]{ChernovMarkarian}
that there exists $C'_0>0$ such that
\begin{align}\label{Deco2}
\forall n'\in\mathbb N_0,\quad Cov\left(f((\widetilde\kappa\circ T^m)_{m\ge n'}),g((\widetilde\kappa\circ T^{m'})_{m'\le 0})\right)\le C'_0\Vert f\Vert_\infty\Vert g\Vert_\infty e^{-a_0n'}\, ,
\end{align}
for any bounded measurable functions $f,g$.
by noticing that $f((\widetilde\kappa\circ T^m)_{m\ge 0})$ is constant on stable curves and that
$g((\widetilde\kappa\circ T^{m'})_{m'\le -1})$ is constant on unstable curves and as such, these functions are bounded by their infinite norms in the respective spaces $\mathcal H^-$
and $\mathcal H^+$ considered in \cite{ChernovMarkarian}.\\
We fix $K>0$ so that 
\begin{align}\label{Deco1} C_0'e^{-a_0 \lfloor K\log t\rfloor}\le t^{-100} 
\end{align}
We recall that it is proved in~\cite{SV07} that
\begin{equation}\label{tailkappa}
\mu(\widetilde\kappa=z)=\mathcal O(|z|^{-3})\, ,
\end{equation}
and that the set $\mathfrak C$ of unit vectors of $\mathbb R^2$ corresponding to the corridor directions in $\mathcal D_2$ (i.e. the direction of a line in $\mathbb R^2$ touching no obstacle) is finite. 
Finally, recall  that by~\cite[Propositions 11--12, Lemma 16]{SV07},
\begin{equation}\label{controltimelog}
\forall V>0,\quad \mu\left(\widetilde\kappa=z, \exists |j|\le V\log (|z|+2),\ j\ne 0,\  |\widetilde\kappa|\circ T^j>|z|^{4/5}\right)=\mathcal O\left(|z|^{-3-\frac 2{45}}\right)\,. 
\end{equation}

\subsection{Control of "bad" configurations}\label{Step1}
%gWe first outline the strategy of the proof and introduce some required additional notation.
%Recall that $K_0$ is given in the statement of Proposition~\ref{lem:tight}.
The first next lemma allows us to neglect trajectories
with no collision before time $t$. 
\begin{lemma}\label{lem:B0'}
	The following estimate holds true as $t\rightarrow +\infty$,
\[
\widetilde\nu\left(B_0\cap\Phi_{-t}(B_0)\cap\{\widetilde N_t=0\}\right)=o(a_t^{-d})\, .
\]
\end{lemma}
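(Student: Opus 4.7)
The plan is to exploit the very strong geometric constraint imposed by conjoining \emph{three} conditions: being in $B_0$ at time $0$, returning to $B_0$ at time $t$, and undergoing \emph{no} collision during $[0,t]$. On $\{\widetilde N_t=0\}$ the flow degenerates to straight-line motion in $\widetilde{\mathcal M}$, i.e.\ $\Phi_s(q,\vec v)=(q+s\vec v,\vec v)$ for every $s\in[0,t]$. Hence both $q$ and $q+t\vec v$ must lie in the projection of $B_0$ to $\mathcal D_d$. I would then split according to $d$.

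For $d=2$ one has $\mathcal D_2=\mathbb R^2$, and the projection of $B_0$ is contained in a Euclidean ball of radius $K_0$. Any two points of this ball are within distance $2K_0$, so $|t\vec v|=|(q+t\vec v)-q|\le 2K_0$. Since $|\vec v|=1$, this forces $t\le 2K_0$. Consequently $B_0\cap\Phi_{-t}(B_0)\cap\{\widetilde N_t=0\}=\emptyset$ for every $t>2K_0$, and the claim is trivially $o(a_t^{-2})$.

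For $d=1$, $\mathcal D_1=\mathbb R\times\mathbb T$, and only the $\mathbb R$-direction is unbounded. Projecting onto the first coordinate, $|q_1|\le K_0$ and $|q_1+tv_1|\le K_0$ force $|v_1|\le 2K_0/t$. I would therefore crudely bound the intersection by the (larger) set $\{(q,\vec v)\in B_0:|v_1|\le 2K_0/t\}$. Since $\widetilde\nu$ is Lebesgue on $\Omega_1\times\mathbb S^1$, the $q$-factor contributes at most $\widetilde\nu(B_0)<\infty$, while the set $\{\vec v\in\mathbb S^1:|v_1|\le 2K_0/t\}$ consists of two short arcs near $(0,\pm 1)$ and has Lebesgue measure $O(1/t)$. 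This yields
\[
\widetilde\nu\bigl(B_0\cap\Phi_{-t}(B_0)\cap\{\widetilde N_t=0\}\bigr)=O(1/t),
\]
and multiplying by $a_t^d=a_t=\sqrt{t\log t}$ gives $O(\sqrt{\log t/t})\to 0$, as required.

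There is no serious obstacle: the proof rests only on the trivial observation that straight-line motion staying inside a set that is compact in exactly one coordinate forces the corresponding velocity component to shrink like $1/t$, together with the fact that the measure of such a velocity slab on $\mathbb S^1$ is $O(1/t)$. Neither the MLLT of Section~\ref{sec:MLLT+LLD} nor the joint LLD of Lemma~\ref{lem:lld} is needed here; this lemma plays the role of a clean geometric warm-up before the genuinely delicate estimates on configurations with long (but not infinite) free flights treated in the subsequent Lemmas~\ref{lem:B0} and~\ref{B1second}.
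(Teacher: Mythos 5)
Your proof is correct, and it takes a genuinely different route from the paper's. For $d=2$ you and the paper argue the same way (the set is empty once $t$ exceeds a constant). For $d=1$, however, the paper drops the constraint $\Phi_t\in B_0$ entirely and bounds $\widetilde\nu(B_0\cap\{\widetilde N_t=0\})\le (2K_0+1)\nu(\widetilde N_t=0)$, then uses the suspension-flow representation to write $\nu(\widetilde N_t=0)=\mu(\tau)^{-1}\int_0^\infty\mu(\tau>t+s)\,ds$ and invokes the tail bound $\mu(\tau>s)\ll s^{-2}$ (from~\eqref{tailV} and~\eqref{linktauV}) to get $O(1/t)$. You instead keep the constraint $\Phi_t\in B_0$ and extract the elementary geometric consequence $|v_1|\le 2K_0/t$, then integrate over the resulting thin velocity slab in $\mathbb S^1$ to get $O(1/t)$ without ever touching the dynamics or the tail of $\tau$. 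Both give the same bound, but your argument is more self-contained: it needs no input from~\cite{SV07} and no suspension-flow bookkeeping, at the cost of using the second $B_0$ constraint that the paper is content to discard. It is worth noting that the paper's version, which bounds the larger set $B_0\cap\{\widetilde N_t=0\}$, simultaneously establishes the slightly stronger statement $\widetilde\nu(B_0\cap\{\widetilde N_t=0\})=O(1/t)$; your version gives only the statement of the lemma as written.
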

\begin{proof}
When $d=2$, the lemma is immediate: as soon as $t>4K_0$, $\widetilde N_t\ge 1$, otherwise, at time $t$, the trajectory cannot be  $2K_0$-close  of its initial position, at time $0$.\\
When $d=1$, we can have $\widetilde N_t=0$ because of possible long free-flights
in the vertical direction that remain at a bounded distance. However, using the representation of $(\widetilde{\mathcal M},(\Phi_t)_t,\widetilde\nu)$ as a suspension flow over a $\mathbb Z$-extension, 
\begin{align*}
\widetilde\nu\left(B_0\cap\{ \widetilde N_t=0\}\right)
&\le ( 2K_0+1) \nu(\widetilde N_t=0)\\
&\le \frac{2K_0+1}{\mu(\tau)}\int_{0}^{+\infty}\mu(\tau >s+t)\, ds\\
&\ll \int_{0}^{+\infty}(1+s+t)^{-2}\, ds\ll t^{-1}=o(a_t^{-1})\, ,
\end{align*}
where we used~\eqref{tailV} and~\eqref{linktauV}.
\end{proof}

The next lemma ensures that we can neglect trajectories
with long first or long last free flight.
\begin{lemma}\label{lem:B0}
There exists a constant $C'>0$ such that, for all $R_0$ and $t$ large enough,  
	\[
	\widetilde\nu\left(B_0\cap \{|\widetilde\kappa|>a_t^d\log R_0\}\right)\le C'a_t^{-d}/\log R_0
	%\label{B0'}
	\, .
	\]
\end{lemma}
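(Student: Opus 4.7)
The plan is to use the suspension/lattice-extension representation $\widetilde{\mathcal M}\cong\widehat{\mathcal M}\times\mathbb Z^d$, in which a configuration $\mathbf x\in\widetilde{\mathcal M}$ is encoded by $(x,u,\ell)$ with $x=(q,\vec v)\in M$, $u\in[0,\tau(x))$, $\ell\in\mathbb Z^d$, and position $p_{d,0}^{-1}(q)+\ell+u\vec v\in\mathcal D_d$. Since $\widetilde\kappa$ is lifted from $M$ (it depends only on $x$, not on $u$ or $\ell$), Fubini gives
\begin{equation*}
\widetilde\nu(B_0\cap\{|\widetilde\kappa|>L\})
=\frac{1}{\mu(\tau)}\int_M \mathbf 1_{\{|\widetilde\kappa(x)|>L\}}\, G(x)\, d\mu(x),
\end{equation*}
with
\begin{equation*}
G(x):=\int_0^{\tau(x)}\#\bigl\{\ell\in\mathbb Z^d:\bigl|p_{d,0}^{-1}(q)+\ell+u\vec v\bigr|\le K_0\bigr\}\, du.
\end{equation*}

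The key geometric observation is that, for every fixed point $y\in\mathcal D_d$, the ball of radius $K_0$ around $y$ contains at most $C_dK_0^d$ translates by elements of $\mathbb Z^d$, uniformly in $y$. Thus the counting function inside the integrand is bounded by a constant $C(K_0)$ depending only on $K_0$ and $d$, which yields $G(x)\le C(K_0)\,\tau(x)$.

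It remains to bound $\int_M \tau\,\mathbf 1_{\{|\widetilde\kappa|>L\}}\, d\mu$. I would first note that the coboundary identity~\eqref{coboundPsi}, lifted to the two-dimensional cell change $\widetilde\kappa$, together with $\tau=|\widetilde V|$ (the Euclidean length of the free-flight vector in the universal cover $\mathbb R^2$, valid for both $d=1,2$), gives
\begin{equation*}
\tau\le |\widetilde\kappa|+2\|H_0\|_\infty,
\end{equation*}
where the extended $H_0$ is still bounded. Combined with the tail $\mu(|\widetilde\kappa|>s)=\mathcal O(s^{-2})$ (which follows from~\eqref{tailV} and the boundedness of the coboundary), the layer cake formula yields
\begin{equation*}
\int|\widetilde\kappa|\,\mathbf 1_{\{|\widetilde\kappa|>L\}}\, d\mu
=L\,\mu(|\widetilde\kappa|>L)+\int_L^\infty \mu(|\widetilde\kappa|>s)\, ds
=\mathcal O(L^{-1}).
\end{equation*}
Putting everything together,
\begin{equation*}
\widetilde\nu(B_0\cap\{|\widetilde\kappa|>L\})\le\frac{C(K_0)}{\mu(\tau)}\,\mathcal O(L^{-1})=\mathcal O(L^{-1}),
\end{equation*}
and choosing $L=a_t^d\log R_0$ gives the required $\mathcal O(a_t^{-d}/\log R_0)$ bound.

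The bulk of the work is conceptual rather than technical: once the correct suspension/lattice splitting is set up, the estimate is essentially a layer-cake computation against the known power tail of $\widetilde\kappa$. The only mildly delicate point is to verify, in the $d=1$ case (where $\mathcal D_1=\mathbb R\times\mathbb T$ and $\mathbb Z^1\simeq\mathbb Z\times\{0\}$), that the lattice-count bound $C(K_0)$ and the pointwise comparison $\tau\le|\widetilde\kappa|+C$ both hold; both reduce to standard features of the $\mathbb Z^2$-cell change used throughout Section~\ref{sec:obstacles}.
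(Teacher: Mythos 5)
Your proposal is correct and follows essentially the same route as the paper: bound the $\widetilde\nu$-measure of the event in $B_0$ by a constant depending on $K_0$ times the $\nu$-measure of the corresponding event (your lattice-count argument is the explicit version of the paper's statement that $B_0$ contains at most $(2K_0+1)^2$ copies of $\mathcal M$), pass to the base map via the suspension representation $\nu(\cdot)=\frac{1}{\mu(\tau)}\mathbb E_\mu[\tau\,\mathbf 1_{\cdot}]$, and then use $\tau=|\widetilde\kappa|+\mathcal O(1)$ together with the quadratic tail $\mu(|\widetilde\kappa|>s)\ll s^{-2}$ and a layer-cake computation to get the $\mathcal O(L^{-1})$ bound.
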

\begin{rmk}\label{rmk:B0}
Note that, since $\Phi_t$
preserves the measure $\widetilde\nu$, we also have
$\widetilde\nu\left(\Phi_{-t}\left(B_0\cap \{|\widetilde\kappa|>a_t^d\log R_0\}\right)\right)\le C'a_t^{-d}/\log R_0$.
\end{rmk}
\begin{proof}[Proof of Lemma~\ref{lem:B0}]
Using again the representation of $(\widetilde{\mathcal M},(\Phi_t)_t,\widetilde\nu)$ as a suspension flow over a $\mathbb Z^d$-extension, we note that
	\begin{align*}
	\nonumber\widetilde\nu&\left(B_0\cap\{|\widetilde\kappa|>
	a_t^d\log R_0\}
	\right)\le(2K_0+1)^2\nu\left(|\widetilde\kappa|>
	a_t^d\log R_0\right)\\
	\nonumber&\le \frac {(2K_0+1)^2}{\mu(\tau)}\mathbb E_\mu\left[\tau 1_{\{|\widetilde\kappa|>a_t^d\log R_0\}}\right]\\
	\nonumber&\ll\int_{a_t^d\log R_0}^{+\infty}\mu(\tau>s)\, ds+a_t^d\log R_0\mu(|\widetilde \kappa|>a_t^d\log R_0))\\
	&\ll \int_{a_t^d\log R_0}^{+\infty}s^{-2}\, ds+(a_t^d\log R_0)^{-1}\ll a_t^{-d}/\log R_0\, ,
	\end{align*}	
using~\eqref{tailV}.
\end{proof}

The lemma below deals with the remaining range, namely $n=N_t\le c_1 t$
with $c_1$ as in~\eqref{eq:defc1}.
\begin{lemma}\label{B1second}
For all $R_0>0$,
	\begin{equation*}
\widetilde\nu\left(B_{0}\cap\Phi_{-t} (B_0)\cap\{\widetilde N_t\le c_1 t\}\right)	
%\le C'a_t^{-d}
%R_0^{-\frac 2{45}}
%/\log R_0+
=o(a_t^{-d})\, ,
	\end{equation*}
	as $t\rightarrow +\infty$.
\end{lemma}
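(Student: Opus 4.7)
The plan is to exploit the constraint that both endpoints lie in the fixed bounded set $B_0$ to freeze the spatial displacement, and then invoke the joint local large deviation estimate Lemma~\ref{lem:lld}, using that $n\le c_1 t$ forces the last coordinate of $\widehat\Psi_n$ to be of order $t$.

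By Lemma~\ref{lem:B0'}, the contribution of $\{\widetilde N_t=0\}$ is already $o(a_t^{-d})$, so I may restrict to $1\le \widetilde N_t\le c_1 t$. Using the identification of $(\widetilde{\mathcal M},\widetilde\nu)$ with $(\widehat{\mathcal M}\times\mathbb Z^d,\frac{1}{\mu(\tau)}\mu\otimes\Leb\otimes\#)$ via $(x,u,\ell)\mapsto\Phi_u(p_{d,0}^{-1}(q)+\ell,\vec v)$, the bounded set $B_0$ meets only the cells $\mathcal{C}_\ell$ for $\ell$ in a finite set $\Lambda\subset\mathbb Z^d$ depending only on $K_0$. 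On the event $\{\mathbf x\in B_0\}\cap\{\Phi_t\mathbf x\in B_0\}\cap\{\widetilde N_t(\mathbf x)=n\}$, unit-speed motion between collisions forces both $u$ and $u':=t+u-\tau_n(x)$ to lie in a bounded interval $[0,c(K_0)]$, while $\Phi_t\mathbf x$ landing in $B_0$ forces $\kappa_n(x)=\ell'-\ell$ for some $\ell'\in\Lambda$. Since $\widetilde\tau_n(x)=t-n\mu(\tau)+u-u'$ on this event, there is a bounded set $U\subset\R^{d+1}$ depending only on $K_0$ such that
\begin{equation*}
\widetilde\nu\bigl(B_0\cap\Phi_{-t}(B_0)\cap\{\widetilde N_t=n\}\bigr)\ll\sum_{\ell,\ell'\in\Lambda}\mu\bigl(\widehat\Psi_n\in z_{n,\ell,\ell'}+U\bigr),
\end{equation*}
where $z_{n,\ell,\ell'}=(\ell'-\ell,\,t-n\mu(\tau))$ and the implied constant depends only on $K_0$.

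For $1\le n\le c_1 t$ with $c_1$ as in~\eqref{eq:defc1} and $t$ large, $|z_{n,\ell,\ell'}|\ge t-n\mu(\tau)\ge t/2$, so $|z_{n,\ell,\ell'}|\asymp t$ and $\log|z_{n,\ell,\ell'}|\asymp\log t$. Lemma~\ref{lem:lld} then gives, for all $n$ above a fixed threshold $n_0$,
\begin{equation*}
\mu(\widehat\Psi_n\in z_{n,\ell,\ell'}+U)\ll\frac{n}{a_n^{d+1}}\cdot\frac{\log t}{t^2},
\end{equation*}
while for $n\le n_0$ the crude union bound $\mu(\tau_n>t/2)\le n\,\mu(\tau>t/(2n))\ll n^3/t^2$ via~\eqref{tailV} suffices. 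Summing, with $a_n^{d+1}\asymp(n\log n)^{(d+1)/2}$, produces a total of order $1/t$ when $d=1$ and $1/(t^{3/2}\sqrt{\log t})$ when $d=2$, both of which are $o(a_t^{-d})=o((t\log t)^{-d/2})$.

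The main obstacle is the decomposition step: simultaneously isolating the boundedness constraints ($u,u',\ell,\ell'$ and $\kappa_n$ all controlled by $K_0$) in order to reduce the event to $\{\widehat\Psi_n\in z_n+U\}$ for a fixed bounded $U$ with $|z_n|\asymp t$ uniformly in $n\le c_1 t$. Once this reduction is in place, Lemma~\ref{lem:lld} together with the elementary sums above closes the argument.
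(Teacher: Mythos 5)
Your reduction hinges on the claim that, for $\mathbf x\in B_0$, the time $u$ since the previous collision, the time $u'$ from the $n$-th collision to $\Phi_t\mathbf x$, and the cells $\ell$ and $\ell'$ of those two collisions are all bounded by constants depending only on $K_0$. That claim is false in the infinite-horizon setting: a point of the bounded ball $B_0$ can lie in the middle of an arbitrarily long free flight along a corridor, so its previous (or next) collision is in a cell $\ell$ arbitrarily far away and $u$ is arbitrarily large, and likewise for $u'$ and $\ell'$. This is exactly the difficulty that makes the tightness statement Theorem~\ref{lem:tight} hard, and it is what Lemma~\ref{lem:B0} (cutting off first/last flight lengths at scale $a_t^d\log R_0$) and Lemma~\ref{lem:decomp} (decomposing over the two additional indices $a,b$ recording the flight lengths) are there to handle in the complementary range $\widetilde N_t>c_1 t$. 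Without boundedness of $(u,u',\ell,\ell')$, the event does not reduce to $\{\widehat\Psi_n\in z_n+U\}$ for a single bounded $U$ and boundedly many $(\ell,\ell')$, so Lemma~\ref{lem:lld} cannot be invoked as one term and the terminal summation over $n$ is not the right object. Nor does the obvious patch save it: truncating $u,u'\le R(t)$ and controlling the tail by $\widetilde\nu(B_0\cap\{u>R(t)\})\ll 1/R(t)$ requires $R(t)\gg t\log t$ when $d=2$, at which point $t-n\mu(\tau)+u-u'$ is no longer forced to be of order $t$ and the input to Lemma~\ref{lem:lld} degenerates.

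For comparison, the paper's proof of this lemma is dimension-dependent and never makes the boundedness assumption you use. For $d=1$, the one-sided large-deviation bound $\widetilde\nu(B_0\cap\{\widetilde N_t\le c_1 t\})\ll(\log t)/t$ (Sublemma~\ref{LDNt}) already gives $o(a_t^{-1})$, without using the return constraint $\Phi_{-t}(B_0)$ at all. For $d=2$, that bound is too weak, and the argument isolates the collision nearest time $t/2$, distinguishes whether the free flight there exceeds $t/4$, excludes two simultaneous long flights among the $K\log t$ neighbouring collisions via Sublemma~\ref{twolongff}, and combines the decorrelation estimate~\eqref{Deco2} with the LLD estimate~\eqref{LD}. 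Your idea of using the return to $B_0$ to freeze the spatial displacement and feed a target with $|z|\asymp t$ into Lemma~\ref{lem:lld} is attractive, and your summation over $n$ is correct once a valid reduction is in hand, but the reduction itself is where all the work lives, and it is not available in the form you state it.
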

To prove this lemma, we will deal separately with the cases $d=1$ and $d=2$.
The main ingredients of the proofs below in these two cases come down to a very delicate decomposition of the involved sum  along with fine estimates
via the use of~\eqref{controltimelog} and of Lemma~\ref{lem:lld} (joint LLD).
The proof of Lemma~\ref{B1second} for $d=1$ will use the following intermediate results.

\begin{sublemma}\label{Nt<logt}
Recall that $K$ satisfies~\eqref{Deco1}. Then
\[
\widetilde\nu\left(B_0\cap\{\widetilde N_{t/100}\le K\log t\}\right)\le (2K_0+1)^2
\nu( N_{t/100}\le K\log t)\ll (\log t)/t\, .\]
\end{sublemma}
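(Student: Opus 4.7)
The plan involves two steps corresponding to the two claimed inequalities.

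\textbf{First inequality.} The event $\{\widetilde N_{t/100}\le K\log t\}$ is $\mathbb Z^d$-invariant (collisions of $(\Phi_t)_t$ project one-to-one to collisions of $(\phi_t)_t$), so it equals the preimage of $\{N_{t/100}\le K\log t\}\subset\mathcal M$ under the natural quotient map. The Euclidean ball $B_0$ meets at most $(2K_0+1)^2$ cells $\mathcal C_\ell$, and on each cell the restriction of $\widetilde\nu$ is a translate of $\nu$; summing over the cells yields the first inequality.

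\textbf{Second inequality: reduction to the base map.} Using the suspension representation $\widehat{\mathcal M}$, write $x=\phi_u(y)$ with $y\in M$ and $u\in[0,\tau(y))$. The identity $N_{t/100}(\phi_u(y))=N_{t/100+u}(y)$ together with the equivalence $N_s(y)\le K\log t\iff\tau_m(y)>s$ (where $m:=\lfloor K\log t\rfloor+1$) gives
\[
\nu(N_{t/100}\le K\log t)\le\frac{1}{\mu(\tau)}\int_M\tau(y)\,\mathbf{1}_{\{\tau_m(y)>t/100\}}\,d\mu(y).
\]
Splitting the integrand at a threshold $A_0$ and using the tail $\mu(\tau>s)\ll s^{-2}$ from~\eqref{tailV}--\eqref{linktauV}, one has $\int\tau\,\mathbf{1}_{\{\tau>A_0\}}\,d\mu\ll 1/A_0$, so the right-hand side is bounded by $C[A_0\,\mu(\tau_m>t/100)+1/A_0]$. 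Choosing $A_0=t/\log t$ and invoking the estimate $\mu(\tau_m>t/100)\ll(\log t)^2/t^2$ (see below) yields the desired $O(\log t/t)$.

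\textbf{The tail estimate on $\tau_m$.} I would decompose
\[
\{\tau_m>t/100\}\subseteq\bigl\{\max_{0\le i<m}\tau\circ T^i>t/200\bigr\}\cup\Bigl\{\textstyle\sum_{i<m}(\tau\wedge t/200)\circ T^i>t/100\Bigr\}.
\]
The first set has measure $\le m\,\mu(\tau>t/200)\ll(\log t)/t^2$ by the tail bound. For the second, Chebyshev's inequality gives a bound of order $\mathrm{Var}(\sum_{i<m}(\tau\wedge t/200)\circ T^i)/(t/100)^2$; the truncated second moment is $\mathbb E[(\tau\wedge t/200)^2]\ll\log t$, and exponential decay of correlations for the billiard map (analogous to~\eqref{Deco2} but for $\tau$-type observables) allows one to bound the variance of the ergodic sum by $\ll m\log t\asymp(\log t)^2$, yielding $\ll(\log t)^2/t^2$.

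\textbf{Main obstacle.} The crucial technical point is the variance estimate for ergodic sums of the truncated observable $\tau\wedge c$ with $c=t/200$. Since $\|\tau\wedge c\|_\infty=c$ grows with $t$, a naive application $|\mathrm{Cov}(\phi,\phi\circ T^k)|\le Ce^{-a_0 k}\|\phi\|_{\mathrm{H\ddot{o}lder}}^2$ is far too loose; one must appeal to a decay-of-correlations bound controlled by $L^2$-type quantities, or equivalently to a Bernstein/Fuk--Nagaev-type inequality for heavy-tailed observables on the billiard. Noteworthy is that the Joint LLD (Lemma~\ref{lem:lld}, $d=0$) alone gives only the summed tail bound $\mu(\widetilde\tau_m>u)\ll(m/a_m)\log u/u$, which after the H\"older-type split produces a bound $\ll\sqrt{(\log t)^{3/2}/(t\sqrt{\log\log t})}$ that is much larger than $(\log t)/t$; hence the Chebyshev-with-truncation argument above cannot be replaced by an appeal to Lemma~\ref{lem:lld}.
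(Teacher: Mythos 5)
Your first inequality and the reduction to the base map are exactly the paper's: $B_0$ is covered by $(2K_0+1)^2$ copies of $\mathcal M$, and writing $\nu(N_{t/100}\le K\log t)\le\frac1{\mu(\tau)}\mathbb E_\mu[\tau\mathbf 1_{\{\tau_m>t/100\}}]$ with $m=\lfloor K\log t\rfloor+1$ is precisely the starting point of the paper's argument. Your split of $\mathbb E_\mu[\tau\mathbf 1_{\{\tau_m>t/100\}}]$ at the threshold $A_0$ also parallels the paper's isolation of the $k=0$ term (the paper's threshold $t/(100m)\sim t/(K\log t)$ plays the role of your $A_0=t/\log t$); both produce the dominant contribution $\ll(\log t)/t$.

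The divergence occurs in how the remaining term is bounded, and this is where you have a genuine gap. You use truncation at level $c=t/200$ followed by Chebyshev, and you need
\[
\mathrm{Var}\Bigl(\textstyle\sum_{i<m}(\tau\wedge c)\circ T^i\Bigr)\ll m\log t\,,
\]
i.e.\ summability of autocovariances $\sum_j|\mathrm{Cov}(\tau\wedge c,(\tau\wedge c)\circ T^j)|\ll\log c$ uniformly in $c$. You correctly observe that the standard decay-of-correlations bound for dynamically H\"older observables is useless here because $\|\tau\wedge c\|_\infty=c$ enters; but you then merely \emph{assert} the existence of an ``$L^2$-type'' decay or a Fuk--Nagaev inequality for the billiard without citing or establishing it. Neither is available off the shelf in this paper's toolbox: the paper's covariance estimate~\eqref{Deco2} is for bounded observables measurable with respect to $(\kappa\circ T^m)_m$, and the joint LLD~Lemma~\ref{lem:lld}, as you rightly note, yields only $\mu(\tau_m>u)\ll\frac{m}{a_m}\frac{\log u}{u}$, far too weak after the split. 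The missing variance bound is thus an unproved assertion, not a detail.

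The paper sidesteps this entirely. It observes that $\tau_m>t/100$ with $m\sim K\log t$ forces, by pigeonhole, some single flight $\tau\circ T^k>t/(100m)\gtrsim t/\log t$ among the first $m$ iterates. The $k=0$ term is the leading $(\log t)/t$. For $k\ge1$, the paper splits on whether $\tau$ itself (the ``current'' flight) is smaller or larger than $t^{m'}$ with $m'=(1+\tfrac1{45})^{-1}$: the small case is handled by crude union bounds, and the large case by the Sz\'asz--Varj\'u double-large-deviation estimate~\eqref{controltimelog}, which bounds the probability that two free flights within a $\log$-window are both large with a polynomial gain $|z|^{-3-\frac2{45}}$. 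This estimate is specific to the corridor structure of the infinite-horizon billiard and is exactly the information you would need to make your autocovariance bound rigorous — so in that sense the two approaches are not independent, but only the paper's route turns~\eqref{controltimelog} into a proof. Your diagnosis that the obstacle is the variance estimate, and that Lemma~\ref{lem:lld} alone cannot replace it, is correct; but identifying the obstacle is not the same as clearing it.
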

\begin{pfof}{the sublemma}
	Recall that $B_0$ has diameter $2K_0$.
The first inequality comes from the fact that $B_0$ contains at most $(2K_0+1)^2$
copies of $\mathcal M$. Let us prove the second inequality. 
Observe that
	\begin{align}
	\nonumber
	&\nu (N_{t/100}\le K\log t)=\frac 1{\mu(\tau)}\mathbb E_{\mu}\left[\tau.\mathbf 1_{\{\tau_{\lfloor K\log t\rfloor+1}\ge t/100\}}\right]\\
	&\ll 
	\mathbb E_\mu\left[\tau. \mathbf 1_{\bigcup_{k=0}^{\lfloor K\log t\rfloor}\{\tau\circ T^k>t/(100(1+K\log t)\}}\right]\\
	%\sum_{k=0}^{K\log t}\mathbb E_{\mu}[\tau. \mathbf 1_{\tau\circ T^k>t/(100K\log t)}	]\\
	\label{Nt100}&\ll 
	\mathbb E[\tau. \mathbf 1_{\{\tau\ge t/(100(1+K\log t))\}
	}
	]+
	%\sum_{k=1}^{K\log t}
	\mathbb E\left[\tau. \mathbf 1_{\bigcup_{k=1}^{\lfloor K\log t\rfloor}\{\tau< t/(100(1+K\log t)),\ \tau\circ T^k\ge t/(100(1+K\log t))\}}\right]\, .
	\end{align}
Now, proceeding as in the proof of Lemma~\ref{lem:B0}, it follows from~\eqref{tailV} that for all $t>2$,\footnote{We use here again the classical formula $\mathbb E_\mu[X]=\int_0^{+\infty}\mu(X>z)\, dz$
valid for any positive measurable $X:M\rightarrow[0,+\infty)$. }
\begin{align}
\nonumber &\mathbb E_\mu\left[\tau. \mathbf 1_{\left\{\tau>\frac t{100(K\log t+1)}\right\}}\right]
=\int_0^{+\infty}\mu\left(\tau. \mathbf 1_{\left\{\tau>\frac t{100(K\log t+1)}\right\}}>z\right)\, dz \\
\nonumber	&\quad\quad = \int_{0}^{\frac t{100(K\log t+1)}} \mu\left(\tau>\frac t{100(K\log t+1)}\right)\, dz+\int_{\frac t{100(K\log t+1)}}^{+\infty} \mu\left(\tau>z\right)\, dz\\
	&\quad\quad\ll (t/\log t)^{-1}
	%=o(a_t^{-1})
	\, ,\label{Nt100a}
\end{align}
providing a control of the first term of the right hand side of~\eqref{Nt100}. 
For the second term of the right hand side of~\eqref{Nt100}, we 
distinguish the case of small (resp.big) values of $\tau$. 
Set $m:=(1+\frac 1{45})^{-1}$. On the one hand,
\begin{align}
	\mathbb E_\mu\left[\tau 1_{\{\tau\le t^{m
			}\}}. \mathbf 1_{\bigcup_{k=1}^{\lfloor K\log t\rfloor}\{\tau\circ T^k\ge t/(100(1+K\log t))\}}\right]\ll t^{
		m
		%\frac 12-\frac 1{90}
	}\log t(t/\log t)^{-2}=t^{m-2}(\log t)^3\, ,\label{Nt100b}
\end{align}
where we used $\tau\le t^m
%{\{\frac 12-\frac 1{90}}\}}
%\le  t^m
%{\frac 12-\frac 1{90}}
$  
and $\mu\left(\bigcup_{k=1}^{\lfloor K\log t\rfloor}\{\tau\circ T^k\ge z\}\right)\le K\log t\, \mu(\tau>z)\ll z^{-2}\log t$. On the other hand, since
$\tau-|\widetilde\kappa|$ is uniformly bounded by some constant $L_0$, 
\begin{align}
\nonumber&\mathbb E_\mu\left[\tau 1_{\{t^m\le \tau\le t/(100(1+K\log t))\}}. \mathbf 1_{\bigcup_{k=1}^{\lfloor K\log t\rfloor}\{\tau\circ T^k\ge t/(100(1+K\log t))\}}\right]\\
\nonumber&\le\sum_{z:t^m-L_0\le |z| \le t/(100(1+K\log t))+L_0}\hspace{-20mm}
(|z|+L_0)\mu\left(\widetilde\kappa=z,\quad\exists
k=1,...,K\log t,\,  |\widetilde\kappa|\circ T^k>\frac t{100 K\log t}-L_0\right)\\
&\quad\ll\sum_{z\in supp(\widetilde\kappa)\, :\, |z|\ge t^m-L_0}(|z|+L_0)|z|^{-3-\frac 2{45}}\ll t^{-m(1+\frac 2{45})}=t^{m-2}\, ,\label{Nt100c}
\end{align}
where we apply~\eqref{controltimelog} with $V=\frac{2K}m$ (indeed, for $t$ large enough, $K\log t\le V\log(t^m-L_0+2)$). Thus, the last bound of the sublemma follows from~\eqref{Nt100},~\eqref{Nt100a},~\eqref{Nt100b} and~\eqref{Nt100c} since $m-2=-\frac{47}{46}<-1$.
\end{pfof}

Lemma~\ref{B1second} in the case $d=1$ follows from the following result.
\begin{sublemma}\label{LDNt}
\[
	\widetilde\nu\left(B_0\cap\{\widetilde N_t\le c_1t\}\right)\le  (1+2K_0)^2\nu(N_t\le c_1t)\ll (\log t)/t\, .
\]
\end{sublemma}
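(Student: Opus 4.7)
\textbf{Proof plan for Sublemma~\ref{LDNt}.}

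The first inequality is immediate from $\mathbb{Z}^d$-periodicity, exactly as in Sublemma~\ref{Nt<logt}: the ball $B_0\subset\widetilde{\mathcal M}$ meets at most $(1+2K_0)^2$ copies of the fundamental domain $\mathcal M$, and the collision count $\widetilde N_t$ agrees with $N_t$ on each copy. For the second inequality, set $n:=\lfloor c_1 t\rfloor+1$ and use the suspension representation of $(\mathcal M,(\phi_t)_t,\nu)$ over $(M,T,\mu)$: for $\mathbf x=\phi_u(x)\in\widehat{\mathcal M}$, the event $\{N_t(\mathbf x)\le c_1t\}$ is $\{N_{t+u}(x)\le c_1 t\}=\{u<\tau_n(x)-t\}$. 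Integrating over $u\in[0,\tau(x))$,
\begin{equation*}
\nu(N_t\le c_1 t)=\frac{1}{\mu(\tau)}\mathbb E_\mu\bigl[\min(\tau,(\tau_n-t)_+)\bigr]\le \frac{1}{\mu(\tau)}\mathbb E_\mu\bigl[\tau\cdot \mathbf 1_{\{\tau_n>t\}}\bigr],
\end{equation*}
so it suffices to prove $\mathbb E_\mu[\tau\cdot \mathbf 1_{\{\tau_n>t\}}]\ll \log t/t$.

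I would split at threshold $t/2$. Write
\begin{equation*}
\mathbb E_\mu[\tau\cdot\mathbf 1_{\{\tau_n>t\}}]\le \mathbb E_\mu[\tau\cdot \mathbf 1_{\{\tau>t/2\}}]+\mathbb E_\mu[\tau\cdot \mathbf 1_{\{\tau\le t/2,\,\tau_n>t\}}].
\end{equation*}
The first piece is $O(1/t)$ directly from $\mu(\tau>s)\sim cs^{-2}$ (cf.~\eqref{tailV}). For the second, the condition $\tau\le t/2$ combined with $\tau_n>t$ forces $\sum_{k=1}^{n-1}\tau\circ T^k>t/2$, so the integrand is bounded by $(\tau\wedge t/2)\cdot(\mathbf 1_{\{\tau_{n-1}>t/2\}}\circ T)$. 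Using exponential decay of correlations~\eqref{Deco2} for the billiard map (approximating $\mathbf 1_{\{\tau_{n-1}>t/2\}}$ by a dynamically H\"older observable, allowed because $\{\tau_{n-1}>t/2\}$ has small-boundary measure), I would decouple to
\begin{equation*}
\mathbb E_\mu\bigl[(\tau\wedge t/2)\cdot (\mathbf 1_{\{\tau_{n-1}>t/2\}}\circ T)\bigr]=\mathbb E_\mu[\tau\wedge t/2]\cdot\mu(\tau_{n-1}>t/2)+o(\log t/t).
\end{equation*}

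It remains to estimate $\mu(\tau_{n-1}>t/2)$. Truncate at level $\theta$ and split
\begin{equation*}
\mu(\tau_{n-1}>t/2)\le(n-1)\mu(\tau>\theta)+\mu\Bigl(\sum_{k=0}^{n-2}(\tau\wedge\theta)\circ T^k>t/2\Bigr).
\end{equation*}
The first term is $\ll n/\theta^2$. Since $\mathbb E_\mu[\tau\wedge\theta]\le \mu(\tau)$ and $n\mu(\tau)\le c_1 t\mu(\tau)<t/1000$ by our choice of $c_1$, the deviation $t/2-n\mu(\tau)\asymp t$ and Chebyshev yields a bound $\ll \mathrm{Var}\bigl(\sum(\tau\wedge\theta)\circ T^k\bigr)/t^2$. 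Here the diagonal contribution $n\,\mathbb E_\mu[(\tau\wedge\theta)^2]\ll n\log\theta$ is immediate from $\mu(\tau>s)\sim cs^{-2}$, and the off-diagonal covariances sum to the same order thanks to exponential decay of correlations of the billiard map for the (H\"older-approximated) truncated $\tau$, giving $\mathrm{Var}\ll n\log\theta$ and hence $\mu(\tau_{n-1}>t/2)\ll n/\theta^2+n\log\theta/t^2$. Choosing $\theta\asymp t/\sqrt{\log t}$ and $n=c_1t$ balances both summands at $O(\log t/t)$. Combining all pieces, $\mathbb E_\mu[\tau\cdot\mathbf 1_{\{\tau_n>t\}}]\ll 1/t+\mu(\tau)\log t/t=O(\log t/t)$, as required. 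The main technical obstacle is the variance bound $\mathrm{Var}(\sum(\tau\wedge\theta)\circ T^k)\ll n\log\theta$: a crude $L^\infty$ decay-of-correlation estimate only gives $O(n\theta^2)$, which is too weak, so one must exploit the precise heavy-tailed structure of $\tau$ consistent with the nonstandard CLT $\tau_n/\sqrt{n\log n}\Rightarrow\mathcal N$ of~\cite{SV07}.
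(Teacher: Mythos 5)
Your first inequality and the suspension-flow identity $\nu(N_t\le c_1t)=\frac{1}{\mu(\tau)}\mathbb E_\mu[\min(\tau,(\tau_n-t)_+)]$ are correct, and the subsequent route you take — truncation plus Chebyshev applied directly at time $0$ — is genuinely different from the paper's argument. The paper instead exploits flow invariance to re-center at time $t/2$, writing $\nu(N_t\le c_1t)=\nu(N_t\circ\phi_{-t/2}\le c_1t)$, which turns the event into a \emph{two-sided} one: the forward orbit from time $t/2$ and the backward orbit from time $t/2$ each have at most $c_1t$ collisions on a $t/2$-long window. After inserting a $K\log t$-step decorrelation buffer (whose contribution is controlled precisely by Sublemma~\ref{Nt<logt} and Lemma~\ref{lem:B0}), the two halves decouple via~\eqref{Deco2}, and the joint LLD of Lemma~\ref{lem:lld} gives $\mu(\tau_{\lfloor c_1t\rfloor}>c t)\ll\sqrt{\log t/t}$ on each side. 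The product of the two one-sided tails is what produces $(\log t)/t$. Your one-sided formulation gets no such squaring.

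This matters because your proposal has two genuine gaps.

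First, the decoupling step $\mathbb E_\mu[(\tau\wedge t/2)(\mathbf 1_{\{\tau_{n-1}>t/2\}}\circ T)]=\mathbb E_\mu[\tau\wedge t/2]\,\mu(\tau_{n-1}>t/2)+o(\log t/t)$ does not follow from~\eqref{Deco2} as stated. The two observables are separated by a time gap of only one step: $\tau\wedge t/2$ depends on the orbit at time $0$ while $\mathbf 1_{\{\tau_{n-1}>t/2\}}\circ T$ depends on times $\ge1$, so the decorrelation factor is $e^{-a}$, a fixed constant, and the error term in~\eqref{Deco2} is of size $O(\|\tau\wedge t/2\|_\infty)=O(t)$, not $o(\log t/t)$. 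The H\"older-approximation you mention does not change this; what is missing is a $K\log t$-step buffer between the two observables, together with a separate argument controlling the first $K\log t$ flight times inside the buffer. The paper builds exactly this machinery (Sublemma~\ref{Nt<logt}, Lemma~\ref{lem:B0}, the discretization over $a'$) before it is ever allowed to apply~\eqref{Deco2}.

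Second, the variance bound $\mathrm{Var}\bigl(\sum_{k<n-1}(\tau\wedge\theta)\circ T^k\bigr)\ll n\log\theta$, which you correctly single out as "the main technical obstacle," is not something you establish, and obtaining it is not a routine application of decay of correlations. With $\|\tau\wedge\theta\|_\infty\le\theta$, the $L^\infty$ correlation bound gives off-diagonal covariances $\ll\theta^2 e^{-aj}$, hence $O(n\theta^2)$, as you note. Interpolating against the Cauchy--Schwarz bound $|Cov|\le\mathrm{Var}(\tau\wedge\theta)\asymp\log\theta$ for $j$ up to $j^*\asymp(2/a)\log\theta$ only improves this to $O(n(\log\theta)^2)$. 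Optimizing $\theta$ then yields $\mu(\tau_{n-1}>t/2)\ll(\log t)^2/t$, one $\log$ factor short of the target. To sharpen $(\log\theta)^2$ to $\log\theta$ you would need a genuinely better bound on the near-diagonal covariances, exploiting the corridor structure in the spirit of~\eqref{controltimelog}; in effect you would be re-deriving a weakened form of the local large deviation Lemma~\ref{lem:lld} inside your Chebyshev estimate. The paper sidesteps this entirely by invoking Lemma~\ref{lem:lld} as a black box and getting the rate from squaring, which is both cleaner and, as it stands, the only complete argument on the table.
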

\begin{pfof}{the sublemma}
Again the first inequality follows from the fact that $B_0$ contains at most
$(2K_0+1)^2$ copies of $\mathcal M$. The main issue is to establish
the last upper bound.
Since the flow $\phi$ preserves $\nu$, $\nu(N_t\le c_1t)=\nu(N_t\circ \phi_{-t/2}\le c_1t)$. Furthermore, the fact that $N_t\circ \phi_{-t/2}\le c_1t$ means that there are at most $c_1t$ collisions in the time interval $[-t/2;t/2]$, so at most $c_1t$ collisions in both time intervals $[-t/2;0]$
and $[0;t/2]$, which implies that both $\tau_{\lfloor c_1t\rfloor}$
and $\tau-\tau_{-\lfloor c_1 t\rfloor}$ are larger than $t/2$,  writing as usual $\tau_{-k}:=-\sum_{m=-k}^{-1}\tau\circ T^m$ and $\tau_m(\phi_u(x)):=\tau_m(x)$ for all $(x,u)\in\widehat {\mathcal M}$. 
Therefore
	\begin{align}
    \nu(N_t\le c_1t)=\nu(N_t\circ \phi_{-t/2}\le c_1t)\le \nu(\tau_{\lfloor c_1t\rfloor}>t/2,\tau-\tau_{-\lfloor c_1t\rfloor}>t/2    )\, ,\label{intermediatedim1b2}
	\end{align}
	In what follows we show that this quantity is $o(a_t^{-1})$.
By  Lemma~\ref{lem:B0}, $\nu(\tau> a_t^2)\ll a_t^{-2}$ and by Sublemma~\ref{Nt<logt}, $\nu(\tau_{\lfloor K\log t\rfloor}>t/100)\ll (\log t)/t $ and 
$\nu(\tau-\tau_{-\lfloor K\log t\rfloor}>t/100)\ll (\log t)/t $ (up to time reversibility of $\Phi$). This combined with~\eqref{intermediatedim1b2} ensures that
	\begin{align}\label{Nt<c1t}
	\nu(N_{t}\le c_1t)
	&\le p_t+\mathcal O((\log t)/t)\, ,
	\end{align}
	with
	\begin{align*}
	p_t&:=\nu\left(\tau<a_t^2,\tau_{\lfloor c_1 t\rfloor}>t/2,\tau-\tau_{-\lfloor c_1 t\rfloor}>t/2, \tau_{\lfloor K\log t\rfloor}<t/100
	,\tau-\tau_{-\lfloor K\log t\rfloor}<t/100
	\right)\\
	&=\sum_{a'=0}^{a_t^2}\mu\left(\tau>a',\, 
	\tau_{\lfloor c_1 t\rfloor}>t/2,\tau-\tau_{-\lfloor c_1 t\rfloor}>t/2, \tau_{\lfloor K\log t\rfloor}<t/100
	,\tau-\tau_{-\lfloor K\log t\rfloor}<t/100
	\right)
	\\
	&\le \sum_{a'=0}^{a_t^2}\mu\Big(|\widetilde\kappa|>a'-2\Vert H_0\Vert_\infty,\, 
	\tau_{\lfloor c_1t\rfloor}>t/2,\\
	&
	\quad\quad\quad\quad\quad\quad\quad\quad\quad\quad,\tau+\tau_{-\lfloor c_1t\rfloor}>t/2, \tau_{\lfloor K\log t\rfloor}<t/100,\,  \tau-\tau_{-\lfloor K\log t\rfloor}<t/100
	\Big)
	\, ,
	\end{align*}
	with the function $H_0$ appearing in~\eqref{coboundPsi} (since $|V|=\tau$ when $d=2$). 
	Observe that
	\[\tau_{\lfloor c_1t\rfloor}\circ T^{\lfloor K\log t\rfloor}>\tau_{\lfloor c_1t\rfloor-\lfloor K\log t\rfloor}\circ T^{\lfloor K\log t\rfloor}
	=\tau_{\lfloor c_1t\rfloor}-\tau_{\lfloor K\log t\rfloor}
	\]
	and that
	\[-\tau_{-\lfloor c_1t\rfloor}
	\circ T^{-\lfloor K\log t\rfloor}
	>-\tau_{-\lfloor c_1t\rfloor	+\lfloor K\log t\rfloor}\circ T^{-\lfloor K\log t\rfloor}
	= -\tau_{-\lfloor c_1t\rfloor
	}+\tau-\tau+\tau_{-\lfloor K\log t\rfloor}
	\, .
	\]
	It follows that 
	%up to an error of $o\left(t^{-\frac 12-\frac 1{180}}\right)$,
	\begin{align*}
%	&\nu(N_t<c_1t)\ll	\nu\left(\tau_{\lfloor c_1t\rfloor}>t/2,\tau+\tau_{-\lfloor c_1t\rfloor}>t/2    \right)\\
	p_t&\le\sum_{a'=0}^{a_t^2\log R_0}\mu\left(|\widetilde\kappa|>a'-2\Vert H_0\Vert_\infty,\, 
	\tau_{\lfloor c_1t\rfloor}\circ T^{\lfloor K\log t\rfloor}>49t/100,|\tau_{-\lfloor c_1t\rfloor}|\circ T^{-\lfloor K\log t\rfloor}>49t/100\right).
	\end{align*}
Recall that, for all $m\in\mathbb Z$, $|\widetilde\kappa_{m}|\ge \tau_{m}-2m\Vert H_0\Vert_\infty$
and that, due to~\eqref{eq:defc1}, $2c_1\Vert H_0\Vert_\infty<1/100$. Thus,
		\begin{align*}
	p_t
	&\le\sum_{a'=0}^{a_t^2\log R_0}\mu\left(|\widetilde\kappa|>a'-2\Vert H_0\Vert_\infty,\, 
	|\widetilde\kappa|_{\lfloor c_1t\rfloor}\circ T^{\lfloor K\log t\rfloor}>48t/100,-|\widetilde\kappa|_{-\lfloor c_1t\rfloor}\circ T^{-\lfloor K\log t\rfloor}>48t/100\right)\, .
	\end{align*}
	Thus, using~\eqref{Deco2} (twice) combined with~\eqref{Deco1},
	\begin{align}\label{intermediatedim1b}
 p_t&\le \sum_{a'=0}^{a_t^2\log R}
	\left(\mu\left(|\widetilde\kappa|>a'-2\Vert H_0\Vert_\infty\right)\left(\mu\left( 
	|\widetilde\kappa|_{\lfloor c_1t\rfloor}>48t/100\right)\right)^2+\mathcal O(t^{-100})\right)\, .
	\end{align}
	By Lemma~\ref{lem:lld} (with $d=0$) and since $\tau$ is $\mu$-integrable,
	\begin{align*}
	\mu\left( |\widetilde\kappa|_{\lfloor c_1t\rfloor}>48t/100\right)
	&\le\mu\left( \tau_{\lfloor c_1t\rfloor}>47t/100\right)\\
	&\le  \mu(\tau_{\lfloor c_1t\rfloor}>t^2)+\mu\left( 47t/100<\tau_{\lfloor c_1t\rfloor}<t^2\right)\\
	&\ll \frac{\mathbb E_{\mu}[\tau_{\lfloor c_1 t\rfloor}]}{t^2}+\sum_{k= 2t/5}^{t^2}\frac{t}{\sqrt{t\log t}}\frac{\log (k-\lfloor c_1t\rfloor\mu(\tau))}{1+(k-\lfloor c_1t\rfloor\mu(\tau))^2}\\
	&\ll t^{-1}+\sum_{k\ge t(2/5-c_1\mu(\tau))}\frac{t}{\sqrt{t\log t}}\frac{\log t}{1+k^2}\\
	&\ll \frac{t}{\sqrt{t\log t}}\frac{\log t}t=\sqrt{\frac{\log t}t}\, . 
	\end{align*}
Combining this with~\eqref{intermediatedim1b} and~\eqref{Nt<c1t}, we infer
	\begin{align*}
	\nu(N_t<c_1t)&\le \sum_{a'=0}^{a_t^2\log R_0}
	\left((a'+1)^{-2}\frac{\log t}t+\mathcal O(t^{-100})\right)+\mathcal O((\log t)/t)=\mathcal O(\log t/t)\, .
	\end{align*}
%	This combined with~~\eqref{intermediatedim1b} ends the proof of the sublemma.
% in the case $d=1$.
	% \begin{equation}\label{intermdim1}
	% \sum_{n= K \log t}^{c_1 t}\sum_{a,b}\mu(A'_{a,b,n,t})\le \nu(N_t<c_1t)=o\left(t^{-\frac 12-\frac 1{180}}\right)=o(a_t^{-d})\quad\mbox{when }d=1\, .
	% \end{equation}
\end{pfof}

We take the line below to quickly complete

\begin{pfof}{~Proposition~\ref{prop:L}}
We observe that
\begin{align*}
\mu(\tau_{\lfloor c_1 t\rfloor}> t)&\le\frac{\widehat\nu(\{(x,u)\in\widehat{\mathcal M}\, :\, \tau_{\lfloor c_1 t\rfloor}(x)> t,\ u\le\min\tau\})}{\min\tau}\\
&\le \frac{\widehat\nu(\{(x,u)\in\widehat{\mathcal M}\, :\, N_t(x)< \lfloor c_1t\rfloor,\ u\le\min\tau\})}{\min\tau}\\
&\le \frac{\nu(N_t\le  \lfloor c_1t\rfloor)}{\min\tau} \le \frac{\nu(N_t\le  c_1t)}{\min\tau}\, ,
\end{align*}
and conclude due to Sublemma~\ref{LDNt}.
\end{pfof}

We continue with

\begin{pfof}{Lemma~\ref{B1second}}
When $d=1$, the result follows from Sublemma~\ref{LDNt} since
\[
\widetilde\nu\left(B_0\cap\Phi_{-t}(B_0)\cap\{\widetilde N_t\le c_1t\}\right)
\le \widetilde\nu\left(B_0\cap\{\widetilde N_t\le c_1t\}\right)\ll (\log t)/t=o(a_t^{-1})\, .
\]
Unfortunately this estimate is not enough when $d=2$. We assume from now on throughout this proof that $d=2$. Recall that we have to prove that
\[
 \widetilde\nu\left(B_0\cap\Phi_{-t}(B_0)\cap\{\widetilde N_t\le c_1t\}\right)=o(a_t^{-2})=o((t\log t)^{-1})\quad\mbox{(assuming }d=2\mbox{)}\, .
\]
	We start with some preliminary calculation that will allow us
	to argue that we can neglect the configurations with more than one long free flight (of length larger than $t/(100(K\log t)^2)$) among the $K\log t$ future and past collision times.
Let us write 
\[D_t:=\{\exists k,\ell\, :\,  k\ne \ell; |k|,|\ell|\le K\log t,\min(\tau\circ T^k,\tau\circ T^\ell)>t/(100(K\log t)^2)\}\subset \mathcal M\]
and $\widetilde D_t$ for the corresponding event in $\widetilde{\mathcal M}$.
\begin{sublemma}\label{twolongff}
	For all $\varepsilon\in(0,\frac 1{45})$,
\begin{equation*}\label{ESTIM0}
\widetilde\nu\left(B_0\cap\Phi_{-t/2}(\widetilde D_t)\right)\le (2K_0+1)^2\nu(
D_t)= o(t^{-1-\frac 1{45}+\varepsilon})=o(a_t^{-2})\, ,\quad\mbox{as }t\rightarrow +\infty\, .
\end{equation*}
\end{sublemma}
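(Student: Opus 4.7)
The first inequality is a direct consequence of the $\mathbb Z^d$-periodicity of the setup. Since $\widetilde D_t=\pi^{-1}(D_t)$ is $\mathbb Z^d$-invariant and $B_0$ has diameter at most $2K_0$, it intersects at most $(2K_0+1)^2$ fundamental cells of the $\mathbb Z^d$-action on $\widetilde{\mathcal M}$. Combining this with $\Phi$-invariance of $\widetilde\nu$ and $\phi$-invariance of $\nu$ (using that $\Phi_{-t/2}(\widetilde D_t)=\pi^{-1}(\phi_{-t/2}(D_t))$), one gets $\widetilde\nu(B_0\cap\Phi_{-t/2}(\widetilde D_t))\le (2K_0+1)^2\,\nu(\phi_{-t/2}(D_t))=(2K_0+1)^2\,\nu(D_t)$.

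The substantive claim is $\nu(D_t)=o(t^{-1-1/45+\varepsilon})$ (where, as noted, we are in the case $d=2$). The plan is to pass to the suspension-flow formula $\nu(D_t)=\mu(\tau)^{-1}\int_{D_t^{(M)}}\tau\,d\mu$, where $D_t^{(M)}\subset M$ is the base set defined by the same condition on $T$-iterates, and then apply a union bound over the $O((\log t)^2)$ pairs $(k,\ell)$ with $k\ne\ell$, $|k|,|\ell|\le K\log t$. For each pair it suffices to estimate $I_{k,\ell}:=\int\tau\,\mathbf 1_{\{\tau\circ T^k>s,\,\tau\circ T^\ell>s\}}\,d\mu$, with $s=t/(100(K\log t)^2)$.

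For each pair, using $\tau=|\widetilde\kappa|$ modulo the bounded coboundary $H_0$ (via \eqref{linktauV} and \eqref{coboundPsi}), the joint event is contained in $\{|\widetilde\kappa|>s-C,\,|\widetilde\kappa|\circ T^j>s-C\}$ with $j=\ell-k$, $|j|\le 2K\log t$. Decomposing by the value $\widetilde\kappa=z$, one obtains the joint-tail bound $\mu(\tau\circ T^k>s,\,\tau\circ T^\ell>s)\ll s^{-2-2/45}$: in the range $|z|\in[s-C,(s-C)^{5/4}]$ the condition $s-C\le|z|^{4/5}$ gives the inclusion $\{|\widetilde\kappa|\circ T^j>s-C\}\subset\{|\widetilde\kappa|\circ T^j>|z|^{4/5}\}$, which lets one apply \eqref{controltimelog} (with $V$ chosen so that $2K\log t\le V\log(|z|+2)$ throughout this range) to bound each term by $O(|z|^{-3-2/45})$; in the complementary range $|z|>(s-C)^{5/4}$ the marginal tail \eqref{tailkappa} suffices. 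Crucially, one sums these pointwise bounds not over all of $\mathbb Z^d$ but over the corridor-concentrated $\supp(\widetilde\kappa)$, where $\sum_{|z|\ge R,\,z\in\supp(\widetilde\kappa)}|z|^{-3-\alpha}\asymp R^{-2-\alpha}$, consistently with the $R^{-2}$ marginal tail \eqref{tailV}; this yields the promised exponent $-2-2/45$.

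Finally, the $\tau$-weight is absorbed by writing $I_{k,\ell}=\int_0^\infty\mu(\tau>u,\{\tau\circ T^k>s,\,\tau\circ T^\ell>s\})\,du$ and splitting at $u=s$: on $[0,s]$ the joint-tail bound gives a contribution of at most $s\cdot s^{-2-2/45}=s^{-1-2/45}$, while on $[s,\infty)$ the same decomposition-by-$z$ argument applied to the triple event $\{\tau>u,\,\tau\circ T^k>s,\,\tau\circ T^\ell>s\}$ (dropping at most one constraint inside \eqref{controltimelog} when needed) produces an additional $O(s^{-1-2/45})$. Summing the $O((\log t)^2)$ pairs and substituting $s\sim t/(\log t)^2$ yields $\nu(D_t)\ll t^{-1-2/45}(\log t)^c$ for some explicit $c$, hence $\nu(D_t)=o(t^{-1-2/45+\varepsilon'})$ for every $\varepsilon'>0$, and in particular $o(t^{-1-1/45+\varepsilon})$ for $\varepsilon\in(0,1/45)$, which in turn is $o(a_t^{-2})$ since $t^{-1/45+\varepsilon}\log t\to 0$ in this range. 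The main obstacle is extracting the sharper exponent $-2-2/45$ in the joint tail by exploiting the corridor-concentrated support of $\widetilde\kappa$ rather than a crude pointwise sum of \eqref{controltimelog} over $\mathbb Z^d$, and then keeping enough margin after the $\tau$-weight to beat the $t^{-1-1/45}$ threshold; extending the argument to handle the three-fold event for $u>s$ is a technical but not conceptually new step.
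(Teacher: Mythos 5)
Your proof is correct and shares the key ingredient with the paper, namely the double-probability estimate~\eqref{controltimelog} combined with a union bound over the $O((\log t)^2)$ pairs $(k,\ell)$, but you handle the $\tau$-weight in the suspension-flow formula by a different route. The paper applies H\"older's inequality $\mathbb E_\mu[\tau\,\mathbf 1_A]\le\Vert\tau\Vert_{L^{p}}\mu(A)^{1/q}$ with $p<2<q$, $1/p+1/q=1$, exploiting $\tau\in L^p$ for all $p<2$; this yields $\nu(D_t)\ll(\log t)^2\,s^{-(2+2/45)/q}$ with $s\asymp t/(\log t)^2$, and letting $q\to 2^+$ gives the claimed exponent $-1-1/45+\varepsilon$. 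You instead use the layer-cake formula and split the $u$-integral at $u=s$, dropping one constraint in the $u>s$ regime so as to reduce to a double event to which~\eqref{controltimelog} applies; this gives $I_{k,\ell}\ll s^{-1-2/45}$ without the $1/q$-loss and hence the marginally sharper conclusion $\nu(D_t)\ll t^{-1-2/45}(\log t)^c$, which of course implies the statement. Your remark that the tail sums run over the corridor-concentrated support of $\widetilde\kappa$ (so that $\sum_{|z|>R,\,z\in\supp(\widetilde\kappa)}|z|^{-3-\alpha}\asymp R^{-2-\alpha}$) is also what makes the paper's one-dimensional sum in~\eqref{FFF0dim2} legitimate, and is worth making explicit. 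One small slip: where you write "the condition $s-C\le|z|^{4/5}$" in justifying the inclusion $\{|\widetilde\kappa|\circ T^j>s-C\}\subset\{|\widetilde\kappa|\circ T^j>|z|^{4/5}\}$, the inequality should read $|z|^{4/5}\le s-C$ (equivalent to $|z|\le(s-C)^{5/4}$, which is the range you state). The paper's H\"older argument is shorter; yours is more elementary and makes transparent where the exponent comes from.
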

\begin{pfof}{the sublemma.}
Again, as in the proofs of Sublemmas~\ref{Nt<logt} and~\ref{LDNt}, the first inequality follows from the fact that $\widetilde{\mathcal M}$ is made of at most $(2K_0+1)^2$ copies of $\mathcal M$ and that $\phi$ preserves the measure $\nu$. It remains to prove the last estimate. 
Using the suspension flow representation and the H\"older inequality applied for any $p<2<q$ such that $\frac 1p+\frac 1q=1$ and close enough to 2, we observe that
	\begin{align*}
	\nu(D_t)&\le 2 
	\sum_{-K\log t\le k
		<\ell\le K\log t}
	\mathbb E_\mu[\tau. \mathbf 1_{\tau\circ T^k>t/(100(K\log t)^2), 
		\, \tau\circ T^\ell>t/(100(K\log t)^2)}]\\
	\nonumber&\le 2
	\sum_{-K\log t\le k
		<\ell\le K\log t}\Vert\tau\Vert_{L^{p}}
	\left(\mu(\tau\circ T^k>t/(100(K\log t)^2),\tau\circ T^\ell>t/(100(K\log t)^2))\right)^{\frac 1q}
	\\
	\nonumber&\le 2
	\sum_{-K\log t\le k,\ell\le K\log t\, :\, k\ne \ell}\Vert\tau\Vert_{L^{p}}
	\left(\sum_{i\in Supp(\widetilde\kappa):|i|>t/(100(K\log t)^2)} \mu(\widetilde \kappa\circ T^k=i,|\widetilde\kappa|\circ T^\ell>|i|^{\frac 45}\right)^{\frac 1q}\, .
	\end{align*}
Finally applying~\eqref{controltimelog} with $V=4K$ and $t$ large enough so that $2K\log t <V\log(2+\frac{t}{100(K\log t)^2})$, we conclude that, for $t$ large enough,
	\begin{align}	\nonumber
\nu(D_t)&\ll 
	(\log t)^2
	\left(\sum_{i>t/(100(K\log t)^2)} i^{-3-\frac 2{45}}\right)^{\frac 1q}\, ,\label{FFF0dim2}
	\end{align}
and this bound holds true for an arbitrary real number $q>2$.
\end{pfof}

We are back to the proof of Lemma~\ref{B1second} assuming that $d=2$.
We will decompose the quantity we have to estimate in $p_{t,1}+p_{t,2}$,
distinguishing the case where the free flight at time $t/2$ is larger or smaller than $t/4$.

	{\bf{Estimate when the free flight at time $t/2$ 
			is larger than $t/4$.}} In this part, we study
\begin{equation}
p_{t,1}:=\widetilde\nu\left(B_0\cap\Phi_{-t}(B_0)\cap\{\widetilde N_t\le c_1t,\tau\circ \Phi_{\frac t2}>t/4\}\right)\, .
\end{equation}
We will use the fact that we can neglect the trajectories such that $\tau\circ \Phi_{t/2}>a_t^3$ since, due to Lemma~\ref{lem:B0},
\begin{equation}\label{B0t/2}
\widetilde\nu(B_0\cap\{\tau\circ \Phi_{t/2}>a_t^3\})\le (2K_0+1)^2\nu\left(\tau\circ \phi_{t/2}>a_t^3\right)\ll a_t^{-3}\, .
\end{equation}
It follows from~\eqref{B0t/2} and from  Sublemma~\ref{twolongff} that
\begin{equation}
p_{t,1}=\widetilde\nu\left(B_0\cap\Phi_{-t}(B_0)\cap\left\{\widetilde N_t\le c_1t\right\}\cap\Phi_{-\frac t2}\left(\left\{\tau\in\left[\frac t4,a_t^3\right]\right\}\setminus\widetilde D_t\right)\right)+o(a_t^{-2})\, .
\end{equation}
Let us study the event appearing in the above formula.\\
The fact that $\tau\circ \Phi_{\frac t2}>t/4$ and that $\Phi_{t/2}\not\in D_t$ implies that the $K\log t$ free flights just before and just after the one occurring at time $t/2$ have  all length smaller than $t/(100(K\log t)^2)$ and thus that
	\begin{equation}\label{tauKlogt}\tau_{\lfloor K\log t\rfloor}\circ \widetilde T\circ\Phi_{t/2}<t/(100(\lfloor K\log t\rfloor))\quad\mbox{and}\quad |\tau_{-\lfloor K\log t\rfloor}|\circ\Phi_{t/2}<t/(100(\lfloor K\log t\rfloor))\, .
	\end{equation}
Recall that the configuration is in $B_0\cap\Phi_{-t}(B_0)$ and satisfies
$\tau\circ\Phi_{t/2}>t/4$. Since $\widetilde N_t\le c_1t$ and  since we are in dimension 2, the free flight of length $t/4$ made at time $t/2$ has to be canceled by the sum of the other (at most $(\lfloor c_1t\rfloor-1)$) free flights made during the time interval $[0;t]$. So,
\begin{equation}\label{tauc1t}
|\tau_{-\lfloor c_1 t\rfloor}|\circ\Phi_{t/2}>t/8\quad\mbox{or}\quad 
(\tau_{\lfloor c_1 t\rfloor}-\tau)\circ\Phi_{t/2}>t/8\, .
\end{equation}
The combination of Conditions~\eqref{tauKlogt} and~\eqref{tauc1t} implies
that at least one of the two next conditions should holds true
	\[
	|\tau_{-\lfloor c_1 t\rfloor}|\circ \widetilde T^{-\lfloor K\log t\rfloor}\circ\Phi_{t/2}>
	|\tau_{-\lfloor c_1 t\rfloor}|\circ\Phi_{t/2}-|\tau_{-\lfloor K\log t\rfloor}|\circ\Phi_{t/2}>11t/100
	\]
	or
	\[
	\tau_{\lfloor c_1 t\rfloor}\circ \widetilde T^{\lfloor K\log t\rfloor}\circ\Phi_{t/2}>
	(\tau_{\lfloor c_1 t\rfloor}-\tau)\circ\Phi_{t/2}-\tau_{\lfloor K\log t\rfloor}\circ \widetilde T\circ\Phi_{t/2}>11t/100\, .
	\]
Note that the second  condition above corresponds to the first one above one up to composing by $\Phi_t$ and up to using time reversal.
Therefore, 

\begin{align*}
p_{t,1}&\le 2\widetilde\nu\left(B_0\cap\left\{\tau\circ \Phi_{\frac t2}\in\left[\frac t4;a_t^3\right],\ \tau_{\lfloor c_1 t\rfloor}\circ \widetilde T^{\lfloor K\log t\rfloor}\circ\Phi_{t/2}>\frac{11t}{100} \right\}\right)   +o(a_t^{-2})\\
&\le 2(2K_0+1)^2\nu\left(\tau\circ \phi_{\frac t2}\in\left[\frac t4;a_t^3\right],\ \tau_{\lfloor c_1 t\rfloor}\circ T^{\lfloor K\log t\rfloor}\circ\phi_{t/2}>\frac{11t}{100} \right)   +o(a_t^{-2})
\end{align*}
using again the fact that $B_0$ is made of at most $(2K_0+1)^2$ copies
of $\mathcal M$. Now using the $\phi_{t/2}$-invariance of $\nu$, we obtain
\begin{align}
p_{t,1}&\le 2(2K_0+1)^2\nu\left(\tau\in\left[\frac t4;a_t^3\right],\ \tau_{\lfloor c_1 t\rfloor}\circ T^{\lfloor K\log t\rfloor}>\frac{11t}{100} \right)   +o(a_t^{-2})\\
	\nonumber&\ll o(a_t^{-2})+
	\sum_{a'=t/4}^{a_t^3}\mu\left(\tau>a',
	\tau_{\lfloor c_1 t\rfloor}\circ T^{\lfloor K\log t\rfloor}>11t/100\right) \\
	\nonumber&\ll o(a_t^{-2})
	+\sum_{a'=t/4}^{a_t^3}\mu\left(|\kappa|>a'-2\Vert H_0\Vert_\infty,|\kappa|_{\lfloor c_1 t\rfloor}\circ T^{\lfloor K\log t\rfloor}>10t/100 \right)\\
	\nonumber&\ll o(a_t^{-2})+
	\sum_{a'=t/4}^{a_t^3}\left(\mu\left(|\kappa|>a'-2\Vert H_0\Vert_\infty\right)\mu\left(|\kappa|_{\lfloor c_1 t\rfloor}>10t/100\right)+\mathcal O(t^{-100})\right)\\
	\label{AAA}&\ll o(a_t^{-2})+\sum_{a'=t/4}^{a_t^3}(a')^{-2}
	\mu\left(|\kappa|_{\lfloor c_1 t\rfloor}>t/10\right) .
%\nu(N_{t/10}<(10c_1)t/10)
	\end{align}
	But, using Lemma~\ref{lem:lld} (with $d=0$) and the
	$\mu$-integrability of $\tau$,
	\begin{align}
	\nonumber\mu\left( |\widetilde\kappa|_{\lfloor c_1 t\rfloor}>t/10\right)
	&\le\mu\left( \tau_{\lfloor c_1 t\rfloor}>9t/100\right)\\
	\nonumber&\le\mu\left( \tau_{\lfloor c_1 t\rfloor}>t^3\right)+\mu\left( 9t/100<\tau_{\lfloor c_1 t\rfloor}\le t^3\right)\\
	\nonumber &\ll \frac{\mathbb E_{\mu}[\tau_{c_1 t}]}{t^3}+\sum_{k= 9t/100}^{t^3}\frac{t}{\sqrt{t\log t}}\frac{\log (k-\lfloor c_1 t\rfloor\mu(\tau))}{1+(k-\lfloor c_1 t\rfloor\mu(\tau))^2}\\
	\nonumber &\ll t^{-2}+\sum_{k\ge t(9/100-c_1\mu(\tau))}\frac{t}{\sqrt{t\log t}}\frac{\log t}{1+k^2}\\
	&\ll t^{-2}+ \frac{t}{\sqrt{t\log t}}\frac{\log t}t=\sqrt{\frac{\log t}t}\, . \label{LD}
	\end{align}
	This together with~\eqref{AAA} implies that
	\begin{align}\label{BBB}
	p_{t,1}
	=o(a_t^{-2})\, .
	\end{align}
	
	{\bf{Estimate when the free flight at time $t/2$ is smaller that $t/4$.}}
	Let
	\begin{equation*}
	p_{t,2}:=\widetilde\nu\left(B_0\cap\Phi_{-t}(B_0)\cap\{\widetilde N_t\le c_1t,\tau\circ \Phi_{\frac t2}\le t/4\}\right)\, .
	\end{equation*}
%It follows from~\eqref{B0t/2} that
%\begin{equation}
%	p_{t,2}:=\widetilde\nu\left(B_0\cap\Phi_{-t}(B_0)\cap\{\widetilde N_t\le c_1t,\tau\circ \Phi_{\frac t2}\in t/4\}\right)+ o(a_t^{-2})\, .
%\end{equation}
Using again the fact that $B_0$ is made of at most $(2K_0+1)^2$ copies of $\mathcal M$ and that $\phi_{t/2}$ preserves $\nu$,
%	By the argument used in obtaining~\eqref{intermediatedim1b}, 
	\begin{align}
	p_{t,2}&\le (2K_0+1)^2\nu(N_t\le c_1t,\tau\circ\phi_{t/2}\le t/4)\label{controlpt2}\\
	 &\le (2K_0+1)^2\nu\left(
	N_{t/2}\le c_1t,N_{-t/2}\le c_1t, \tau\le t/4\right)\, .\nonumber
	\end{align}
	This together with Sublemma~\ref{twolongff} and time reversibility gives that
	\begin{align*}
	&\nu(N_t<c_1t,\tau\circ\phi_{t/2}\le t/4)\\
	&\le o(a_t^{-2})+ \nu\left(
	N_{t/2}\le c_1t,N_{-t/2}\le c_1t, \tau\le t/4
	,\min(\tau_{\lfloor K\log t\rfloor}-\tau,|\tau_{-\lfloor K\log t\rfloor})|\le t/100\right)\\
	% %&\le o(a_t^{-2})+ \nu(\tau_{\lfloor c_1 t\rfloor}>t/2,\tau_{-\lfloor c_1 t\rfloor}+\tau>t/2,\tau<t/4,\min(\tau_{\lfloor K\log t\rfloor}-\tau,\tau_{-\lfloor K\log t\rfloor})<t/100)\\
	&\le o(a_t^{-2})+2 \nu\left(
	% %\tau_{\lfloor c_1 t\rfloor}>t/2,\tau_{-\lfloor c_1 t\rfloor}+\tau>t/2,\tau<t/4,
	N_{t/2}\le c_1t,\, N_{-t/2}\le c_1t, \, \tau\le t/4, \, 
	|\tau_{-\lfloor K\log t\rfloor}|<t/100\right).
	\end{align*}
	Since we also know that
	\[
	|\tau_{-\lfloor c_1 t\rfloor}|\circ T^{-\lfloor K\log t\rfloor}=|\tau_{-\lfloor K\log t\rfloor-\lfloor c_1 t\rfloor}-\tau_{-\lfloor K\log t\rfloor}|
	>|\tau_{-\lfloor c_1 t\rfloor}-\tau|-|\tau_{-\lfloor K\log t\rfloor}-\tau|>\frac t2 -\frac t4-\frac t{100}\, .
	\] 
	we obtain
	\begin{align*}
	&\nu(N_t\le c_1t,\tau\circ\phi_{t/2}\le t/4)\le o(a_t^{-2})+2 \nu(
	N_{t/2}\le c_1t,\, %N_{-t/2}>c_1t,\, 
	% %\tau_{c_1t}>t/2,\tau_{-c_1t}>t/4,\tau_{-\lfloor K\log t\rfloor}<t/100)\\
	% %&\le o(a_t^{-2})+2 \nu(\tau_{\lfloor c_1 t\rfloor}>t/2,
	|\tau_{-\lfloor c_1 t\rfloor}|\circ T^{-\lfloor K\log t\rfloor}>24t/100,\tau\le t/4).
	\end{align*}
		Let
	\[
	A_{a',t'}=
	\{\tau\ge a'-4\Vert H_0\Vert_\infty,
	\tau_{\lfloor c_1 t\rfloor}-a'
	> 48t/100\}\, .
	\]
	Using again the representation by a suspension flow and the correlation estimate~\eqref{Deco2} combined with~\eqref{Deco1},
	\begin{align*}
	&\nu(N_t\le c_1t,\tau\circ\phi_{t/2}\le t/4)+o(a_t^{-2})\\
	&\le \sum_{a'=0}^{\lfloor t/4\rfloor}\mu\left(\tau\ge  a',\tau_{\lfloor c_1 t\rfloor}-a'\ge t/2,|\tau_{-\lfloor c_1 t\rfloor}|\circ T^{-\lfloor K\log t\rfloor}> 24t/100\right)\\
	&\le \sum_{a'=0}^{\lfloor t/4\rfloor}\mu\left(|\kappa|\ge a'-2\Vert H_0\Vert_\infty,|\kappa|_{\lfloor c_1 t\rfloor}-a'\ge 49t/100,\left||\kappa|_{-\lfloor c_1 t\rfloor}\right|\circ T^{-\lfloor K\log t\rfloor}> 23t/100\right)\\
	&\le \sum_{a'=0}^{\lfloor t/4\rfloor}\left(\mu\left(A_{a',t}\right)\mu\left(|\tau_{-\lfloor c_1 t\rfloor}|> 22t/100\right)+\mathcal O(t^{-100})\right)\\
	&\le \sum_{a'=0}^{\lfloor t/4\rfloor}\mu\left(A_{a',t}\right)\sqrt{\frac{\log t}t}\, ,
	\end{align*}
	where in the last line we have used~\eqref{LD}.
	
	The previous displayed estimate
	% together with the argument used in the proof of~\eqref{intermediatedim1b} 
	gives that
	\begin{align*}
	\nu(N_t\le c_1t,\tau\circ\phi_{t/2}\le t/4)&\le o(a_t^{-2})+\nu(N_{48t/100+4\Vert H_0\Vert_\infty}\le c_1 t)\sqrt{\frac{\log t}t}\\
	&= o(a_t^{-2})\, ,
	% %\left(\frac{\log t}t\right)^{\frac 32}
	\end{align*}
	which together with~\eqref{controlpt2} ensures that $p_{t,2}=o(a_t^{-2})$, which combined with~\eqref{BBB} ends the proof of
	Lemma~\ref{B1second} in the case $d=2$.~\end{pfof}

\subsection{Control on "good" configurations}\label{Step2}
Due to Lemmas
%~\ref{lem:B0'},
~\ref{B1second} and~\ref{lem:B0} and to Remark~\ref{rmk:B0}, it remains to study the 
$\widetilde\nu$-measure of the set of configurations $\mathbf x\in B_{R_0}\cap \Phi_{-t}(B_0)$, that have at least $c_1t$ collisions in the time interval $[0,t)$ and with first and last free flight both smaller than $a_t^d\log R$.
The next lemma provides the domination of
this measure by a sum. 
Let us write $\mathfrak C$ for the set of \emph{unit direction of 
	corridors} in $\mathcal D_2$. We recall that this set is finite (see e.g.~\cite{SV07}).

\begin{lemma}\label{lem:decomp}
There exists a positive integer $L_0$, a positive real number $C_0$ and a compact set $K'\subset\mathbb Z^d\times\mathbb R$ such that, for all integer $R_0>L_0$ and all $t>0$,
	\begin{align*}
	&\widetilde\mu\left(B_{R_0}\cap \Phi_{-t}(B_{0})\cap\{\widetilde N_t> c_1t,\ |\widetilde \kappa|\le a_t^d\log R,\ |\widetilde \kappa\circ\Phi_t|\le a_t^d\log R\}\right)\\
	&\quad\le C_0 \sum_{\vec w_1,\vec w_2\in\mathfrak C}\sum_{n=\lfloor c_1 t\rfloor }^{\lfloor t/\min\tau\rfloor}\sum_{a= R_0-L_0}^{\lfloor a_t^d\log R_0\rfloor}\sum_{b=  0}^{\lfloor a_t^d\log R_0\rfloor}
	\mu(A'_{a,b,n,t}(\vec w_1,\vec w_2,K'))\, ,
	\end{align*}
where we set
\begin{align}\nonumber
A'_{a,b,n,t}(\vec w_1,\vec w_2,K')=&\left\{\widehat\Psi_n \in \left(-\pi_d(a\vec w_1+b\vec w_2), t-n\mu(\tau)-b-a\right)+K', \right.\\
&\quad\left.
|\widetilde\kappa\circ T^{-1}|\ge a,|\widetilde\kappa\circ T^n|\ge b\right\}\, .\label{eq:defApr}
\end{align}

\end{lemma}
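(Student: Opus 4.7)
The strategy is to parameterize each good configuration $\mathbf{x}$ via its suspension representation and the first and last long free flights it undergoes during $[0,t]$, then translate the geometric constraints on $\mathbf{x}$ and $\Phi_t\mathbf{x}$ into conditions on the billiard orbit of length $n$ sandwiched between the first collision after $\mathbf{x}$ and the last collision before $\Phi_t\mathbf{x}$.

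Using the suspension representation, write $\mathbf{x} = \Phi_u(y)$ with $y \in \widetilde M$, $u \in [0, \tau(y))$, and set $x_0 = p_d(y) \in M$. Let $z \in M$ be (the projection of) the first collision in $[0,t]$, so that after re-indexing $T^{-1}(z) = x_0$ and $T^n(z)$ is the last collision in $[0,t]$. The membership $\mathbf{x} \in B_{R_0}$ forces the remaining first flight length $a := \tau(y) - u$ to lie in $[R_0 - L_0, a_t^d \log R_0]$: the lower bound is essentially the defining condition of $B_{R_0}$ (adjusted by the position bound $|\mathbf{x}| \leq K_0$), and the upper bound follows from $|\widetilde\kappa| \leq a_t^d \log R_0$ via the coboundary relation $V = \kappa + H_0 \circ T - H_0$ from \eqref{coboundPsi} together with $\tau = |V|$ (or its $d=1$ analogue). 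In the fully dimensional infinite horizon setup of~\cite{SV07}, long free flights are confined to the finite set $\mathfrak C$ of corridor directions with angular deviation decaying as the flight length grows; summing over the finitely many choices we fix a corridor direction $\vec w_1 \in \mathfrak C$ for the initial flight. An identical analysis applied to the flight containing $\Phi_t\mathbf{x}$ produces a last corridor direction $\vec w_2 \in \mathfrak C$ and a last flight length $b \in [0, a_t^d \log R_0]$. These long-flight lower bounds read $|\widetilde\kappa \circ T^{-1}(z)| \geq a$ and $|\widetilde\kappa \circ T^n(z)| \geq b$.

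The $n$ billiard map steps $z, T z, \ldots, T^{n-1}z$ between the first and the last collision in $[0,t]$ occupy exactly time $\tau_n(z) = t - a - b$, hence $\widetilde\tau_n(z) = t - n\mu(\tau) - a - b$. The net displacement $\widetilde W_t(\mathbf{x})$ has norm at most $2K_0$ and decomposes as $a\vec w_1 + V_n(z) + b\vec w_2$ up to $O(1)$ errors accounting for the angular deviations of the two long flights from their corridor axes; using \eqref{coboundPsi} to write $V_n = \kappa_n + H_0 \circ T^n - H_0$ and projecting to $\mathbb Z^d$ gives $\kappa_n(z) \in -\pi_d(a\vec w_1 + b\vec w_2) + K'_0$ for a fixed compact $K'_0 \subset \mathbb Z^d$. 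Combining the time and displacement constraints,
\[
\widehat\Psi_n(z) \in \bigl(-\pi_d(a\vec w_1 + b\vec w_2),\, t - n\mu(\tau) - a - b\bigr) + K'
\]
for a fixed compact $K' \subset \mathbb Z^d \times \mathbb R$ independent of $a, b, n, t$. The range of $n$ is $[\lfloor c_1 t\rfloor, \lfloor t/\min\tau\rfloor]$: the lower bound comes from $\widetilde N_t > c_1 t$ and the upper bound from $\tau_n(z) \leq t$. Finally, discretizing $a$ and $b$ to integers (absorbing the unit-length Lebesgue intervals for $u$, and the bounded sums over the cells $\ell_0$ constrained by $|\mathbf{x}|, |\Phi_t\mathbf{x}| \leq K_0$, into $C_0$) yields the asserted bound.

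The main obstacle is the bookkeeping in the second step: extracting the corridor direction with a quantitative angular deviation estimate, and ensuring that the various $O(1)$ errors — from the coboundary $H_0$, from the position constraints at times $0$ and $t$, from the Lebesgue variable $u$, from the deviation of a long flight from its corridor axis, and from the integer discretization of $a$ and $b$ — all assemble into a single compact set $K' \subset \mathbb Z^d \times \mathbb R$ whose diameter is independent of the summation indices $(a, b, n, t)$.
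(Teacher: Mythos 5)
Your proposal is correct and follows essentially the same approach as the paper's proof: parameterize a good configuration by the first and last collision in $[0,t]$, extract corridor directions $\vec w_1,\vec w_2\in\mathfrak C$ for the long first and last flights, discretize the residual flight times into $a,b$, and account for the bounded multiplicity of the parameterization with the constant $C_0$. The one cosmetic difference is that you derive the $\kappa_n$ constraint by decomposing the displacement $\widetilde W_t(\mathbf{x})$ (bounded by $2K_0$) and invoking the coboundary relation $V=\kappa+H_0\circ T-H_0$, whereas the paper works directly with the cell indices $\ell,\ell'$ of $\widetilde x$ and $\widetilde T^n\widetilde x$ and uses $\kappa_n=\ell'-\ell$; these are equivalent.
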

\begin{proof}
Let $\mathbf x\in B_{R_0}\cap \Phi_{-t}(B_{0})\cap\{\widetilde N_t> c_1t,\ |\widetilde \kappa|\le a_t^d\log R,\ |\widetilde \kappa\circ\Phi_t|\le a_t^d\log R\}$. 
We will parametrise $\mathbf x$ by $(x,-u,\ell)\in M\times (-\infty,-R_0]\times\mathbb Z^d$, with $u\in[0,\tau(T^{-1}(x)))$. We write  
$\mathbf  x$ under the form $\Phi_{-u}(\widetilde x)$ with $\widetilde x\in \widetilde M$ corresponding to the configuration of the particle at the next (future) collision time. This configuration $\widetilde x$ belongs to some cell $\mathcal C_\ell$ with $\ell\in\mathbb Z^d$ and thus $\widetilde x$ can be rewritten under the form $(p_{d,0}^{-1}(q)+\ell,\vec v)$  for some $x=(q,\vec v)\in M$ (as explained in Section~\ref{sec:obstacles}). By construction
$u\in[R_0,\tau(T^{-1}(x)))$. We parametrize $\Phi_t(\mathbf x)$ by $((T^{n}(x),s),\ell')\in\widehat{\mathcal M}\times \mathbb Z^d$, as follows. 
Recalling  that $N_t$ is the lap number introduced above~\eqref{eq:lapn}, we write $\Phi_t(\mathbf x)$ under the form
$\Phi_s(\widetilde T^{n}(\widetilde x))$ with $n=N_t(\phi_{-u}(x))-1$
and $s\in[0, \tau(T^{n}(x))$.
Due to our assumptions on $\mathbf x$, we know that $N_t(\phi_{-u}(x))\ge \lfloor c_1 t\rfloor+1 $ so that $n\ge \lfloor c_1 t\rfloor$. Moreover, $N_t\le 1+ t/\min\tau$. Thus
\begin{equation}\label{boundn}
 \lfloor c_1 t\rfloor\le n\le t/\min\tau\, .
\end{equation}
It follows from~\eqref{linktauV} and~\eqref{coboundPsi}
that $\tau-|\widetilde\kappa|$ is uniformly bounded. Recall that $u\le\tau(T^{-1}(x))$
and $s\le\tau(T^n(x))$.
We discretise $u,s$ by setting
\begin{align}\label{defia}
a&:=\max(0,\lfloor u\rfloor-\Vert \tau-|\widetilde\kappa|\Vert_\infty)\le |\widetilde\kappa(T^{-1}(x))|\le a_t^d\log R_0\, ,\\
b&:=\max(0,\lfloor s\rfloor-\Vert \tau-|\widetilde\kappa|\Vert_\infty)\le |\widetilde\kappa(T^n(x))|\le a_t^d\log R_0\, .\label{defib}
\end{align}
With the previous notations,  
\begin{equation}\label{taun}
\widetilde\tau_n(x)=t-n\mu(\tau)-u-s=t-n\mu(\tau)-a-b+\mathcal O(1)\, ,
\end{equation}
%with the convention $\widetilde\tau_{-1}(x) =-\tau(T^{-1}(x))-(-1)\mu(\tau)$(if $n=0$) and 
where $\mathcal O(1)$ is uniformly bounded
(independently of $\mathbf x$). 
Furthermore, there exist two unit directions of corridors $\vec w_1,\vec w_2\in\mathfrak C$ that are  "close" to be collinear 
to, respectively, the first and last free flight, meaning that
\[
\widetilde\kappa(T^{-1}(x))=|\widetilde\kappa(T^{-1}(x))|\vec w_1+\mathcal O(1)\quad\mbox{and}\quad
\widetilde\kappa(T^n(x))=|\widetilde\kappa(T^n(x))|\vec w_2+\mathcal O(1)\, ,
\]
with $\mathcal O(1)$ uniformly bounded (independently of $\mathbf x$).
This implies that  
$\ell=a\pi_d(\vec w_1)+\mathcal O(1)$ and
$\ell'=-b\pi_d(\vec w_2)+\mathcal O(1)$, where again
$\mathcal O(1)$ is uniformly bounded (independently of $\mathbf x$). Thus, for a given $(a,b,\vec w_1,\vec w_2)$,
only a uniformly bounded number of values of $(\ell,\ell')$ are possible.
Second, this implies also that
\begin{equation}\label{kappan}
\kappa_n+\pi_d(a\vec w_1+b\vec w_2)
=\ell'-\ell+a\pi_d(\vec w_1)+b\pi_d(\vec w_2)=\mathcal O(1) .
\end{equation}
Recalling that $\widehat\Psi_n=\left(\kappa_n,\widetilde\tau_n\right)$, it follows from~\eqref{defia},~\eqref{defib},~\eqref{taun} and~\eqref{kappan} that we can find a compact set $K'$ independent of $\mathbf x$ such that, with previous notations,
\[
x\in A'_{a,b,n,t}(\vec w_1,\vec w_2,K')\, .
\]
The bounds on $n,a,b$ comes from respectively
~\eqref{boundn},~\eqref{defia} (and $u\ge R_0$) and~\eqref{defib}.
The multiplicative constant $C_0$ comes from the bounded number of possible values of $(\ell,\ell',\lfloor u\rfloor,\lfloor s\rfloor)$ once $a$ and $b$ are fixed.
This ends the proof of the lemma.
\end{proof}

Since $\mathfrak C$ is finite, it is enough to fix $\vec w_1,\vec w_2$ and to prove that
\begin{align}\label{eq:sh0}
 \lim_{R_0\rightarrow +\infty}\limsup_{t\rightarrow +\infty}a_t^d\sum_{n=\lfloor c_1t\rfloor }^{\lfloor t/\min\tau\rfloor}\sum_{a= R_0}^{\lfloor a_t^d\log R_0\rfloor}\sum_{b=  0}^{\lfloor a_t^d\log R_0\rfloor}
\mu(A'_{a,b,n,t}(\vec w_1,\vec w_2,K'))=0\, .
\end{align}
The aimed result~\eqref{eq:sh0} will be proved via the next technical lemma, the proof of which uses a splitting of the summation over $n,a, b$ in smaller ranges.

%The next lemma below deals with~\eqref{eq:sh0} in the range $n=N_t-1>c_1t$.
\begin{lemma}\label{B1'}
	% Let $c_1$ as in~\eqref{eq:defc1}. Then, t
There exists $C'>0$
 such that for all $R_0$
 % large enough
 \begin{equation*}
\sum_{n=\lceil c_1t\rceil}^{\lfloor t/\min\tau\rfloor}\sum_{a,b=0,...,\lfloor a_t^d\log(R_0)\rfloor\, :\, \max(a,b)\ge R_0}^{}
\mu(A'_{a,b,n,t}(\vec w_1,\vec w_2,K'))\le C'a_t^{-d}R_0^{-\frac 2{45}}+o(a_t^{-d})\, ,
\end{equation*}
as $t\rightarrow +\infty$.
\end{lemma}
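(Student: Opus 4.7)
The plan is to approximately factorize $\mu(A'_{a,b,n,t})$ as the product of tail probabilities for the two boundary long free flights and a joint LLD bound for the bulk displacement $\widehat\Psi_n$ near the target $z:=(-\pi_d(a\vec w_1+b\vec w_2),\,t-n\mu(\tau)-a-b)$. By time-reversibility of the Sinai billiard map and $T$-invariance of $\mu$, it suffices to bound the sum restricted to $a\ge R_0$; the complementary range $b\ge R_0$ is handled symmetrically with past and future exchanged.

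I would first use $T$-invariance to move the long incoming flight to time $0$, then split the event according to whether any further cell change of magnitude $>a^{4/5}$ occurs within the window $|j+1|\le V\log(a+2)$ around the initial collision. By the refined tail~\eqref{controltimelog}, summed over corridor values $|\widetilde\kappa|\ge a$, the exceptional event has measure $O(a^{-2-2/45})$. On the complementary good event the partial contribution of the window-collisions to $\widehat\Psi$ is uniformly $O(a^{4/5}\log a)$, so conditioning on the $V\log(a+2)$ initial collisions and applying Lemma~\ref{lem:lld} to the bulk trajectory yields $\mu(\widehat\Psi_{n'}\in z'+K')\ll\frac{n}{a_n^{d+1}}\cdot\frac{\log(|z|+2)}{1+|z|^2}$ with $n'=n-V\log(a+2)$ and $|z'|\asymp|z|$; the conditional-vs-unconditional discrepancy is controlled by $O(t^{-100})$ via~\eqref{Deco2}--\eqref{Deco1} across the $K\log t$ gap. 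An identical treatment of the outgoing long flight at time $n$ contributes the matching tail factor $(1+b)^{-2}$ from $\mu(|\widetilde\kappa|\ge b)$.

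Summing over $n\in[c_1t,t/\min\tau]$: in the Gaussian regime ($a+b\lesssim a_t$, which is the main range since $a,b\le a_t^d\log R_0$) the tighter bound $\mu(\widehat\Psi_n\in z_n+K')\ll a_n^{-(d+1)}$ from Lemma~\ref{lem:jointllt} gives $\sum_n\mu\ll a_t^{-d}$ after integrating out the transverse $n$-direction, while in the LLD regime ($a+b\gg a_t$) one has $\sum_n\mu\ll \frac{t}{a_t^{d+1}}\cdot\frac{\log(a+b)}{a+b}$ via an integral estimate on $\int \log\sqrt{A^2+m^2}/(A^2+m^2)\,dm$. Multiplying by the tail factor $a^{-2}(1+b)^{-2}$ and summing over $a\ge R_0$ and $0\le b\le a_t^d\log R_0$, the typical contribution is $\ll a_t^{-d}R_0^{-1}$ (using $\sum_{a\ge R_0}a^{-2}\ll R_0^{-1}$ and $\sum_{b\ge 0}(1+b)^{-2}=O(1)$), while the exceptional contribution from~\eqref{controltimelog} is $\ll a_t^{-d}R_0^{-1-2/45}$. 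Since $R_0^{-1}\le R_0^{-2/45}$ for $R_0\ge1$, the overall bound $\le C'a_t^{-d}R_0^{-2/45}+o(a_t^{-d})$ follows.

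The main obstacle is executing the approximate factorization rigorously: one must verify that the conditional LLD (after fixing the initial window of $V\log(a+2)$ collisions) is uniformly close to the unconditional LLD, which requires combining small-scale mixing of $T$ with the~\eqref{Deco2} decorrelation across the $K\log t$ gap, and one must show that the error terms arising from~\eqref{Deco2} remain negligible even when coupled with the exceptional event from~\eqref{controltimelog}. This careful tracking of error terms across the two boundary windows, together with the transition between the Gaussian and LLD regimes in the $n$-sum, is the technical heart of the proof.
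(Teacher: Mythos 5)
Your high-level plan matches the paper's: reduce by time reversal and $T$-invariance to the case $a\ge R_0$; split $\mu(A'_{a,b,n,t})$ into a main term that factorizes as tail probabilities for the boundary flights times an LLD/MLLT estimate for $\widehat\Psi_n$; split the $n$-sum into the Gaussian range ($|t-a-b-n\mu(\tau)|\lesssim a_t$, treated via the MLLT) and the complementary LLD range; and control decorrelation across a $K\log t$ gap via~\eqref{Deco2}--\eqref{Deco1}. Up to this point you reproduce the paper's architecture (replacing their use of Lemma~\ref{lem:jointllt0} with a more informal "conditional LLD" picture).

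However, there is a genuine gap: you do not identify the cross-correlation term that actually produces the exponent $R_0^{-2/45}$, and as a consequence your claimed estimates are inconsistent with the lemma. When one makes the "conditional vs.\ unconditional LLD" rigorous by applying the joint MLLT with error terms, Lemma~\ref{lem:jointllt0} (with $G=\mathbf 1_{\{|\widetilde\kappa|\circ T^{-1}\ge a\}}$ and $H=\mathbf 1_{\{|\widetilde\kappa|\ge b\}}$, $k_n=K\log t$), the discrepancy is \emph{not} just $O(t^{-100})$: the statement also carries the error terms $a_n^{-d-2}\bigl(k_n\|G\|_{L^1}\|H\|_{L^p}+\|H\|_{L^1}\|\widehat\Psi_{2k_n}G\|_{L^1}\bigr)$. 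In particular, the term
\[
a_t^{-d-2}\,\mu(|\widetilde\kappa|\ge b)\,\bigl\|\widehat\Psi_{2k_n}\,\mathbf 1_{\{|\widetilde\kappa|\circ T^{-1}\ge a\}}\bigr\|_{L^1}
\]
cannot be replaced by $\mu(|\widetilde\kappa|\ge a)$ times something $n$-uniform, because when the first flight is long the subsequent $\widehat\Psi$-increments are strongly correlated with it. The paper spends the technical bulk of the proof (the four-case analysis~\eqref{CCC4a}--\eqref{CCC4d}, combining~\eqref{controltimelog},~\eqref{tailkappa} and~\eqref{Deco2}) to show that $\sum_{a\ge R_0}\|\widehat\Psi_{2k_n}\mathbf 1_{\{|\widetilde\kappa|\circ T^{-1}\ge a\}}\|_{L^1}\ll R_0^{-2/45}\log t$, which after summing over $n$ and $b$ gives the dominant contribution $\mathcal O(a_t^{-d}R_0^{-2/45})$. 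Your proposal puts the main term at $R_0^{-1}$ and the "exceptional" term at $R_0^{-1-2/45}$, which would yield an overall bound $\ll a_t^{-d}R_0^{-1}$, strictly stronger than the lemma; this should be a warning sign. The quantity you would lose track of is precisely the case $b'\asymp a'\gtrsim R_0$ in the window around the long incoming flight, for which~\eqref{controltimelog} gives $\mu(|\widetilde\kappa|=a',|\widetilde\kappa|\circ T^{\ell}>a'^{4/5})\ll a'^{-3-2/45}$, and multiplying by the weights $a'b'$ and summing over $a'\ge R_0$ produces exactly $R_0^{-2/45}$ (and \emph{not} $R_0^{-1}$). Without explicitly controlling this coupled moment you cannot close the argument.
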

\begin{proof}
Note that, by measure preserving and time reversal,
$(\widetilde\kappa\circ T^{-1},\widehat\Psi_n,\widetilde\kappa\circ T^n)$ has the same distribution as $(-\kappa\circ T^n,(-\widetilde\kappa_n,\widetilde\tau_n),-\widetilde\kappa\circ T^{-1})$ and so
\[
\mu(A'_{a,b,n,t}(\vec w_1,\vec w_2,K'))=\mu(A'_{b,a,n,t}(-\vec w_2,-\vec w_1,K''))
\]
with $K''=\{(\ell,r)\in\mathbb Z^d\times\mathbb R\, :\, (-\ell,r)\in K'\}$, with the notation~\eqref{eq:defApr}.
Thus, up to replacing $(a,b,\vec w_1,\vec w_2,K')$
by  $(b,a,-\vec w_2,-\vec w_1,K'')$, it is enough to prove that there exists $C''_0$ such that, for all
$R_0$,
 \begin{equation}\label{B1'0}
\sum_{n=\lceil c_1t\rceil}^{\lfloor t/\min\tau\rfloor}\sum_{a= R_0}^{\lfloor a_t^d\log(R_0)\rfloor}\sum_{b=0}^a
\mu(A'_{a,b,n,t}(\vec w_1,\vec w_2,K'))\le C_0''a_t^{-d}R_0^{-\frac 2{45}}+o(a_t^{-d}),\quad\mbox{as }t\rightarrow +\infty\, .
\end{equation}
The main ingredients of the proof of this are Lemma~\ref{lem:jointllt0} together with its Corollary~\ref{coro:llt0cor} and an argument similar to the one used in the proof of Lemma~\ref{lem:S1S2}.
To exploit Lemma~\ref{lem:jointllt0}, recall~\eqref{eq:defApr} and note  that
\begin{align}\label{eq:exp0}
\mu(A'_{a,b,n,t}(\vec w_1,\vec w_2,K'))
\le \mathbb E_{\mu}\left[\mathbf 1_{|\widetilde\kappa|\circ T^n\ge b}
\mathbf 1_{|\widetilde\kappa|\circ T^{-1}\ge a}
.h\left(
\widehat\Psi_n-(-\pi_d(a\vec w_1+b\vec w_2),
t-b-a-n\mu(\tau))\right)\right]\, ,
\end{align}
where $h:\Z^{d}\times\mathbb R\to (0,\infty)$ is the integrable function with compactly supported
Fourier transform given by $h(y_1,...,y_{d+1}):=K''\mathbf 1_{|y_1|,|y_d|\le K''}\frac{(1-\cos(y_{d+1}/K''))}{y_{d+1}^2}$ for some suitable $K''>0$.
%; such a function $h$ can be defined precisely, but the precise details are not important at this stage (the compactly supported Fourier transform can be given by, for instance, equation~\eqref{eq:polya} used in the proof of Lemma~\ref{lem:jointllt}). 
Recall~\eqref{Deco2} and~\eqref{Deco1} and assume that $n>4 \lfloor K\log t\rfloor$. Let $k_n=k_t=\lfloor K\log t\rfloor$.  
It follows from~\eqref{eq:exp0} and Corollary~\ref{coro:llt0cor},
that, for any $\varepsilon_0\in\left(0,\frac 12-\frac 1{45}\right)$ and any $n=\lfloor c_1t\rfloor,...,t/\min\tau$,
\begin{align}\label{muA'}
&\mu(A'_{a,b,n,t}(\vec w_1,\vec w_2,K'))\ll \mu(|\widetilde\kappa|\ge a)\mu(|\widetilde\kappa|\ge b)
\widetilde Q^{(0)}_{n,a,b}(t)\\
\label{err1}&\quad+\mathcal O\left(
t^{-100}
+\log t\, a_t^{-d-2}
(\mu(|\widetilde\kappa|\ge a)\mu(|\widetilde\kappa|\ge b)^{\frac 12-\varepsilon_0}\right.
\\
\label{err2}&\quad
\quad \left.+a_t^{-d-2}\mu(|\widetilde\kappa|\ge b)
 \left\Vert \widehat\Psi_{2k_n} 1_{\{|\widetilde\kappa|
 	\circ T^{-1}
 	\ge a\}}\right\Vert_{L^1}\right)\, ,
%\end{align}
%\begin{align}
\\ &
\mbox{with}\quad
\widetilde Q^{(0)}_{n,a,b}(t)
:=\mathbb E_{\mu}\left[h\left(\widehat\Psi_n-(-\pi_d(a\vec w_1+b\vec w_2),t-b-a-n\mu(\tau)
)\right)\right]\, .
\end{align}

{\bf{Estimating the term in the right hand side of~\eqref{muA'}.}}

We claim that, adapting carefully the argument of Lemma~\ref{lem:S1S2},
\begin{equation}\label{CCC0}
\sup_{a,b}\sum_{n=\lceil c_1 t\rceil}^{\lfloor t/\min\tau\rfloor}\widetilde Q^{(0)}_{n,a,b}(t)=\mathcal O(a_t^{-d})
\, ,
\end{equation}
which implies that
\begin{equation}\label{B1}
\sum_{n=\lceil c_1 t\rceil}^{\lfloor t/\min\tau\rfloor}\sum_{b\ge 0,a\ge R_0}\mu(|\widetilde\kappa|\ge a)\mu(|\widetilde\kappa|\ge b)
\widetilde Q^{(0)}_{n,a,b}(t)=
\mathcal O(a_t^{-d}R_0^{-1})\, .
\end{equation}
We prove the claim~\eqref{CCC0}.
Fix $L>0$.\\
First, it follows from Lemma~\ref{lem:jointllt0} that
\begin{align*}
\widetilde Q_{n,a,b}^{(0)}(t)&\ll 
a_t^{-d-1}\left(g_{d+1}\left(\frac{-\pi_d(a\vec w_1+b\vec w_2),t-b-a-n\mu(\tau)}{a_t}\right)+(\log t)^{-1}\right)
\end{align*}
and so that
\begin{align}
\nonumber\sum_{n\, :\, |t-b-a-n\mu(\tau)|<La_t\mu(\tau)}\widetilde Q_{n,a,b}^{(0)}(t)&\ll a_t^{-d}\int_{\mathbb R}g_{d+1}\left(\frac{-\pi_d(a\vec w_1+b\vec w_2)}{a_t},y\right)\, dy+ o(a_t^{-d}) \\
&\ll a_t^{-d}\Vert g_d\Vert_\infty+ o(a_t^{-d})\ll a_t^{-d}\, ,\label{CCC1}
\end{align}
with $g_d:=\int_{\mathbb R}g_{d+1}(\cdot,y)\, dy$.\\
Second, if $n>c_1t$ and $La_t< | n-(t-b-a)/\mu(\tau)|$, then, 
 as soon as $t$ large enough, it follows from Lemma~\ref{lem:lld}
 that, setting $v_{a,b,n}:=(-\pi_d(a\vec w_1+b\vec w_2),t-b-a-n\mu(\tau)
  )$, 
\begin{align}\nonumber
 \widetilde Q_{n,a,b}^{(0)}(t)&\ll \sum_{k_3\in\mathbb Z}\frac{1}{1+k_3^2} \mu\left(\widehat \Psi_n-v_{a,b,n}=(0,0,k_3)+\mathcal O(1)\right)\\
 &\ll \frac{t}{a_t^{d+1}}\sum_{k_3\in\mathbb Z}\frac{1}{1+k_3^2} \frac{\log(2+|v_{a,b,n}+k_3|)}{(2+|v_{a,b,n}+k_3|)^2}\, .\label{EEE1}
\end{align}
But, on the one hand,
\begin{align}\nonumber
\sum_{k_3:|v_{a,b,n}+k_3|\ge |v_{a,b,n}|/2}\frac{1}{1+k_3^2} \frac{\log(2+|v_{a,b,n}+k_3|)}{(2+|v_{a,b,n}+k_3|)^2}
&\ll \frac{\log(4+|v_{a,b,n}|)}{(1+|v_{a,b,n}|)^2}  \sum_{k_3}\frac 1{1+k_3^2}\\
&\ll \frac{\log(4+|v_{a,b,n}|)}{(1+|v_{a,b,n}|)^2}\, ,\label{EEE2}
\end{align}
and, on the other hand,
\begin{align}\nonumber
\sum_{k_3:|v_{a,b,n}+k_3|<|v_{a,b,n}|/2}\frac{1}{1+k_3^2} \frac{\log(2+|v_{a,b,n}+k_3|)}{(2+|v_{a,b,n}+k_3|)^2}
&\ll \sum_{u:|u|<|v_{a,b,n}|/2}\frac{1}{1+|u-v_{a,b,n}|^2} \frac{\log(2+|u|)}{(2+|u|)^2}\\
&\ll \frac{1}{|v_{a,b,n}|^2}\sum_{u:|u|<|v_{a,b,n}|/2} \frac{\log(2+|u|)}{(2+|u|)^2}\ll \frac{1}{|v_{a,b,n}|^2}\, .\label{EEE3}
\end{align}
It follows from~\eqref{EEE1},~\eqref{EEE2} and~\eqref{EEE3} that
\[
 \widetilde Q_{n,a,b}^{(0)}(t)\ll \frac{\log(2+|v_{a,b,n}|)}{(1+|v_{a,b,n}|)^2}\, ,
\]
as $\widetilde Q_n(t)$ in sublemma~\eqref{lem:1a}. Therefore
	\begin{align}
	\nonumber&\sum_{n>c_1t:La_t<  |n-(t-b-a)/\mu(\tau)|} \tQ^{(0)}_{n,a,b}(t) \\
	\nonumber&\quad\ll \frac{t}{a_t^{d+1}}
	\sum_{n: La_t<|n-(t-a-b)/\mu(\tau)|}
	\frac{|\log|n\mu(\tau)-(t-a-b)||}{1+|n\mu(\tau)-(t-a-b)|^2}\\ 
	&\quad\ll \frac{t}{a_t^{d+1}} \frac{\log(a_t)}{La_t}\ll \frac 1{La_t^d}\ll a_t^{-d}\, ,\label{CCC2}
	\end{align}
	since $\log u/u^2$ has primitive $-(1+\log u)/u$ and since $a_t^2=t\log t\sim 2t\log a_t$.
The claim~\eqref{CCC0} follows from~\eqref{CCC1} and~\eqref{CCC2}.

{\bf{Estimating the terms in~\eqref{err1}.}}
The first term leads to
\begin{equation}\label{eq:s2pre}
\sum_{n=\lceil c_1t\rceil}^{\lfloor t/\min\tau\rfloor}\sum_{a= R_0}^{\lfloor a_t^d\log R_0\rfloor}\sum_{b=0}^at^{-100}\ll t^{-99}a_t^{2d}(\log R_0)^2=o(a_t^{-d})\, .
 \end{equation}
It remains to estimate the contribution of the second part of~\eqref{err1}. 
Note that
\begin{align*}
\sum_{a\ge  R_0} \mu(|\widetilde\kappa|\ge a)\left(\sum_{b=0}^a\mu(|\widetilde\kappa|\ge b)^{\frac 12-\varepsilon_0}  \right)
&\ll \sum_{a\ge  R_0} a^{-2}\left(\sum_{b=0}^a(b+1)^{-1+2\varepsilon_0} \right)\\ 
&\ll  \sum_{a\ge  R_0} a^{-2+2\varepsilon_0}=R_0^{-1+2\varepsilon_0}\, ,
\end{align*}
since $\varepsilon_0<\frac 12$.
Since we also know that $t\log t=a_t^2$, we obtain that
\begin{align}\label{eq:es2}
 \sum_{n=\lceil c_1 t\rceil}^{\lfloor t/\min\tau\rfloor}\sum_{a\ge  R_0}\sum_{b=0}^a\log t\,a_t^{-d-2}\mu(\widetilde\kappa\ge a)\mu(\widetilde\kappa\ge b)^{\frac 12-\varepsilon_0}
 \ll a_t^{-d}R_0^{-(1-2\varepsilon_0)}\, ,
\end{align}
with $1-2\varepsilon_0>0$.

{\bf{Estimating the term in~\eqref{err2}}}
 We claim that
 \begin{equation}\label{CCC4}
\sum_{a\ge R_0}
%^{a_t^d\log R}
\left\Vert \widehat\Psi_{2k_n} 1_{\{|\widetilde\kappa|\circ T^{-1}\ge a\}}\right\Vert_{L^1}\ll  R_0^{-\frac 2{45}}\log t\, .
\end{equation}
 Since we also know that $t\log t=a_t^2$ and  that
 $\sum_{b\ge 0}\mu(|\widetilde\kappa|\ge b)=\mathbb E[|\widetilde\kappa|]<\infty$,
 we obtain that
 \begin{align}\label{eq:es3}
 \sum_{n=\lceil c_1 t\rceil}^{\lfloor t/\min\tau\rfloor}\sum_{a\ge R_0 }
 %^{a_t^d\log R_0}
 \sum_{b\ge 0}a_t^{-d-2}\mu(|\widetilde\kappa|\ge b)
 \left\Vert \widehat\Psi_{2k_n} 1_{\{|\widetilde\kappa|
 	\circ T^{-1}
 	\ge a\}}\right\Vert_{L^1}
 =\mathcal O\left(
a_t^{-d}R_0^{-\frac 2{45}}\right).
\end{align}
 
We now prove the claim~\eqref{CCC4}.
First, compute that
\begin{align*}
\sum_{a\ge  R_0 }
%^{a_t^d\log R_0}
&
 \left\Vert \widehat\Psi_{2k_n} 1_{\{|\widetilde\kappa|\circ T^{-1}\ge a\}}\right\Vert_{L^1}\le
\sum_{\ell=0}^{2k_n-1} \sum_{a\ge R_0}
%^{a_t^d\log R_0}
\sum_{a'\ge a}
\sum_{b'\ge 1}
 b'\mu(|\widetilde\kappa|\circ T^{-1}=a',|\widetilde\kappa|\circ T^\ell=b')
  \\
& \le
 \sum_{\ell=1}^{2k_n} \sum_{a'\ge R_0}\sum_{a=R_0}^{a'}
 \sum_{b'\ge 1}
 b'\mu(|\widetilde\kappa|=a',|\widetilde\kappa|\circ T^\ell=b')\\
& \le
\sum_{\ell=1}^{2k_n} \sum_{a'\ge R_0}\sum_{b'\ge 1}
a'b'\mu(|\widetilde\kappa|=a',|\widetilde\kappa|\circ T^\ell=b')\, ,
\end{align*}
where the sum over $a',b'$ is taken over the positive real numbers (non necessarily integer) such that the summand is non null.

We claim that, uniformly in $n$ and in  $\ell=1,...,2k_n$,  
\begin{align}\label{esss}
\sum_{a'\ge R_0}\sum_{b'\ge 1}
a'b'\mu(|\widetilde\kappa|=a',|\widetilde\kappa|\circ T^\ell=b')\ll R_0^{-\frac 2{45}}
\end{align}
The previous two displayed equations give the claim~\eqref{CCC4}.

It remains to prove the claim~\eqref{esss}.
We proceed via considering all relevant cases of $a',b'$.

{\bf{Case 1: Contribution of the $a',b'$ such that $b'^{\frac 45}\le a'\le b'$.}}
\begin{align*}
&\sum_{b'\ge 1}
\sum_{a'\in[\max(R_0,(b')^{4/5});b']}  a'b'\mu(|\widetilde\kappa|=a',|\widetilde\kappa|\circ T^\ell=b')\\
&\le \sum_{b'\ge R_0}
b'\mathbb E_\mu\left[\sum_{a'\in[ (b')^{4/5} ;b']}a' \mathbf 1_{|\widetilde\kappa|=a'}\mathbf 1_{|\widetilde\kappa|\circ T^\ell=b'}\right]\\
&\le \sum_{b'\ge R_0}
|b'|^2
\mu(|\widetilde\kappa|\ge (b')^{\frac 45},|\widetilde\kappa|\circ T^\ell=b')\, ,
\end{align*}
where in the last equation we used that
\begin{equation}\label{MAJO}
\sum_{a'\in[a_-;a_+]}a' 1_{\{|\widetilde\kappa|=a'\}}\le a_+ 1_{\{|\widetilde\kappa|\ge a_-\}}\, .
\end{equation}
Thus applying \eqref{controltimelog} with $V:=100/a_0$, we obtain
\[
\mu(|\widetilde\kappa|\ge (b')^{\frac 45},|\widetilde\kappa|\circ T^\ell=b')\ll |b'|^{-3-\frac 2{45}}\, ,
\]
uniformly in $\ell\le\frac {100\log(2+|b'|)}{a_0}$, and it follows from~\eqref{Deco2} combined with~\eqref{tailkappa} that
\[
\mu(|\widetilde\kappa|\ge (b')^{\frac 45},|\widetilde\kappa|\circ T^\ell=b')
\le \mu(|\widetilde\kappa|\ge (b')^{\frac 45})\mu(|\widetilde\kappa|\circ T^\ell=b')+C'e^{-a_0\ell}\ll  |b'|^{-3-\frac 2{45}}
\]
uniformly in $\ell\ge\frac {100\log(2+|b'|)}{a_0}$.
Therefore
\begin{align}
\sum_{a'\in[ \max(R_0,(b')^{4/5});b']}  a'b'&\mu(|\widetilde\kappa|=a',|\widetilde\kappa|\circ T^\ell=b')
\ll \sum_{b'\ge  R_0 }|b'|^2\, |b'|^{-3-\frac 2{45}}\\
&\ll \sum_{b'\ge  R_0}  |b'|^{-1-\frac 2{45}}
\ll R_0^{-\frac 2{45}}\, .\label{CCC4a}
\end{align}

{\bf{Case 2: Contribution of the $a',b'$ such that $(a')^{\frac 45}<b'<a'$.}}
\begin{align}
\nonumber&\sum_{b'\ge 1}\sum_{a'\in[\max(R_0,b')
; (b')^{\frac 54}]}a'b'\mu(|\widetilde\kappa|=a',|\widetilde\kappa|\circ T^\ell=b')\\
\nonumber&\le \sum_{a'\ge R_0}
\sum_{b' \in[(a')^{\frac 45};a']}
a'b'\mu(|\widetilde\kappa|=a',|\widetilde\kappa|\circ T^\ell=b')\\
\nonumber&\le \sum_{a'\ge R_0}
|a'|^2 \mu(|\widetilde\kappa|=a',|\widetilde\kappa|\circ T^\ell>(a')^{\frac 45})\\
&\ll \sum_{a'\ge R_0}
  |a'|^2 |a'|^{-3-\frac 2{45}}\ll \sum_{a'\ge R_0}
  |a'|^{-1-\frac 2{45}}\ll R_0^{-\frac 2{45}}\, ,\label{CCC4b}
\end{align}
using again~\eqref{MAJO} and~\eqref{controltimelog} again with $V=100/a_0$
when $\ell\le V\log(2+|a')$ and~\eqref{Deco2} otherwise.

 {\bf{Case 3: Contribution of the $a',b'$ such that $a'<(b')^{\gamma}<b'$ for some $\gamma\in(0,1)$ (e.g. $\gamma=\frac 45$).}}
 
\begin{align}
\nonumber&\sum_{b'\ge 1}
\sum_{a'\in[R_0;(b')^{\gamma}]}a' b'\mu(|\widetilde\kappa|=a',|\widetilde\kappa|\circ T^\ell=b')\\
\nonumber&\le \sum_{b'\ge R_0^{\frac 1\gamma}}
\sum_{a'\in[R_0;(b')^{\gamma}]}a' b'\mu(|\widetilde\kappa|=a',|\widetilde\kappa|\circ T^\ell=b')\\
\nonumber&\le \sum_{b'\ge R_0^{\frac 1\gamma}}
 |b'|^\gamma b'\mu(|\widetilde\kappa|\ge R_0,|\widetilde\kappa|\circ T^\ell=b')\\
&\ll \sum_{b'\in Supp(|\widetilde\kappa|): b'\ge R_0^{\frac 1\gamma}}
(b')^{\gamma} |b'|\, |b'|^{-3}
%\ll \sum_{b'\in Supp(|\widetilde\kappa|): b'\ge R_0^{\frac 1\gamma}}\, |b'|^{-2+\gamma}
\ll R_0^{\frac 1\gamma(-1+\gamma)}\, ,\label{CCC4c}
\end{align}
using~\eqref{tailkappa}.

{\bf{Case 4: Contribution of the $a',b'$ such that $b'<(a')^{\gamma}<a'$ with $\gamma\in(0,1)$ (e.g. $\gamma=\frac 45$).}}

\begin{align}
\nonumber&\sum_{a'\ge R_0}\sum_{b'\in[1;(a')^{\gamma}]}a' b'\mu(|\widetilde\kappa|=a',|\widetilde\kappa|\circ T^\ell=b')
\nonumber\le \sum_{a'\ge R_0}(a')^{\gamma} |a'| \mu(|\widetilde\kappa|=a')\\
\quad\quad&\ll \sum_{a'\in Supp(|\widetilde\kappa|):a'\ge R_0}(a')^{\gamma} |a'| |a'|^{-3}
%\ll \sum_{a'\ge R_0}(a')^{-2+\gamma} 
\ll R_0^{-1+\gamma}\, .\label{CCC4d}
\end{align}

The claim~\eqref{esss} follows from~\eqref{CCC4a},~\eqref{CCC4b},~\eqref{CCC4c} and~\eqref{CCC4d}, ending the proof of~\eqref{CCC4} and so of~\eqref{eq:es3}. 
Estimate~\eqref{B1'0} and so the lemma then follows from~\eqref{muA'},~\eqref{B1},~\eqref{eq:s2pre},~\eqref{eq:es2} and~\eqref{eq:es3}
~\end{proof}

 \subsubsection{Concluding the proof of Theorem~\ref{lem:tight}}
 
 The conclusion follows from Lemmas~\ref{lem:B0} (and the comment thereafter),~\ref{B1second},~\ref{lem:decomp} and~\ref{B1'}.

 \section{Proof of joint CLT (Lemma~\ref{lem:clt})}
 \label{sec:proofjCLT}
Let $d\in\{0,1,2\}$. 
In this section we show that arguments established in~\cite{BalintGouezel06} and~\cite{PeneTerhesiu21} can be adapted to the study of $\widehat\Psi$ instead of $\kappa$. The main idea comes down to a basic observation, namely that
$\widehat\Psi$ can be written as the sum of a vector in 
$\Z^{d+1}$ that 'behaves like' $\kappa$ and of a bounded function. The mentioned vector
in $\Z^{d+1}$ is precisely
$\left(\kappa,|\widetilde\kappa|-\mathbb E_{\mu}[|\widetilde\kappa|]\right)$ which, as $\kappa$, is constant on good sets and has a similar tail probability.
In particular, the distribution of $\widehat\Psi$ is in the domain of a nonstandard CLT with normalization
$\sqrt{n\log n}$. The details are provided around equation~\eqref{eq:rewr} below.

 We will prove the convergence in distribution of $(\widehat \Psi_n/\sqrt{n\log n})_n$ by establishing the pointwise convergence of its characteristic function, with the use of Fourier perturbed operator on the quotient tower
constructed by Young in~\cite{Young98}(see \cite{Chernov99}) as Sz\'asz and Varj\'u did in~\cite{SV07} to establish the CLT and LLT for $\kappa$. 
It follows from~\eqref{coboundPsi} that
$\tau=|V|=|\widetilde \kappa|+\mathcal O(1)$, where $\widetilde\kappa:M\rightarrow \mathbb Z^2$ is the cell change in the $\mathbb Z^2$-periodic Lorentz gas (see 
Subsection~\ref{notationtightness}). \\
We have already recalled several properties of $\widetilde\kappa$. Let us recall, in particular, the precise tail of
$\widetilde\kappa$ (this is partially recalled in~\eqref{tailkappa}).
 By \cite{SV07} completed by \cite{PeneTerhesiu21}, there exist $L_0>0$ and a finite set $\mathcal E$ made of $(L,w)\in(\mathbb Z^d)^2$
with $w$ prime such that 
\begin{align}\label{eq:t1}
|\widetilde\kappa|>L_0\quad\Rightarrow \quad \exists (L,w)\in\mathcal E,\ \exists N\in\mathbb N^*\ \widetilde\kappa=L+Nw
\end{align}
and
\begin{align}\label{eq:tail}
\mu(\widetilde\kappa=L+Nw)=c_{L,w}N^{-3}+\mathcal O(N^{-4})\, ,\quad\mbox{as }N\rightarrow +\infty\, ,
\end{align}
with $c_{L,w}>0$. 
This set $\mathcal E$ parametrizes the set of corridors mentioned in Section~\ref{sec:proofmixing} (the set $\mathfrak C$ therein corresponds to the set of unit vectors proportional to some $w$ such that there exists $L\in\mathbb Z^2$ such that $(L,w)\in\mathcal E$). 
Then, when $d=2$, the variance matrix $\Sigma_0$ (for the Sinai billiard map) appearing in the Central Limit Theorem for the displacement given by~\eqref{eq:cltV} corresponds to the following quadratic form
\begin{equation}\label{defSigma0dim2}
\mbox{If }d=2,\quad
\forall t\in\mathbb R^2,\quad
\langle\Sigma_0 t,t\rangle:=\frac 12\sum_{(L,w)\in\mathcal E}c_{L,w}\langle t,w\rangle^2\, .
\end{equation}
It is not degenerate since, when $d=2$, we assume the existence of at least two non parallel corridors, and so of two non parallel $w,w'$ such that there exists $L,L'\in\mathbb Z^2$ such that
$(L,w),(L',w')\in\mathcal E$.
 
When $d=1$, setting $\pi_1(w_1,w_2)=w_1$,  $\Sigma_0$ is given by the formula
\begin{equation}\label{defSigma0dim1}
\mbox{If }d=1,\quad \Sigma_0:=\frac 12 \sum_{(L,w)\in\mathcal E}c_{L,w}(\pi_1(w))^2
\end{equation}
 which is non null since we assumed the existence of at least an unbounded line touching no obstacle.

We recall that the variance matrix $\Sigma$ for the flow appearing in~\eqref{eq:Sigma} is given by $\Sigma=\Sigma_0/\sqrt{\mu(\tau)}$.\\

The variance matrix $\Sigma_{d+1}$ of the limit of $a_n^{-1}\widehat\Psi_n$
will appear to be given by the following pretty similar formula:
\begin{equation}\label{Sigmad+1}
\forall t\in\mathbb R^{d+1},\quad
\langle\Sigma_{d+1} t,t\rangle:=\frac 12\sum_{(L,w)\in\mathcal E}c_{L,w}\langle t,(\pi_d(w),|w|)\rangle^2\, ,
\end{equation}
with, as in Section~\ref{notationtightness}, 
\[
\forall w\in\mathbb Z^2,\quad \pi_2(w)=w\quad \mbox{and}\quad \pi_1(w_1,w_2)=w_1\, ,\] 
and with the convention \[
\forall (w,z)\in\mathbb Z^2\times\mathbb R,\quad(\pi_0(w),|w|)=w\quad\mbox{and more generally}\quad (\pi_0(w),z)=z .
\]

Throughout this section, we fix some (arbitrary)  $q\in [1,2)$, and some $b_q>2$ so that 
\begin{equation}\label{defbq}
\frac{1}{b_q}+\frac 1q<1\, .
\end{equation}
This choice will determine the choice of the Banach space on the Young tower.

\subsection{Expression of the characteristic functions via Fourier Perturbed operator}
We observe that
\[|\hPsi(x)-\hPsi(y)|\le d(x,y)+d(T(x),T(y))\, ,\]
for any $x,y$ in the same connected component of $M\setminus (\mathcal S_0\cup T^{-1}(\mathcal S_0))$, where $\mathcal S_0$
is the set of post-collisional vectors tangent to $\partial\Omega$. 
We recall that the diameter of the connected components of 
$M\setminus \bigcup_{k=-n}^nT^{-k}(\mathcal S_0)$ is $\mathcal O(\beta_1^n)$ for some $\beta_1\in(0,1)$.

As in~\cite{SV07}, we consider the towers constructed by Young
in \cite{Young98} (see also \cite{Chernov99}). We recall some facts on Young towers
and introduce some notations that we shall use in the remainder of this paper.
We let $(\Delta,f_\Delta,\mu_{\Delta})$ be the hyperbolic tower, which is an extension of $(M,T,\mu)$ by $\pi:\Delta\mapsto M$
(with $\pi(x,\ell)=T^\ell(x)$) and write
 $\left(\overline\Delta,f_{\overline\Delta},\mu_{\overline\Delta}\right)$
for the quotient tower (obtained from $\Delta$ by quotienting 
out the stable manifolds).
The quotient tower is identified with $\overline\Delta:=\{(x,\ell)\in\Delta\, :\, x\in\overline Y\}$, where 
$\overline Y$ is an unstable curve of a well chosen set $Y\subset M$, and write  $\overline\pi:\Delta\rightarrow\overline\Delta$ for the projection
corresponding to the holonomy along the stable curves of $\Delta$.
%We write $\overline\pi:\Delta\rightarrow\overline\Delta$ for the quotient map.\\
% Let us recall some facts on Young's towers. 
% To construt her tower $\Delta$, Young constructs a nice subset $Y$ of $M$ on which she constructs a nice return time $R$. 
The dynamical system $(\Delta,f_\Delta,\mu_{\Delta})$ is given by
\begin{itemize}
	\item The space $\Delta$ is the set of couples $(x,\ell)\in Y\times\mathbb N_0$ such that $\ell<R(x)$, where $R$ is a return time to $Y$.
	\item The map $f_\Delta$ is given by $f_\Delta(x,\ell)=(x,\ell+1)$
	if $\ell<R(x)-1$ and $f_\Delta(x,R(x)-1)=(T^{R(x)}(x),0)$.
	\item The probability measure $\mu_{\Delta}$ is given by $\mu_{\Delta}(A\times\{\ell\})=\mu_\Delta(A\cap \{R>\ell\})/\mathbb E_\mu[R.1_Y]$, for any measurable set $A\subset Y$.
\end{itemize}
We assume that the greatest common divisor (g.c.d.) of $R$ is $1$, which can be done because of total ergodicity\footnote{The idea of using the total ergodicity of $T$ for constructing a new tower with $g.c.d.(R)=1$ was suggested in~\cite[Section 4]{Young98} 
and used in~\cite{SV04} for ensuring aperiodicity of the version of $\kappa$
on $\overline\Delta$. The details of such a tower construction are contained in \cite[Appendix B]{FP09AIHP}.}
of $T$ ; this assumption is not essential, since one can also deal directly with  $g.c.d.(R)\ne 1$, but it helps simplifying the proofs and notation throughout the remainder of this paper.

The partition $\mathcal P$ on $\Delta$ consists of a union of partitions of the different levels which become finer and finer  as one goes up in the tower. The partition
$\mathcal P$ is used to define a separation time $s(\cdot,\cdot)$
on $\Delta$:
\[
s(x,y):=\inf\{n\ge -1\, :\, \mathcal P(f_\Delta^{n+1}(x))\ne \mathcal P(f_\Delta^{n+1}(y))\}\, ,
\]
The separation time $s(x,y)$ satisfies the following property:
$\pi(x)$ and $\pi(y)$ are in the same connected component of
$M\setminus (\mathcal S_0\cup T^{-1}(\mathcal S_0))$ if $s(x,y)\ge 0$.
In particular if $s(x,y)>2n$, then $\pi(f_\Delta^n(x))$ and $\pi(f_\Delta^n(y))$ are in the same connected component of $M\setminus\bigcup_{k= -n}^nT^{-k}(\mathcal S_0)$. 
Since the atoms of the partition $\mathcal P$ are unions of stable curves, this separation time has a direct correspondent $\overline s(\cdot,\cdot)$ on the quotient tower $\overline\Delta$. 
Let $P$ be the transfer operator of $ \left(\overline\Delta,f_{\overline\Delta},\mu_{\overline\Delta}\right)$, i.e. $P$ is defined on $L^1(\mu_{\overline\Delta})$ by
\[
\int_{\overline\Delta}H.P(G)\, d\mu_{\overline\Delta}=\int_{\overline\Delta}H\circ f_{\overline\Delta}.
G\, d\mu_{\overline\Delta}\, .
\]
Let $\beta\in(\beta_1^{\frac 14},1)$ and close enough to 1. 
%Young's condition on $\beta$ ensures in particular that $\widehat\Psi\circ\pi$ is  Lipschitz on each atom of Young's partition on $\Delta$ with respect to the ultrametric $\beta^{\overline s(\cdot,\cdot)}$ on $\Delta$. 
%We reinforce this condition by assuming that $\beta$ is so that  $\widehat\Psi\circ\pi$ is Lipschitz on each atom of Young's partition on $\Delta$ with respect to $\beta^{2\overline s(\cdot,\cdot)}$ on $\Delta$.
It follows from~\cite{Young98} and~\cite{Chernov99} that there exists
$\varepsilon'>0$ such that, for all $\varepsilon\in]0,\varepsilon'[$,  $P$ is quasicompact on the Banach space $\mathcal B=\mathcal B_{\varepsilon}$ corresponding to the set of
functions of the form $e^{\varepsilon\omega}H$, with $H\in\mathcal B_0$,
where $\omega(x,\ell)=\ell$ and where $\mathcal B_0$ is the Banach space of bounded functions $H:\overline\Delta\rightarrow \mathbb C$ that are Lipschitz continuous with respect to the ultrametric $\beta^{\overline s(\cdot,\cdot)}$ (the space $\mathcal B_0$ corresponds to the space $\mathcal B_\varepsilon$ when $\varepsilon=0$). The space $\mathcal B$
is then endowed with the norm $\Vert\cdot\Vert_{\mathcal B}$ given by
\begin{equation}\label{eq:Bo}
 \Vert H\Vert_{\mathcal B}=\Vert e^{-\varepsilon\omega}H\Vert_{\mathcal B_0}\, .
\end{equation}
Recall $b_q$ satisfies~\eqref{defbq}.
Choose $\varepsilon$ small enough so that $e^{\varepsilon \omega}\in \mathbb L^{b_q}(\mu_{\overline\Delta})$ which implies that $\mathcal B$ 
is continuously embedded in ${\mathbb L}^{b_q}(\mu_{\overline\Delta})$ since
\begin{equation}\label{fixb0}
 \Vert H\Vert_{L^{b_q}}\le \Vert e^{\varepsilon\omega}\Vert_{L^{b_q}}\Vert e^{-\varepsilon\omega}H\Vert_{\infty}\le\Vert e^{\varepsilon\omega}\Vert_{L^{b_q}}\Vert H\Vert_{\mathcal B} \, .
\end{equation}
(This particular choice of $b_q$ will be used in the proof of Sublemma~\ref{sunl:pi} below.)
Since we assume that $g.c.d.(R)=1$, 1 is the only (dominating) eigenvalue of modulus 1 of $P$, and it is simple and isolated in the spectrum of $P$. In particular,
there exists $\theta\in(0,1)$ (depending on $(\beta,\varepsilon)$) such that
\begin{equation}\label{quasicompactness}
\Vert P^n-\mathbb E_{\mu_{\overline\Delta}}[\cdot]1_{\overline\Delta}\Vert_{\mathcal L(\mathcal B)}=\mathcal O(\theta^n)\, ,\quad\mbox{as }n\rightarrow +\infty\, .
\end{equation}
Let $t\in\mathbb R^{d+1}$. 
Recall that via~\cite[eq. (6.2) verified in Corollary 9.4]{BBM19}, 
\begin{align}\label{coboundPsibis}
\hPsi\circ\pi=\overline\Psi\circ\overline\pi+\chi\circ f_\Delta-\chi\, ,
\end{align}
with $\overline\Psi=(\overline\kappa,\overline\tau):\overline\Delta\rightarrow \mathbb Z^d\times\mathbb R$, 
where $\overline\tau$ is the version of $\widetilde\tau=\tau-\mu(\tau)$ on $\overline\Delta$ given
by
\[ 
\overline\tau:=\widetilde\tau\circ\pi +\sum_{n\ge 1}\left(\tau\circ\pi\circ f_{\Delta}^n-\tau\circ\pi\circ f_{\Delta}^{n-1}\circ f_{\overline\Delta}\right)
\]
and where $\chi=(\mathbf{0},\chi_0)$ with $\mathbf 0$ the null element of $\mathbb Z^d$ and with
\[
\chi_0:=\sum_{n\ge 0}\left(\tau\circ\pi\circ f_\Delta^n-\tau\circ\pi\circ f_\Delta^{n}\circ \overline\pi\right)\, .
\]
By~\cite[Proof of Lemma 8.3]{BBM19} (see also Section~\ref{sec:app}) $\overline\tau$ 
is locally Lipschitz continuous (on each atom of Young's partition) with respect to the ultrametric $\beta^{s(\cdot,\cdot)}$, and $\chi:\Delta\rightarrow \{0\}^d\times\mathbb R$ is bounded and 
Lipschitz in the following sense: 
\begin{equation}\label{regbartau}
\sup_{k\ge 1}
\sup_{x,y:s(x,y)>2k}\frac{|\chi(f_\Delta^k(x))-\chi(f_\Delta^k(y))|}{\beta^k}<
\infty\, ,
\end{equation}
It follows from the coboundary equation~\eqref{coboundPsibis} that
\[
\mathbb E_\mu[e^{i\langle t,\frac{\widehat \Psi_n}{a_n
%\sqrt{n}
}\rangle}]
=   \mathbb E_{\mu_{\Delta}}\left[e^{-i\langle \frac{t}{a_n
%\sqrt{n}
	},\chi\rangle} e^{i\langle \frac{t}{a_n
%\sqrt{n}
},\overline\Psi_n\rangle\circ\overline\pi}e^{i\langle \frac{t}{a_n},\chi\circ f_\Delta^n\rangle}\right]\, .
\]
Let $\overline K:\overline\Delta\rightarrow\mathbb Z^2$
be the version of $\widetilde\kappa$  on $\overline\Delta$, i.e. the function such that
$\overline K\circ\overline\pi=\widetilde\kappa\circ\pi$. 
It follows from~\eqref{coboundPsi} and~\eqref{coboundPsibis} that
the function $\Theta:\overline\Delta\to\mathbb R^{d+1}$ defined by
\begin{align}\label{eq:rewr}
\Theta:=\overline\Psi-\Upsilon
,\quad\mbox{with }
\Upsilon:=\left(\pi_d(\overline K),|\overline K|-\mathbb E_{\mu_{\overline \Delta}}[|\overline K|]\right)\, ,
\end{align}
is bounded and Lipschitz, and $\Upsilon$ is constant on partition elements (as  $\overline\kappa=\pi_d(\overline{K})$ corresponding to the cell change). Since both $\overline\Psi$ and $\Upsilon$ have mean zero,
$\mathbb E_{\mu_{\overline \Delta}}[\Theta]=0$.

We define the Fourier-perturbed operators $P_t,\widetilde P_t\in\mathcal L(\mathcal B)$ by  $P_tv=P(e^{i\langle t,\overline\Psi\rangle} v),\widetilde P_tv=P(e^{i\langle t,\Upsilon\rangle} v)$ for $t\in\R^{d+1}$.
By~\cite{SV07} (which exploits~\cite{Young98,Chernov99}), up to enlarging the value of $\theta\in(0,1)$ appearing in~\eqref{quasicompactness}, there exist $\beta_0\in(0,\pi]$, 
a continuous function $t\mapsto \lambda_t\in\mathbb C$  and two families of operators $(\Pi_t)_t$ and $(U_t)_t$
acting on $\mathcal B$ such that $t\mapsto \Pi_t\in\mathcal L(\mathcal B,L^1(\overline\Delta))$ is continuous and such that, for every
$t\in[-\beta_0,\beta_0]^{d+1}$ and every positive integer $n$,
\begin{equation}
\label{spgap-Sz}
P_t^n=\lambda_t^n\Pi_t+U_t^n\, ,\quad \widetilde P_t^n=\widetilde \lambda_t^n\widetilde\Pi_t+\widetilde U_t^n\, ,
\end{equation}
\begin{equation}
\label{spgap-Sz-bis}
\mbox{with}\quad\quad\quad\quad\sup_{t\in[-\beta_0,\beta_0]^d}\left(\Vert U_t^n\Vert_{\mathcal B}+\Vert \widetilde U_t^n\Vert_{\mathcal B}\right)
%+\sup_{t\in[-\pi,\pi]^d\setminus[-\beta_0,\beta_0]^d}\Vert P_t^n\Vert_{\mathcal B}
=\mathcal O\left(\theta^n\right)\, ,\quad\mbox{as }n\rightarrow +\infty\, .
\end{equation}
Set $k=k_n:=(\log n)^2$ and $F_{k,u}(x):=e^{i\langle u,\overline\Psi_k(\overline\pi(x))+\chi\circ f_\Delta^k(x)\rangle}$. 
It follows from~\eqref{coboundPsibis} that
\begin{align*}
\mathbb E_\mu[e^{i\langle t,\frac{\widehat \Psi_n}{a_n}\rangle}]
&=\mathbb E_{\mu_{\Delta}}\left[e^{-i\langle \frac{t}{a_n},\chi\circ f_\Delta^k\rangle} e^{i\langle \frac{t}{a_n},\overline\Psi_n\rangle\circ\overline\pi\circ f_\Delta^k}e^{i\langle \frac{t}{a_n},\chi\circ f_\Delta^{n+k}\rangle}\right]\\
&=\mathbb E_{\mu_{\Delta}}\left[F_{k,-\frac{t}{a_n}} e^{i\langle \frac{t}{a_n},\overline\Psi_n\rangle\circ\overline\pi}
F_ {k,\frac{t}{a_n}}\circ f_\Delta^n\right]\, .
\end{align*}
We approximate $F_{k,u}(x)$
by its conditional expectation $\widehat F_{k,u}(x)=\overline F_{k,u}(\overline\pi(x))$ on the set $\{y\in\Delta\, :\, s(x,y)>2k\}$, where $s$ is the separation time on $\Delta$ as recalled earlier in this section.
 Since $e^{i\langle u,\overline\Psi\rangle}$ is bounded and Lipschitz on $\overline\Delta$ and since  $e^{i\langle t,\chi\rangle}$
is bounded and Lipschitz on $\Delta$ (in the sense of \eqref{regbartau}), it follows that
%\[\left\Vert F_{k,t} -\widehat F_{k,t}\right\Vert_\infty \le C|t| \beta^k\, .\]
%Thus, we are led to the study of
\begin{align*}
\mathbb E_\mu[e^{i\langle t,\frac{\widehat \Psi_n}{a_n}\rangle}]
&=\mathbb E_{\mu_{\overline\Delta}}\left[\overline F_{k,-\frac{t}{a_n}} e^{i\langle \frac{t}{a_n},\overline\Psi_n\rangle}
\overline F_ {k,\frac{t}{a_n}}\circ f_{\overline\Delta}^n\right]+\mathcal O(\beta^k)\\
&=\mathbb E_{\mu_{\overline\Delta}}\left[\overline F_{k,\frac{t}{a_n}} P_{\frac{t}{a_n}}^n(
\overline F_ {k,-\frac{t}{a_n}})\right]+\mathcal O(\beta^k)\\
&=\lambda_{t/a_n}^{n-2k}\mathbb E_{\mu_{\overline\Delta}}\left[\overline F_{k,t} \Pi_{\frac{t}{a_n}}(
P_{\frac{t}{a_n}}^{2k}(\overline F_ {k,-\frac{t}{a_n}}))\right]+\mathcal O(\beta^k+\theta^{n-2k})\, ,
\end{align*}
as $n\rightarrow +\infty$.
Furthermore $\Vert \overline F_ {k,u}\Vert_{\infty}\le 1 $ and $ P_{\frac{t}{a_n}}^{2k}(\overline F_ {k,-\frac{t}{a_n}})$ are uniformly (in $k,n$) Lipschitz
with respect to Young's ultrametric $\beta^{s(\cdot,\cdot)}$.
Thus by continuity of  $t\mapsto \Pi_t\in\mathcal L(\mathcal B,L^1(\overline\Delta))$, 
\begin{align*}
\mathbb E_\mu[e^{i\langle t,\frac{\widehat \Psi_n}{a_n}\rangle}]
=\lambda_{\frac{t}{a_n}}^{n-2k}\left(\mathbb E_{\mu_{\overline\Delta}}\left[\overline F_{k,\frac{t}{a_n}}\right] \mathbb E_{\mu_{\overline\Delta}}
\left[P_{\frac{t}{a_n}}^{2k}(\overline F_ {k,-\frac{t}{a_n}}))\right]+o(1)\right)\, ,
\end{align*}
as $n\rightarrow +\infty$. 
Observe that, for every $t\in\mathbb R^{d+1}$,
\begin{align*}
\mathbb E_{\mu_{\overline\Delta}}\left[\overline F_{k,\frac{t}{a_n}}\right]
&=\mathbb E_{\mu_\Delta}\left[e^{i\langle \frac{t}{a_n},\overline\Psi_k\circ\overline\pi+\chi\circ f_\Delta^k\rangle}\right]=1+ o(1)\quad \mbox{as }n\rightarrow +\infty\, ,
%\mathcal O\left(n^{-\frac 12}\mathbb E_{\mu_\Delta}\left[|\overline\Psi_k\circ\overline\pi|+|\chi|\right]\right)
\end{align*}
due to the dominated convergence theorem (using the fact that $\lim_{k\rightarrow +\infty}\overline\Psi_k/k=0$  $\mu_{\overline\Delta}$-almost-surely).
Analogously
\[
\forall t\in\mathbb R^{d+1},\quad \mathbb E_{\mu_{\overline\Delta}}
\left[P_{\frac{t}{a_n}}^{2k}(\overline F_ {k,-\frac{t}{a_n}}))\right]
= \mathbb E_{\mu_\Delta}\left[e^{i\langle \frac{t}{a_n},\overline\Psi_k\circ\overline\pi-\chi\rangle\circ f_\Delta^k}\right]=1+ o(1)\, ,\quad\mbox{as }n\rightarrow +\infty\, .
\]
Thus 
\begin{align}\label{eq:Four}
\mathbb E_\mu[e^{i\langle t,\frac{\widehat \Psi_n}{a_n}\rangle}]
=\lambda_{\frac{t}{a_n}}^{n-2k}+o(1)\, .
\end{align}
An important observation that will allow us to adapt the results of~\cite{PeneTerhesiu21} to the present context is that
\[
P_t-\widetilde P_t=P\left(\left(e^{i\langle t,\overline\Psi\rangle}-e^{i\langle t,\Upsilon\rangle}\right)\cdot\right)
=\widetilde P_t\left(\left(e^{i\langle t,\Theta\rangle}-1\right)\cdot\right)\, .
\]
\subsection{Regularity of the dominating eigenvalues and its spectral projector}
In this part, we prove that for $q\in[1,2)$
chosen before~\eqref{defbq}, 
\[\Vert \Pi_t-\Pi_0\Vert_{\mathcal B\rightarrow L^q(\mu_{\overline\Delta})}=\mathcal O(t)\quad \mbox{and}\quad
\lambda_t=1-\log(1/|t|)\langle t,\Sigma_{d+1} t\rangle+\mathcal O(t^2)\, ,
\]
as $t\rightarrow 0$.
We do so, via the following several steps: we first establish in Sublemma~\ref{sub:asl} an equivalent of $\lambda_t-1$, and we use it to establish in Sublemma~\ref{sunl:pi} the announced estimate of $\Vert \Pi_t-\Pi_0\Vert_{\mathcal B\rightarrow L^q(\mu_{\overline\Delta})}$ that we finally use to establish in Sublemma~\ref{sunl:lamb} the announced expansion of $\lambda_t$.

To obtain such estimates, we will control the error between
$\lambda_t$ and $\widetilde\lambda_t$, and between
$\Pi_t$ and $\widetilde\Pi_t$. Here we crucially exploit that
$\Upsilon$ and  $\kappa$ satisfy similar properties. This allows us to adapt some results obtained for $\kappa$ in~\cite{SV07,PeneTerhesiu21} with the use of~\cite{BalintGouezel06}.

We start by studying $P_t-\widetilde P_t$. 
Observe that $(e^{i\langle t,\overline\Theta\rangle}-1)\cdot\in\mathcal L(\cB)$ is dominated by  $\left\Vert e^{i\langle t,\overline\Theta\rangle}-1\right\Vert_{\mathcal B_0}$. This implies that $\left\Vert\widetilde P_t-P_t\right\Vert_{\mathcal L(\cB)}=\mathcal O(|t|)$, and thus that
\begin{equation}\label{Pit-tildePit}
\left\Vert \Pi_t-\widetilde\Pi_t\right\Vert_{\mathcal B}=\mathcal O(t)\, ,
\end{equation}
using the usual Cauchy integral expression for $\Pi_t$ and $\widetilde \Pi_t$. 
In particular, the announced estimate on $\Vert \Pi_t-\Pi_0\Vert_{\mathcal B\rightarrow L^q(\mu_{\overline\Delta})}$ will follow from the same estimate for $\Vert \widetilde\Pi_t-\widetilde\Pi_0\Vert_{\mathcal B\rightarrow L^q(\mu_{\overline\Delta})}$. 

Lemma~\ref{lem:clt} follows immediately  from~\eqref{eq:Four}, combined with the continuity of $t\mapsto\Pi_t\in\mathcal L(\mathcal B\rightarrow L^1(\mu_{\overline\Delta}))$
and from the first sublemma below.

 \begin{sublemma}\label{sub:asl}  
 As $t\rightarrow 0$, %$\lambda_t=\widetilde\lambda_t+\mathcal O(t^2)$, and 
 $1-\lambda_t\sim \log(1/|t|)\langle t,\Sigma_{d+1} t\rangle$,
 where $\Sigma_{d+1}$ is given by~\eqref{Sigmad+1}.
\end{sublemma}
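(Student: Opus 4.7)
The strategy is to exploit the decomposition $\overline\Psi = \Upsilon + \Theta$ from \eqref{eq:rewr}, where $\Upsilon$ is constant on partition elements and inherits the tail structure of $\overline K$, while $\Theta$ is bounded and Lipschitz in the sense of~\eqref{regbartau}. I would split the task into (i) computing the asymptotic of $1-\widetilde\lambda_t$ (the eigenvalue associated to $\widetilde P_t$, for which the Bálint--Gouëzel / Szász--Varjú machinery applies almost verbatim), and (ii) showing that the correction $|\lambda_t - \widetilde\lambda_t| = O(|t|^2)$, which is $o(t^2\log(1/|t|))$ and therefore invisible at the leading order.

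For step (i), I would normalize the eigenfunction $\widetilde h_t = \widetilde\Pi_t 1$ so that $\mu_{\overline\Delta}(\widetilde h_t) = 1$ and write $\widetilde\lambda_t = \mu_{\overline\Delta}(e^{i\langle t,\Upsilon\rangle}\widetilde h_t)$, yielding
\[
1 - \widetilde\lambda_t = \mu_{\overline\Delta}\bigl(1 - e^{i\langle t,\Upsilon\rangle}\bigr) + \mu_{\overline\Delta}\bigl((1 - e^{i\langle t,\Upsilon\rangle})(\widetilde h_t - 1)\bigr).
\]
The second term is $O(|t|^2)$ via $\|\widetilde h_t - 1\|_{\mathcal B} = O(|t|)$, the embedding $\mathcal B \hookrightarrow L^{b_q}(\mu_{\overline\Delta})$, and Hölder. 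For the first, I use the tail identity~\eqref{eq:t1}--\eqref{eq:tail}: on $\{|\widetilde\kappa| > L_0\}$, $\Upsilon$ takes values of the form $z_{L,w,N} := (\pi_d(L+Nw), |L+Nw| - \mathbb{E}_{\mu_{\overline\Delta}}[|\overline K|]) = N(\pi_d(w),|w|) + O(1)$ with weight $c_{L,w}N^{-3} + O(N^{-4})$, while the contribution of $\{|\widetilde\kappa|\le L_0\}$ to $\mu_{\overline\Delta}(1 - \cos\langle t,\Upsilon\rangle)$ is $O(|t|^2)$ as $\Upsilon$ is bounded there. Using the classical asymptotic $\sum_{N\ge 1} N^{-3}(1 - \cos(N\alpha)) \sim \tfrac{\alpha^2}{2}\log(1/|\alpha|)$ as $\alpha \to 0$ (obtained by splitting at $N \sim 1/|\alpha|$ and using $\sum_{N \le 1/|\alpha|} N^{-1} \sim \log(1/|\alpha|)$), this gives
\[
\mu_{\overline\Delta}(1 - \cos\langle t,\Upsilon\rangle) \sim \log(1/|t|)\cdot \tfrac12 \sum_{(L,w)\in\mathcal E} c_{L,w} \langle t,(\pi_d(w),|w|)\rangle^2 = \log(1/|t|)\langle t,\Sigma_{d+1}t\rangle,
\]
while the imaginary part $\mu_{\overline\Delta}(\sin\langle t,\Upsilon\rangle)$ is of the same order as the real part minus the (symmetric) dominant contribution, and so is $o(t^2 \log(1/|t|))$ (by centeredness of $\Upsilon$ and the cancellation between $w$ and $-w$ directions appearing in $\mathcal E$, plus a $O(|t|^2)$ bound from $\mathbb{E}[|\Upsilon|]<\infty$).

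For step (ii), writing $\lambda_t = \mu_{\overline\Delta}(e^{i\langle t,\Upsilon\rangle}e^{i\langle t,\Theta\rangle}h_t)$ with $h_t = \Pi_t 1$ normalized analogously, one has $\lambda_t - \widetilde\lambda_t = \mu_{\overline\Delta}\bigl(e^{i\langle t,\Upsilon\rangle}(e^{i\langle t,\Theta\rangle}h_t - \widetilde h_t)\bigr)$. Expanding $e^{i\langle t,\Theta\rangle} = 1 + i\langle t,\Theta\rangle + O(|t|^2)$ (valid in $\|\cdot\|_\infty$ since $\Theta$ is bounded, and in $\mathcal B_0$ since $\Theta$ is Lipschitz), and using $h_t - \widetilde h_t = O(|t|)$ in $\mathcal B$ via~\eqref{Pit-tildePit}, the first-order term reduces to $i\mu_{\overline\Delta}(e^{i\langle t,\Upsilon\rangle}\langle t,\Theta\rangle)$. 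Since $\mu_{\overline\Delta}(\Theta) = 0$, this rewrites as $i\mu_{\overline\Delta}((e^{i\langle t,\Upsilon\rangle}-1)\langle t,\Theta\rangle)$, which is $O(|t|)\cdot|t|\cdot \mathbb{E}[|\Upsilon|]\|\Theta\|_\infty = O(|t|^2)$ by the pointwise bound $|e^{i\langle t,\Upsilon\rangle}-1| \le |t||\Upsilon|$ and the integrability $\mathbb{E}[|\Upsilon|] < \infty$ (inherited from $\mu(|\widetilde\kappa|=N)\sim cN^{-3}$).

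The main obstacle is the delicate bookkeeping in step (i) when passing from the tail expansion of $\mu_{\overline\Delta}(\Upsilon = z_{L,w,N})$ to the clean sum $\sum_N c_{L,w}N^{-3}(1 - \cos N\alpha_{L,w,t})$ with $\alpha_{L,w,t} = \langle t,(\pi_d(w),|w|)\rangle$: one must show that the $O(1)$ offsets in $z_{L,w,N}$ produce only $O(|t|^2)$ errors after summation, that the $O(N^{-4})$ correction in \eqref{eq:tail} summed against $(1-\cos N\alpha) = O(N^2\alpha^2 \wedge 1)$ gives at most $O(|t|^2)$, and that the finite set $\{|\widetilde\kappa| \le L_0\}$ contributes the same order. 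All of this parallels the treatment of $\kappa \in \mathbb Z^d$ in~\cite{SV07,PeneTerhesiu21}; the only genuine novelty is that the relevant lattice directions are now $(\pi_d(w),|w|) \in \mathbb Z^d \times \mathbb R$, so positive-definiteness of $\Sigma_{d+1}$ via~\eqref{Sigmad+1} requires noting that the vectors $\{(\pi_d(w),|w|) : (L,w) \in \mathcal E\}$ span $\mathbb R^{d+1}$, which follows since the corridor directions span $\mathbb R^d$ (by the full-dimensional infinite-horizon assumption) while the $|w|$ coordinates distinguish different $w$'s.
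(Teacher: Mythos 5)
Your high-level structure mirrors the paper's: decompose $\overline\Psi = \Upsilon + \Theta$, show $\lambda_t - \widetilde\lambda_t = O(|t|^2)$ using boundedness of $\Theta$ (this part of your argument is sound and matches the paper's treatment of $I_1, I_2$), and compute $1-\widetilde\lambda_t$ by splitting off the term with $\widetilde v_0 = 1_{\overline\Delta}$ and evaluating the tail sum. The leading-term computation and the bookkeeping points you flag are correct.

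However, there is a genuine gap in your handling of the error term $\mu_{\overline\Delta}\bigl((1-e^{i\langle t,\Upsilon\rangle})(\widetilde h_t - 1)\bigr)$. You claim this is $O(|t|^2)$ via $\|\widetilde h_t - 1\|_{\mathcal B} = O(|t|)$ and Hölder. That norm estimate is false: the perturbation $\widetilde P_t - \widetilde P_0 = P\bigl((e^{i\langle t,\Upsilon\rangle}-1)\cdot\bigr)$ has $\mathcal L(\mathcal B)$-norm only $O(1)$, not $O(|t|)$, precisely because $\Upsilon$ is unbounded (the factor $e^{i\langle t,\Upsilon\rangle}-1$ has $\|\cdot\|_\infty \le 2$ but is not pointwise $O(|t|)$ on the whole space). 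This unboundedness of $\Upsilon$ is the entire reason the nonstandard $\sqrt{n\log n}$ normalization and the $\log(1/|t|)$ in the leading term appear; if $\|\widetilde h_t-1\|_{\mathcal B}$ were $O(|t|)$ one would land in the ordinary CLT regime. The correct estimate available is only $\|\widetilde\Pi_t - \widetilde\Pi_0\|_{\mathcal B\to L^q} = O(|t|)$ for $q<2$ (Sublemma~\ref{sunl:pi}), and a H\"older application of that requires the factor $1-e^{i\langle t,\Upsilon\rangle}$ in $L^{q/(q-1)}$ with $q/(q-1)>2$; but $\mu(|\Upsilon|>s)\sim c\,s^{-2}$ makes $\|1-e^{i\langle t,\Upsilon\rangle}\|_{L^{q'}} = O(|t|^{2/q'})$ with $2/q'<1$, so the product comes out strictly worse than $o(|t|^2\log(1/|t|))$. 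There is also a circularity concern: Sublemma~\ref{sunl:pi}, which you would implicitly be invoking, itself uses Sublemma~\ref{sub:asl} as input. In the paper this step is where the real work happens: the bound $I_2'(t)=o(|t|^2\log(1/|t|))$ is obtained by importing the B\'alint--Gou\"ezel perturbation argument, whose key nontrivial input is the ``double probability'' estimate~\eqref{controltimelog} (repackaged as~\eqref{eq:svd}), not by a soft norm estimate. Your proposal therefore silently assumes away the hardest ingredient of the proof.
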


\begin{pfof}{Sublemma~\ref{sub:asl}}
To study the expansion of $t\mapsto \lambda_t$, in both Sublemmas~\ref{sub:asl} and~\ref{sunl:lamb}, 
we consider the normalized eigenvectors $v_t,\widetilde v_t$  of respectively $P_t,\widetilde P_t$ associated with $\lambda_t,\widetilde\lambda_t$ given by
$v_t= \frac{\Pi_t(1_\Delta)}{\mathbb E_{\mu_\Delta}[\Pi_t(1_\Delta)]}$ and $\widetilde v_t= \frac{\widetilde\Pi_t(1_\Delta)}{\mathbb E_{\mu_\Delta}[\widetilde\Pi_t(1_\Delta)]}$, and
we will use the following expressions
\[
\lambda_t=\mathbb E_{\mu_{\Delta}}[P_t(v_t)]=\mathbb E_{\mu_{\Delta}}[e^{i\langle t,\overline\Psi\rangle}v_t]\quad\mbox{and}\quad
\widetilde \lambda_t=\mathbb E_{\mu_{\Delta}}[\widetilde P_t(\widetilde v_t)]=
\mathbb E_{\mu_{\Delta}}[e^{i\langle t,\overline\Upsilon\rangle}\widetilde v_t]
\, ,
\]
Therefore
\begin{align}
\label{eq:lam}
\lambda_t-\widetilde\lambda_t &= I_1(t)+I_2(t)\, ,
\end{align}
with
\begin{align*}I_1(t):=\int_{\overline\Delta}e^{i\langle t,\overline\Psi\rangle}( v_t-\widetilde v_t)\, d\mu_{\overline\Delta}=\int_{\overline\Delta}(1-e^{i\langle t,\overline\Psi\rangle})( \widetilde v_t- v_t)\, d\mu_{\overline\Delta}\, ,
\end{align*}
and
\begin{align*}
\quad I_2(t):=\int_{\overline\Delta} (e^{i\langle t,\overline\Psi\rangle}-e^{i\langle t,\Upsilon\rangle})\widetilde v_t\, d\mu_{\overline\Delta}\, .
\end{align*}
As argued below, $I_1(t)$ and $I_2(t)$ are $\mathcal O(|t|^2)$.
Regarding $I_2$, we first note that $(e^{i\langle t,\overline\Psi\rangle}-e^{i\langle t,\Upsilon\rangle})\cdot\in\mathcal L(\cB)$ is dominated by the Lipschitz norm of $(e^{i\langle t,\overline\Psi\rangle}-e^{i\langle t,\Upsilon\rangle})=e^{i\langle t,\Upsilon\rangle}(e^{i\langle t,\Theta\rangle}-1)$.
It follows from the definitions of $v_t,\widetilde v_t$ and\eqref{Pit-tildePit} that
\begin{align*}
 \|v_t-\widetilde v_t\|_{\mathcal L(\cB)}=\mathcal O(t),\quad \mbox{as }t\rightarrow 0\, .
\end{align*}
Recall that $\cB\subset L^{b_q}$ with $b_q>2$ fixed satisfying~\eqref{defbq}.
Further, note that by~\eqref{eq:tail},$\Upsilon$ and thus $\overline\Psi$
are in $L^q$ for any $q<2$. By the choice of $b_q$, $b_q>q/(q-1)$.
Hence,
\[
 |I_1(t)|\ll |t|\,\int_{\overline\Delta}|\overline\Psi|\,|\widetilde v_t-v_t|\,d\mu_{\overline\Delta} 
 \ll|t|\,\|\overline\Psi\|_{L^q}\|\widetilde v_t-v_t\|_{L^{\frac{q}{q-1}}}\ll |t|
 \,|\widetilde v_t-v_t\|_{\cB}\ll t^2.
\]
Next, recalling the definition of $\Theta$ in~\eqref{eq:rewr},
\begin{align*}
 I_2(t)=\int_{\overline\Delta} e^{i\langle t,\Upsilon\rangle}(e^{i\langle t,\Theta\rangle}-1)\widetilde v_t\, d\mu_{\overline\Delta}=&I_2^1(t)+I_2^2(t)
\end{align*}
with
\[
I_2^1(t):= it\int_{\overline\Delta} e^{i\langle t,\Upsilon\rangle}\,\Theta\,\widetilde v_t\, d\mu_{\overline\Delta},\quad\mbox{and}\quad
I_2^2(t):=\int_{\overline\Delta} e^{i\langle t,\Upsilon\rangle}(e^{i\langle t,\Theta\rangle}-1-it\Theta)\widetilde v_t\, d\mu_{\overline\Delta}\, .
\]
Now, since $\Theta$ is bounded, 
\[
 |I_2^2(t)|\ll |t|^2\int_{\overline\Delta} |\Theta|^2\,|\widetilde v_t|\, d\mu_{\overline\Delta}\ll  (|t|\, \Vert\Theta\Vert_\infty)^2\int_{\overline\Delta}|\widetilde v_t|\, d\mu_{\overline\Delta}
 \ll |t|^2.
\]
For $I_2^1$, write
\begin{align*}
 I_2^1(t)&=it\int_{\overline\Delta} e^{i\langle t,\Upsilon \rangle}\,\Theta\,\widetilde v_0\, d\mu_{\overline\Delta} +it\int_{\overline\Delta} e^{i\langle t,\Upsilon \rangle}\,\Theta\,(\widetilde v_t-\widetilde v_0)\, d\mu_{\overline\Delta}\\
 &=it\int_{\overline\Delta}\Theta\,d\mu_{\overline\Delta}+it\int_{\overline\Delta} (e^{i\langle t,\Upsilon \rangle}-1)\,\Theta\,\widetilde v_0\, d\mu_{\overline\Delta}+\mathcal O(|t|^2)\\
 &=0+\mathcal O(|t|^2),\quad\mbox{as }t\rightarrow 0\, ,
 \end{align*}
where we have used that $\int_{\overline\Delta}\Theta\,d\mu_{\overline\Delta}=0$.
Putting the above together, $|I_2(t)|\ll |t|^2$.

The bounds for $I_1$ and $I_2$ together with~\eqref{eq:lam} give that
\begin{equation}\label{eq:l2}
\lambda_t= \widetilde\lambda_t +\mathcal O(|t|^2),\quad \mbox{as }t\rightarrow 0\, .
\end{equation}
It remains to show that $1-\widetilde\lambda_t$ has the desired asymptotic, using
the form of $\Upsilon $ in~\eqref{eq:rewr}.
To this end we observe that
\begin{align}
\label{eq:tlam}
1-\widetilde\lambda_t= \mathbb E_{\mu_{\overline\Delta}}[1-\widetilde P_t(\widetilde v_t)]=\mathbb E_{\mu_{\overline\Delta}}[(1-e^{i\langle t,\Upsilon\rangle})\widetilde v_t]=I'_1(t)+I'_2(t)\, ,
\end{align}
with
\begin{align*}I'_1(t):=\int_{\overline\Delta} (1-e^{i \langle t,\Upsilon\rangle})\widetilde v_0\, d\mu_{\overline\Delta}\quad\mbox{ and }\quad I'_2(t):=\int_{\overline\Delta} (1-e^{i\langle t,\Upsilon\rangle})(\widetilde v_t-\widetilde v_0)\, d\mu_{\overline\Delta}\, .
\end{align*}
We estimate $I'_1(t)$.
Recall equations~\eqref{eq:t1},~\eqref{eq:tail} and that $\int_{\overline\Delta}\Upsilon\, d\mu_{\overline\Delta}=0$.
For any $L,w,N$,  write $W_N(L,w):=(\pi_d(L+Nw),|L+Nw|-\mathbb E_{\mu}[|\widetilde\kappa|])$
and compute that
\begin{align*}
I'_1(t)&=\int_{\overline\Delta} (1-e^{i \langle t,\Upsilon\rangle}-i \langle t,\Upsilon\rangle)\, d\mu_{\overline\Delta}\\
&=\sum_{(L,w)\in\mathcal E}\sum_{N\ge 1}
\left(e^{i\langle t, W_N(L,w)\rangle}-1-i\langle t,W_N(L,w)\rangle\right) \mu
(\{ \kappa=L+Nw
\})+\mathcal O(|t|^2)\\
&=\sum_{(L,w)\in\mathcal E}\sum_{N=1}^{1/|t|}  \left(e^{i\langle t,W_N(L,w)\rangle}-1-i\langle t,W_N(L,w)\rangle\right) 
\left(c_{L,w}N^{-3}+\mathcal O(N^{-4})\right)
\\
&\quad +O\left(|t|^2+|t|\sum_{(L,w)\in\mathcal E}|w|\sum_{N>1/|t|} N. N^{-3}
%\mu_{\overline\Delta}(\{\overline\kappa = w'+ Nw \}) 
\right )\\
&=\sum_{(L,w)\in\mathcal E} \frac{c_{L,w}}{2}\sum_{N=1}^{1/|t|}\, \langle t,W_N(L,w)\rangle^2 N^{-3} 
+\mathcal O(|t|^2)\, .
\end{align*}
Since
\(
\langle t, W_N(L,w)\rangle^2 N^{-3} = N^{-1} \langle t,(\pi_d(w),|w|)\rangle^2 +\mathcal O(|t|^2N^{-2})\, ,
\)
\begin{align}
\nonumber I'_1(t)&=
\sum_{(L,w)\in\mathcal E}
\frac{c_{L,w}}{2}\, \log (1/|t|)\langle t, (\pi_d(w),|w|)  \rangle^2+\mathcal O(|t|^2)\\
&=\log(1/|t|) \langle \Sigma_{d+1} t,t\rangle+\mathcal O(|t|^2)\, .\label{eq:i1}
\end{align}
For $I'_2(t)$, we just need to explain that the argument of~\cite{SV07} via \cite{BalintGouezel06},
which provides the asymptotic of the eigenvalue associated to perturbation by $\kappa$
instead of $\Upsilon $ as here, goes through.
The required ingredients in the argument of~\cite{SV07, BalintGouezel06} are: 1) tail of $\kappa$  and 2) the 'double probability' estimate~\eqref{controltimelog}. Regarding 1), we already know the tail of $\Upsilon $: see equations~\eqref{eq:t1} and~\eqref{eq:tail}.
Regarding 2), an analogue of~\eqref{controltimelog}, we recall that we already know that this
holds for $\widetilde\kappa$. 
%For any $L,w,N$, we write $W_N(L,w):=(L+Nw,|L+Nw|-\mathbb E_{\mu}[|\widetilde \kappa|])$.
Because of the expression of $\Upsilon$ (via $\widetilde\kappa$),
the arguments used in~\cite[Proofs of Propositions 11–12]{SV07} ensure that
\begin{align}
\label{eq:svd}
\mu_{\overline\Delta}\left(A_{N,V}\right)=\mathcal O(N^{-3-\varepsilon}),\quad\mbox{as }N\rightarrow +\infty\, ,
\end{align}
where
\[
A_{N,V}:=\left\{\Upsilon=W_N(L,w),\ \exists |j|\le V\log (n+2),\ |\Upsilon\circ f^j|>|W_N(L,w)|^{4/5}\right\}.
\]
Equations~\eqref{eq:tail} and~\eqref{eq:svd} together with~\cite[Proof of Theorem 3.4]{BalintGouezel06} ensure that\footnote{More precisely, see \emph{Estimate of $\lambda_t$ (there) in the proof of}~\cite[Proof of Theorem 3.4]{BalintGouezel06}. In particular, see~\cite[Lemma 3.16 and 3.19]{BalintGouezel06}}

\begin{align}
\label{eq:lntla}
I'_2(t)=  o\left(|t|^2\log(1/|t|)\right),\quad\mbox{as }t\rightarrow 0\, ,
\end{align} 
The conclusion from
this together with~\eqref{eq:l2},~\eqref{eq:tlam} and~\eqref{eq:i1}.~\end{pfof}

In the remainder of this section, we prove a stronger version
of Sublemma~\ref{sub:asl}, along with a strong continuity estimate on $\Pi_t$,  that will be
essential in the proofs of Lemmas~\ref{lem:jointllt0}
and~\ref{lem:jointllt} carried out in Section~\ref{sec:proofjMLLT}.

Recall that $q\in[1,2)$ has been fixed at the beginning of the present section.
\begin{sublemma}\label{sunl:pi} 
\begin{align*}
	\left\Vert\widetilde\Pi_t-\widetilde\Pi_0\right\Vert_{\mathcal B
		\rightarrow L^{
			q}(\mu_{\overline\Delta})}+\left\Vert\Pi_t-\Pi_0\right\Vert_{\mathcal B
		\rightarrow L^{
			q}(\mu_{\overline\Delta})}=\mathcal O(t)\, \quad\mbox{as }t\rightarrow 0\, .
	\end{align*}
%where $\mathcal B_0$ is the set of bounded Lipschitz functions on $\overline\Delta$ with respect to the ultrametric $\beta^{s(\cdot,\cdot)}$.
\end{sublemma}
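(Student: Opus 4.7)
The plan is to split $\Pi_t - \Pi_0 = (\Pi_t - \widetilde\Pi_t) + (\widetilde\Pi_t - \widetilde\Pi_0)$, using $\widetilde\Pi_0 = \Pi_0$, and estimate the two summands separately. The first summand is easy: by~\eqref{Pit-tildePit} we already have $\|\Pi_t - \widetilde\Pi_t\|_{\cB\to\cB} = \mathcal O(t)$, and composing with the continuous embedding $\cB\hookrightarrow L^{b_q}\hookrightarrow L^q$ (valid since $b_q>q$) immediately yields $\|\Pi_t - \widetilde\Pi_t\|_{\cB\to L^q} = \mathcal O(t)$. The core task is therefore the analogous bound for $\widetilde\Pi_t - \widetilde\Pi_0$; here the perturbation $\widetilde P_t - P_0 = P\bigl((e^{i\langle t,\Upsilon\rangle}-1)\cdot\bigr)$ is \emph{not} $\mathcal O(t)$ in the $\cB\to\cB$ operator norm (because $\Upsilon$ is unbounded), so a mixed-norm argument is necessary.

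The key compensating estimate I would establish is the following: for $h\in\cB$, combining $|e^{i\langle t,\Upsilon\rangle}-1|\le|t||\Upsilon|$ with the $L^q$-contraction property of $P$ and H\"older's inequality with $1/r+1/b_q = 1/q$,
\[
\|(\widetilde P_t - P_0)h\|_{L^q} \le |t|\,\|\Upsilon\|_{L^r}\,\|h\|_{L^{b_q}} \le C|t|\,\|h\|_\cB,
\]
provided $r<2$. Since the choice of $b_q$ is at our disposal (any $b_q$ with $1/b_q+1/q<1$ works in the setup), we may enlarge $b_q$ to also ensure $b_q>2q/(2-q)$, which gives $r<2$, and then $\|\Upsilon\|_{L^r}<\infty$ by~\eqref{eq:tail}. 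This estimate is then inserted into the Kato resolvent formula
\[
\widetilde\Pi_t - \widetilde\Pi_0 = \frac{1}{2\pi i}\oint_\gamma (z-\widetilde P_t)^{-1}(\widetilde P_t - P_0)(z-P_0)^{-1}\,dz
\]
on a small circle $\gamma$ around $1$: for $g\in\cB$, $(z-P_0)^{-1}g$ is uniformly bounded in $\cB$, so the key estimate gives $\|(\widetilde P_t - P_0)(z-P_0)^{-1}g\|_{L^q} = \mathcal O(|t|)\|g\|_\cB$.

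The main obstacle is the outer resolvent $(z-\widetilde P_t)^{-1}$: it must be applied to a function that is $L^q$-small of order $|t|$ but only $\cB$-bounded of order $1$. To bridge this mismatch, I would use the spectral decomposition $(z-\widetilde P_t)^{-1} = (z-\widetilde\lambda_t)^{-1}\widetilde\Pi_t + V_z^t$ afforded by~\eqref{spgap-Sz-bis}. The rank-one piece $\widetilde\Pi_t f = \alpha_t(f)\widetilde v_t$ handles the $L^q$-small input using that $\alpha_t$ extends to a bounded functional on $L^q$ (by duality and the regularity of the left eigenfunction, which is close to $1$ in $L^{q/(q-1)}$ by perturbation theory, arguing as for $\widetilde v_t$ via Sublemma~\ref{sub:asl}). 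For $V_z^t$, a further application of the resolvent identity $R_z^t = R_z^0 + R_z^t (\widetilde P_t - P_0) R_z^0$ reduces matters to a composition ending in $R_z^0$, whose restriction to $(I-\Pi_0)\cB$ can be expressed as the convergent series $\sum_{n\ge 0}(P_0^n-\Pi_0)$ and controlled in a mixed norm via the spectral gap~\eqref{quasicompactness}. Carrying the $\mathcal O(|t|)$ factor cleanly through this juggling is the most delicate step, and follows the general philosophy of the analogous analysis for $\kappa$ in~\cite{PeneTerhesiu21}.
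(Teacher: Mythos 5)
Your two initial moves are correct: the reduction via~\eqref{Pit-tildePit}, and the compensating estimate $\|(\widetilde P_t-P_0)h\|_{L^q(\mu_{\overline\Delta})}\ll |t|\,\|\Upsilon\|_{L^r}\,\|h\|_{L^{b_q}}\ll|t|\,\|h\|_{\cB}$ (with $1/r+1/b_q=1/q$ and $b_q$ large enough that $r<2$), are exactly the right kind of mixed-norm ingredients. The gap is in the step you yourself flag as delicate. Collecting residues in the Kato formula with $R_z^t=(z-\widetilde\lambda_t)^{-1}\widetilde\Pi_t+V_z^t$ and $R_z^0=(z-1)^{-1}\widetilde\Pi_0+V_z^0$ gives
\[
\widetilde\Pi_t-\widetilde\Pi_0
=\widetilde\Pi_t(\widetilde P_t-P_0)V_{\widetilde\lambda_t}^0
+V_1^t(\widetilde P_t-P_0)\widetilde\Pi_0\, .
\]
The first term would be handled by your estimate plus the claim that the left eigenfunctional $\alpha_t$ is uniformly bounded on $L^q$; this claim is plausible but not established in the proposal (and cannot be read off Sublemma~\ref{sub:asl}, which concerns $\lambda_t$, not the dual eigenvector). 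The real obstruction is the second term: it equals $\mathbb E_{\mu_{\overline\Delta}}[w]\,V_1^t\bigl(P(e^{i\langle t,\Upsilon\rangle}-1)\bigr)$, where the argument of $V_1^t$ is $L^q$-small of order $|t|$ but only $\cB$-bounded of order $1$. One would need an $L^q\to L^q$ bound on the reduced resolvent, and there is none: the spectral data~\eqref{spgap-Sz}--\eqref{spgap-Sz-bis} and the holomorphy of $V_z^t$ live in $\mathcal L(\cB)$, not in $\mathcal L(L^q)$, and transfer operators of this kind have no spectral gap on $L^q$. Iterating the resolvent identity reintroduces $R_z^t$ (or $R_z^0$, no better) without ever terminating in an $L^q$-bounded operator; and your suggestion to expand $V_1^0=\sum_{n\ge0}(P_0^n-\Pi_0)$ and estimate term by term, $\|(P_0^n-\Pi_0)f\|_{L^q}\ll\min(\theta^n\|f\|_{\cB},\|f\|_{L^q})$, yields only $O(|t|\log(1/|t|))$ after summation, losing a logarithm and leaving the discrepancy $V_1^t-V_1^0$ still unaccounted for.

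The paper takes a structurally different route that avoids the resolvent on the tower altogether. The eigenvector equation, iterated up the tower, expresses $\widetilde\Pi_t(w)$ at level $\ell$ explicitly in terms of its restriction to the base: $\widetilde\Pi_t(w)(x,\ell)=\widetilde\lambda_t^{-\ell}\,e^{i\langle t,\Upsilon_\ell(x,0)\rangle}\,\widetilde\Pi_t(w)(x,0)$. The perturbation analysis is carried out on the base $Y$ in the \emph{bounded} space $\cB_0$, giving the claim~\eqref{prop5-4} $\|1_Y(\widetilde\Pi_t-\widetilde\Pi_0)\|_{\cB\to\cB_0}=\mathcal O(|t|)$ via an adaptation of~\cite[Prop.~5.4, Lemma~C.2]{PeneTerhesiu21} to $\overline\Psi$. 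The passage to the whole tower then uses the decomposition $I_{1,t}+I_{2,t}+I_{3,t}$: $I_{2,t}$ inherits the $\mathcal O(|t|)$ from the base estimate, while $I_{1,t}$ (factor $e^{i\langle t,\Upsilon_\omega\rangle}-1$) and $I_{3,t}$ (factor $\widetilde\lambda_t^{-\omega}-1$) are controlled in $L^q$ by the exponential tail of $R$ together with, respectively, the tail of $\Upsilon$ and Sublemma~\ref{sub:asl}. It is this explicit base-to-tower formula, rather than any abstract resolvent identity, that converts the $L^q$-smallness of the perturbation into the sharp $\mathcal O(|t|)$ bound on the spectral projector.
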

\begin{proof}
Due to~\eqref{Pit-tildePit}, it is enough to control
$\left\Vert\widetilde\Pi_t-\widetilde\Pi_0\right\Vert_{\mathcal B
	\rightarrow L^{
		q}(\mu_{\overline\Delta})}$. 
 We claim that with the choice of $b_q$ (see~\eqref{defbq}) and $\varepsilon$ in the text before~\eqref{fixb0}, \cite[Proposition 5.4]{PeneTerhesiu21} applies to ensure that
\begin{equation}\label{prop5-4}
\Vert 1_Y(\widetilde\Pi_t-\widetilde\Pi_0)\Vert_{\mathcal B\rightarrow \mathcal B_0}=\mathcal O(|t|),\quad\mbox{as }t\rightarrow 0\, .
\end{equation}
Using~\eqref{prop5-4}, we modify the proof of  \cite[Proposition 5.3]{PeneTerhesiu21}
to conclude the proof of the sublemma. Set $\pi_0(x,\ell):=x$, recall that $\omega(x,\ell)=\ell$ and, using~\cite[Formula (44)]{PeneTerhesiu21}, we write $(\widetilde\Pi_t-\widetilde\Pi_0)(w)(x)=I_{1,t}(x)+I_{2,t}(x)+I_{3,t}(x)$
with
\begin{align*}
I_{1,t}(x)&:=\left[(e^{i\langle t,\Upsilon_{\omega(x)}(\pi_0(x))\rangle}-1)\widetilde\Pi_0(w)\right]\\
I_{2,t}(x)&:=\left[e^{i\langle t,\Upsilon_{\omega(x)}(\pi_0(x))\rangle}
(\widetilde\Pi_t-\widetilde\Pi_0) (w)(\pi_0(x))\right]
\\
I_{3,t}(x)&:=
(\widetilde \lambda_t^{-\omega(x)}-1)
[e^{i\langle t,\Upsilon_{\omega(x)}(\pi_0(x))\rangle}\widetilde\Pi_t (w)(\pi_0(x))]\, .
\end{align*}
First,
\begin{align}
\nonumber\Vert I_{1,t}\Vert_{L^q(\mu_{\overline\Delta})}^q&=\mathbb E_{\mu_{\overline\Delta}} \left[\left((e^{i\langle t,\Upsilon_{\omega(\cdot)}(\pi_0(\cdot))\rangle}-1)\right)^q\right]\Vert w\Vert_{L^1(\mu_{\overline\Delta})}^q\\
\nonumber&\le |t|^q\mathbb E_{\mu_{\overline\Delta}}\left[ \left|\Upsilon_{\omega(\cdot)}(\pi_0(\cdot))\right|^q\right]\Vert w\Vert_{L^1(\mu_{\overline\Delta})}^q\\
\nonumber&\ll |t|^q\sum_{n\ge 0}\mathbb E_{\mu}\left[1_{Y\cap \{R> n\}}   |\Upsilon_n|^q \right]\Vert w\Vert_{L^1(\mu_{\overline\Delta})}^{q}\\
&\ll |t|^q\sum_{n\ge 0}\mu_Y(R> n)^{1/q'} \Vert |\Upsilon_n|^{q}\Vert_{L^{p'}(\mu_{\overline Y})}\Vert w\Vert_{L^1(\mu_{\overline\Delta})} ^{q}\ll |t|^q\Vert w\Vert_{\mathcal B} ^{q}\, ,\label{ControlI1}
\end{align}
taking $p'>1$ and $q'>1$ such that $\frac 1{p'}+\frac 1{q'}=1$ and $qp'<2$, and
using the fact that $\mu_{\overline Y}(R\ge n)$ decreases exponentially fast.
Second, it follows from~\eqref{prop5-4} that
\begin{align}
\nonumber
\left\Vert I_{2,t}\right\Vert_{L^q(\mu_{\overline\Delta})}^{q}&\le\left\Vert
(\widetilde\Pi_t-\widetilde\Pi_0) (w)(\pi_0(\cdot))\right\Vert_{L^q(\mu_{\overline\Delta})}^q\\
%&\le\sum_{n\ge 0}\mathbb E_{\mu}\left[1_{Y\cap\{R>n\}} |(\widetilde\Pi_t-\widetilde\Pi_0) (w)|^q\right]\\
&\ll \Vert 1_Y(\widetilde\Pi_t-\widetilde\Pi_0)(w)\Vert^q_\infty \ll |t|^q\Vert w\Vert_{	\mathcal B}^q\, .\label{ControlI2}
\end{align}
Third, by Sublemma~\ref{sub:asl}, there exists some $a'>0$ such that, for $t$ small enough,  
$1>|\widetilde \lambda_t|>e^{-a'\frac {|t|^2\log(1/|t|)} 2}$, and so
\begin{align}
\nonumber\left\Vert I_{3,t}\right\Vert_{L^q(\mu_{\overline\Delta})}^q &\le 
\left\Vert 1_Y\widetilde\Pi_t (w)\right\Vert_\infty\, 
\left|\lambda_t-1\right|^q\mathbb E_{\mu_{\overline\Delta}}\left[ \left|\omega(\cdot)
e^{a'\frac {|t|^2\log(1/|t|)} 2(\omega(\cdot)-1)}\right|^{q}\right]\\
\nonumber&\ll \left(|t|^2\log(1/|t|)\left\Vert \widetilde\Pi_t (w)\right\Vert_{\mathcal B}\right)^{q} \sum_{n\ge 1}\mu_Y(R>n)n^{q} e^{a'q\frac {n|t|^2\log(1/|t|)} 2}\\
&\ll  \left(|t|^2\log(1/|t|) \Vert w\Vert_{\mathcal B}\right)^{q}\, ,\label{ControlI3}
\end{align}
%\end{equation}
provided $|t|$ is small enough, using again that
 $\mu_Y(R>n)$ decays exponentially fast in $n$.
The conclusion follows from~\eqref{ControlI1},~\eqref{ControlI2} and~\eqref{ControlI3}.

It remains to complete

{\bf{Proof of the claim~\eqref{prop5-4}}.}
Recall $b_q$ satisfies~\eqref{defbq} and that $\varepsilon$ has been fixed in the text before~\eqref{fixb0}; in particular, $\frac{1}{b_q}+\frac 1q<1$. 
There exists $p\in (2,b_q)$
so that $\frac{1}{p}+\frac 1q<1$. In particular, $1<q\frac{p-1}{p}$
and so $2\frac{p-1}{p}>\frac 2q>1$. Let $\gamma\in\left(1,\frac{p-1}p\right)$.
Let $h\in \mathcal B$. 
Note that $h=vw$, with  $w:=e^{-\varepsilon\omega}h\in\mathcal B_0$ and with $v:=e^{\varepsilon\omega}\in L^{b_q}$ constant on partition elements.

With these choices, $h,v,p,b=b_q,\gamma$
satisfy the assumptions of \cite[Proposition 5.4, Lemma C.2]{PeneTerhesiu21} which still holds true with the same proof in when replacing $\Pi_t$ therein by the present $\widetilde \Pi_t$ (this is equivalent to replacing $\overline\kappa$ in~\cite{PeneTerhesiu21} by the present $\overline\Psi$).
Again, this adaptation is possible because $\overline\Psi$ is constant on partition elements and $\overline\Psi$ has a similar tail to that of
$\overline\kappa$ (again, see equations~\eqref{eq:t1} and~\eqref{eq:tail}).
(The similar tail ensures, in particular, that $\overline\Psi$.
is as integrable as $\overline\kappa$. ) As a consequence, \cite[Proof of Lemma C.2]{PeneTerhesiu21} goes through with $\overline\kappa$ replaced by $\overline\Psi$. Moreover, because of the same properties of $\overline\Psi$, the proof of \cite[Proposition 5.4]{PeneTerhesiu21}
can be easily modified to prove the claim~\eqref{prop5-4}.

The adaptation of \cite[Proposition 5.4]{PeneTerhesiu21} implies  that
\[
\Vert 1_Y(\widetilde\Pi_t-\widetilde\Pi_0)(h)\Vert_{\mathcal B_0}\ll |t| \Vert 1_Y\widetilde \Pi'_0(h)\Vert_{\mathcal B_0}+ |t|^\gamma\left(\Vert h \Vert_{\mathcal B}+\Vert e^{-\varepsilon\omega}h\Vert_{\mathcal B_0} \Vert e^{\varepsilon\omega}\Vert_{L^{b_q}}\right)\, .
\]
By~\cite[Lemma C.2]{PeneTerhesiu21} with $\overline\kappa$ replaced by $\overline\Psi$,

\[
\left( \Vert 1_Y\widetilde\Pi'_0(h)\Vert_{\mathcal B_0}\le \Vert h \Vert_{\mathcal B}+\Vert e^{-\varepsilon\omega}h\Vert_{\mathcal B_0} \Vert e^{\varepsilon\omega}\Vert_{L^{b_q}}\right)\, ,
\]
We conclude that
\[
\Vert 1_Y(\widetilde\Pi_t-\widetilde \Pi_0)(h)\Vert_{\mathcal B_0}\ll |t|\left( \Vert h \Vert_{\mathcal B}+\Vert e^{-\varepsilon\omega}h\Vert_{\mathcal B_0} \Vert e^{\varepsilon\omega}\Vert_{L^{b_q}}\right)
\ll |t|\Vert h\Vert_{\mathcal B}\, ,
\]
since $\Vert e^{-\varepsilon\omega}h\Vert_{B_0}=\Vert h\Vert_{\mathcal B}$.~\end{proof}

\begin{sublemma}\label{sunl:lamb} 
As $t\to 0$,
	$1-\lambda_t= \log(1/|t|)\langle t,\Sigma_{d+1} t\rangle  +O(|t|^{2})$.\\
\end{sublemma}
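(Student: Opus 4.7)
My plan is to refine Sublemma~\ref{sub:asl}, whose remainder was only $o(|t|^2\log(1/|t|))$, into the sharper error $O(|t|^2)$, using the Lipschitz-type continuity of the spectral projector provided by Sublemma~\ref{sunl:pi} as the key new ingredient. By~\eqref{eq:l2} we already have $\lambda_t = \widetilde\lambda_t + O(|t|^2)$, so it suffices to show $1-\widetilde\lambda_t = \log(1/|t|)\langle t,\Sigma_{d+1}t\rangle + O(|t|^2)$.

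I keep the decomposition $1-\widetilde\lambda_t = I'_1(t) + I'_2(t)$ from the proof of Sublemma~\ref{sub:asl} (where $I'_1(t) = \int_{\overline\Delta}(1-e^{i\langle t,\Upsilon\rangle})\widetilde v_0\, d\mu_{\overline\Delta}$ is the main term and $I'_2(t) = \int_{\overline\Delta}(1-e^{i\langle t,\Upsilon\rangle})(\widetilde v_t - \widetilde v_0)\, d\mu_{\overline\Delta}$ is the remainder). The explicit computation of $I'_1(t)$ carried out in Sublemma~\ref{sub:asl}, which splits according to $\{|\Upsilon|\le 1/|t|\}$ versus $\{|\Upsilon|>1/|t|\}$ and uses the sharp tail~\eqref{eq:tail}, already delivers $I'_1(t) = \log(1/|t|)\langle t,\Sigma_{d+1}t\rangle + O(|t|^2)$. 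Thus only $I'_2(t)=O(|t|^2)$ remains, and this is the improvement over the Bálint--Gouëzel bound $I'_2(t) = o(|t|^2\log(1/|t|))$ used before.

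To handle $I'_2(t)$, I expand $\widetilde v_t - \widetilde v_0$ using the tower decomposition $\widetilde\Pi_t(1) - \widetilde\Pi_0(1) = I_{1,t} + I_{2,t} + I_{3,t}$ from the proof of Sublemma~\ref{sunl:pi}, plus an $O(|t|^2)$ normalization correction coming from $c_t := \mathbb E_{\mu_{\overline\Delta}}[\widetilde\Pi_t(1)]\to 1$. The $I_{2,t}$-piece is the easiest: by~\eqref{prop5-4} together with the embedding $\mathcal B_0\hookrightarrow L^\infty$ one has $\|I_{2,t}\|_{L^\infty} = O(|t|)$, and paired with $\int |1-e^{i\langle t,\Upsilon\rangle}|\, d\mu_{\overline\Delta} = O(|t|)$ (which uses only $\Upsilon\in L^1(\mu_{\overline\Delta})$, available from the tail~\eqref{eq:tail}) this yields an $O(|t|^2)$ contribution. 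The $I_{3,t}$-piece factors the small quantity $\widetilde\lambda_t^{-\omega}-1$ out: using Sublemma~\ref{sub:asl}'s coarse bound $|1-\widetilde\lambda_t|\ll |t|^2\log(1/|t|)$ and the geometric decay of $\mu_Y(R>\ell)$, the Birkhoff sum in $\omega$ is absolutely convergent and again gives $O(|t|^2)$.

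The hard step is the $I_{1,t}$-piece, which reduces to estimating $\int(1-e^{i\langle t,\Upsilon\rangle})(e^{i\langle t,\Upsilon_\omega\rangle}-1)\, d\mu_{\overline\Delta}$ with $\Upsilon_\omega$ a Birkhoff sum of length $\omega$ running over the (unbounded) tower height. A naive Hölder split using $\Upsilon\in L^p(\mu_{\overline\Delta})$ for $p<2$ always leaves a $|t|^{\varepsilon}$ or $\log(1/|t|)$ deficit (the best one gets is $O(|t|^{2-\varepsilon})$). To close the gap I apply the double-probability estimate~\eqref{controltimelog} jointly to $\Upsilon$ and $\Upsilon_\omega$, in the same spirit as in the Bálint--Gouëzel analysis used for Sublemma~\ref{sub:asl}, but now the Lipschitz-in-$t$ structure of $\widetilde\Pi_t$ from Sublemma~\ref{sunl:pi} contributes an extra factor $|t|$ at the level of the integrand rather than just at the level of an $L^q$ norm; this removes the parasitic $\log(1/|t|)$ and delivers the desired sharp $O(|t|^2)$ bound, completing the expansion of $1-\widetilde\lambda_t$ and hence of $1-\lambda_t$.
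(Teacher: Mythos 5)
Your overall reduction is the same as the paper's: cut $1-\lambda_t$ down to $1-\widetilde\lambda_t+O(|t|^2)$ via~\eqref{eq:l2}, keep the decomposition $1-\widetilde\lambda_t=I'_1(t)+I'_2(t)$ with $I'_1$ giving the main term, and try to sharpen $I'_2(t)=o(|t|^2\log(1/|t|))$ to $O(|t|^2)$. Where you and the paper part ways is that the paper does not re-derive the $I'_2$ estimate at all: it transposes~\cite[Lemma 6.1]{PeneTerhesiu21} word for word (replacing $\overline\kappa,P_t,\Pi_t,v_t,\lambda_t$ there by $\Upsilon,\widetilde P_t,\widetilde\Pi_t,\widetilde v_t,\widetilde\lambda_t$), checking that the inputs — constancy of $\Upsilon$ on partition atoms, the tail~\eqref{eq:tail}, the coarse bound~\eqref{eq:lntla}, the double-probability estimate~\eqref{controltimelog}, and Sublemma~\ref{sunl:pi} — carry over. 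Your proposal attempts a from-scratch reconstruction of that argument.

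Within the reconstruction, the treatment of the normalisation $c_t$ and of the $I_{2,t}$ and $I_{3,t}$ pieces is fine (the $I_{2,t}$ contribution is $\|1_Y(\widetilde\Pi_t-\widetilde\Pi_0)(1)\|_\infty\cdot\|1-e^{i\langle t,\Upsilon\rangle}\|_{L^1}=O(|t|^2)$ by~\eqref{prop5-4}; the $I_{3,t}$ contribution is handled by H\"older with $q<2<q'$, $\|1-e^{i\langle t,\Upsilon\rangle}\|_{L^q}\ll|t|$ and $\|I_{3,t}\|_{L^{q'}}\ll|1-\widetilde\lambda_t|\ll|t|^2\log(1/|t|)$, using the exponential tail of $R$). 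The gap is in $I_{1,t}$. With $w=1_{\overline\Delta}$ one has $\widetilde\Pi_0(1)=1$, so $I_{1,t}(x)=e^{i\langle t,\Upsilon_{\omega(x)}(\pi_0(x))\rangle}-1$ and $\widetilde\Pi_t$ does \emph{not} appear in that piece. Your claim that the ``Lipschitz-in-$t$ structure of $\widetilde\Pi_t$ from Sublemma~\ref{sunl:pi} contributes an extra factor $|t|$ at the level of the integrand'' for $I_{1,t}$ has nothing to act on there; it is the mechanism for $I_{2,t}$, not $I_{1,t}$. What remains is genuinely the bilinear estimate
$\int_{\overline\Delta}\bigl(1-e^{i\langle t,\Upsilon\rangle}\bigr)\bigl(e^{i\langle t,\Upsilon_{\omega}(\pi_0(\cdot))\rangle}-1\bigr)\,d\mu_{\overline\Delta}=O(|t|^2)$,
for which Cauchy--Schwarz delivers only $O(|t|^2\log(1/|t|))$ (both $L^2$ norms are $\asymp|t|\sqrt{\log(1/|t|)}$ because of the $u^{-2}$ tail). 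Removing the logarithm is exactly where the joint large/large exclusion~\eqref{controltimelog} together with the geometric decay of $\mu_{\overline Y}(R>\ell)$ must be combined in a multi-case computation, and this is the part you leave as ``apply the double-probability estimate in the same spirit.'' Until that computation is carried out, the argument is not closed; the paper avoids this by citing the already-published proof, with the hypothesis checks that guarantee the transfer to $\Upsilon$ is legitimate.
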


\begin{proof}
We keep the notations of the proof of Sublemma~\ref{sub:asl}. It follows from
~\eqref{eq:l2},~\eqref{eq:tlam} and~\eqref{eq:i1} that
$1-\lambda_t=\log(1/|t|)\langle t,\Sigma_{d+1} t\rangle +I'_2(t) +O(|t|^{2})$. 
The proof that $|I'_2(t)|=O(|t|^2)$ 
follows exactly as in
the proof of~\cite[Lemma 6.1]{PeneTerhesiu21}
(replacing everywhere $\overline\kappa,P_t,\Pi_t,v_t,\lambda_t$ therein  by $\Upsilon,\widetilde P_t,\widetilde\Pi_t,\widetilde v_t,\widetilde \lambda_t$ and following the proof line by line). This is due to the fact that $\Upsilon$ satisfies the following properties (that are also satisfied by $\overline\kappa$):  $\Upsilon$ is constant on partition elements,  equations~\eqref{eq:tail}, ~\eqref{eq:lntla} and~\eqref{controltimelog} hold, and the estimate on $\widetilde\Pi_t$ stated in  Sublemma~\ref{sunl:pi} holds.
\end{proof}

\section{Proofs of joint MLLT (Lemmas~\ref{lem:jointllt0} and~\ref{lem:jointllt})}\label{sec:proofjMLLT}
Let $d\in\{0,1,2\}$.
 Compared to CLT, a specific property required to prove the MLLT is the non-arithmeticity (or minimality), which is treated in the next lemma.
\begin{lemma}\label{nonarithmeticity}
	For every proper closed subgroup  $\Gamma$ of 	$\mathbb Z^d\times\mathbb R$ and  for every $a\in \mathbb Z^d\times\mathbb R$,
	\[
	\mu\left(\widehat\Psi+g-g\circ T\not\in a+\Gamma\right)>0\, .
	\]
\end{lemma}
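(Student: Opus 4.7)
The plan is to argue by contradiction, reducing the arithmeticity-type condition to a character relation via Pontryagin duality. Assume the conclusion fails for some proper closed subgroup $\Gamma\subsetneq\mathbb{Z}^d\times\mathbb{R}$, some $a\in\mathbb{Z}^d\times\mathbb{R}$, and some measurable $g\colon M\to\mathbb{Z}^d\times\mathbb{R}$, so that $\widehat\Psi+g-g\circ T\in a+\Gamma$ $\mu$-almost everywhere. Since $\Gamma$ is a proper closed subgroup of the LCA group $\mathbb{Z}^d\times\mathbb{R}$, Pontryagin duality furnishes a non-trivial continuous character $\chi\colon\mathbb{Z}^d\times\mathbb{R}\to S^1$, necessarily of the form $\chi(m,r)=\exp(i(\langle\xi,m\rangle+sr))$ with $(\xi,s)\in\mathbb{T}^d\times\mathbb{R}\setminus\{(0,0)\}$, that annihilates $\Gamma$. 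Pairing the inclusion with $\chi$ and absorbing $e^{is\mu(\tau)}\cdot\chi(a)$ into a unimodular $\lambda$ reduces the task to excluding
\begin{equation}\label{eq:cb}
e^{i(\langle \xi,\kappa\rangle+s\tau)}=\lambda\,\frac{h\circ T}{h}\quad\mu\text{-a.e.},\qquad h:=\chi(g)\in S^1,
\end{equation}
for every non-trivial $(\xi,s)\in\mathbb{T}^d\times\mathbb{R}$.

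The two one-variable special cases are already in the literature. If $s=0$ (so $\xi\neq 0$), \eqref{eq:cb} becomes $e^{i\langle\xi,\kappa\rangle}=\lambda\,h\circ T/h$, excluded by the aperiodicity of the cell-change $\kappa$ established in \cite{SV07} and used there to obtain the LLT for $\kappa_n$. If $\xi=0$ (so $s\neq 0$), \eqref{eq:cb} becomes $e^{is\tau}=\lambda\,h\circ T/h$, the classical non-arithmeticity of the Sinai-billiard flight time that underlies every known LLT/MLLT for the billiard flow (cf.\ \cite{BBM19,DN20}).

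The crux is the joint case $\xi\neq 0$ and $s\neq 0$, which I would handle by a periodic-orbit argument. Iterating \eqref{eq:cb}, evaluation at a $T$-periodic point $x$ of period $n$ yields $\langle\xi,\kappa_n(x)\rangle+s\tau_n(x)\equiv n\arg\lambda\pmod{2\pi}$, so for any two periodic points $x_1,x_2$ of a common period $n$,
\begin{equation}\label{eq:poo}
\langle\xi,\kappa_n(x_1)-\kappa_n(x_2)\rangle+s\bigl(\tau_n(x_1)-\tau_n(x_2)\bigr)\in 2\pi\mathbb{Z}.
\end{equation}
The strategy is to produce two families of periodic orbits whose $(\kappa_n,\tau_n)$-differences together generate a dense subgroup of $\mathbb{R}^{d+1}$: (i) short two-bounce orbits between nearby pairs of scatterers, which have $\kappa_n=0$ and $\tau_n$ varying continuously as the pair is moved, providing dense differences in the time coordinate; and (ii) long orbits shadowing straight-line trajectories along each of the (at least $d$ non-parallel) corridor directions $w\in\mathfrak{C}$, for which $\widetilde\kappa_n\approx Nw$, hence $\kappa_n\approx N\pi_d(w)$, while $\tau_n=|\widetilde\kappa_n|+O(1)$ by \eqref{coboundPsi} and \eqref{linktauV}. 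Against \eqref{eq:poo}, the density of this family forces $(\xi,s)=(0,0)$, a contradiction.

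The main obstacle lies in the rigorous construction of the second family — periodic orbits with prescribed $\kappa_n\approx N\pi_d(w)$ for any $N\ge 1$ and any corridor direction $w$, with simultaneously controlled $\tau_n$ — which is where the fully-dimensional infinite-horizon hypothesis enters decisively. This construction adapts the symbolic/Markov-partition arguments used in \cite{SV07} for $\kappa$ alone to the joint cocycle $(\kappa,\tau)$, relying on shadowing in the uniformly hyperbolic part of $T$ (away from the singularity set) to realize the prescribed displacements as genuine periodic orbits and on the bounded coboundary relation $\tau-|\widetilde\kappa|=O(1)$ to transfer control from $\kappa_n$ to $\tau_n$.
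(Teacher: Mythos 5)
Your reduction via Pontryagin duality to excluding the coboundary relation $e^{i(\langle \xi,\kappa\rangle+s\tau)}=\lambda\,h\circ T/h$ for nontrivial $(\xi,s)$ is a legitimate reformulation, and the two degenerate cases ($s=0$, $\xi=0$) are indeed handled by known results. The joint case is where your argument has two genuine gaps, neither of which is cosmetic.

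First, passing from the $\mu$-a.e.\ relation \eqref{eq:cb} with a merely \emph{measurable} transfer function $h$ to the periodic-orbit constraint \eqref{eq:poo} requires evaluating $h$ along a periodic orbit, which is meaningless for a measurable function. One would need a measurable Livšic theorem upgrading $h$ to a function that is continuous (or at least defined and consistent) along periodic orbits. For uniformly hyperbolic diffeomorphisms this is classical, but the billiard map $T$ has singularities accumulating on the phase space, unbounded derivative, and a non-trivial hyperbolicity structure; a measurable rigidity step of this kind is not available off the shelf and you do not address it. Second, even granting such regularity, the construction of periodic orbits shadowing corridor trajectories with prescribed $\kappa_n\approx N\pi_d(w)$ for all large $N$ (and with simultaneous control of $\tau_n$) is delicate precisely because the long free flights in a corridor carry the trajectory arbitrarily close to tangencies; you acknowledge this is the crux, but it is not a small technicality — it is most of the work, and it is not done.

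The paper avoids both issues by a different and considerably more economical route. After reducing $\Gamma$ to explicit generators $v_1,\dots,v_{d+1}$, it exploits the time-reversal involution $\xi\colon(q,\varphi)\mapsto(q,-\varphi)$, under which $\kappa=-\kappa\circ\xi\circ T$ while $\tau=\tau\circ\xi\circ T$. Averaging the cocycle identity with its conjugate by $\xi\circ T$ kills the $\kappa$-contribution up to a coboundary and leaves a purely one-dimensional arithmetic decomposition $\tau-\mu(\tau)=a_{d+1}+\alpha c'+G-G\circ T$ with $c'$ integer-valued. This is then contradicted by the mixing of a perturbed suspension flow $\phi^{(\varepsilon)}$, exactly as in \cite[Lemma A.3]{DN16}, whose proof goes via the temporal-distance/joint non-integrability argument of \cite[Sections 6.10--6.11]{ChernovMarkarian} rather than periodic orbits. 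The moral difference: the paper's argument leverages a symmetry special to billiards to reduce the joint $(\kappa,\tau)$ question to the known flow-mixing statement for $\tau$, never needing to regularize $g$ or to manufacture orbits; your proposal, were the gaps filled, would be a more ``generic'' hyperbolic argument but at the cost of substantial additional machinery that the billiard singularity structure makes far from routine.
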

\begin{proof}
We adapt the proof of \cite[Lemma A.3]{DN16} to $d+1$-dimensional observable $\widehat\Psi$, with a slightly different presentation. 
Assume there exists a proper subgroup $\Gamma$ of 
$\mathbb Z^d\times\mathbb R$, a measurable function
$g=(g_1,...,g_{d+1}):M\rightarrow \mathbb Z^d\times\mathbb R$ and $a=(a_1,...,a_{d+1})\in \mathbb Z^d\times\mathbb R$ such that $\widehat\Psi+g-g\circ T\in a+\Gamma$
$\mu$-almost surely. 
%  Thus $\mathbb Z^2\times\{0\}\subset \Gamma$.
%,which implies that $\Gamma$ is a discrete subgroup of $\mathbb Z^2\times\mathbb R$.
\begin{itemize}
	\item Let us prove that we can find a family $(v_1,...,v_{d+1})$ of generators of $\Gamma$ of the form
	\[
	v_i:=(\mathbf{e_i},\alpha_i)\ \mbox{for }i=1,...,d,\quad %v_2:=(e_2,\alpha_2), 
	v_{d+1}:=(\mathbf 0,\alpha_{d+1})\, ,
	\]
	where $\mathbf{e_i}$ is the $i$-th vector of the canonical basis of $\mathbb R^d$ and where $\mathbf 0$ is the null element of $\mathbb Z^d$.
	Let $\alpha_1,...,\alpha_d\in\mathbb R$ be such that
	$v_i:=(\mathbf{e_i},\alpha_i)\in \Gamma$ for $i=1,...,d$. Such numbers exist since the projection of $\Gamma$ on the first $d$ coordinates generates $\mathbb Z^d$ (since it has been proved in \cite{SV07} that $\kappa:M\rightarrow\mathbb Z^d$ is non-arithmetic).
	
	Observe that $\Gamma\cap(\{\mathbf 0\}\times\mathbb R)
	%\{(z_1,z_2,z_3)\in \Gamma\, :\, z_1=z_2=0\}
	$
	is a discrete subgroup of $\{
	%(0,0)
	\mathbf 0\}\times\mathbb R$. Indeed it is a closed subgroup, and it cannot be $
	%\{(0,0)\}
	\{\mathbf 0\}\times\mathbb R$ otherwise $\Gamma$ would be $\mathbb Z^d\times\mathbb R$,  since then any element $(a'_1,...,a'_{d+1})$ of $\mathbb Z^d\times\mathbb R$ could be rewritten $\left(\sum_{i=1}^da'_i.v_i\right)+\left(a'_{d+1}-\sum_{i=1}^da'_i\alpha_i\right)(\mathbf 0,1)$.
	Hence, 
	% $\{(z_1,z_2,z_3)\in \Gamma\, :\, z_1=z_2=0\}$ 
	 $\Gamma\cap(\{\mathbf 0\}\times\mathbb R)$ is discrete and has the form
	$\{
	%(0,0)
	\mathbf 0\}\times (\alpha_{d+1}\mathbb Z)$ for a non-negative real number $\alpha_{d+1}$. Set $v_{d+1}:=(\mathbf 0
	%0,0
	,\alpha_{d+1})\in \Gamma$.

	Let us prove that $(v_1,...,v_{d+1})$ generates the group $\Gamma$.
	Let $a'=(a'_1,...,a'_{d+1})\in \Gamma\subset \mathbb Z^d\times\mathbb R$. Set $w=a'-\sum_{i=1}^da'_iv_i=(\mathbf 0,\beta)$.
	By definition of $\alpha_{d+1}$, there exists $m\in\mathbb Z$ such that $\beta =m\alpha_{d+1}$. Thus $a'\in \sum_{i=1}^{d+1}\mathbb Zv_i$.
	\item Let us prove that there exist $r,\alpha\in\mathbb R$ and two measurable functions $c':M\rightarrow \mathbb Z$ and  $G:M\rightarrow\mathbb R$ such that $\mu(0<G<\min\tau)>0$ and
	\[
	\tau-\mu(\tau)=r+  \alpha.c'+G-G\circ T\, .
	\]
	It follows from the previous item that there exists a measurable function $c=(c_1,...,c_{d+1}):M\rightarrow \mathbb Z^{d+1}$ such that
	\[ 
	\widehat\Psi +g-g\circ T= a+ \sum_{i=1}^{d+1}c_i. v_i\, \quad \mu-a.s.\, ,
	\]
	by taking $c_i=\kappa_i+g_i-g_i\circ T-a_i $ for $i\in\{1,...,d\}$ %$c_2=\kappa_2+g_2-g_2\circ T-a_2$
	%$c_3=0$ if $\alpha_3=0$ 
	and 
	\begin{equation}
	\label{formulec3}
	c_{d+1}=\left(\tau-\mu(\tau)+g_{d+1}-g_{d+1}\circ T-a_{d+1}-\sum_{i=1}^d\alpha_i.c_i\right)/\alpha_{d+1}\mathbf 1_{\alpha_{d+1}\ne 0}\, .
	\end{equation}
%	otherwise. 
Thus 
	\[ 
	\tau-{\mu(\tau)}+g_{d+1}-g_{d+1}\circ T=a_{d+1}+\sum_{i=1}^{d+1} \alpha_i.c_i\, .
	\]
	Now we observe that $\kappa=-\kappa\circ\xi\circ T$ and $\tau=\tau\circ\xi\circ T$ with $\xi$ the involution mapping $(q,\varphi)\in \partial\Omega\times[-\frac\pi 2;\frac\pi 2]$ to $(q,-\varphi)$, so the previous identity composed with $\xi\circ T$ becomes
	\[ 
	\tau -{\mu(\tau)} +g_{d+1}\circ\xi \circ T-g_{d+1}\circ T\circ\xi \circ T=a_{d+1}+\sum_{i=1}^{d+1} \alpha_i.c_i\circ \xi \circ T\, .
	\]
	Therefore, by taking the average of the two previous identities, we obtain
	\begin{align*}
	 \tau &-{\mu(\tau)} +\frac{g_{d+1}\circ\xi \circ T-g_{d+1}\circ T\circ\xi \circ T}{2}\\
	 &+\frac{g_{d+1}-g_{d+1} \circ T}{2}=a_{d+1}+ \sum_{i=1}^{d+1} \alpha_i\frac{c_i+c_i\circ \xi \circ T}2\, .
	\end{align*}

	But for $i=1,...,d$,
	\[c_i+c_i\circ\xi \circ T=g_i-g_i\circ T-g_i\circ\xi \circ T+g_i\circ T\circ\xi \circ T\, 
	\]
	is a coboundary, and so, due to~\eqref{formulec3}, $\alpha_{d+1}\frac{c_{d+1}+c_{d+1}\circ\xi \circ T}2-(\tau -{\mu(\tau)} -a_{d+1})$ is also a coboundary.
	Thus we have proved the existence of two measurable functions
	$c':M\rightarrow \mathbb Z$ and $G:M\rightarrow\mathbb R$ such that
	\[
	\tau -{\mu(\tau)} =a_{d+1}+ \frac{\alpha_{d+1}}2.c'+G-G\circ T\, , i.e.\ \tau=r+ \frac{\alpha_{d+1}}2.c'+G-G\circ T\, ,
	\]
	with $r:=\mu(\tau)+a_{d+1}$. 
	The condition $\mu(0<G<\min\tau)>0$ is obtained up to adding a constant to $G$. The above identity would contradict the non-aritmeticity of $\tau$.
	\item We follow exactly the second part of the proof of \cite[Lemma A.3]{DN16}.\\
	Set $\alpha:=\frac{\alpha_{d+1}}2$.
	For $\delta>0$, we consider the set $\mathcal C''_\delta$ of points $y=\phi_t(x)$ with $x\in M$, $0<t<\tau(x)$ and $|t-G(x)|<\delta$.\\
	It follows from the previous item that 
	the first return time $\zeta$ to $\mathcal C''_0$ takes its values in $r\mathbb N + \alpha \mathbb Z$. Indeed if $y=\phi_t(x)\in\mathcal C''_0$ with $t=G(x)\in(0;\tau(x))$ and $\phi_s(y)=\phi_u(T^n(x))\in \mathcal C''_0$, with $u=t+s-\tau_n(x)=G(T^n(x))\in(0,\tau(T^n(x)))$, then $s=G(T^n(x))-t+\tau_n(x)=G(T^n(x))-G(x)+\tau_n(x)=nr+\alpha c'_n$.\\
	We choose $\varepsilon>0$ so that $r+\varepsilon\in \alpha\mathbb Q$. Let $b$ be the smallest positive element of
	$\alpha\mathbb Z+(r+\varepsilon)\mathbb Z$, so that $\alpha\mathbb Z+(r+\varepsilon)\mathbb Z=b\mathbb Z$. Indeed, if $r+\varepsilon=\alpha\frac p q$ with $p,q\in\mathbb Z$, $q\ne 0$, then 
	$\alpha\mathbb Z+(r+\varepsilon)\mathbb Z=\frac{\alpha}q\left(q\mathbb Z+ p\mathbb Z \right)=b\mathbb Z$, with $b:=\frac{|\alpha|\, gcd(p,q)}{|q|}$.
	\\ The first return time to $\mathcal C'_0:=\{(x,G(x)),x\in M, G(x)\le \tau(x)+\varepsilon\}$ for the suspension flow $\phi^{(\varepsilon)}$ over $(M,T)$ with roof function $\tau +\varepsilon$ is in $\alpha\mathbb Z+(r+\varepsilon)\mathbb N\subset b\mathbb Z$. Indeed, 
	if $y=\phi_t(x)\in\mathcal C'_0$ with $t=G(x)\in(0;\tau(x)+\varepsilon)$ and $\phi^{(\varepsilon)}_s(y)=\phi^{(\varepsilon)}_u(T^n(x))\in \mathcal C'_0$, with $u=t+s-\tau_n(x)-n\varepsilon=G(T^n(x))\in(0,\tau(T^n(x))+\varepsilon)$, then $s=G(T^n(x))-t+\tau_n(x)+n\varepsilon=G(T^n(x))-G(x)+\tau_n(x)+n\varepsilon=nr+\alpha c'_n+n\varepsilon$.\\
	%We take $\delta\in(0,b/2)$, then for every $t>0$ at distance larger than $2\delta$ of $\alpha\mathbb Z+(r+\varepsilon)\mathbb Z$.
	Thus, if $\delta\in(0,b/2)$, the return time of $\phi^{(\varepsilon)}$ to $\mathcal C'_\delta$ (defined as $\mathcal C''_\delta$ but for $\phi^{(\varepsilon)}$) occurs only at time $t$ at distance at most $2\delta$ of
	%$\alpha\mathbb Z+(r-\varepsilon)\mathbb Z$
	$b\mathbb Z$, which contradicts the mixing of $\phi^{(\varepsilon)}$. 
		As explained in \cite[Lemma A.2]{DN16}, the proof of the mixing of the suspension flow $\phi^{(\varepsilon)}$ follows the same line as the mixing of the billiard flow established in~\cite[Sections 6.10-6.11]{ChernovMarkarian} thanks to the temporal distance (which remains unchanged if we replace $\tau$ by $\tau+\varepsilon$).
	
\end{itemize}

\end{proof}

\begin{pfof}{Lemma~\ref{lem:jointllt0}}
	Let $p>2$, $G,H,h$ as in the assumptions of Lemma~\ref{lem:jointllt0}. 
	To prove the joint MLLT, we will use estimates established in
	Section~\ref{sec:proofjCLT}.
	We keep the notations of this section with the couple $(q,p_q)$ being chosen so that $q\in [1,2)$ such that $\frac 1p+\frac 1q=1$ and $b_q>p$ (this will imply that $\frac{1}{b_q}+\frac 1q<1$).
	
On $\Delta$ and $\overline\Delta$, we keep the convention $u_n:=\sum_{k=0}^{n-1}u\circ f_\Delta^k$ and
$\overline u_n:=\sum_{k=0}^{n-1}\overline u\circ f_{\overline\Delta}^k$
for any $u:\Delta\rightarrow\mathbb R^{d'}$ and 
$\overline u:\overline\Delta\rightarrow\mathbb R^{d'}$.\\

For simplicity, we keep the notation $G,H,\widehat\Psi$
for the functions defined on $\Delta$ (instead of $M$) corresponding to $G\circ\pi,H\circ\pi,\widehat\Psi\circ\pi$ respectively. 
Since the functions $G$ and $H$ are bounded and dynamically H\"older continuous on $M$, the functions $G$ and $H$ are also bounded and dynamically H\"older on $\Delta$ in the following sense: up to increasing the value of 
$\beta\in(0,1)$ in the Young Banach space $\mathcal B$ introduced in Section~\ref{sec:proofjCLT}, $G$ and $H$ satisfy the following property
\begin{equation}\label{regGH}
s(x,y)>2k\quad\Rightarrow\quad |G(f_\Delta^k(x))-G(f_\Delta^k(y))|<L_G\beta^k,\ 
|H(f_\Delta^k(x))-H(f_\Delta^k(y))|<L_H\beta^k\, .
\end{equation}
%Let $q$ be such that $\frac 1q+\frac 1p=1$. 
Recall that $\varepsilon$ in the definition of Young's Banach space $\mathcal B$ (see text before~\eqref{fixb0}) is so that $\mathcal B$ is continuously embedded in $\mathbb L^{b_q}(\mu_{\overline \Delta}) $. Also, by Sublemma~\ref{sunl:pi}, 
\[
\Vert \Pi_t-\Pi_0\Vert_{\mathcal B\rightarrow L^q(\mu_{\overline\Delta})}=\mathcal O(t)\, .
\]
With the above notations, we are led to the study of the following quantity:
\[
\mathbb E_{\mu_\Delta}[G .h(\widehat\Psi_n-L)H\circ f_\Delta^n]\, .
\]
%Since both $g$ and $\widehat g$ are integrable, i
It follows from the Fourier inversion theorem that
\begin{align}\label{FourierTransform}
\mathbb E_{\mu_\Delta}[G .h(\widehat\Psi_n-L)H\circ f_\Delta^n]&=
\frac 1{(2\pi)^{d+1}}\int_{\mathbb T^d\times\mathbb R}e^{-i\langle t,L\rangle}\widehat h(t)\mathbb E_{\mu_\Delta}\left[Ge^{i\langle t,\widehat\Psi_n\rangle}H\circ f_\Delta^n\right]\, dt\, ,
\end{align}
with $\widehat h(t):=\sum_{\ell\in\mathbb Z^d}\int_{\mathbb R}h(\ell,x)e^{i\langle t,(\ell,x)\rangle}\, dx$.

\begin{itemize}
	\item \underline{Step 1}: Transition to the quotient\\
\begin{itemize}
\item Approximation.\\ 
	Recall, from~\eqref{coboundPsibis} that  
	$\hPsi\circ\pi=\overline\Psi\circ\overline\pi+\chi\circ f_\Delta-\chi$, with
	$\overline \Psi=\left(\overline\kappa,\overline \tau \right)$ with values in $\mathbb Z^d\times\mathbb R$ uniformly locally H\"older on each partition element,  with $\chi$ bounded and dynamically H\"older in the sense of~\eqref{regbartau}.
	Thus
	\begin{align*}
	\mathbb E_{\mu_\Delta}&[Ge^{i\langle t,\widehat\Psi_n\rangle}H\circ f_\Delta^n]=
	\mathbb E_{\mu_\Delta}[G\circ f_\Delta^{k_n}e^{i\langle t,\overline\Psi_n\circ \overline f_{\overline\Delta}^{k_n}\circ\overline\pi+\chi\circ f_\Delta^{k_n+n}-\chi\circ f_\Delta^{k_n}\rangle}H\circ f_\Delta^{k_n}\circ f_\Delta^n]\\
	&=
	\mathbb E_{\mu_\Delta}\left(e^{-i\langle t,\overline\Psi_{k_n}\circ \overline\pi\rangle}(Ge^{-i\langle t,\chi\rangle})\circ f_\Delta^{k_n}.e^{i\langle t,\overline\Psi_n\circ\overline\pi\rangle}.\left((He^{i\langle t,\chi\rangle})\circ f_\Delta^{k_n}e^{i\langle t,\overline\Psi_{k_n}\circ \overline\pi\rangle}\right)\circ f_\Delta^n\right)\, .
	\end{align*}
	We approximate $\widehat G_{({k_n})}(t):=e^{-i\langle t,\overline\Psi_{k_n}\circ \overline\pi\rangle}(Ge^{-i\langle t,\chi\rangle})\circ f_\Delta^{k_n}$ by $G_{({k_n})}(t)\circ\overline\pi$ with
	\[
	G_{({k_n})}(t)\circ\overline\pi:=\mathbb E_{\mu_\Delta}[ e^{-i\langle t,\overline\Psi_{k_n}\circ\overline\pi\rangle}
	(Ge^{-i\langle t,\chi\rangle})\circ f_\Delta^{k_n} |s(.,.)>2{k_n}]\]
%	and we set $\overline G_{({k_n})}(t)$ so that
%\[		G_{({k_n})}(t)	 =e^{-i\langle t,\overline\Psi_{k_n}\rangle}\overline	  G_{({k_n})}(t)\, ,	\]
	and analogously $e^{i\langle t,\overline\Psi_{k_n}\circ \overline\pi\rangle}(He^{i\langle t,\chi\rangle})\circ f_\Delta^{k_n}$ by $H_{({k_n})}(-t)\circ\overline\pi$.
%, and we set $\overline H_{({k_n})}(t)$ so that	$H_{({k_n})}(t)	=e^{-i\langle t,\overline\Psi_{k_n}\rangle}\overline 	H_{({k_n})}(t)$.
%If $G\circ\pi=\overline G\circ \overline \pi$ with $\overline G$	constant on atoms of the Young partition, then 
%	\[\overline G_{k_n}(t)=\overline G\circ f_{\overline\Delta}^{k_n} . I_{k_n}(t)\]
%with $I_{{k_n}}(t)\circ\overline\pi=\mathbb E_{\mu_\Delta}[e^{-i\langle t,\chi\rangle}\circ f_\Delta^{k_n} |s(.,.)>2{k_n}] $.
	\item Control of the error in this approximation.\\
		Since $G$ and $H$ and $e^{i\langle t,\chi\rangle}$ are uniformly bounded and dynamically H\"older in the sense of~\eqref{regGH} and~\eqref{regbartau}, 
	it follows that
	\begin{equation}\label{approxiG}
	\left\Vert \widehat G_{({k_n})}(t)-G_{({k_n})}(t)\circ \overline\pi\right\Vert_\infty\le C\left(\Vert G\Vert_\infty |t|+ L_G\right)\beta^{k_n}\, .
	\end{equation}
	Therefore
	\begin{align*}
	&\left|\mathbb E_{\mu_\Delta}[G .
	e^{i\langle t,\widehat\Psi_n\rangle}.H\circ f_\Delta^n]
	-\mathbb E_{\mu_{\overline \Delta}}\left[G_{({k_n})}(t).e^{i\langle t,\overline\Psi_n\rangle}.H_{({k_n})}(-t)\circ f_{\overline\Delta}^n\right]\right|\\
	\nonumber&\quad\quad\le  C'(1+|t|)\left(\Vert G\Vert_\infty\Vert H\Vert_\infty+L_H\Vert G\Vert_\infty+L_G\Vert H\Vert_{\infty}\right)
	\beta^{k_n}\, .
	\end{align*}
	Therefore
	\begin{align}
	\label{errorGH}\mathbb E_{\mu_\Delta}&[G .h(\widehat\Psi_n-L)H\circ f_\Delta^n]\\
	\nonumber=&
	\frac 1{(2\pi)^{d+1}}\int_{\mathbb T^d\times\mathbb R}e^{-i\langle t,L\rangle}\widehat h(t)\mathbb E_{\mu_{\overline \Delta}}\left[G_{({k_n})}(t).e^{i\langle t,\overline\Psi_n\rangle}.H_{({k_n})}(-t)\circ f_{\overline\Delta}^n\right]\, dt\\
	&+\mathcal O\left(\beta^{k_n} 
	\left(\Vert G\Vert_\infty\Vert H\Vert_\infty+L_H\Vert G\Vert_\infty+L_G\Vert H\Vert_{\infty}\right)
	\right)\, ,\label{errorGHbis}
	\end{align}
	since $
	\int_{\mathbb R^d}(1+|t|).|\widehat h(t)|\, dt<\infty$.
\end{itemize}
\item \underline{Step 2}: Use of the transfer operator of $f_{\overline\Delta}$.\\
Due to \eqref{errorGH}, we are led to the study of the integral in $t\in\mathbb R^d$ of $\widehat h(t)$ multiplied by:
\begin{align}\label{simplifGH}
\mathbb E_{\mu_{\overline \Delta}}&\left[G_{({k_n})}(t).e^{i\langle t,\overline\Psi_n\rangle}.H_{({k_n})}(-t)\circ f_{\overline\Delta}^n\right]
=\mathbb E_{\mu_{\overline \Delta}}\left[ H_{({k_n})}(-t).P_t^n(G_{({k_n})}(t))\right]\\
\nonumber&\quad\quad\quad=\mathbb E_{\mu_{\overline \Delta}}\left[ H_{({k_n})}(-t).P_t^{n-3{k_n}}(P_t^{3{k_n}}(G_{({k_n})}(t)))\right]\, .
\end{align}
We already know that $\Vert H_{({k_n})}(-t)\Vert_{L^p}
%\infty
\le\Vert H\Vert_{L^p}$. 
%We need to control the regularity of $P_t^{3{k_n}}(G_{({k_n})}(t))$.
%Observe that, for 
Let $m\ge 2$.
%\begin{align*}
%P_t^{m{k_n}}\left(G_{({k_n})}(t)\right)&=P^{m{k_n}}\left(e^{i\langle t,\overline\Psi_{m{k_n}}\rangle}e^{-i\langle t,\overline\Psi_{k_n}\rangle}\overline G_{({k_n})}(t)\right)\\
%&=P^{m{k_n}}\left(e^{i\langle t,\overline\Psi_{(m-1){k_n}}\circ f_{\overline\Delta}^{k_n}\rangle}\overline G_{({k_n})}(t)\right)\\
%&=P_t^{(m-1){k_n}}\left(P^{k_n}\left(\overline G_{({k_n})}(t)\right)\right)\, .
%\end{align*}
%Notice that $\Vert G_{({k_n})}(t)\Vert_\infty\le \Vert G\Vert_\infty$. 
%	Furthermore $P_t^{2{k_n}}(G_{({k_n})}(t))$ is dynamically H\"older with H\"older constant uniformly bounded in $\mathcal O\left((1+|t|)\Vert G\Vert_\infty\right)$. Indeed, l
Let us prove that
	there exists $C_0>0$ such that, for every $n\ge 3$, $t\in\mathbb R^{d+1}$ and
	$\bar x,\bar y\in\overline\Delta$ such that $\overline s(\bar x,\bar y)>0$, the following inequality holds true 
\begin{equation}\label{HolderP2kg}
\left| P_t^{m{k_n}}(G_{({k_n})}(t))(\bar x)-P_t^{m{k_n}}(G_{({k_n})}(t))(\bar y)\right|
\le C_0(1+|t|)\left\vert P^{m{k_n}}(|
%\overline 
G_{({k_n})}(t)|)(x)\right\vert\, \beta^{\overline s(\bar x,\bar y)}\, .
\end{equation}
Indeed, we observe that
	\begin{align*}
\nonumber&\left| P_t^{m{k_n}}(G_{({k_n})}(t))(\bar x)-P_t^{m{k_n}}(G_{({k_n})}(t))(\bar y)\right|\\
\quad\quad&\le \sum_{\phi_{(m{k_n})}}\left| e^{
	%S_{m{k_n}}
g_{m{k_n}}(\phi_{(m{k_n})}(\bar x))}e^{i\langle t,\overline\Psi_{mk_n}(
%f_{\overline\Delta}^{k_n}(
\phi_{(m{k_n})}(\bar x))
%)
\rangle}(
%\overline
{G}_{({k_n})}(t))(\phi_{(m{k_n})}(\bar x))\right.\\
\nonumber\quad&\left.-e^{
%S_{mk}
g_{m{k_n}}(\phi_{m{k_n}}(\bar y))}e^{i\langle t,
%S_{{k_n}}
\overline\Psi_{m{k_n}}
%(f_{\overline\Delta}^{k_n}
(\phi_{(m{k_n})}(\bar y))
%)
\rangle}(
%\overline
{G}_{({k_n})}(t))(\phi_{(m{k_n})}(\bar y))\right|\, ,
\end{align*}
where the sum is taken over the  inverse branches $\phi_{(m{k_n})}$ of $f_{\overline \Delta}^{m{k_n}}$ and where $g$ satisfies 
\begin{equation}\label{holderg}
|e^{g(\overline x)}-e^{g(\overline y)}|\le L_g e^{g(x)}\beta^{\overline s(\overline x,\overline y)},\mbox{if }\overline{s}(\overline x,\overline y)>1\, .
\end{equation}
 We conclude by noticing that
	\[
	%\quad\Rightarrow\quad
	%\overline 
	G_{({k_n})}(t)(\phi_{(m{k_n})}(\bar x))=
	%\overline 
	G_{({k_n})}(t)(\phi_{(m{k_n})}(\bar y))\, ,
	\]
since $m\ge 2$ and that
	\begin{align}
	\nonumber\left|e^{i\langle t,\overline\Psi_{m{k_n}}(\phi_{(m{k_n})}(\bar x))\rangle}
	-e^{i\langle t,\overline\Psi_{m{k_n}}(\phi_{(m{k_n})}(\bar y))\rangle}
	\right|&\le |t|L_{\overline\Psi}\sum_{j=0}^{m{k_n}-1}\beta^{\bar s(\bar x,\bar y)+m{k_n}-j}\\
	&\le |t|L_{\overline\Psi}\beta^{\overline s(\bar x,\bar y)+1}/(1-\beta)\, ,\label{holder1}
	\end{align}
	and
	\begin{align}
	\nonumber\left|e^{g_{m{k_n}}(\phi_{(m{k_n})}(\bar x))}-e^{g_{m{k_n}}(\phi_{(m{k_n})}(\bar y))}
	\right|&\le e^{g_{m{k_n}}(\phi_{(m{k_n})}(\bar x))}\left|e^{g_{m{k_n}}(\phi_{(m{k_n})}(\bar y))-g_{m{k_n}}(\phi_{(m{k_n})}(\bar x))}-1
	\right|\\
	\nonumber&\le  e^{g_{m{k_n}}(\phi_{(m{k_n})}(\bar x))}L_g\sum_{j=0}^{m{k_n}-1}\beta^{\overline s(\overline x,\overline y)+m{k_n}-j+1}\\
%\left(e^{\sum_{j=0}^{m{k_n}-1}L_g\beta^{\overline s(\bar x,\bar y)+m{k_n}-j}}-1\right)\\
	&\le e^{g_{m{k_n}}(\phi_{(m{k_n})}(\bar x))}L_g\frac{\beta^{\overline s(\bar x,\bar y)+1}}{1-\beta}\, ,\label{holder2}
	\end{align}
due to~\eqref{holderg}. 
This ends the proof of~\eqref{HolderP2kg}.
Therefore
\begin{align*}
\Vert P^{2{k_n}}(|
%\overline 
G_{({k_n})}|)\Vert_{\mathcal B}\le \Vert P^{2{k_n}}(|
%\overline 
G_{({k_n})}|)\Vert_{\mathcal B_0}=\mathcal O\left(\Vert G\Vert_\infty\right)\, ,
\end{align*}	
and
\begin{align}
\nonumber\Vert P_t^{3{k_n}}(G_{({k_n})}(t))\Vert_{\mathcal B}&=\mathcal O\left((1+|t|)\Vert P^{3{k_n}}(|
%\overline
 G_{({k_n})}|)\Vert_{\mathcal B}\right)\\
\nonumber&=\mathcal O\left((1+|t|)\Vert P^{{k_n}}(P^{2{k_n}}(|
%\overline 
G_{({k_n})}|))\Vert_{\mathcal B}\right)\\
&\le  \mathcal O((1+|t|)(\mathbb E_\mu[|G|]+\mathcal O(\theta^{k_n}\Vert  G\Vert_{\infty})))\, ,\label{P3kGk}
\end{align}	
where we used~\eqref{quasicompactness} at the last line.
%Furthermore $G=\overline G\circ\overline{\pi}$ with $\overline G$ %constant on atoms of the Young partition, then $G_{({k_n})}(t)=G\circ f_{\overline\Delta}^{k_n} .I_{({k_n})}(t)$, and so 
%\[\Vert P^{2{k_n}}_t(G_{({k_n})}(t))\Vert_{\mathcal B_0}\le \mathcal O((1+|t|)\Vert P^{k_n}(|\overline G|)\Vert_{\infty})\]
%and
%\[\Vert P^{2k}_t(G_{(k)}(t))\Vert_{\mathcal B}\le \mathcal O((1+|t|)\Vert P^k(|\overline G|)\Vert_{\mathcal B})= \mathcal O((1+|t|)(\mathbb E[|G|]+\mathcal O(\theta^k\Vert \overline G\Vert_{\mathcal B})))\, .\]
%\[\Vert P^{3k}_t(G_{(k)}(t))\Vert_{\mathcal B}\le \mathcal O((1+|t|)\Vert P^k(|\overline G|)\Vert_{\mathcal B})= \mathcal O((1+|t|)(\mathbb E[|G|]+\mathcal O(\theta^k\Vert \overline G\Vert_{\mathcal B})))\, .\]
\item \underline{Step 3}: Restriction to a  neighbourhood of 0.\\
Let $K>0$ be such that the support of $\widehat g$ is contained in 	$\mathbb T^d\times[-K,K]$.
Using Sublemma~\ref{sunl:lamb}, we consider $b_0\in(0;\min(1,\beta_0))$ (see~\eqref{spgap-Sz},~\eqref{spgap-Sz-bis}) small enough so that there exists $a'>0$ such that, for all $t\in[-b_0,b_0]^{d+1}$, the following holds true
\begin{equation}\label{quasicompact}
P_t^n=\lambda_t^n\Pi_t+\mathcal O(\theta^n)\, ,\quad
\lambda_t=e^{- \Sigma_{d+1} t \cdot t\, \log (1/|t|)+\mathcal O(t^2)}\, ,
\end{equation}
\begin{equation}\label{Pidiff}
\Pi_t-\mathbb E_{\mu_{\overline\Delta}}[\cdot]1_{\overline\Delta}
=\mathcal O(|t|)\mbox{ in }
\mathcal L((\mathcal B
,\Vert\cdot\Vert_{\mathcal B})\rightarrow L^{q}(\mu_{\overline\Delta}))\, ,
\end{equation}
	\begin{equation}\label{lambda0}
	\theta\le e^{-2 \Sigma_{d+1}t\cdot t\log(1/|t|)}\le \left|\lambda_t\right|\le e^{-a'|t|^2\log(1/|t|)}\, ,
	\end{equation}
	and that
	$\forall y>x>b_0 ^{-1}$, $\frac 12 (x/y)^\epsilon \le \log(x)/\log(y)\le 2 (y/x)^{\varepsilon}$
	(using for example Karamata's representation of slowly varying functions).
	This last condition will imply that, for every $n$ large enough (so that $\mathfrak a_n>b_0 ^{-1}$) and for every $u\in[-b_0  \mathfrak a_n,b_0  \mathfrak a_n]^{d+1}$, the following 
	inequalities hold true
	\begin{equation}\label{logx/logy}
	\frac 12\min(|u|^\varepsilon,|u|^{-\varepsilon})\le  \log(\mathfrak a_n/|u|)/\log \mathfrak a_n  \le    2\max(|u|^\varepsilon,|u|^{-\varepsilon})\, .
	\end{equation}
	
			 Lemma~\ref{nonarithmeticity} ensures that the spectral radius of $P_t$ is smaller than 1 for every $t\ne 0$. This implies that there exists $\theta_0\in(0,1)$ such that $\sup_{b_0 <|t|_\infty<K}\Vert P_t^n\Vert=\mathcal O(\theta_0^n)$ (by upper semi-continuity of the spectral radius). 
			Thus
	\begin{align}\label{Intinterm}
			&\left|\int_{b_0 <|t|_\infty<K}e^{-i\langle t,L\rangle}\widehat h(t)
			\mathbb E_{\mu_{\overline \Delta}}\left( H_{({k_n})}(-t).P_t^{n-3{k_n}}(P_t^{3{k_n}}(G_{({k_n})}(t)))\right)\, dt\right|\\
&\quad			\le \Vert h\Vert_{L^1} K \mathcal O\left(\theta_0^{n-3{k_n}}\Vert H\Vert_{L^p}\sup_{|t|<b_0}\Vert P_t^{3{k_n}}(G_{({k_n})}(t))\Vert_{\mathcal B}\right)\, ,
			\end{align}
since $L^p$ is continuously included in the dual of the Young space $\mathcal B$.
Thus, we can focus on $[-b_0 ,b_0 ]^d$.
It follows from~\eqref{quasicompact} and~\eqref{Pidiff} that, in $L^q(\mu_{\overline\Delta})$,
	\begin{align*}
	P_t^{n-3{k_n}}(P_t^{3{k_n}}(G_{({k_n})}(t)))&=\lambda_t^{n-3{k_n}}\left(\mathbb E_{\mu_{\overline\Delta}}[P_t^{3{k_n}}(G_{({k_n})}(t))]+\mathcal O(|t|\, \Vert P_t^{3{k_n}}(G_{({k_n})}(t))\Vert_{\mathcal B})
	\right)
	\\
	&+\mathcal O\left(\theta^{n-3{k_n}}(1+|t|)\Vert P_t^{3{k_n}}(G_{({k_n})}(t))\Vert_{\mathcal B}\right)
	\, ,
	\end{align*}
	and so
	\begin{align}
	\label{ERR1}
	&\int_{[-b_0 ,b_0 ]^{d+1}}e^{-i\langle t,L\rangle}
	\widehat h(t)\left(\mathbb E_{\mu_{\overline\Delta}}\left[H_{({k_n})}(-t).P_t^{n-3{k_n}}(P_t^{3{k_n}}(G_{({k_n})}(t)))\right]\right.\\
	&\quad\quad\left.-\mathbb E_{\mu_{\overline\Delta}}\left[H_{({k_n})}(-t)\right]\lambda_t^{n-3{k_n}}\mathbb E_{\mu_{\overline\Delta}}[P_t^{3{k_n}}(G_{({k_n})}(t))]\right)\, dt\\
	\nonumber &\quad\quad\quad\quad=\mathcal O\left(\Vert H\Vert_{L^{p}}\mathfrak a_n^{-d-2}\sup_{|t|<b_0}\Vert P_t^{3{k_n}}(G_{({k_n})}(t))\Vert_{\mathcal B}\right)\, ,
	\end{align}
	since
	\begin{align}
	\nonumber\int_{[-b_0 ,b_0 ]^{d+1}}&\left(|t|
	%^2
	|\lambda_t|^{n-3{k_n}}+\theta^{n-3{k_n}}(1+|t|)\right)\, dt\le \int_{[-b_0 ,b_0 ]^{d+1}}|t|
	%^2
	e^{-a' n|t|^2\log(1/|t|)}\, dt+ \mathcal O\left(\theta^{\frac n2}\right)\\
	\nonumber&\le \mathfrak a_n^{-d-2}\int_{[-b_0  \mathfrak a_n,b_0  \mathfrak a_n]^{d+1}}|u|
	%^2
	e^{-a' n|u/\mathfrak a_n|^2
	%^2
	\log(|\mathfrak a_n|/|u|)}\, du+ \mathcal O\left(\theta^{\frac n2}\right)\\
	\nonumber&\le \mathfrak a_n^{-d-2}\int_{[-b_0  \mathfrak a_n,b_0  \mathfrak a_n]^{d+1}}|u|
	%^2
	e^{-\frac{a' |u|^2}{2\log(\mathfrak a_n)}\log(|\mathfrak a_n|/|u|)}\, du+ \mathcal O\left(\theta^{\frac n2}\right)\\
	\nonumber&\le \mathfrak a_n^{-d-2}\int_{[-b_0  \mathfrak a_n,b_0  \mathfrak a_n]^{d+1}}|u|
	%^2
	e^{-\frac {a'}4 |u|^{2-\varepsilon}
		%\left(1-\frac{\log|u|}{\log(\mathfrak a_n)}\right)
	}\, du+  \mathcal O\left(\theta^{\frac n2}\right)\\
	\label{ERR1b}&=\mathcal O\left(\mathfrak a_n^{-d-2}\right)\, ,
	\end{align}
	where we used~\eqref{logx/logy}. 
	Furthermore, it follows from the definition of $H_{(k_n)}$ that
	\begin{align}
	\mathbb E_{\mu_{\overline\Delta}}\left[H_{({k_n})}(-t)\right] 
	&=\mathbb E_{\mu_{\Delta}}\left(e^{i\langle t,\overline\Psi_{{k_n}}\circ \overline\pi\rangle}(He^{i\langle t,\chi\rangle})\circ f_\Delta^{k_n}\right)\\ 
&=\mathbb E_{\mu_\Delta}[H ]+\mathcal O({k_n}t\Vert H\Vert_{L^{p}})
\, ,
	\end{align}
since $\chi$ is uniformly bounded and $\Vert \widehat\Psi_{k_n}\Vert_{L^q(\mu)}\ll k_n\Vert\widehat \Psi\Vert_{L^q(\mu)}$ since $q<2$.
Moreover, due to~\eqref{approxiG}, for all $t\in[-b_0;b_0]^{d+1}$,
	\begin{align*}
	\mathbb E_{\mu_{\overline\Delta}}[P_t^{3{k_n}}(G_{({k_n})}(t))]&=\mathbb E_{\mu_\Delta}[(e^{i\langle t,\overline\Psi_{2{k_n}}\circ\overline\pi-\chi\rangle}G)\circ f_\Delta^{k_n} ]+\mathcal O(\beta^{k_n})\\ 
	&
=\mathbb E_{\mu}\left[G\right]+\mathcal O\left(t\Vert G\Vert_{L^{1}}+t\Vert \widehat\Psi_{2{k_n}}.G\Vert_{L^1(\mu)}\right)\, .
\end{align*}
%	and is $\mathcal O(\Vert G\Vert_{L^1})$		since $\overline\psi$ and $\chi$ admit moment of any order smaller than 2
Combining this last two estimates with \eqref{ERR1} via~\eqref{ERR1b}, we infer that
	\begin{align*}
%	\label{ERR2}
	&\int_{[-b_0 ,b_0 ]^{d+1}}e^{-i\langle t,L\rangle}\widehat h(t)\left(
	\mathbb E_{\mu_{\overline\Delta}}\left[H_{({k_n})}(-t).P_t^{n-3{k_n}}(P_t^{3{k_n}}(G_{({k_n})}(t)))\right]-\mathbb E_{\mu_{\Delta}}\left[H\right]\mathbb E_{\mu_{\Delta}}\left[G\right]\lambda_t^{n-3{k_n}}\right)\, dt\\
	\nonumber&
	\quad=\mathcal O\left(\mathfrak a_{n}^{-d-2}\left[\Vert H\Vert_{L^{p}}\sup_{|t|<b_0}\Vert P_t^{3{k_n}}(G_{({k_n})}(t))\Vert_{\mathcal B}+{k_n} \Vert G\Vert_{L^1}\Vert H\Vert_{L^{p}} +
	\Vert H\Vert_{L^1}
	\Vert \widehat\Psi_{2{k_n}}.G\Vert_{L^1(\mu)}\right]\right)
\, .	\end{align*}
	It follows from this last estimate combined with \eqref{errorGH} and \eqref{simplifGH} that
	\begin{align}\label{ERR3}
	\mathbb E_{\mu_\Delta}&[G .h(\widehat\Psi_n-L).H\circ f_\Delta^n]
	- \mathfrak a_{n}^{-d-1}  \mathbb E_{\mu_{\Delta}}\left[H\right]\mathbb E_{\mu_{\Delta}}[G]\int_{[-b_0  \mathfrak a_{n},b_0  \mathfrak a_{n}]^{d+1}}e^{-i\mathfrak a_{n}^{-1}\langle t,L\rangle}
	\widehat h(t/\mathfrak a_{n})
	\lambda_ {t/\mathfrak a_{n}}^{n-3{k_n}}\, dt\\
	& =\mathcal O\left(\beta^{k_n} \Vert G\Vert_{Holder}.\Vert H\Vert_{Holder}+\mathfrak a_{n}^{-d-2}\left[\Vert H\Vert_{L^{p}}\sup_{|t|<b_0}\Vert P_t^{3{k_n}}(G_{({k_n})}(t))\Vert_{\mathcal B}\right.\right.\\
	&
	\left.\left.+{k_n} \Vert G\Vert_{L^1}\Vert H\Vert_{L^{p}} +
	%{k_n}
	\Vert H\Vert_{L^1}
	\Vert \widehat\Psi_{2{k_n}}.G\Vert_{L^1(\mu)}\right]\right)\, .
	\end{align}
Thus, due to~\eqref{P3kGk}, the above formula~\eqref{ERR3} is bounded by
\[
\mathcal O\left(\max(\beta,\theta)^{k_n} \Vert G\Vert_{Holder}.\Vert H\Vert_{Holder}+\mathfrak a_{n}^{-d-2}\left[{k_n} \Vert G\Vert_{L^1}\Vert H\Vert_{L^{p}} +
%{k_n}
\Vert H\Vert_{L^1}
%\Vert G\Vert_{L^{2+\varepsilon}} ]
\Vert \widehat\Psi_{2{k_n}}.G\Vert_{L^1(\mu)}\right]\right)\, .
\]
	It remains to estimate
	\[
	\int_{[-b_0  \mathfrak a_{n},b_0  \mathfrak a_{n}]^{d+1}} e^{-i\mathfrak a_{n}^{-1}\langle t,L\rangle}
	\widehat h(t/\mathfrak a_{n})
	\lambda_ {t/\mathfrak a_{n}}^{n-3{k_n}}\, dt \, .
	\]
	To this end, let us notice that, due to~\eqref{logx/logy}, for $t\in[-\mathfrak a_n b_0,\mathfrak a_n b_0]^d$,
	\begin{align}
	\label{lambdan-k}\lambda_{t/\mathfrak a_{n}}^{n-3{k_n}} 
	&
	=\lambda_{t/\mathfrak a_{n}}^{n} \lambda_{t/\mathfrak a_{n-3{k_n}}}^{-3{k_n}} 
	=  e^{-\frac {n}{ \mathfrak a_{n}^2} \langle \Sigma_{d+1} t,t\rangle\, \left(\log (\mathfrak a_{n}/|t|)\right)+\mathcal O(n|t|^2/\mathfrak a_{n}^2)}  
	e^{\mathcal O(\frac {k_n}n\max(|t|^{2-\epsilon},|t|^{2+\epsilon}))}
	\\
	\nonumber&=  e^{-\frac 1{2 \log (\mathfrak a_{n})} \langle \Sigma_{d+1} t, t\rangle\, \left(\log \mathfrak a_{n}-\log(|t|)\right)
		+\mathcal O(\max(|t|^{2-\epsilon},|t|^{2+\epsilon})\eta_n)
	}\\
	\nonumber&=e^{-\frac 1{2} \langle\Sigma_{d+1} t, t\rangle\, \left(1
		-\frac{\log |t|}{\log \mathfrak a_{n}}\right)}
	+\mathcal O\left(e^{-\frac{a'\min(|t|^{2-\epsilon},|t|^{2+\epsilon})}2}\max(|t|^{2-\varepsilon},|t|^{2+\varepsilon})\eta_n\right)
\, ,
	\end{align}
with $\eta_n:=\frac 1{\log n}+\frac{k_n}n$. 
	Therefore, since $\widehat h$ is Lipschitz continuous and
	writing $[\widehat h]_{Lip}$ for its Lipschitz constant,
	it follows that
	\begin{align*}
	&\int_{[-b_0  \mathfrak a_{n},b_0  \mathfrak a_{n}]^{d+1}}e^{-i\mathfrak a_{n}^{-1}\langle t,L\rangle}\widehat h(t/\mathfrak a_{n})\lambda_{t/\mathfrak a_n}^{n-3{k_n}}\, dt\\
	=&\int_{\mathbb R^{d+1}}e^{-i\mathfrak a_n^{-1}\langle t,L\rangle}\left(\widehat h(0)+\mathcal O(t/\mathfrak a_n)\right)e^{
		-\frac 1{2}  \langle\Sigma_{d+1} t, t\rangle \left(1-\mathbf 1_{|t|<b_0  \mathfrak a_n}\frac{\log(|t|)}{\log (\mathfrak a_n)}\right)}\,  dt+\mathcal O\left(\left(|\widehat h(0)|+\frac{
	[\widehat h]_{Lip}}{\mathfrak a_n}\right)\eta_n\right)\nonumber\\
	=&\widehat h(0)\int_{\mathbb R^{d+1}}e^{-i\mathfrak a_n^{-1}\langle t,L\rangle}
	%\widehat h(t/\mathfrak a_n)
	%\left(\widehat h(0)+\mathcal O(t/\mathfrak a_n)\right)
	e^{-\frac 1{2}  \langle\Sigma_{d+1} t, t\rangle}
	\left(1
	+\frac {\mathbf 1_{|t|<b_0  \mathfrak a_n}}2   \langle\Sigma_{d+1} t, t\rangle\frac{\log( |t|)}{\log (\mathfrak a_n)}\right. \nonumber\\
	&\left. \ \ \ 
	+O\left(\mathbf 1_{|t|<b_0  \mathfrak a_n}\max\left(1,e^{\frac 12 \langle\Sigma_{d+1} t, t\rangle\frac{\log(|t|)}{\log \mathfrak a_n}}\right)
	\frac{
		|t|^4(\log(|t|))^2}{(\log n)^2}\right)\right)\, dt+O\left( |\widehat h(0)|\eta_n+\frac{[\widehat h]_{Lip}}{\mathfrak a_n}\right)\, ,\nonumber
	\end{align*}
	where we used 
	$
	e^{x}=1+x+O(\max(1,e^{x})x^2)$. Thus
%	\begin{align}
%	&\int_{[-b_0  \mathfrak a_n,b_0  \mathfrak a_n]^d}e^{-i\mathfrak a_n^{-1}\langle t,L\rangle}\widehat h(t/\mathfrak a_n)\lambda_{t/\mathfrak a_n}^{n-3k}\, dt	=\widehat h(0)\int_{\mathbb R^d}e^{-i\mathfrak a_n^{-1}\langle t,L\rangle}	e^{-\frac 1{2} \Sigma_{d+1} t\cdot t}\left(1+\frac {\mathbf 1_{|t|<b_0  \mathfrak a_n}}2  \Sigma_{d+1} t\cdot t\frac{\log|t|)}{\log \mathfrak a_n}\right)\nonumber\\
%	&\ \ \ +O\left(|t|^{4+\ell} 	\mathbf 1_{|t|<b_0  \mathfrak a_n} e^{-\frac 1{2} \Sigma_{d+1} t\cdot t\left(1	-\frac{\mathbf 1_{|t|>1}\log |t|}{\log \mathfrak a_n}\right)}\frac{		|\log |t||^2}{(\log n)^2}\right)\, du+O\left(\frac {\widehat h(0)}{\log n}+\frac{\Vert \widehat g'\Vert_\infty}{\mathfrak a_n}\right)\, , \nonumber	\end{align}
%	and so, using the fact that $1<|t|<b_0  \mathfrak a_n$ implies that $\log \mathfrak a_n-\log|t|=\log(\mathfrak a_n/|t|)>|ut|^{-\varepsilon}$, it follows that
	\begin{align}
	\int_{[-b_0  \mathfrak a_n,b_0  \mathfrak a_n]^{d+1}} &e^{-i\mathfrak a_n^{-1}\langle t,L\rangle}\widehat h(t/\mathfrak a_n)\lambda_{t/\mathfrak a_n}^{n-3{k_n}}\, dt=g_{d+1}\left(\frac L{\mathfrak a_n}\right)\int_{\mathbb Z^d\times\mathbb R}h\, d\lambda_{d+1}+\mathcal O\left(|\widehat h(0)|\eta_n+\frac{
		%\Vert \widehat g'\Vert_\inftyg
	[\widehat h]_{Lip}}{\mathfrak a_n}\right)\, ,\label{intlambda}
	\end{align}
	with
	$
	g_{d+1}(z):=
	\frac{
		e^{-\frac 1{2}   \langle\Sigma_{d+1} z, z\rangle}}{\sqrt{(2\pi)^{d}\det \Sigma_{d+1}}}$.
\end{itemize}
This ends the proof of Lemma~\ref{lem:jointllt0}.
\end{pfof}

\begin{pfof}{Lemma~\ref{lem:jointllt}}
Let $A_0,B_0\subset M$ be measurable sets such that $\mu(\partial A_0)=\mu(\partial B_0)=0$. Let $K\subset\mathbb Z^d\times\mathbb R
$ be a bounded set with $\Lambda_{d+1}(\partial K)=0$ (boundary in $\mathbb Z^d\times\mathbb R$) and let $z\in
\mathbb R^{d+1}
%\mathbb Z^d\times\mathbb R
$ and $(z_n)_n$ be a sequence of $\mathbb Z^d\times\mathbb R$ such that $\lim_{n\rightarrow +\infty}z_n/a_n=z$.
Let us prove that 
\begin{equation}\label{jointLLD1}
\lim_{n\rightarrow +\infty}a_n^{d+1}\mu\left(A_0\cap T^{-n}( B_0)\cap\{\widehat\Psi_n(x)\in z_n +K\}\right)= g_{d+1}(z)\mu(A_0)\mu(B_0)\lambda_{d+1}(K)
\, .
\end{equation}
	
%We fix a bounded set $K\subset \mathbb Z^d\times\mathbb R$, a point$z\in\mathbb R^{d+1}$ and a sequence $(z_n)_n$ of points of $\mathbb T^d\times\mathbb R$such that $\lim_{n\rightarrow +\infty}z_n/a_n= z$.
	
%Let us consider $K$ be a bounded set of $\mathbb R^{d+1}$, $z\in\mathbb R^{d+1}$ and a sequence $(z_n)_n$ of $\mathbb R^{d+1}$ such that $\lim_{n\rightarrow +\infty}z_n/a_n= z$ and satisfying one of the following assumptions~:
%\begin{itemize}
%	\item $(z_n)_n$ is a sequence of $\mathbb Z^d\times\mathbb R$, $K$ is contained in $\mathbb Z^d\times\mathbb R$ and its boundary in $\mathbb Z^d\times\mathbb R$ has null  $\lambda_{d+1}$-measure.
%    \item the  boundary of $K$ in $\mathbb R^{d+1}$ has null  $\lambda_{d+1}$-measure.
%\end{itemize}
We will approximate $A_0$ and $B_0$ by $A_n^\pm$ and $B_n^\pm$
respectively, where $A_n^-$ (resp. $A_n^+$) is the  union of all connected components of $M\setminus\bigcup_{k=-m_n}^{m_n}T^{-k}(\mathcal S_0)$ contained in (resp. intersecting) $A_0$
with $m_n\rightarrow +\infty$, analogously with $B_n^\pm$ with respect to $B_0$. Since the diameter of these connected components is smaller than $C\vartheta^n$ for some $C>0$ and some $\vartheta\in(0,1)$ and since $\mu(\partial A_0)=\mu(\partial B_0)=0$, we conclude that $\mu(A_n^+\setminus A_n^-)$
and $\mu(B_n^+\setminus B_n^-)$ vanishes as $n\rightarrow +\infty$.
Consider $h$ as in Lemma~\ref{lem:jointllt0} taking nonnegative values. 
%Fix $z\in\mathbb R^{d+1}$ and 
%a sequence $z_n\in\mathbb Z^d\times\mathbb R$ such that $z_n\sim z a_n$
% (e.g. $z_n=(\lfloor z_1a_n\rfloor,\lfloor z_1a_n\rfloor, z_3)$). 
We set
\[
\mathfrak M_n(A,B):= a_n^{d+1}\mathbb E_{\mu}[\mathbf 1_{A}.h(\widehat\Psi_n-z_n
).\mathbf 1_{T^{-n}(B)}]\]
and
\[
\mathfrak M(A,B):= \mu(A)\mu(B)g_{d+1}\left(
%\frac
 z
%{\mathfrak a_n}
\right)\int_{\mathbb Z^d\times\mathbb R}h\, d\Lambda_{d+1}\, .
\]
Since $h\ge 0$, these two quantities are increasing in $A$ and in $B$. Thus
\[
\mathfrak M_n(A_m^-,B_m^-)-\mathfrak M(A_m^+,B_m^+)\le 
\mathfrak M_n(A_0,B_0)-\mathfrak M(A_0,B_0)\le 
\mathfrak M_n(A_m^+,B_m^+)-\mathfrak M(A_m^-,B_m^-)\, .
\]
It then follows from Lemma~\ref{lem:jointllt0} (with $k_n=\log n$) applied to $\mathbf 1_{A_m^\pm},\mathbf 1_{B_m^-}$ since these observables are dynamically H\"older and since $\mathfrak a_n\sim a_n$ that, for all $m\ge 1$,
\begin{align*}
(\mu(A_m^-)\mu(B_m^-)-\mu(A_m^+)\mu(B_m^+))g_{d+1}(z)\int_{\mathbb Z^{d}\times\mathbb R} g\, d\lambda_{d+1}& =
\lim_{n\rightarrow +\infty}
\mathfrak M_n(A_m^-,B_m^-)-\mathfrak M(A_m^+,B_m^+)\\
&\le 
\liminf_{n\rightarrow +\infty}\mathfrak M_n(A_0,B_0)-\mathfrak M(A_0,B_0)\, , 
\end{align*}
from which we conclude that 
$\liminf_{n\rightarrow +\infty}\mathfrak M_n(A_0,B_0)-\mathfrak M(A_0,B_0)\ge 0$, we proceed analogously with the $\limsup$ exchanging exponents $+$ and $-$, and we conclude that
\begin{equation}\label{CVCV}
\lim_{n\rightarrow +\infty}\mathfrak M_n(A_0,B_0)=\mathfrak M(A_0,B_0)=0\, ,
\end{equation}
and extend this to the case of complex valued function $g$.
Consider the function $H_0$ appearing in \eqref{coboundPsi}. 
Let $\delta=1/L>0$, where $L$ is an integer such that $L>2\Vert H_0\Vert_\infty$ and 
$K\subset (-L+2\Vert H_0\Vert_\infty,L-2\Vert H_0\Vert_\infty)^d\times(-\frac {2\pi}{\delta},\frac{2\pi}\delta)$. 
Let us consider the family of functions $(g_{\delta,\theta}:\mathbb R^{d+1}\rightarrow \mathbb C)_\theta$ given by
\begin{align}\label{eq:polya}
 g_{\delta,\theta}(x)=
%g_1(a,b)
e^{i\langle \theta, x\rangle}h_\delta(x)\, ,
\end{align}
with $h_\delta:(x_1,...x_{d+1})\mapsto\frac{1-\cos(\delta x_{d+1})}{(2L-1)^2\pi \delta x_{d+1}^2}\mathbf  1_{|x_1|,...,|x_d|<L}$ (using the density of Polya's distribution). The Fourier transform of $g_{\delta,\theta}$ is $t\mapsto   \sum_{|k_1|,..,|k_d|<L}\frac{e^{i\sum_{i=1}^dk_it_i}}{(2L-1)}
\max(0,1-|(t_{d+1}+\theta)/\delta|)$.
The above convergence result~\eqref{CVCV} with  $h=g_{\delta ,\theta}$ for all $\theta$ implies the
convergence in distribution of $(\mathfrak m_n)_n$ to $\mathfrak m$
(since it ensures the convergence of characteristic functions),
where $\mathfrak m_n$ has density $\frac{h_\delta}{a_n^{d+1}\mathbb E_\mu[\mathbf 1_{A_0\cap T^{-n}(B_0)}h_\delta(\widehat\Psi_n-z_n)]}$
with respect to the image measure of $a_n^{d+1}\mathbf 1_{A_0\cap T^{-n}B_0}\mu$ by $\widehat\Psi_n-z_n$, and where 
 $\mathfrak m$ is the probability measure with density $h_\delta/g_{d+1}(z)$ with respect to $g_{d+1}(z)\Lambda_{d+1}$. Thus, since $K\subset (-L,L)^d\times(-\frac {2\pi}{\delta},\frac{2\pi}\delta)$, the previous distribution convergence implies that  $\lim_{n\rightarrow +\infty}\int_K\frac 1{h_\delta} \, d\mathfrak m_n=\int_K\frac 1{h_\delta} \, d\mathfrak m$, i.e.
\[
\lim_{n\rightarrow +\infty}\frac{a_n^{d+1}\mu\left(A_0, \widehat \Psi_n-z_n\in K, T^{-n}(B_0) \right)}{a_n^{d+1}\mathbb E_\mu[\mathbf 1_{A_0\cap T^{-n}(B_0)}h_\delta(\widehat\Psi_n-z_n)]}=\Lambda_{d+1}(K)\, ,
\]
and so, using again~\ref{CVCV} for the denominator,
\[
\lim_{n\rightarrow +\infty}\frac{a_n^{d+1}\mu\left(A_0, \widehat \Psi_n-z_n\in K, T^{-n}(B_0) \right)}{g_{d+1}(z)\mu(A_0)\mu(B_0)}=\Lambda_{d+1}(K)\, .
\]
This ends the proof of pointwise MLLT for $\widehat\Psi_n$~\eqref{jointLLD1}.

It remains to prove the uniformity in the convergence results.
Assume that~\eqref{eq:jointLLDunif} does not converge to 0 uniformly in $z\in\mathbb Z^d\times\mathbb R\, :\, |z|\le L a_n$, as $n\rightarrow +\infty$. 
Then, there would exist a sequence $(z_n)_n$ in $\mathbb Z^d\times\mathbb R$ such that $|z_n|<La_n$, a sequence of integers $m(n)$ and a real number $\eta>0$ such that
\[
\forall n\ge N_0,\quad \left|a_n^{d+1}\mu\left(A_0\cap T^{-n}(B_0)\cap\{\widehat\Psi_n\in z_n +K\}\right)- g_{d+1}\left(\frac{z}{a_n}\right)\mu(A_0)\mu(B_0)\lambda_{d+1}(K)\right|>\eta\, .
\]
This ends the proof of Lemma~\ref{lem:jointllt}.
\end{pfof}

\appendix

\section{Proof of joint LLD (Lemma~\ref{lem:lld})}\label{sec:jLLD}
In this appendix, we prove Lemma~\ref{lem:lld}. The proof is very similar to that of~\cite{MPT} except that the function $\overline\Psi$ is not constant on partition elements. 
%This implies some adaptations in the proof. 
For completeness, we explain in this appendix which adaptations have to be done to~\cite{MPT} to prove our joint LLD estimate stated in Lemma~\ref{lem:lld}.\\
We recall that optimal LLD for the cell change $\kappa$ (and so for the flight function $V$, due to~\eqref{coboundPsi}),
have been obtained in~\cite{MPT}. More precisely, by~\cite[Theorem 1.1 and Remark 1.2]{MPT},
for any $h>0$, there exists $C>0$ so that 
$\mu(V_n\in B(x,h)))\le C\frac{n}{a_n^d}\frac{\log|x|}{1+|x|^2}$, for any $n\ge 1$
and $x\in\R^d$. Here $B(x, h)$ denotes an open ball in $\R^d$ of radius $h$ centered at $x$.  Similarly, 
$\mu(\kappa_n=N)\le C\frac{n}{a_n^d}\frac{\log|N|}{1+|N|^2}$ for all $N\in\Z^d$ and all
$n\ge 1$.

The proof of LLD for $\kappa$ in~\cite{MPT} relies strongly on the fact that $\kappa$ goes to the quotient Young tower and that $\overline\kappa$ is constant on partition elements of the partition $\mathcal P$ for the Young tower $\Delta$ (as recalled in Section~\ref{sec:proofjCLT}); the statement on $V$ follows immediately since, up to 
a bounded coboundary, $V$ is the same as $\kappa$.
Due to~\eqref{coboundPsibis}, $\widehat\Psi\circ \pi$ can be written as 
$\overline \Psi\circ\overline\pi=\left(\overline\kappa,\overline\tau\right)\circ\overline\pi$ plus a bounded coboundary.
Thus, LLD for $\widehat\Psi$ will follow from LLD
for $\overline\Psi$. 
The function $\overline\tau$, and thus $\overline\Psi$, is not 
constant on partition elements. However, as argued below, the argument in~\cite{MPT}
goes through to provide LLD as in Lemma~\ref{lem:lld} for $\overline\Psi$ (and thus $\widehat\Psi$).

Throughout this section, let $d\in\{0,1,2\}$ and $U\subset\R^{d+1}$ be an open ball, as in the statement of Lemma~\ref{lem:lld}. To avoid a clash of notation below, the $z$ in the statement of Lemma~\ref{lem:lld} will be replaced by $x$. More precisely, here we shall prove
that, for any bounded set $U\subset \mathbb R^{d+1}$,
\begin{equation} \label{eq:MPT0}
\mu(\widehat\Psi_n\in x+U)\ll \frac{n}{a_n^{d+1}} \frac{(\log|x|)}{1+|x|^2}
\quad\text{uniformly in $n\ge1$, $x\in\R^{d+1}$\, .}
\end{equation}

As recalled in~\cite[Remark 1.3]{MPT}, the LLD in the range $ |x|\le a_n$
follows from the involved LLT, while the range $ |x|\ge a_n$ requires serious work.

\subsection{The range $ |x|\le a_n$}
It follows from the LLT estimate given in Lemma~\ref{lem:jointllt0}  that $\mu(\widehat\Psi_n\in B(x,h) )\ll a_n^{-d-1}\ll \frac n{a_n^{d+1}}\frac{|\log |x||}{1+|x|^2}$,
where the first inequality holds true uniformly in $n\ge 1,x\in\mathbb R^{d+1}$ and where the second one holds true for $n\ge 1$ and $x\in\mathbb R^d$ such that $|x|\le a_n$.\\
Indeed, $t\mapsto \frac{t^2}{|\log |t||}$ as limit $+\infty$ as $t\rightarrow +\infty$,  has derivative $t\mapsto\frac{2\log t-1}t$ and so it is increasing on $[e^{\frac 12},+\infty[$. Thus, for $n$ large enough, if $|x|\le a_n$, then $\frac{|x|^2}{|\log |x||}\le \frac{a_n^2}{|\log a_n|}=\frac{2n\log n}{\log n+\log\log n}$.

\subsection{The range $n\ll \log |x|$ } 

In this range we proceed similarly to~\cite[Lemma 3.1]{MPT}
obtain
\begin{lemma}
For any $\epsilon_1>0$ and any $q\ge 1$, there exists $C_q>0$ so that, for every $x\in\mathbb R^{d+1}$
and every $n\ge 1$ such that $\epsilon_1 n\le \log |x|$, 
 $\mu(\widehat\Psi_n\in x+U)\le \frac{C_q}{n^q |x|^2}$.
\end{lemma}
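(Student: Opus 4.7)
The plan is first to establish the stronger estimate $\mu(\widehat\Psi_n\in x+U)\ll n|x|^{-2+2\delta}$ for an arbitrary small $\delta\in(0,1/2)$, and then to exploit the hypothesis $|x|\ge e^{\epsilon_1 n}$ in order to convert the factor $|x|^{2\delta}$ into $e^{-2\delta\epsilon_1 n}$, which dominates any polynomial in $n$.

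To prove the strengthened estimate I would set $A:=|x|^{1-\delta}$, write $Z_j:=\widehat\Psi\circ T^j$, and split the event according to whether $\max_{0\le j<n}|Z_j|>A$ or not. On the first part, a union bound combined with the quadratic tail estimate $\mu(|\widehat\Psi|>t)=O(t^{-2})$ (recalled in the text right after Lemma~\ref{lem:clt}) yields
$$\mu\bigl(\max_{j<n}|Z_j|>A\bigr)\le n\,\mu(|\widehat\Psi|>A)\ll n|x|^{-2+2\delta}.$$
On the complement $\{\max_{j<n}|Z_j|\le A\}$ one has $|\widehat\Psi_n|\le nA=n|x|^{1-\delta}$; but the hypothesis $\epsilon_1 n\le\log|x|$ forces $n=O(\log|x|)=o(|x|^\delta)$, so $nA=o(|x|)$, and the intersection with $\{\widehat\Psi_n\in x+U\}$ becomes empty once $|x|$ exceeds a threshold $|x|_0=|x|_0(\epsilon_1,\delta,U)$.

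Combining these two observations, for $|x|\ge|x|_0$ we obtain
$$\mu(\widehat\Psi_n\in x+U)\ll n|x|^{-2+2\delta}\le \frac{C\,n\,e^{-2\delta\epsilon_1 n}}{|x|^2}\le \frac{C_q}{n^q|x|^2},$$
the last inequality coming from the boundedness of $n\mapsto n^{q+1}e^{-2\delta\epsilon_1 n}$ on $[1,\infty)$. The region $|x|<|x|_0$ is trivial: the hypothesis forces $n\le(\log|x|_0)/\epsilon_1$ bounded, so the trivial estimate $\mu(\cdot)\le 1$ is absorbed into $C_q/(n^q|x|^2)$ by enlarging $C_q$. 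There is no genuine obstacle here; everything relies on the quadratic tail of $\widehat\Psi$ (which is known) and the exponential-beats-polynomial mechanism triggered by the exponential gap between $n$ and $\log|x|$.
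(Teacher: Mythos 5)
Your argument breaks down at the conversion step. The hypothesis $\epsilon_1 n\le\log|x|$ gives the \emph{lower} bound $|x|^{2\delta}\ge e^{2\delta\epsilon_1 n}$; the inequality $n|x|^{-2+2\delta}\le Cne^{-2\delta\epsilon_1 n}/|x|^2$ that you write down would require the \emph{upper} bound $|x|^{2\delta}\le Ce^{-2\delta\epsilon_1 n}$, which is false for $|x|$ large (the left side exceeds $1$, the right side is below $1$). Note also that the intermediate estimate $n|x|^{-2+2\delta}$ is actually \emph{weaker} than $|x|^{-2}$, whereas the lemma asks for something strictly smaller than $|x|^{-2}$. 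If one tries to correct the sign by aiming instead for an intermediate estimate $n|x|^{-2-2\delta}$, the maximum-term decomposition cannot deliver it: for the complementary event $\{\max_j|Z_j|\le A\}$ to be incompatible with $\{\widehat\Psi_n\in x+U\}$ you need $A$ at most of order $|x|/n$, and then the union bound on $\{\max_j|Z_j|>A\}$ combined with the tail estimate $\mu(|\widehat\Psi|>t)\ll t^{-2}$ yields only $n(n/|x|)^2 = n^3/|x|^2$. Since $n^3/|x|^2\cdot n^q|x|^2 = n^{q+3}\to\infty$, no choice of threshold reaches the target $C_q/(n^q|x|^2)$; the union bound on a single threshold-crossing event inherently loses powers of $n$.

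The missing ingredient is the finer structure that the paper imports from~\cite[Lemma 3.1]{MPT}: the pointwise (density) estimate $\mu(|\widetilde\kappa|=p)\ll p^{-3}$ — which is why the paper passes to the integer-valued observable $(\kappa,\lfloor\widetilde\tau\rfloor)$ before applying that lemma — together with the double-tail estimate~\eqref{controltimelog}. Heuristically, for $\widehat\Psi_n$ to be near $x$ with $|x|\ge e^{\epsilon_1 n}$ some summand must land almost exactly at $x$, which costs about $|x|^{-3}$ per index (the density estimate), rather than merely crossing the threshold $|x|/n$, which costs $(n/|x|)^2$ (the tail estimate). The former gives $n|x|^{-3}$, and $n|x|^{-3}\cdot n^q|x|^2 = n^{q+1}/|x|\le n^{q+1}e^{-\epsilon_1 n}$ is bounded, which is exactly the exponential-beats-polynomial mechanism you invoked. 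The double-tail estimate then rules out the residual configurations in which two nearby summands are simultaneously large. Working only with the tail of the maximum discards precisely the extra power of $|x|$ that makes the estimate go through.
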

 \begin{proof}
There exists $x_0$ such that if $|x|>x_0$, then $|x|>2(\diam(U)+\epsilon_1^{-1}\log|x|)$. There exists a constant $C'_q$ such that $\mu(\widehat\Psi_n\in x+U)\le \frac{C'_q}{n^q |x|^2}$ for all
$x,n$ such that $\epsilon_1 n\le \log|x|\le \log x_0$.
It remains to treat the case $|x|\ge x_0$. One can observe that the proof of~\cite[Lemma 3.1]{MPT}
only uses the fact that $|\kappa|_\infty$ takes integer values, that $\mu(|\kappa|_\infty=p)\ll p^{-3}$ as $x\rightarrow +\infty$ and that $\mu(|\kappa|_\infty=p,\, |\kappa|_\infty\circ T^r\ge cp^{\frac 45})\ll p^{-3-\frac 2{45}}$. These properties are also satisfied
by $(\kappa,\lfloor \widetilde\tau\rfloor)$.
Thus, for every $q\ge 1$, there exists $C'_q>0$ such that, for all
$y\in\mathbb R^d$ and all $n\ge 1$ such that $\epsilon n\le \log|y|$, 
 $\mu((\kappa_n,(\lfloor \widetilde\tau\rfloor)_n)=y)\le  \frac{C''_q}{n^q |y|^2}$ and so
\[
\mu(\Psi_n\in x+U)\le \sum_{y\in (x+U+\{0\}^d\times[-n;0])\cap \mathbb Z^{d+1}} \frac{C''_q}{n^q |y|^2}\le (n+1)(\diam(U)+1)^{d+1} \frac{4C''_q}{n^q |x|^2}\, .\]
since $y$ in the first sum above satisfies
\[|y|\ge |x|-\diam(U)-n\ge |x|-\diam(U)-\frac{\log |x|}{\epsilon_1} \ge |x|/2\, .\]
We conclude by taking e.g. $C_q:=\max\left(C'_q,8C''_{q+1}(\diam(U)+1)\right)$. \end{proof}

\subsection{The range $a_n\le |x|\le e^{\epsilon_1 n}$ for a particular $\epsilon_1$}

 This $\eps_1$ is to be fixed so that it matches with the choice of $\eps_1$ in ~\cite[Proposition 6.2]{MPT}. As we shall explain below, it does not play a role in the current argument, but see~\eqref{vareps1} for a particular choice.

Since $\widehat\Psi\circ\pi$ is equal to $\overline\Psi\circ\overline\pi$
plus a bounded coboundary, the desired LLD~\eqref{eq:MPT0} for $\widehat\Psi$ will follow from LLD for $\overline\Psi$. So, in this range, we focus on LLD estimates for $\overline\Psi$.
As clarified in Appendix~\ref{sec:app}, $\overline\Psi$ is uniformly Lipschitz on Young's partition elements.

We work in the set up of Section~\ref{sec:proofjCLT}. As in Section~\ref{sec:proofjCLT}, for simplicity, we assume that\footnote{\label{footnote}This assumption is not essential. One could, as in~\cite{MPT} work
without, but in that case~\eqref{spgap-Sz} becomes slightly more complicated as there exists no longer a simple isolated eigenvalue at $1$, but finitely many eigenvalues of finite multiplicity.} the g.c.d. of $R$ is $1$.

Set $\delta=b_0/4$ with $b_0\in(0,\min(1,\beta_0))$ (with $\beta_0$ introduced before~\eqref{spgap-Sz}) and such that there is a constant $c>0$  such that\footnote{The existence of such a couple $(b_0,c)$ comes from Sublemma~\ref{sub:asl}.}
\begin{equation}\label{defc}
\forall t\in\mathbb R^d,\quad |t|<b_0\ \Rightarrow\ |\lambda_{t}|\le e^{-c|t|^2L_t}\, ,\quad\mbox{with }L(t):=\log(|t|^{-1})=|\log|t||\, .
\end{equation}
Using~\eqref{spgap-Sz} and~\eqref{spgap-Sz-bis}, we consider a function
$r:\R^{d+1}\to\C$ is $C^2$ with $\supp r\subset [-b_0,b_0]^{d+1}$ such that\footnote{The existence of such an $r$ is guarantied by a classical smoothing argument; see, for instance,~\cite[Footnote 1]{MTejp})}
\begin{align}\label{eq:fourlld}
 \mu(\overline\Psi_n\in x+U) &\le \int_{[-\delta,\delta]^{d+1}} e^{-i\langle t, x\rangle}r(t)P_t^n 1\,dt= A_{n,x}+ \mathcal O(\theta^n)\, ,
\end{align}
with
\[A_{n,x}:=\int_{[-\delta,\delta]^{d+1}} e^{-i\langle t, x\rangle}r(t)\lambda_{t}^n\Pi_{t}1\,dt\, .\]
The desired LLD~\eqref{eq:MPT0}
in this range will follow from~\eqref{eq:fourlld} together with the following
estimates on $\lambda_t$ and $\Pi_t$.
The following result corresponds to the hardest estimate in the set-up of~\cite{MPT}.
Let $\partial_j=\partial_{t_j}$ for $j=1,\dots,d+1$.
For $t,h\in\R^{d+1}$, $\mathbf{b}>0$, set 
\[
M_{\mathbf{b}}(t,h)=|h|L_h
\big\{1+ L_h\,|t|^2L_t +|h|^{-\mathbf{b}|t|^2L_t}L_h^2\,|t|^4L_t^2\big\}\, .
\]
%where $L_t=\log(1/|t|)=|\log|t||$.\\
\begin{lemma}{~Analogue of~\cite[Lemma 4.1]{MPT}.}
	\label{lem:key} 
	Let $j\in\{1,\dots,d\}$.
	The maps
	$t\mapsto \lambda_{t}$ and $t\mapsto\Pi_{t}:\cB_0\to L^1$ are $C^1$ on $[-b_0,b_0]^{d+1}$. 
	
	Furthermore, there exist $C>0$ and 
	$\mathbf{b}>0$ such that 
	for all $t,h\in [-b_0,b_0]^{d+1}$, 
	\[
	|\partial_j\lambda_{t+h}-\partial_j\lambda_{t}|  \le CM_{\mathbf{b}}(t,h),
	\qquad
	\|\partial_j\Pi_{t+h}-\partial_j\Pi_{t}\|_{\cB_0\mapsto L^1}  \le 
	CM_{\mathbf{b}}(t,h).
	\]
\end{lemma}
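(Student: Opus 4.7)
The plan is to adapt the proof of~\cite[Lemma 4.1]{MPT} by exploiting the decomposition $\overline\Psi=\Upsilon+\Theta$ from~\eqref{eq:rewr}, where $\Upsilon$ is $\mathbb Z^{d+1}$-valued and constant on partition elements (with tail~\eqref{eq:tail}), while $\Theta$ is bounded and dynamically Lipschitz. Correspondingly, I would split $P_t=\widetilde P_t+R_t$ where $R_t v := \widetilde P_t\bigl((e^{i\langle t,\Theta\rangle}-1)v\bigr)$. Since $\Theta\in L^\infty\cap\cB_0$, the operator $R_t\in\mathcal L(\cB_0)$ is as smooth in $t$ as the multiplier $e^{i\langle t,\Theta\rangle}-1$; in particular both $\partial_j R_t$ and $\partial_j\partial_k R_t$ exist, with $\|R_t\|_{\mathcal L(\cB_0)}=O(|t|)$ and $\|\partial_j R_{t+h}-\partial_j R_t\|_{\mathcal L(\cB_0)}=O(|h|)$, uniformly in $t\in[-b_0,b_0]^{d+1}$.

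First I would run the argument of~\cite[Lemma 4.1]{MPT} with $\kappa$ replaced by $\Upsilon$, obtaining the claimed bounds for $\widetilde\lambda_t$ and $\widetilde\Pi_t$. The MPT proof is a Keller--Liverani-type perturbation scheme whose only inputs concerning the cell-change are: the tail law~\eqref{eq:tail}; the double-probability estimate~\eqref{eq:svd}; constancy on partition elements (so that $\partial_j\widetilde P_t$ is itself a transfer operator of multiplication by a polynomial in $\Upsilon$); and the fact that $1$ is a simple isolated eigenvalue of $\widetilde P_0$, guaranteed here by the standing assumption $\gcd(R)=1$. All four inputs are available in the present setting, so the MPT argument produces
\[
|\partial_j\widetilde\lambda_{t+h}-\partial_j\widetilde\lambda_t|+\|\partial_j\widetilde\Pi_{t+h}-\partial_j\widetilde\Pi_t\|_{\cB_0\to L^1}\ll M_{\mathbf b}(t,h),
\]
with the same $M_{\mathbf b}$. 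The factor $|h|^{-\mathbf b|t|^2 L_t}$ arises in MPT from iterating the spectral decomposition over $\sim |h|^{-1}$ steps and controlling $|\widetilde\lambda_t|^{|h|^{-1}}$ via~\eqref{defc}.

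Next I would pass from $(\widetilde\lambda_t,\widetilde\Pi_t)$ to $(\lambda_t,\Pi_t)$ by analytic perturbation theory. The resolvent identity
\[
\Pi_t-\widetilde\Pi_t = -\frac{1}{2\pi i}\oint_\gamma (z-P_t)^{-1}R_t(z-\widetilde P_t)^{-1}\,dz
\]
along a small contour $\gamma$ encircling $1$, combined with Sublemma~\ref{sunl:pi} and the $C^2$-regularity of $R_t$ with $\|R_t\|=O(|t|)$, shows that $t\mapsto\Pi_t-\widetilde\Pi_t$ is $C^1$ with the crude bound $\|\partial_j(\Pi_{t+h}-\widetilde\Pi_{t+h})-\partial_j(\Pi_t-\widetilde\Pi_t)\|_{\cB_0\to L^1}\ll|h|$, and analogously $|\partial_j(\lambda_{t+h}-\widetilde\lambda_{t+h})-\partial_j(\lambda_t-\widetilde\lambda_t)|\ll|h|$. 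These errors are dominated by the $M_{\mathbf b}(t,h)$ estimate coming from $\widetilde P_t$, and the lemma follows by the triangle inequality.

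The main obstacle will be the bookkeeping in the upgrade step: in expanding $\partial_j\Pi_{t+h}-\partial_j\Pi_t$ through the resolvent formula one meets products such as $\lambda_t^k(\partial_j R_t)\Pi_t$ with $k\sim |h|^{-1}$, and the factor $|h|^{-\mathbf b|t|^2 L_t}$ must be tracked through each contribution to check that it is not amplified by the perturbation. Fortunately $\Theta$ is strictly better behaved than $\Upsilon$ (bounded, versus heavy-tailed), so no new source of growth is introduced and the $M_{\mathbf b}$-profile from MPT is preserved. Carrying out this bookkeeping exactly as in~\cite[Sections~4--6]{MPT} completes the proof.
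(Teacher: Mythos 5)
Your proposed route is genuinely different from the paper's: instead of splitting $\overline\Psi=\Upsilon+\Theta$ and perturbing, the paper works directly with the induced observable $\overline\Psi_R$ through the renewal operators $\hQ,\hA,\hB,\hE$ and absorbs the non-constancy of $\overline\Psi_R$ on partition elements into the local Lipschitz constant $K_u$ introduced in Proposition~\ref{prop:Ruv} (see Propositions~\ref{prop:g}, \ref{prop:furtherR}, \ref{prop:furtherA}, \ref{prop:furtherB}). Unfortunately your perturbative step contains a genuine gap. Writing $R_t=P_t-\widetilde P_t$ and differentiating,
\[
\partial_j R_t v
= P\bigl(i\Upsilon_j\bigl(e^{i\langle t,\overline\Psi\rangle}-e^{i\langle t,\Upsilon\rangle}\bigr)v\bigr)
+P\bigl(i\Theta_j e^{i\langle t,\overline\Psi\rangle}v\bigr),
\]
and the first term involves multiplication by $\Upsilon_j$, which only lies in $L^{2-\varepsilon}$ because of the tail~\eqref{eq:tail}. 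Hence $\partial_j R_t$ is \emph{not} bounded on $\cB_0$ (nor on $\cB$), contradicting your assertion that ``$R_t\in\mathcal L(\cB_0)$ is as smooth in $t$ as the multiplier $e^{i\langle t,\Theta\rangle}-1$''; the $\widetilde P_t$ factor in the factorization $R_t v=\widetilde P_t((e^{i\langle t,\Theta\rangle}-1)v)$ also depends on $t$ through the heavy-tailed $\Upsilon$, and cannot be treated as a fixed bounded operator. Consequently the contour-integral identity for $\Pi_t-\widetilde\Pi_t$ cannot be differentiated in $t$ within $\mathcal L(\cB_0\to L^1)$: differentiating the resolvent produces $(z-P_t)^{-1}\partial_jP_t(z-P_t)^{-1}$, and $(z-P_t)^{-1}$ has no useful spectral gap on the $L^1$-type space that $\partial_jP_t$ lands in. This is precisely the obstruction that forces both~\cite{MPT} and the present appendix to work at the level of the renewal operators on the induced map $\overline Y$, where multiplication by $\overline\Psi_R$ (or $\Upsilon_R$) is tamed by the exponential tail of $R$. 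Your ``crude bound $\ll|h|$'' for the perturbation error is therefore unjustified, and repairing it would essentially reproduce the renewal analysis you were trying to avoid.

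A secondary inaccuracy: $\Upsilon$ is \emph{not} $\mathbb Z^{d+1}$-valued. Its last coordinate is $|\overline K|-\mathbb E_{\mu_{\overline\Delta}}[|\overline K|]$, which is a real (non-lattice) number even though it takes countably many values. So even your first step, applying~\cite[Lemma 4.1]{MPT} with $\kappa$ replaced by $\Upsilon$, already requires the non-lattice extension of the MPT machinery — which is part of what the paper's appendix carries out. The property $\Upsilon$ does retain is constancy on partition elements, which is indeed useful, but it does not make the reduction to MPT automatic, and it does not supply the missing control of $\partial_j R_t$.
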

In \cite{MPT}, we obtained the same formula for $M_{\mathbf{b}}(t,h)$,
while working with $\kappa\in\Z^d$ instead 
$\overline\Psi\in\R^{d+1}$). 
Given Lemma~\ref{lem:key} in the range $a_n\le |x|\le e^{\varepsilon_1 n}$
for
\begin{equation}\label{vareps1}
 \varepsilon_1:=c/\mathbf{b}
\end{equation}
 with $\mathbf{b}>0$ as in Lemma~\ref{lem:key} and $c>0$ as in~\eqref{defc},
 the desired LLD for $\overline\Psi$ (as in~\eqref{eq:MPT0} with $\overline\Psi$ 
 instead of $\widehat\Psi$)
follows word for word as in \cite[Section 6 via Corollary 4.3]{MPT} (written for $\kappa$).
Indeed, the proofs therein just use the statement of Lemma~\ref{lem:key},~\eqref{defc} and the fact that $a_n=\sqrt{n\log n}$.

\subsection{Proof of Lemma~\ref{lem:key}}
\label{sec:pfkey}
We follow the proofs of~\cite{MPT}. We focus on the changes
that are needed since $\overline\Psi$ is not constant on the atoms of the Young partition. We will indicate which part of proofs of~\cite{MPT} can be followed line by line.
Throughout this section let  $Q:L^1(\overline Y)\to L^1(\overline Y)$ be the transfer operator corresponding to the Gibbs-Markov map $\overline F:\overline Y\to \overline Y$ (the base map of the quotient tower $(\overline\Delta,f_{\overline\Delta})$). We recall that 
$R$ is the return time of $f_{\overline \Delta}$ to $\overline Y$, so that
$F(\cdot)=f_{\overline{\Delta}}^{R(\cdot)}(\cdot)$. 
The proof of~\cite[Lemma 4.1]{MPT} of which Lemma~\ref{lem:key}
is an analogue (in the set up of Lemma~\ref{lem:lld})  starts from the following 
renewal equation 
\begin{equation}\label{eq:ren}
 \widehat P(z,t)=\sum_{n\ge 0}z^nP_t^n=\widehat A(z,t)\widehat T(z,t)\widehat B(z,t)+\widehat E(z,t), z\in \C, |z|\le 1, t\in [-\delta_0,\delta_0]^{d+1}
\end{equation}
where the operators $\widehat A,\widehat T, \widehat B,\widehat E$ and $\delta_0>0$ are to be defined/specified in the subsections to follow. In particular $\widehat T$ will be given by  
\[
\widehat T(z,t)=(I-\widehat Q(z,t))^{-1}\, ,
\]
with
\begin{equation}\label{def:hatQ}
\widehat Q(z,t):=Q\left(z^{R(\cdot)}e^{i\langle t,\overline\Psi_{R}\rangle}\cdot \right)=\sum_{n\ge 1}z^nQ\left(\mathbf 1_{\{R=n\}}e^{i\langle t,\overline\Psi_{n}\rangle}\cdot\right)\, ,
\end{equation}
where we write $\overline\Psi_R$ for the function defined by
 \[\forall y\in \overline Y,\quad  \overline\Psi_{R(y)}(y)=\sum_{k=0}^{R(y)-1}\overline\Psi(y,k )\, . \]
Following the approach in~\cite{MPT}, the proof of Lemma~\ref{lem:key} consists in using~\eqref{eq:ren} to clarify that 
\[\lambda_t=(g_0(t))^{-1}\]
where $t\mapsto g_0(t)$ is continuous, satisfies $g_0(0)=1$ and is  such that 1 is the dominating eigenvalue of $\widehat Q(g_0(t),t)$ and that
\[
\Pi_t=\lambda_t \widehat A(g_0(t),t)\widetilde\pi_0(t)\widehat B(g_0(t),t)
\]
where $\widetilde\pi_0(t)$ is such that $\widehat H(z,t):=\widehat T(z,t)-(g_0(t)-z)^{-1}\widetilde\pi(t)$ is analytic in $z$. 
As in~\cite{MPT}, 
the regularity (in terms of $M_b$) of the derivatives of $\lambda$ (stated in Lemma~\ref{lem:key}) will follow, via the use of the implicit function theorem, from the study of $\widehat Q(z,t)$ for $z$ close to 1, and from the properties satisfied by $\overline\Psi$. The analogous property for $\Pi$ will follow from the properties satisfied by $\lambda=1/g_0$ and also from the study of the  derivatives in $t$ of  $\widehat A,\widehat T, \widehat B,\widehat E$ for $z$ close to 1.
\subsubsection{Renewal operators}
\label{sec:Y}
 Throughout, we write $\alpha$ for the  partition corresponding the Gibbs Markov map $(\overline F,\overline Y,\alpha,\mu_{\overline Y})$. Also, let $s'(y,y')$ be the usual separation time of points $y,y'\in \overline Y$
 (see, for instance,~\cite[Section 2]{MPT} for definition) and for $\beta\in (0,1)$, 
 let $d_\beta(y,y'):=\beta^{s'(y,y')}$.
 Let $\cB_1(\overline Y)$ be the Banach space of bounded observables $v:\overline Y\to\R$ 
 Lipschitz with respect to the metric $d_\beta$.
It follows from the fact that $R$ has exponential tail probability that 
$\mu_{\overline Y}(|\overline\Psi|_R>n)=\mathcal O(n^{-2})$ as $\mu(|\overline\Psi|>n)$ (see e.g.~\cite[Section 2]{PeneTerhesiu21} for this argument).

\begin{prop} \label{prop:g}
	There exists $C_{\overline\Psi}>0$ such that
	\[	
\forall a\in\alpha,\quad\forall y,y'\in a,\quad	\sum_{\ell=0}^{R(a)-1}|\overline\Psi(y,\ell)
	-\overline\Psi(y',\ell)|  \le C_{\overline\Psi}
	d_\beta(y,y')\, .
	\]
\end{prop}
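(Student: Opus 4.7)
The plan is to prove Proposition~\ref{prop:g} by splitting $\overline\Psi = (\overline\kappa, \overline\tau)$ into its two components and exploiting regularity that has already been established for each of them in Section~\ref{sec:proofjCLT}. The first component is trivial, and the second reduces to a bookkeeping application of the Lipschitz estimate on $\overline\tau$ combined with a standard separation-time comparison.

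First, $\overline\kappa$ is the lift to the quotient tower of the cell change function $\kappa$, and Young's partition $\mathcal P$ on $\overline\Delta$ is by construction chosen so that $\overline\kappa$ is constant on each atom (see~\cite{SV07,MPT}). Concretely, the atoms of $\mathcal P$ at level $\ell$ above an atom $a$ of $\alpha$ have the form $\{(y,\ell) : y \in a\}$, so $\overline\kappa(y,\ell) = \overline\kappa(y',\ell)$ for all $y, y' \in a$ and all $\ell \in \{0, \ldots, R(a)-1\}$. Hence the $\overline\kappa$--component contributes $0$ to the sum we need to bound, and we are reduced to estimating $\sum_{\ell=0}^{R(a)-1}|\overline\tau(y,\ell)-\overline\tau(y',\ell)|$.

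For this, the Lipschitz regularity recalled after~\eqref{coboundPsibis} (cf.~\cite[Proof of Lemma 8.3]{BBM19}) provides a constant $C_0>0$, uniform over atoms of $\mathcal P$, such that
\[
|\overline\tau(x) - \overline\tau(x')| \;\le\; C_0\,\beta^{\overline s(x,x')}
\]
whenever $x, x'$ lie in the same atom of $\mathcal P$. The key quantitative step is then a lower bound for the separation time on $\overline\Delta$ in terms of the Gibbs--Markov separation time $s'$ on $\overline Y$: for $y, y' \in a$ and $\ell < R(a)$, the pair $(y,\ell), (y',\ell)$ climbs the remaining $R(a)-\ell$ levels of the column above $a$ together, and upon returning to the base it lands in a common atom if and only if $s'(y,y')\ge 2$; iterating this argument over subsequent returns and using $R \ge 1$ yields
\[
\overline s\bigl((y,\ell),(y',\ell)\bigr) \;\ge\; R(a) - \ell - 1 + \bigl(s'(y,y') - 1\bigr).
\]

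Combining the two estimates and summing a geometric series gives
\[
\sum_{\ell=0}^{R(a)-1} |\overline\tau(y,\ell) - \overline\tau(y',\ell)|
\;\le\; C_0\,\beta^{s'(y,y')-2} \sum_{\ell=0}^{R(a)-1} \beta^{R(a)-\ell}
\;\le\; \frac{C_0}{\beta(1-\beta)}\,\beta^{s'(y,y')}
\;=\; \frac{C_0}{\beta(1-\beta)}\, d_\beta(y,y'),
\]
which proves the proposition with $C_{\overline\Psi} := C_0/(\beta(1-\beta))$. There is no substantive obstacle: the entire argument is bookkeeping once one has the uniform local Lipschitz regularity of $\overline\tau$ (quoted from~\cite{BBM19}) and the fact that $\mathcal P$ is built as a tower partition subordinate to the Gibbs--Markov partition $\alpha$ on the base.
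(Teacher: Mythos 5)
Your proof is correct and takes essentially the same route as the paper: both arguments reduce to the Lipschitz regularity of $\overline\tau$ on atoms of Young's partition (quoted from \cite{BBM19}), a comparison of the tower separation time $\overline s$ at level $\ell$ with the Gibbs--Markov separation time $s'$ on the base, and a geometric series. The only difference is presentational: you split $\overline\Psi=(\overline\kappa,\overline\tau)$ and discard the $\overline\kappa$--component separately, whereas the paper treats $\overline\Psi$ as a single function already Lipschitz with respect to $\beta^{\overline s(\cdot,\cdot)}$ (which is precisely equivalent, since $\overline\kappa$ being constant on $\mathcal P$--atoms and $\overline\tau$ being Lipschitz together give the Lipschitz property of $\overline\Psi$).
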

\begin{proof}
	Recall that $\overline\Psi$
	 is-Lipschitz with respect to Young's metric $\beta^{s(\cdot,\cdot)}$. Let us write $C'_{\overline\Psi}$
	 for its Lipschitz constant.
	Thus, for $a\in\alpha$ and $y,\,y'\in a$,
	\begin{align*}
	|\overline\Psi_R(y)-\overline\Psi_R(y')|
	& \le \sum_{\ell=0}^{R(a)-1}|\overline\Psi(y,\ell)-\overline\Psi(y',\ell)|\\
	 & \le C'_{\overline\Psi} \sum_{\ell=0}^{R(a)-1}
	\beta^{R(a)-\ell +\overline s(y,y')}	\le \frac{C'_{\overline\Psi}d_\beta(y,y')}{1-\beta}\, ,
%	\end{split}
	\end{align*}
since $s'(y,y')\le \overline s(y,y')-R(a)+1$.
	\end{proof}

Recall that 
\begin{equation}\label{FormuleQ}
Q(u)(y)=\sum_{a\in\alpha} \xi(y_a)u(y_a)\, ,\end{equation}
where $y_a$ is the preimage of $y$ under $F$ that belongs to $a$, and with $\xi=e^{g_R}$ with $g_R(y):=\sum_{k=0}^{R(y)-1}g(y,k)$
where $g$ satisfies~\eqref{holderg}. Thus
\begin{equation} \label{eq:GM}
0<\xi(y_a)=e^{g_R(y_a)}\le C\mu_{\overline Y}(a), \qquad
|\xi(y_a)-\xi(y'_a)|\le C\mu_{\overline Y}(a)d_\beta(y,y'),
\end{equation}
for all $y,y'\in {\overline Y}$, $a\in\alpha$, where we define 
$g_R$ as we have defined $\overline\Psi_R$ 
considering the function $g$
instead of $\overline\Psi$.\\
The next result extends~\cite[Proposition 5.1]{MPT} which was stated, in the context therein, for a function $u$ constant on elements of $\alpha$ (for which the local Lipschitz constant $K_u$ is null).

\begin{prop} \label{prop:Ruv} There exists $C>0$ such that
	\[
\Vert 	Q(u)\Vert_{\mathcal B_1({\overline Y})}\le C\Vert  |u|+K_u\Vert_{L^1(\mu_{\overline Y})}\, ,
	\]
for every $u\in L^1({\overline Y})$ such that $K_u\in L^1(\mu_{\overline Y})$
with
\[
\forall a\in\pi,\ \forall x\in a,\quad K_u(x):=\sup_{y,y'\in a}\frac{|u(y)-u(y')|}{\beta^{s'(y,y')}}\, .
\]
In particular, for all $v\in\mathcal B_1({\overline Y})$ (set of Lipschitz functions on ${\overline Y}$),
\begin{equation}\label{Quv}
\Vert Q(uv)\Vert_{\mathcal B_1({\overline Y})}\le 
C\Vert  |uv|+K_{uv}\Vert_{L^1(\mu_{\overline Y})}\le 
C\Vert  |u|+K_{u}\Vert_{L^1(\mu_{\overline Y})}\Vert v\Vert_{\mathcal B_1({\overline Y})}\, .
\end{equation}
\end{prop}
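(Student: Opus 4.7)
My plan is to follow the classical Gibbs--Markov distortion argument used for the transfer operator $Q$, but carefully tracking the local Lipschitz variation of $u$ on the atoms, since $u$ is no longer assumed constant on partition elements (which is the main new feature compared to \cite[Proposition 5.1]{MPT}). The key observation is that $K_u$ is constant on each atom $a\in\alpha$: if $y,y'\in a$, then $|u(y)-u(y')|\le K_u(a)\beta^{s'(y,y')}$, and in particular $|u(y)|\le |u(x)|+K_u(a)$ for any $x\in a$. Integrating against $\mu_{\overline Y}$ over $a$ gives $\mu_{\overline Y}(a)\sup_{a}|u|\le \int_a(|u|+K_u)\,d\mu_{\overline Y}$, which will be applied repeatedly.

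For the sup-norm bound on $Q(u)$, I use formula~\eqref{FormuleQ} together with the distortion estimate $\xi(y_a)\le C\mu_{\overline Y}(a)$ from~\eqref{eq:GM}:
\[
|Q(u)(y)|\le \sum_{a\in\alpha}\xi(y_a)|u(y_a)|\le C\sum_{a\in\alpha}\mu_{\overline Y}(a)|u(y_a)|\le C\sum_{a\in\alpha}\int_a(|u|+K_u)\,d\mu_{\overline Y}=C\||u|+K_u\|_{L^1(\mu_{\overline Y})}.
\]
For the Lipschitz part, fix $y,y'\in\overline Y$ with $y,y'$ in the same element of the common refinement (otherwise $d_\beta(y,y')=1$ and the estimate reduces to twice the sup-norm bound). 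I split
\[
Q(u)(y)-Q(u)(y')=\sum_{a\in\alpha}\bigl[\xi(y_a)-\xi(y'_a)\bigr]u(y_a)+\sum_{a\in\alpha}\xi(y'_a)\bigl[u(y_a)-u(y'_a)\bigr].
\]
The first sum is estimated via $|\xi(y_a)-\xi(y'_a)|\le C\mu_{\overline Y}(a)d_\beta(y,y')$ from~\eqref{eq:GM} and the same integration trick as above, yielding $\le Cd_\beta(y,y')\||u|+K_u\|_{L^1}$. For the second sum, I use the crucial Gibbs--Markov identity $s'(y_a,y'_a)=s'(y,y')+1$, so $|u(y_a)-u(y'_a)|\le K_u(a)\beta^{s'(y,y')+1}\le K_u(a)d_\beta(y,y')$; combined with $\xi(y'_a)\le C\mu_{\overline Y}(a)$ this gives $\le Cd_\beta(y,y')\|K_u\|_{L^1}$.

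For the consequence~\eqref{Quv}, it suffices to check that $\||uv|+K_{uv}\|_{L^1}\lesssim \||u|+K_u\|_{L^1}\|v\|_{\mathcal B_1(\overline Y)}$ and apply the first inequality to $uv$. Clearly $\||uv|\|_{L^1}\le\|v\|_\infty\|u\|_{L^1}\le\|v\|_{\mathcal B_1}\||u|+K_u\|_{L^1}$. For the local Lipschitz constant, for $y,y'\in a$,
\[
|u(y)v(y)-u(y')v(y')|\le |u(y)|\,[v]_\beta\beta^{s'(y,y')}+\|v\|_\infty K_u(a)\beta^{s'(y,y')},
\]
so $K_{uv}(a)\le [v]_\beta\sup_{a}|u|+\|v\|_\infty K_u(a)$. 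Integrating and using $\mu_{\overline Y}(a)\sup_{a}|u|\le\int_a(|u|+K_u)\,d\mu_{\overline Y}$ yields $\|K_{uv}\|_{L^1}\le\|v\|_{\mathcal B_1}(\||u|+K_u\|_{L^1}+\|K_u\|_{L^1})$, completing the estimate with a constant $3C$. The only real obstacle is the careful bookkeeping to route all local suprema of $u$ through the combination $|u|+K_u$; there are no new dynamical inputs beyond the Gibbs--Markov distortion in~\eqref{eq:GM} and the identity $s'(y_a,y'_a)=s'(y,y')+1$.
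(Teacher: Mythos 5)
Your proof is correct and follows essentially the same Gibbs--Markov distortion argument as the paper; if anything, you are slightly more careful, since you explicitly split off and bound the $[\xi(y_a)-\xi(y'_a)]u(y_a)$ term (which contributes an extra $C\,d_\beta(y,y')\,\||u|+K_u\|_{L^1}$ via the sup-norm bound), whereas the paper's displayed estimate passes directly to $\sum_a \mu_{\overline Y}(a)|u(y_a)-u(y'_a)|$ and leaves that contribution implicit. The key ingredients — \eqref{FormuleQ}, both parts of \eqref{eq:GM}, the separation-time relation $s'(y_a,y'_a)\ge s'(y,y')+1$, and the pointwise bound $\sup_a|u|\le |u(x)|+K_u(x)$ — match the paper's, and your derivation of \eqref{Quv} is the standard product-rule computation the paper leaves unstated.
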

\begin{pfof}{Proposition~\ref{prop:Ruv}}
	Let $y,y'\in {\overline Y}$. For $a\in\alpha$, we write $y_a,y_a'\in a$ for the respective preimages of $y,y'$ under $F$.
	It follows from~\eqref{FormuleQ} and from the first part of~\eqref{eq:GM} that
	\[
	\left\Vert Q(u)\right\Vert_\infty
	\ll \sum_{a\in\alpha} \mu_{\overline Y}(a)\,\sup_a|u| \le \left\Vert |u|+K_u\right\Vert_{L^1(\mu_{\overline Y})}\, ,
	\]
since, for all $y,x\in a\in\alpha$, $|u(y)|\le |u(x)|+K_u(x)$.\\
Next, let $a\in\alpha$ and $y,y'\in a$, using the second part of~\eqref{eq:GM}, we obtain
	\begin{align*}
\left| Q(u)(y)-Q(u)(y')\right| & \le \sum_{a\in\alpha}\mu_{\overline Y}(a)\,\left|u(y_a)-u(y'_a)\right|\\ 
 & \le \sum_{a\in\alpha}\mu_{\overline Y}(a)K_u(a)\beta^{s'(y_a,y'_a)}\\
 & \le \sum_{a\in\alpha}\mu_{\overline Y}(a)K_u(a)\beta^{s'(y,y')+1}\, ,
	\end{align*}
which ends the proof.
\end{pfof}

For $z\in\C$ with $|z|\le1$ and $t\in\R^d$, the operator
$\widehat Q(z,t)$ formally defined in~\eqref{def:hatQ} defines
an operator on $L^1(\mu_{\overline Y})$ and can be decomposed in
$\hQ(z,t)
=\sum_{n=1}^\infty z^n Q_{t,n}
$, where we set 
\[
Q_{t,n}:=Q\left(1_{\{R=n\}}e^{i\langle t,\overline \Psi_R\rangle}\cdot\right)\, .
\]
The next result replaces~\cite[Proposition 5.2]{MPT}. The conclusion is the same, but, in our context, we have to deal with $K_u$, so we include entirely its proof.
\begin{prop} \label{prop:Rt}
	There exists $\delta_0>0$ such that when
	regarded as  functions with values in the set of continuous linear operators on $\cB_1({\overline Y})$,
	\begin{itemize}
		\item[(a)] $z\mapsto \hQ(z,t)$
		is analytic on $B_{1+\delta_0}(0)$ for all $t\in \R^d$;
		\item[(b)] $(z,t)\mapsto (\partial_z^k\hQ)(z,t)$ is $C^1$ on $B_{1+\delta_0}(0)\times\R^d$ for $k=0,1,2$;
		\item[(d)]
		$z\mapsto (\partial_j\hQ)(z,t)$ is $C^1$ on $B_{1+\delta_0}(0)$ uniformly in $t\in B_1(0)$ for $j=1,\dots,d$.
	\end{itemize}
\end{prop}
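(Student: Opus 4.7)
The plan is to obtain all three statements by bounding $\|Q_{t,n}\|_{\mathcal B_1(\overline Y)\to\mathcal B_1(\overline Y)}$ (and its derivatives in $t$) by an exponentially decaying sequence in $n$, uniformly over $t$ in bounded sets. Once such bounds are in hand, the series $\hat Q(z,t)=\sum_{n\ge 1}z^n Q_{t,n}$ and all of its derivatives we care about will converge absolutely in operator norm on a disk $B_{1+\delta_0}(0)$ for some $\delta_0>0$, and the required analyticity/$C^1$-regularity statements follow from the standard theorem on analytic/$C^1$ operator-valued series.

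To estimate $\|Q_{t,n}\|$, I would apply~\eqref{Quv} from Proposition~\ref{prop:Ruv} with the multiplier $u_{t,n}:=\mathbf 1_{\{R=n\}}e^{i\langle t,\overline\Psi_R\rangle}$. The $L^1$-norm is trivial: $|u_{t,n}|\le\mathbf 1_{\{R=n\}}$ gives $\|u_{t,n}\|_{L^1}\le\mu_{\overline Y}(R=n)$, which is exponentially small since $R$ has exponential tails on $\overline Y$. For the local Lipschitz constant $K_{u_{t,n}}$, observe that $R$ is constant on each $a\in\alpha$, so on such an $a$ we have
\[
|u_{t,n}(y)-u_{t,n}(y')|\le |t|\,|\overline\Psi_R(y)-\overline\Psi_R(y')|\le C_{\overline\Psi}\,|t|\,d_\beta(y,y')\,\mathbf 1_{\{R=n\}}
\]
by Proposition~\ref{prop:g}. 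Hence $K_{u_{t,n}}\le C_{\overline\Psi}|t|\mathbf 1_{\{R=n\}}$, so $\||u_{t,n}|+K_{u_{t,n}}\|_{L^1}\ll (1+|t|)\mu_{\overline Y}(R=n)$, and Proposition~\ref{prop:Ruv} yields
\[
\|Q_{t,n}\|_{\mathcal B_1(\overline Y)\to\mathcal B_1(\overline Y)}\ll (1+|t|)\,e^{-\alpha n}
\]
for some $\alpha>0$. Choosing $\delta_0<e^\alpha-1$ proves (a).

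For (b) and (d), differentiating termwise gives $\partial_j Q_{t,n}=Q(\mathbf 1_{\{R=n\}}\,i\overline\Psi_R^{(j)}\,e^{i\langle t,\overline\Psi_R\rangle}\cdot)$, and the higher $z$-derivatives only introduce factors of $n(n-1)\cdots$, which are absorbed in any exponential. The key task is to bound $\||v_{t,n}|+K_{v_{t,n}}\|_{L^1}$ for $v_{t,n}=\mathbf 1_{\{R=n\}}\overline\Psi_R^{(j)}e^{i\langle t,\overline\Psi_R\rangle}$ (and similarly for second $t$-derivatives, which bring a factor $(\overline\Psi_R^{(j)})^2$). Combining H\"older with the exponential tail of $R$, $\int_{\{R=n\}}|\overline\Psi_R|^k\,d\mu_{\overline Y}\le \|\overline\Psi_R\|_{L^p}^k\,\mu_{\overline Y}(R=n)^{1-k/p}$ which still decays exponentially thanks to $\|\overline\Psi_R\|_{L^p}<\infty$ for all $p<2$ (from $\mu_{\overline Y}(|\overline\Psi|_R>n)=O(n^{-2})$) and is used for $k=1,2$. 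For the local Lipschitz piece we again use Proposition~\ref{prop:g} plus boundedness of $\overline\Psi_R$ on partition elements modulo a Lipschitz error, giving a bound $K_{v_{t,n}}\le C(1+|t|)(|\overline\Psi_R|+1)\mathbf 1_{\{R=n\}}$ which integrates to an exponentially decaying quantity. Differentiability in $t$ of each $Q_{t,n}$ follows from dominated convergence in the Banach-space sense, and the uniform exponential bounds make the differentiated series converge in $\mathcal L(\mathcal B_1(\overline Y))$ uniformly on compact $t$-sets, yielding (b) and (d).

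The main obstacle, and the reason this requires more care than the cell-change case of~\cite[Proposition 5.2]{MPT}, is that $\overline\Psi$ is \emph{not} constant on partition elements $a\in\alpha$: the flight-time component $\overline\tau$ is only locally Lipschitz with respect to the separation metric. This forces us to keep track of the genuine Lipschitz constant $K_u$ in Proposition~\ref{prop:Ruv} at every step (where in~\cite{MPT} this term vanished identically), and also to control the $L^1$-norms of $\overline\Psi_R\mathbf 1_{\{R=n\}}$ and $(\overline\Psi_R)^2\mathbf 1_{\{R=n\}}$ on individual return level sets, since $\overline\Psi_R$ is unbounded. Both controls are delivered by Proposition~\ref{prop:g} together with the H\"older argument exploiting $\overline\Psi_R\in L^p$ for $p<2$ and the exponential tail of $R$.
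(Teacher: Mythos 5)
Your proof follows essentially the same route as the paper's: reduce to exponential-in-$n$ bounds on $\|Q_{t,n}\|_{\mathcal L(\cB_1(\overline Y))}$ and $\|\partial_j Q_{t,n}\|_{\mathcal L(\cB_1(\overline Y))}$ via Proposition~\ref{prop:Ruv}, control the local Lipschitz constants of $u_{t,n}$ and $\partial_j u_{t,n}$ with Proposition~\ref{prop:g} (noting $R$ is constant on partition elements), and use H\"older with the exponential tail of $R$ together with $\overline\Psi_R\in L^p$ for $p<2$ to sum the series. One small caveat: your aside about second $t$-derivatives and the H\"older bound for $k=2$ does not actually hold, since $\overline\Psi_R\notin L^2$ forces $1-k/p<0$ there; this is immaterial because the proposition only asserts $C^1$ regularity in $t$ (alongside extra $z$-derivatives, which cost only polynomial factors in $n$), so no second $t$-derivative ever needs to be controlled.
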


\begin{proof}
	It suffices to show that there exist $a>0$, $C>0$ such that
	\[
	\|Q_{t,n}\|_{{\cB_1({\overline Y})}}\le C(|t|+1)e^{-an},  \qquad
	\|\partial_jQ_{t,n}\|_{{\cB_1({\overline Y})}}\le C(|t|+1)e^{-an}, 
	\]
	for all $t\in\R^d$, $j=1,\dots,d$, $n\ge1$.
	Since $R$ is constant on partition elements, it follows from Proposition~\ref{prop:Ruv}(a) that
	\[
	\left\Vert Q_{t,n}\right\Vert_{\mathcal L(\mathcal B_1({\overline Y}))}\ll
	\left\Vert 1_{\{R=n\}}(1+K_{e^{i\langle t,\overline\Psi_R\rangle}}) \right\Vert_{L^1(\mu_{\overline Y})}\, ,
	\]
and that
	\begin{align*}
\left\Vert	\partial_j Q_{t,n}\right\Vert_{\mathcal L(\mathcal B_1({\overline Y}))}&=\left\Vert iQ\left(1_{\{R=n\}}(\overline\Psi_R)_j e^{i\langle t, \overline\Psi_R\rangle}\right)\right\Vert_{\mathcal L(\mathcal B_1({\overline Y}))}\\
&\ll \left\Vert 1_{\{R=n\}}\left(|\overline\Psi_R|+ 
K_{(\overline\Psi_R)_je^{i\langle t,\overline\Psi_R\rangle}}\right) \right\Vert_{L^1(\mu_{\overline Y})}\, .
	\end{align*}
But it follows from Proposition~\ref{prop:g} that	$K_{e^{i\langle t,\overline\Psi_R\rangle}}\le C_{\overline\Psi} |t|$
and that 
\begin{equation}\label{Kpsiexp}
K_{(\overline\Psi_R)_je^{i\langle t,\overline\Psi_R\rangle}}
\le C_{\overline\Psi}(1+|t|\, |\overline\Psi_R|)\, .
\end{equation}
Therefore
\[
	\left\Vert
Q_{t,n}\right\Vert_{\mathcal L(\mathcal B_1({\overline Y}))}
\ll  (1+|t|)\mu_{\overline Y}(R=n)\, ,
\]
and
\[
\left\Vert	\partial_j Q_{t,n}\right\Vert_{\mathcal L(\mathcal B_1({\overline Y}))}\ll (1+|t|)\left\Vert 1_{\{R=n\}}(1+|\overline\Psi_R|) \right\Vert_{L^1(\mu_{\overline Y})}\, ,
\]
We complete the proof by noticing that, since $\overline\Psi\in L^r({\overline Y})$ for all $r<2$ and $R$ has exponential tails,
there exists $a>0$ such that
$\|1_{\{R=n\}} (1+\overline\Psi_R)\|_1\ll e^{-an}$.
\end{proof}

For $z\in\C$ with $|z|\le1$ and $t\in\R^d$, define
\[
\hA(z,t)  :L^1({\overline Y})\to L^1(\overline\Delta), \qquad \hA(z,t)(v)
=\sum_{n=1}^\infty z^n A_{t,n}(v)
\]
where 
\(
A_{t,n}(v)(y,\ell) =1_{\{\ell=n\}}P_t^n(v)(y,\ell)
=1_{\{\ell=n\}} e^{i\langle t,\overline\Psi_n(y,0)\rangle}v(y).
\)

\begin{prop} \label{prop:A}
	There exists $\delta_0>0$ such that 
	when regarded as functions with values in the set of continuous linear operators from $L^\infty({\overline Y})$ to $L^1(\overline\Delta)$,
	\begin{itemize}
		\item[(a)] $z\mapsto \hA(z,t)$
		is analytic on $B_{1+\delta_0}$ $t\in\R^d$;
		\item[(b)] $(z,t)\mapsto (\partial_z\hA)(z,t)$ is $C^1$ on $B_{1+\delta_0}(0)\times\R^d$.
	\end{itemize}
\end{prop}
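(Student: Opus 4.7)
The proof will follow the same structural template as~\cite[Proposition 5.3]{MPT} (stated there for $\kappa$): expand
\[
\widehat A(z,t)=\sum_{n\ge 1}z^n A_{t,n},
\]
estimate $\|A_{t,n}\|_{L^\infty(\overline Y)\to L^1(\overline\Delta)}$ and those of its first and second partial derivatives in $(z,t)$ with sufficient decay in $n$, and deduce from absolute, uniform convergence of the differentiated series both analyticity in $z$ and $C^1$ regularity of $(z,t)\mapsto\partial_z\widehat A(z,t)$. Since each individual $A_{t,n}$ is analytic in $z$ (polynomial) and analytic in $t$ (a multiplication by $e^{i\langle t,\overline\Psi_n\rangle}$), this will immediately yield (a) and (b).

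First, I would establish the basic exponential bound. Using that $\mu_{\overline\Delta}(B\times\{n\})\le C\mu_{\overline Y}(B\cap\{R>n\})$ together with the standard exponential tail $\mu_{\overline Y}(R>n)\le Ce^{-an}$ of the return time for Sinai billiards, for any $v\in L^\infty(\overline Y)$,
\[
\|A_{t,n}(v)\|_{L^1(\overline\Delta)}=\int_{\{\ell=n\}}|v(y)|\,d\mu_{\overline\Delta}\ll \mu_{\overline Y}(R>n)\,\|v\|_\infty\ll e^{-an}\|v\|_\infty,
\]
uniformly in $t\in\R^d$. The key step, which is the only place where the non-constancy of $\overline\Psi$ on partition elements intervenes, is the estimate on $\partial_{t_j}A_{t,n}$. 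Formally,
\[
(\partial_{t_j}A_{t,n})(v)(y,\ell)=1_{\{\ell=n\}}\,i(\overline\Psi_n(y,0))_j\,e^{i\langle t,\overline\Psi_n(y,0)\rangle}v(y),
\]
so that $\|\partial_{t_j}A_{t,n}(v)\|_{L^1(\overline\Delta)}\ll \|v\|_\infty\int_{\{R>n\}}|\overline\Psi_n(y,0)|\,d\mu_{\overline Y}$. Fix $p\in(1,2)$, which is admissible because by the tail bound $\mu_{\overline\Delta}(|\overline\Psi|>u)\sim cu^{-2}$ one has $\overline\Psi\in L^p(\mu_{\overline\Delta})$; let $p'=p/(p-1)>2$ be its Hölder conjugate. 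Writing $|\overline\Psi_n(y,0)|\le\sum_{k=0}^{n-1}|\overline\Psi(y,k)|$ and applying Hölder term by term,
\[
\int_{\{R>n\}}|\overline\Psi(y,k)|\,d\mu_{\overline Y}\le \mu_{\overline Y}(R>n)^{1/p'}\,\|\overline\Psi(\cdot,k)1_{\{R>k\}}\|_{L^p(\mu_{\overline Y})}\ll e^{-an/p'}\|\overline\Psi\|_{L^p(\mu_{\overline\Delta})}
\]
uniformly in $k$, where the last inequality follows because $\|\overline\Psi(\cdot,k)1_{\{R>k\}}\|_{L^p(\mu_{\overline Y})}^p=\mu_{\overline Y}(R)\int 1_{\{\ell=k\}}|\overline\Psi|^p\,d\mu_{\overline\Delta}\le\mu_{\overline Y}(R)\|\overline\Psi\|_{L^p(\mu_{\overline\Delta})}^p$. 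Summing over $k=0,\dots,n-1$ yields
\[
\|\partial_{t_j}A_{t,n}\|_{L^\infty(\overline Y)\to L^1(\overline\Delta)}\ll n\,e^{-an/p'}.
\]

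Finally, I would assemble the pieces. Choose $\delta_0>0$ so small that $(1+\delta_0)e^{-a/p'}<1$; then each of the series
\[
\sum_n z^n A_{t,n},\quad\sum_n nz^{n-1}A_{t,n},\quad\sum_n n(n-1)z^{n-2}A_{t,n},\quad\sum_n z^n\partial_{t_j}A_{t,n},\quad\sum_n nz^{n-1}\partial_{t_j}A_{t,n}
\]
converges absolutely in the operator norm $\|\cdot\|_{L^\infty(\overline Y)\to L^1(\overline\Delta)}$, uniformly on $\overline{B_{1+\delta_0}(0)}\times K$ for any compact $K\subset\R^d$. Each individual summand is continuous in $(z,t)$ and holomorphic in $z$; uniform convergence therefore gives (a) the analyticity of $z\mapsto\widehat A(z,t)$ on $B_{1+\delta_0}(0)$, and (b) the existence and continuity of $\partial_z^2\widehat A$ and $\partial_{t_j}\partial_z\widehat A$ on $B_{1+\delta_0}(0)\times\R^d$, i.e.\ $(z,t)\mapsto\partial_z\widehat A(z,t)$ is $C^1$.

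The main obstacle, compared to the analogue in~\cite[Proposition 5.3]{MPT}, is precisely the non-constancy of $\overline\Psi$ on Young atoms combined with the lack of $L^2$ integrability: one cannot read off $\|\partial_{t_j}A_{t,n}(v)\|$ by enumerating finitely many values as for $\kappa$. The Hölder split above handles this, and fortunately only the \emph{first} derivative of $\widehat A$ in $t$ is needed (since only $\partial_z\widehat A$ is required to be $C^1$), so no bound on $\int |\overline\Psi_n|^2$ — which would be genuinely delicate because $\overline\Psi\notin L^2(\mu_{\overline\Delta})$ — is ever required.
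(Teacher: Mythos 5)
Your proof is correct and follows essentially the same route as the paper's, which simply cites~\cite[Proof of Proposition~5.3]{MPT} and remarks that it reduces to the H\"older inequality together with the exponential decay of $\Vert 1_{\{R>n\}}\,|\overline\Psi|_R\Vert_{L^1(\mu_{\overline Y})}$. The only stylistic difference is that you apply H\"older term-by-term in the sum $|\overline\Psi_n(y,0)|\le\sum_{k=0}^{n-1}|\overline\Psi(y,k)|$, picking up a harmless factor of $n$ in $n\,e^{-an/p'}$, rather than bounding $|\overline\Psi_n(y,0)|\le|\overline\Psi|_R(y)$ on $\{R>n\}$ and applying H\"older once (using $|\overline\Psi|_R\in L^p$ for $p<2$); both give the required exponential decay and the assembly of (a) and (b) from uniform convergence of the differentiated series is standard and correctly carried out.
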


\begin{proof} 
The proof goes  word for word as \cite[Proof of Proposition~5.3]{MPT} since it just uses the H\"older inequality combined with the fact that $\Vert 1_{R>n}\overline\Psi_R\Vert_{L^1({\overline Y})}$ decays exponentially fast in $n$ as $n\rightarrow +\infty$.
\end{proof}

For $z\in\C$ with $|z|\le1$ and $t\in\R^d$, define
\[
\hB(z,t)  :L^1(\overline\Delta)\to L^1({\overline Y}), \qquad \hB(z,t)(v)
=\sum_{n=1}^\infty z^n B_{t,n}(v)
\]
where 
\[
B_{t,n}(v) =1_{\overline Y}P_t^n(1_{D_n}v), \qquad
D_n=\{(y,R(y)-n):y\in \overline Y,\,R(y)>n\}.
\]

\begin{prop} \label{prop:B}
	There exists $\delta_0>0$ such that 
	when regarded a  functions with values in the set of continuous linear operators from $\cB_0$ to $\cB_1({\overline Y})$,
	\begin{itemize}
		\item[(a)] $z\mapsto \hB(z,t)$
		is analytic on $B_{1+\delta_0}(0)$ for all $t\in\R^d$;
		\item[(b)] $(z,t)\mapsto (\partial_z\hB)(z,t)$ is $C^1$ on $B_{1+\delta_0}(0)\times\R^d$.
	\end{itemize}
\end{prop}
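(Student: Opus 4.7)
The plan is to mimic the strategy used for Propositions~\ref{prop:Rt} and~\ref{prop:A}, reducing $B_{t,n}$ to a sum over Gibbs--Markov atoms and extracting uniform exponential bounds in $n$. Concretely, for $y\in \overline Y$, unfolding the definition $B_{t,n}(v)=1_{\overline Y}P_t^n(1_{D_n}v)$ and using that a preimage of $(y,0)$ under $f_{\overline\Delta}^n$ in $D_n$ is exactly a point $(y_a,R(a)-n)$ with $F(y_a)=y$ and $R(a)>n$, one obtains
\[
B_{t,n}(v)(y)=\sum_{a\in\alpha,\,R(a)>n}\xi(y_a)\,e^{i\langle t,\overline\Psi_n(y_a,R(a)-n)\rangle}\,v(y_a,R(a)-n).
\]
This has exactly the Gibbs--Markov structure of~\eqref{FormuleQ}, so the estimates of Proposition~\ref{prop:Ruv} will apply once I control the partition-wise Lipschitz constant of $u_a(y_a):=e^{i\langle t,\overline\Psi_n(y_a,R(a)-n)\rangle}v(y_a,R(a)-n)$.

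The first step is the sup bound: since $v\in\cB_0$ is uniformly bounded by $\|v\|_{\cB_0}$ and $\xi(y_a)\le C\mu_{\overline Y}(a)$, summing over $a$ gives $\|B_{t,n}(v)\|_\infty\le C\|v\|_{\cB_0}\mu_{\overline Y}(R>n)\ll e^{-an}\|v\|_{\cB_0}$ for some $a>0$ by the exponential tail of $R$. For the $\cB_1(\overline Y)$ Lipschitz estimate, I need $K_{u_a}$ on each atom $a$: the factor $v(\cdot,R(a)-n)$ is Lipschitz on $a$ in Young's ultrametric (inherited from $v\in\cB_0$) with local Lipschitz constant controlled by $\|v\|_{\cB_0}$, while the Fourier factor contributes at most $|t|$ times the variation of $\overline\Psi_n(\cdot,R(a)-n)$ over $a$. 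Combining with~\eqref{Quv} yields $\|B_{t,n}\|_{\mathcal L(\cB_0,\cB_1({\overline Y}))}\le C(1+|t|)e^{-an}$, from which analyticity of $z\mapsto\hB(z,t)$ on $B_{1+\delta_0}(0)$ for any $\delta_0<e^a-1$ is immediate, proving (a).

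For (b), the formal derivative $\partial_j B_{t,n}(v)=i\cdot 1_{\overline Y}P_t^n(1_{D_n}(\overline\Psi_n)_j v)$ brings in an extra factor of $(\overline\Psi_n)_j$; since $\overline\Psi\in L^r$ for every $r<2$ and $R$ has exponential tails, $\|1_{\{R>n\}}\overline\Psi_n\|_{L^1}$ still decays exponentially in $n$, and the analogue of~\eqref{Kpsiexp} again costs only a factor $(1+|t|)$ in the Lipschitz estimate. Thus $\|\partial_j B_{t,n}\|_{\mathcal L(\cB_0,\cB_1({\overline Y}))}\le C(1+|t|)e^{-an}$ and joint $C^1$ dependence in $(z,t)$ of $\partial_z\hB$ follows from uniform absolute convergence of the differentiated series on compact subsets of $B_{1+\delta_0}(0)\times\R^d$. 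The one genuinely new point compared to the corresponding argument in~\cite{MPT} is that $\overline\Psi$ is not constant on atoms, so one must verify that the local oscillation of $\overline\Psi_n(\cdot,R(a)-n)$ on each atom $a$ is bounded by a constant $C_{\overline\Psi}$ independent of $n$ and $a$; but this is exactly the telescoping argument of Proposition~\ref{prop:g}, applied here to the length-$n$ sum climbing from level $R(a)-n$ to the top of the $a$-column, since consecutive iterates stay in the same Young cell and the Lipschitz factor $\beta^{R(a)-\ell}$ is summable.
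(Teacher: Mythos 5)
Your proposal is correct and follows essentially the same route as the paper: both reduce $B_{t,n}$ to a Gibbs--Markov sum (equivalently $Q(1_{\{R>n\}}v_{t,n})$), invoke Proposition~\ref{prop:Ruv} to pass to a bound on $|v_{t,n}|+K_{v_{t,n}}$, control the partial-sum oscillation of $\overline\Psi_n(\cdot,R(a)-n)$ via the telescoping estimate of Proposition~\ref{prop:g} together with the separation-time comparison $\overline s((y,R(a)-n),(y',R(a)-n))\ge s'(y,y')$, and close using the exponential tail of $R$ and $|\overline\Psi|_R\in L^{2-\epsilon}$.
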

The proof is analogous to the one of~\cite[Proposition 5.4]{MPT}, but, again, we have have to deal with the presence of $K_u$ in Proposition~\ref{prop:Ruv}. So we detail this proof.

\begin{pfof}{Proposition~\ref{prop:B}}
%	Let $\|\cdot\|$ denote $\|\cdot\|_{\cB_0\mapsto \cB_1({\overline Y})}$.
We observe that
	$B_{t,n}v=Q(1_{\{R>n\}}v_{t,n})$ where
	\[
	v_{t,n}(y):=e^{i\langle t,\overline\Psi_n(y,R(y)-n)\rangle}
	v(y,R(y)-n).
	\]
Since $R$ is constant on partition elements, it follows from Proposition~\ref{prop:Ruv} that
	\begin{equation}\label{BBB1}
	\left\Vert B_{t,n}(v)\right\Vert_{\mathcal B_1({\overline Y})}\ll \left\Vert 1_{\{R>n\}}(|v_{t,n} |+K_{v_{t,n}})\right\Vert_{L^1(\mu_{\overline Y})}\le (|t|+1)\|1_{\{R>n\}}\|_{L^1(\mu_{\overline Y})}
	\end{equation}
	and
	\begin{equation}\label{dBBB1}
	\|\partial_j B_{t,n}(v)\|_{\mathcal B_1({\overline Y})}\ll \left\Vert 1_{\{R>n\}}(|\partial_jv_{t,n} |+K_{\partial_j v_{t,n}})\right\Vert_{L^1(\mu_{\overline Y})} \, .
	\end{equation}
But on $\{R>n\}$,
\begin{equation}\label{BBB1bis}
K_{v_{t,n}}\le (1+|t|C_{\overline\Psi})\Vert v\Vert_{\mathcal B_0}\quad\mbox{and}\quad 
K_{\partial_j v_{t,n}}\le |\overline\Psi|_R K_{v_{t,n}} + C_{\overline\Psi}\Vert v\Vert_\infty
,
\end{equation}
where $C_{\overline\Psi}$ is the constant appearing in Proposition~\ref{prop:g}. 
Indeed, for any $a\in\alpha$ and any $y,y'\in a$, writing $[v]_{\mathcal B_0}$ for the Lipschitz constant of $v$ and using the fact that
\begin{align*}
\overline s((y,R(a)-n),(y',R(a)-n))&=\overline s((F(y),0),(F(y'),0))+n\ge s'(F(y),F(y'))+n\\
&=s'(y,y')+n-1\ge s'(y,y')\, ,
\end{align*}
we observe that
\begin{align*}
&\left|e^{i\langle t, \overline\Psi_n(y,R(y)-n))\rangle}v(y,R(y)-n)-e^{i\langle t, \overline\Psi_n(y',R(y')-n))\rangle}v(y',R(y')-n)\right|\\
&\le \left(|t| C_{\overline\Psi}\Vert v\Vert_\infty+[v]_{\mathcal B_0}\right) \beta^{s'(y,y')}\, ,
\end{align*}
which ends the proof of the first part of~\eqref{BBB1bis}. The second
comes from the standard bound of the Lipschitz constant of a product. 
It follows from~\eqref{BBB1},~\eqref{dBBB1} and~\eqref{BBB1bis} that
\begin{equation*}
	\left\Vert B_{t,n}(v)\right\Vert_{\mathcal B_1({\overline Y})}\ll (1+|t|)\|1_{\{R>n\}}\|_{L^1(\mu_{\overline Y})}\Vert v\Vert_{\mathcal B_0}
\end{equation*}
and
\begin{equation*}
\|\partial_j B_{t,n}(v)\|_{\mathcal B_1({\overline Y})}\ll (1+|t|)\left\Vert 1_{\{R>n\}}|\overline\Psi|_R\right\Vert_{L^1(\mu_{\overline Y})} \Vert v\Vert_{\mathcal B_0}\, .
\end{equation*}
The result follows from the fact that $\mu_{\overline Y}(R>n)$ decays exponentially fast in $n$ as $n\rightarrow +\infty$ and from 
the fact that $|\overline\Psi|_R$ is $L^{2-\epsilon}$ for any $\epsilon>0$.
\end{pfof}

For $z\in\C$ with $|z|\le1$ and $t\in\R^d$, define
\[
\hE(z,t)  :\cB_0\to L^1(\overline\Delta), \qquad \hE(z,t)(v)
=\sum_{n=1}^\infty z^n E_{t,n}(v)
\]
where 
\(
E_{t,n}(v)(y,\ell) =1_{\{\ell>n\}}P_t^n(v)(y,\ell).
\)

\begin{prop} \label{prop:E}
	There exists $\delta_0>0$ such that 
	regarded as operators from $\cB_0$ to $L^1(\overline\Delta)$,
	\begin{itemize}
		\item[(a)] $z\mapsto \hE(z,t)$
		is analytic on $B_{1+\delta_0}(0)$ for all $t\in\R^d$;
		\item[(b)] $(z,t)\mapsto \hE(z,t)$ is $C^0$ on $B_{1+\delta_0}(0)\times\R^d$;
	\end{itemize}
\end{prop}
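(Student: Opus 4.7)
The plan is to exploit the fact that, on levels strictly above the base, the dynamics of $f_{\overline\Delta}$ is simply the upward shift $(y,j)\mapsto(y,j+1)$, so the transfer operator $P_t^n$ acts on the indicator $\{\ell>n\}$ in an explicit, branching-free way. Concretely, for $(y,\ell)\in\overline\Delta$ with $\ell>n$, the unique preimage of $(y,\ell)$ under $f_{\overline\Delta}^n$ lies at $(y,\ell-n)$, with trivial Jacobian. Hence
\[
E_{t,n}(v)(y,\ell)=\mathbf 1_{\{\ell>n\}}\,e^{i\langle t,\overline\Psi_n(y,\ell-n)\rangle}\,v(y,\ell-n),
\]
from which the pointwise estimate $|E_{t,n}(v)(y,\ell)|\le \mathbf 1_{\{\ell>n\}}\|v\|_\infty\le \mathbf 1_{\{\ell>n\}}\|v\|_{\cB_0}$ follows trivially, uniformly in $t\in\R^d$.

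First I would integrate this bound against $\mu_{\overline\Delta}$. Since $\mu_{\overline\Delta}$ is essentially $(\mu_{\overline Y}\otimes\mathrm{counting})/\mu_{\overline Y}(R)$ restricted to $\{(y,j):j<R(y)\}$, one has $\mu_{\overline\Delta}(\ell>n)=(\mu_{\overline Y}(R))^{-1}\sum_{k>n}\mu_{\overline Y}(R>k)$, and the exponential tail of the return time $R$ (a standard and repeatedly invoked ingredient in this paper) yields $\mu_{\overline\Delta}(\ell>n)\ll e^{-an}$ for some $a>0$. Consequently,
\[
\|E_{t,n}(v)\|_{L^1(\overline\Delta)}\le \|v\|_{\cB_0}\,\mu_{\overline\Delta}(\ell>n)\ll e^{-an}\|v\|_{\cB_0},
\]
uniformly in $t\in\R^d$ and $v\in\cB_0$.

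Given this, part (a) is immediate: choose $\delta_0>0$ with $(1+\delta_0)e^{-a}<1$; then the series $\sum_{n\ge 1}z^n E_{t,n}$ converges in the operator norm $\cB_0\to L^1(\overline\Delta)$ uniformly on $\overline{B_{1+\delta_0/2}(0)}$, so $z\mapsto \hE(z,t)$ is analytic on $B_{1+\delta_0}(0)$ for every $t$. For part (b), the explicit formula shows that for each fixed $v\in\cB_0$ and each $n$, the map $t\mapsto E_{t,n}(v)\in L^1(\overline\Delta)$ is continuous (indeed, it is the pointwise product of the continuous, uniformly bounded factor $e^{i\langle t,\overline\Psi_n\rangle}$ with a fixed $L^1$ function, so dominated convergence applies). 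Joint continuity in $(z,t)$ on $B_{1+\delta_0}(0)\times\R^d$ then follows from the uniform exponential bound above, which makes the series normally convergent and allows us to pass continuity through the sum.

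The main (and only) obstacle is really just bookkeeping: carefully identifying the explicit form of $P_t^n$ above the base and verifying that $\mu_{\overline\Delta}(\ell>n)$ decays exponentially. Once these are in hand, there is no need to invoke the Gibbs-Markov transfer operator $Q$, nor to control local Lipschitz seminorms as in Propositions~\ref{prop:Ruv}--\ref{prop:B}, since the target space $L^1(\overline\Delta)$ is weaker than $\cB_1(\overline Y)$ and the factor $\mathbf 1_{\{\ell>n\}}$ never triggers the return mechanism. Thus this result is genuinely simpler than its companions \ref{prop:Rt}, \ref{prop:A}, \ref{prop:B}.
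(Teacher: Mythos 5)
Your proof is correct and takes essentially the same approach as the paper's (which simply defers to the corresponding proposition in~\cite{MPT}): you observe that above the base the transfer operator is the pure shift with unit Jacobian, giving the explicit formula for $E_{t,n}$, and then combine the uniform bound $|E_{t,n}(v)|\le \mathbf 1_{\{\ell>n\}}\|v\|_\infty$ with the exponential decay of $\mu_{\overline\Delta}(\ell>n)$ (equivalently, of $\|R\mathbf 1_{\{R>n\}}\|_{L^1(\mu_{\overline Y})}$, which the paper cites). The only cosmetic difference is that you bypass the H\"older step the paper alludes to by bounding $|v|$ directly by $\|v\|_{\cB_0}$, which is legitimate since $\cB_0\hookrightarrow L^\infty$; your dominated-convergence argument for the operator-norm continuity in $(z,t)$ and the normal convergence of the series are also exactly what is needed.
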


\begin{proof}
The proof goes word for word as~\cite[Proof of Proposition~5.5]{MPT} since it just uses the H\"older inequality 
combined with the fact that $\Vert R 1_{R>n}\Vert_{L^1({\overline Y})}$
decays exponentially fast in $n$ as $n\rightarrow +\infty$.
\end{proof}

\subsubsection{Further estimates}
The results contained in this subsection are the analogue of~\cite[Proposition 5.6--5.9]{MPT}. Since, we will have to deal with $K_u$ coming from~\eqref{Quv}, some modifications are required in these proofs. We detail the parts corresponding to these modifications and indicate
which parts of the proofs of~\cite{MPT} remain the same.

\begin{prop} \label{prop:furtherR}
	There exist $C>0$, $\delta_0>0$ and $b>0$ such that
	\[
	\|\partial_j\partial_z \hQ(z,t+h)
	-\partial_j\partial_z \hQ(z,t)\|_{{\cB_1({\overline Y})}}
	\le C |h| L_h^2
	%(1+|t|
	%L_h)
	\big\{1
	+ |h|^{-b\log |z|}L_h(|z|-1)\big\},
	\]
	for all
	$t,h\in B_{\delta_0}(0)$, all $z\in\C$ with $1\le|z|\le 1+\delta$,
	and all $j=1,\dots,d$.
\end{prop}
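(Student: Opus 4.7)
The statement is the direct analogue, for the perturbation $\widehat Q(z,t)$ by the pair $\overline\Psi=(\overline\kappa,\overline\tau)$, of the estimate obtained in~\cite{MPT} for the pure cell-change perturbation. The new feature is that $\overline\Psi$ is no longer constant on the partition elements of the Young quotient tower, only locally Lipschitz in the sense of Proposition~\ref{prop:g}. The plan is to follow the scheme of~\cite{MPT} while systematically using Proposition~\ref{prop:Ruv} (which allows a nontrivial multiplier local Lipschitz seminorm~$K_u$) in place of its constant-on-atoms analogue in~\cite{MPT}.

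A direct computation yields
\begin{equation*}
 (\partial_j\partial_z \widehat Q(z,t+h)-\partial_j\partial_z \widehat Q(z,t))(v)=Q(u_{z,t,h}\,v),\qquad
u_{z,t,h}= i R\,z^{R-1}(\overline\Psi_R)_j\,e^{i\langle t,\overline\Psi_R\rangle}(e^{i\langle h,\overline\Psi_R\rangle}-1),
\end{equation*}
and since $R$ (hence $Rz^{R-1}$) is constant on partition elements, Proposition~\ref{prop:Ruv} bounds the $\mathcal{L}(\mathcal{B}_1(\overline Y))$ operator norm by $\||u_{z,t,h}|+K_{u_{z,t,h}}\|_{L^1(\mu_{\overline Y})}$. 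Combining the Leibniz rule for local Lipschitz seminorms with the elementary bound $|e^{i\langle h,\overline\Psi_R\rangle}-1|\le \min(2,|h||\overline\Psi_R|)$ and Proposition~\ref{prop:g}, one obtains, uniformly in $t,h\in B_{\delta_0}(0)$,
\begin{equation*}
 |u_{z,t,h}|+K_{u_{z,t,h}}\ll R|z|^{R-1}\bigl[(1+|\overline\Psi_R|)\min(2,|h||\overline\Psi_R|)+|h||\overline\Psi_R|\bigr].
\end{equation*}

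The plan is then to split the $\mu_{\overline Y}$-integration according to $\{|\overline\Psi_R|\le 1/|h|\}$ and $\{|\overline\Psi_R|>1/|h|\}$. On the first region the bracket is dominated by $|h|(1+|\overline\Psi_R|^2)$; the factor $|\overline\Psi_R|^2$ truncated at $1/|h|$ integrates to $O(L_h)$ via the $O(M^{-2})$ tail inherited from~\eqref{eq:tail}, and combining with the growth of $\sum_{n\lesssim L_h}n|z|^{n-1}\lesssim L_h^2$ yields the leading term of size $|h|L_h^2$. On the second region the bracket is dominated by $1+|\overline\Psi_R|$; since a large value of $|\overline\Psi_R|$ essentially forces $R\gtrsim L_h$, the factor $|z|^R$ there becomes $|z|^{bL_h}=|h|^{-b\log|z|}$, giving the remainder $|h|^{-b\log|z|}L_h(|z|-1)$. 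The main obstacle is to sharpen this tail contribution so that it matches the announced form: this requires the joint tail estimate~\eqref{controltimelog} of~\cite{SV07}, which says that a large value of $|\overline\Psi_R|$ on $\{R=n\}$ is essentially produced by a single large $\overline\Psi$-increment flanked by small ones, so that the extra factor $(|z|-1)$ arises from the summation $\sum_{n\gtrsim L_h}(|z|^n-1)e^{-an}\ll (|z|-1)L_h$ and the constant $b$ is determined by the exponential decay rate of $R$.
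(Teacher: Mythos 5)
The first half of your proposal matches the paper's argument closely: you compute $(\partial_j\partial_z\hQ(z,t+h)-\partial_j\partial_z\hQ(z,t))(v)=iQ(Rz^{R-1}w_{t,h}\,v)$, exploit that $Rz^{R-1}$ is constant on atoms, invoke Proposition~\ref{prop:Ruv} to reduce to $\||u_{z,t,h}|+K_{u_{z,t,h}}\|_{L^1(\mu_{\overline Y})}$, and use Proposition~\ref{prop:g} together with the Leibniz rule to control $K_{u_{z,t,h}}$. This is precisely the new content of the proposition relative to~\cite{MPT}, and you have it right.

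Where you diverge is the summation step. The paper, after obtaining the pointwise domination, introduces the integer-valued observable $\psi$ (the ceiling of the atomwise supremum of $|\overline\Psi|_R$), reduces everything to the double sum $\sum_{m,n}\mu_{\overline Y}(\psi=m,R=n)\,mn\,\min\{|h|m,1\}\,|z|^n$ in~\eqref{eq:rmn}, and then defers verbatim to~\cite[Proposition~5.6]{MPT}. Your sketch of that step contains a genuine error: you assert that ``a large value of $|\overline\Psi_R|$ essentially forces $R\gtrsim L_h$.'' This is false in the infinite-horizon setting — a single long free flight makes $|\overline\Psi_R|$ huge while $R$ can be as small as $1$ — and in fact the whole difficulty of the estimate is that $\psi$ can be large on the set $\{R=n\}$ for small $n$. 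The bound $|h|^{-b\log|z|}L_h(|z|-1)$ does not come from forcing $R\gtrsim L_h$; it arises from the joint tail analysis of $(\psi,R)$ already done in~\cite{MPT}. Relatedly, your appeal to~\eqref{controltimelog} is misplaced here: the paper's proof of this proposition never invokes~\eqref{controltimelog}; the only tail information used is the $\mathcal O(m^{-2})$ decay of $\mu_{\overline Y}(|\overline\Psi|_R>m)$ (inherited from~\eqref{eq:tail}) and the exponential tail of $R$, fed into the $r_{m,n}$-sum of~\cite{MPT}. Once you reach~\eqref{eq:rmn} you should simply cite~\cite[Proposition~5.6]{MPT} rather than attempting to rederive its conclusion heuristically.
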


\begin{proof}
%{Proposition~\ref{prop:furtherR}}
%	In this argument, we take $|x|_\infty=\max_{j=1,\dots,d}|x_j|$ on $\R^d$ so that $\psi$ is integer-valued. 
We observe that
	\[
	\partial_j\partial_z Q(z,t+h)(v)-\partial_j\partial_z Q(z,t)(v) = i Q\left( w_{t,h}
	R z^{R-1} v)\right)\, ,
	\]
with $w_{t,h}:=(\overline\Psi_R)_j e^{i\langle t, \overline \Psi_R\rangle}\left(e^{i\langle h, \overline \Psi_R\rangle}-1\right)$.
It follows from Propositions~\ref{prop:Ruv} and~\ref{prop:g} that
	\begin{align*}
&	\|\partial_j\partial_zQ(z,t)\|_{\cB_1({\overline Y})} \\
& \ll \sum_{n=1}^\infty n|z|^{n-1}
	\left\Vert  1_{\{R=n\}}\left(|\overline\Psi_R(e^{i\langle h,\overline\Psi_R\rangle}-1)|+K_{w_{t,h}}\right)\right\Vert_{L^1(\mu_{\overline Y})} ,
	\\
 & \ll \sum_{n=1}^\infty n|z|^{n-1}
\left\Vert 1_{\{R=n\}}\left(|\overline\Psi_R(e^{i\langle h,\overline\Psi_R\rangle}-1)|(1+(1+|t|)C_{\overline\Psi})+C_{\overline\Psi} |h|\, (|\overline\Psi_R|+C_{\overline \Psi})\right)\right\Vert_{L^1(\mu_{\overline Y})} ,
	\end{align*}
Indeed, for all partition element $a$ and for all $y,y'\in a$,
\begin{align*}
&\left|w_{t,h}(y)-w_{t,h}(y')\right|
\\
&\le\left|(\overline\Psi_R(y))_je^{i\langle t, \overline \Psi_R(y)\rangle}-(\overline\Psi_R(y'))_je^{i\langle t, \overline \Psi_R(y')\rangle}\right|\, 
\left|e^{i\langle h, \overline \Psi_R(y)\rangle}-1\right|+ \left|\overline\Psi_R(y')\right||e^{i\langle h, \overline \Psi_R(y)\rangle}-e^{i\langle h, \overline \Psi_R(y')\rangle}|\\
&\le  C_{\overline\Psi}(1+|t|)|\overline\Psi_R|\beta^{s'(y,y')}\left|e^{i\langle h, \overline \Psi_R(y)\rangle}-1\right|+ |h| \left|\overline\Psi_R(y')\right|C_{\overline\Psi}\beta^{s'(y,y')}\, ,
\end{align*}
due to~\eqref{Kpsiexp} and Proposition~\ref{prop:g}; this gives the
required domination of $K_{w_{t,h}}$.
Thus, we have proved that
	\begin{align}
\nonumber	\|\partial_j\partial_zQ(z,t)\|_{\mathcal L(\cB_1({\overline Y}))} &\ll 
\left\Vert \left(|\overline\Psi_R(e^{i\langle h,\overline\Psi_R\rangle}-1)|+|h|(1+|\overline\Psi_R|)\right) Rz^R\right\Vert_{L^1(\mu_{\overline Y})} \\
%&\ll  \left( \left\Vert\overline\Psi_R(e^{i\langle h,\overline\Psi_R\rangle}-1)Rz^R\right\Vert_{L^1(\mu_{\overline Y})}+|h|\right)\, ,\label{majodjdz}
&\ll  \left( \left\Vert|\overline\Psi|_R\min(|h|\, |\overline\Psi|_R,1)Rz^R\right\Vert_{L^1(\mu_{\overline Y})}+|h|\right)\, ,\label{majodjdz}
\end{align}
provided $\delta$ is small enough since $\overline\Psi_R\in L^{2-\varepsilon}$ for all $\varepsilon\in(0,2)$ and since
$\mu_{\overline Y}(R>n)$ decays exponentially fast in $n$ as $n\rightarrow +\infty$. It remains to estimate the first term of the right hand side of~\eqref{majodjdz}. 
For any $x\in {\overline Y}$, let us write $\psi(x)$ for the supremum of the upper integer part of $|\overline\Psi|_R$ on the partition atom containing $x$. 
Then
\begin{align}
\left\Vert|\overline\Psi|_R\min(|h|\, |\overline\Psi|_R,1)Rz^R\right\Vert_{L^1(\mu_{\overline Y})} 
\le \left\Vert \psi\min(|h|\, \psi,1)Rz^R\right\Vert_{L^1(\mu_{\overline Y})} 
\label{majodjdzbis}
\end{align}
provided $\delta$ is small enough.
But
	\[
	\left\Vert \psi\min(|h|\, \psi,1)Rz^R\right\Vert_{L^1(\mu_{\overline Y})}\ll
	 \sum_{m,n=1}^\infty r_{m,n}
	\]
	where
	\begin{align}\label{eq:rmn}
	 r_{m,n}=\mu_{\overline Y}(\psi=m,R=n) mn \min\{|h|m,1\}|z|^n.
	\end{align}
The rest of the proof then follows the same lines as~\cite[Proposition~5.6]{MPT}.
%except for the presence of the factor $1+|t|$. 
%This results in an extra factor $1+|t|L_h$ in the final result. 
%The arguments are unchanged from~\cite{MPT} and hence omitted.
\end{proof}

\begin{rmk} \label{rmk:furtherR}
	Similarly to~\cite[Remark 5.7]{MPT}, a simplified version
	of the argument used in the proof of Proposition~\ref{prop:furtherR}
	(only for the derivative in $j$)
	gives
	\[
	\|\partial_j\hQ(z,t+h)
	-\partial_j\hQ(z,t)\|_{{\cB_1({\overline Y})}}
	\le C |h| L_h\big\{1
	+ |h|^{-b\log |z|}L_h(|z|-1)\big\}.
	\]
\end{rmk}

\begin{prop} \label{prop:furtherA}
	There exist $C>0$, $\delta>0$ and $b>0$ such that
	\[
	\|\partial_j \hA(z,t+h)
	-\partial_j \hA(z,t)\|_{\cB_1({\overline Y})\mapsto L^1(\overline\Delta)}
	\le C |h| L_h\big\{1 + |h|^{-b\log |z|}L_h(|z|-1)\big\},
	\]
	for all
	$t,h\in B_\delta(0)$, all $z\in\C$ with $1\le|z|\le 1+\delta$,
	and all $j=1,\dots,d$. 
\end{prop}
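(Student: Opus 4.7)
The plan is to reduce the estimate to a double-sum of the same form as the one appearing in the proof of Proposition~\ref{prop:furtherR}, using the $\psi$-majorant trick to handle the fact that $\overline\Psi$ is only Lipschitz (rather than constant) on the partition elements of the base $\overline Y$. Concretely, we first differentiate $A_{t,n}(v)(y,\ell)=1_{\{\ell=n\}}e^{i\langle t,\overline\Psi_n(y,0)\rangle}v(y)$ in $t_j$ to obtain
\[
(\partial_j A_{t,n}(v))(y,\ell)=i(\overline\Psi_n(y,0))_j\,e^{i\langle t,\overline\Psi_n(y,0)\rangle}\,v(y)\,1_{\{\ell=n\}}1_{\{R(y)>n\}},
\]
so that
\[
|(\partial_j A_{t+h,n}-\partial_j A_{t,n})(v)(y,\ell)|\le|\overline\Psi_n(y,0)|\,\min\bigl(|h|\,|\overline\Psi_n(y,0)|,1\bigr)\,|v(y)|\,1_{\{\ell=n,\,R(y)>n\}}.
\]

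Integrating over $\overline\Delta$ and using $\|v\|_\infty\le\|v\|_{\cB_1(\overline Y)}$ (this is the only place where the $\cB_1$-norm is used), we get
\[
\|\partial_j A_{t+h,n}(v)-\partial_j A_{t,n}(v)\|_{L^1(\overline\Delta)}\ll\|v\|_{\cB_1(\overline Y)}\int_{\{R>n\}}|\overline\Psi_n|\,\min(|h|\,|\overline\Psi_n|,1)\,d\mu_{\overline Y}.
\]
Let $\psi(y)$ denote, as in the proof of Proposition~\ref{prop:furtherR}, the ceiling of $|\overline\Psi|_R$ on the partition atom of $\alpha$ containing $y$. Since $\overline\Psi$ is uniformly locally Lipschitz and $\sum_{k=0}^{n-1}|\overline\Psi(\cdot,k)|\le|\overline\Psi|_R$ on $\{R>n\}$, we have $|\overline\Psi_n(y,0)|\le\psi(y)+O(1)$ on $\{R>n\}$, so the right-hand side is bounded (up to constants) by $\int_{\{R>n\}}\psi\min(|h|\psi,1)\,d\mu_{\overline Y}$.

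Multiplying by $|z|^n$, summing over $n\ge1$ and applying Fubini yields
\[
\sum_{n\ge 1}|z|^n\int_{\{R>n\}}\psi\min(|h|\psi,1)\,d\mu_{\overline Y}\ll\sum_{m,n\ge1}\tilde r_{m,n},\qquad\tilde r_{m,n}=\mu_{\overline Y}(\psi=m,R=n)\,m\min(|h|m,1)\,|z|^n,
\]
which is precisely the double sum appearing in the proof of Proposition~\ref{prop:furtherR} (equation~\eqref{eq:rmn}) with the factor $n$ removed. Splitting the double sum exactly as in \cite[Proof of Proposition~5.6]{MPT}, each of the ranges contributes a factor $L_h$ smaller than its analogue there, so the final bound is $|h|L_h\{1+|h|^{-b\log|z|}L_h(|z|-1)\}$, as required. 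The main (and only) technical point compared with \cite{MPT} is the $\psi$-majorant step, which was already anticipated in the proof of Proposition~\ref{prop:furtherR}; all the remaining combinatorial work on the sum $\sum\tilde r_{m,n}$ transfers verbatim from there, so no new obstacle arises.
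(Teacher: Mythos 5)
Your overall strategy matches the paper's one-line proof: differentiate $\hA(z,t)$ in $t_j$, observe there is no $\partial_z$, $\psi$-majorize $|\overline\Psi_\ell|$, and appeal to the combinatorics of the double sum $r_{m,n}$ of Proposition~\ref{prop:furtherR} with "one less factor of $n$". However, there is a genuine gap in your Fubini step. You claim
\[
\sum_{n\ge 1}|z|^n\int_{\{R>n\}}\psi\min(|h|\psi,1)\,d\mu_{\overline Y}\ \ll\ \sum_{m,n\ge1}\mu_{\overline Y}(\psi=m,R=n)\,m\min(|h|m,1)\,|z|^n .
\]
Interchanging sum and integral, the left side equals $\int_{\overline Y}\psi\min(|h|\psi,1)\bigl(\sum_{n=1}^{R-1}|z|^n\bigr)\,d\mu_{\overline Y}$, while the right side equals $\int_{\overline Y}\psi\min(|h|\psi,1)\,|z|^R\,d\mu_{\overline Y}$. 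The claimed domination requires $\sum_{n<R}|z|^n\ll|z|^R$, which fails for $|z|$ near $1$ (at $|z|=1$ the left-hand factor is $R-1$ while the right-hand factor is $1$). So your reduction does not literally give $\tilde r_{m,n}$ (the $n$-free sum): the $L^1(\overline\Delta)$ norm involves a sum over the tower levels $\ell=0,\dots,R(y)-1$, and that sum contributes a factor of order $\min(R,(|z|-1)^{-1})|z|^R$. Taking the crude bound $R|z|^R$ reproduces $r_{m,n}$ (with the factor $n$) rather than $\tilde r_{m,n}$, and hence yields the Proposition~\ref{prop:furtherR} bound $|h|L_h^2\{\cdots\}$ rather than the claimed $|h|L_h\{\cdots\}$.

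To make your argument work you would need either to exploit the sharper bound $\sum_{n<R}|z|^n\le\min(R,(|z|-1)^{-1})|z|^R$ and split carefully according to the size of $R$ versus $(|z|-1)^{-1}$ (the $(|z|-1)$-regime being precisely what the bracket $\{1+|h|^{-b\log|z|}L_h(|z|-1)\}$ is designed to absorb), or to avoid the lossy $\psi$-majorant and track the actual size of the partial sums $|\overline\Psi_\ell|$, which are typically much smaller than $\psi$ for $\ell$ small. The paper's own proof of this proposition is a one-liner ("one less factor of $n$") and does not spell this out either; it refers implicitly to \cite[Proof of Proposition~5.6]{MPT}, where the double sum is analysed with precisely this kind of splitting. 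So your proposal matches the paper's approach in spirit, but the specific inequality you write down is false and the missing justification is exactly the delicate point.
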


\begin{proof}
	We have
	\[
	(\hA(z,t)v)(y,\ell)=\sum_{n=1}^\infty z^n1_{\{\ell=n\}}e^{i\langle t,\overline\Psi_n(y,0)\rangle}v(y)
	=z^\ell e^{i\langle t,\overline\Psi_\ell(y,0)\rangle}v(y),
	\]
	for $\ell=0,\dots,R(y)-1$.
	Hence
	we can proceed as in the proof of
	Proposition~\ref{prop:furtherR}, except that
	there is one less factor of $n$
	 (and so one less factor of $L_h$).
\end{proof}

\begin{prop} \label{prop:furtherB}
	There exist $C>0$, $\delta>0$ and $b>0$ such that
	\[
	\|\partial_j \hB(z,t+h)
	-\partial_j \hB(z,t)\|_{\cB_0\mapsto \cB_1({\overline Y})}
	\le C |h| L_h \big\{1
	+ |h|^{-b\log |z|}L_h(|z|-1)\big\},
	\]
	for all
	$t,h\in B_\delta(0)$, all $z\in\C$ with $1\le|z|\le 1+\delta$,
	and all $j=1,\dots,d$, 
\end{prop}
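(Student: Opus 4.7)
The strategy mirrors that of Proposition~\ref{prop:furtherR} and Proposition~\ref{prop:furtherA}, combined with the adaptations already introduced in Proposition~\ref{prop:B} (the ones accommodating the fact that $\overline\Psi$ is not constant on partition elements of the Young tower). The target exponent profile ($|h|L_h$ outside, $L_h$ inside the braces) is one power of $L_h$ less than in Proposition~\ref{prop:furtherR}, matching the $\partial_j$-only structure of Proposition~\ref{prop:furtherA}.

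First, I would use the identity $B_{t,n}(v) = Q\bigl(1_{\{R>n\}} v_{t,n}\bigr)$ with $v_{t,n}(y) = e^{i\langle t,\overline\Psi_n(y,R(y)-n)\rangle} v(y,R(y)-n)$ (as in the proof of Proposition~\ref{prop:B}) and compute
\[
\partial_j\hB(z,t+h)(v) - \partial_j\hB(z,t)(v) = i\sum_{n\ge 1} z^n Q\bigl(1_{\{R>n\}} W_{t,h,n}(v)\bigr),
\]
where $W_{t,h,n}(v)(y) = \bigl(\overline\Psi_n(y,R(y)-n)\bigr)_j\, e^{i\langle t,\overline\Psi_n(y,R(y)-n)\rangle}\bigl(e^{i\langle h,\overline\Psi_n(y,R(y)-n)\rangle}-1\bigr) v(y,R(y)-n)$. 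Applying Proposition~\ref{prop:Ruv} converts the $\cB_1(\overline Y)$-norm into an $L^1(\mu_{\overline Y})$ bound on $|W_{t,h,n}(v)| + K_{W_{t,h,n}(v)}$, summed against $|z|^n$.

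Next, I would bound $K_{W_{t,h,n}(v)}$ by combining: (i) Proposition~\ref{prop:g} applied to $\overline\Psi_n(\cdot, R(\cdot)-n)$, which gives local Lipschitz control with constant $C_{\overline\Psi}$; (ii) the elementary bound $|e^{i\langle h,\xi\rangle}-1| \le \min(|h||\xi|,2)$ and its Lipschitz counterpart; and (iii) the $\cB_0$-norm of $v$ (the tower separation time only decreases under the map $y\mapsto (y,R(y)-n)$ exactly as in~\eqref{BBB1bis}). After dominating $|\overline\Psi_n(y,R(y)-n)|$ on each partition atom by $\psi(y)$ (the ceiling of $|\overline\Psi|_R$ on that atom, as in Proposition~\ref{prop:furtherR}), one is reduced to estimating
\[
\sum_{m,n\ge 1} r_{m,n}\Vert v\Vert_{\cB_0},
\qquad r_{m,n} = \mu_{\overline Y}(\psi = m,\, R=n)\, m\min(|h|m,1)\, |z|^n,
\]
plus a harmless $O(|h|)$ remainder coming from the pure-Lipschitz part of $K_{W_{t,h,n}(v)}$.

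Note this is the exact same double sum as in Proposition~\ref{prop:furtherR} except with one fewer factor of $n$ (matching the step from Proposition~\ref{prop:furtherR} to Proposition~\ref{prop:furtherA}). Consequently the estimation of $\sum_{m,n} r_{m,n}$ follows line by line the argument used there (tail bounds on $\mu_{\overline Y}(\psi=m,R=n)$ coming from $\mu(|\overline\Psi|>m) = O(m^{-2})$ together with the exponential decay of $\mu_{\overline Y}(R=n)$, split along the dyadic ranges $|h|m \lessgtr 1$ and $n \lessgtr b\log|z|^{-1}$). The main obstacle here, as in Proposition~\ref{prop:furtherR}, is controlling the interaction between the heavy $m^{-2}$ tail of $\psi$ and the exponential blow-up $|z|^n$ on $\{R=n\}$; this is what forces the factor $|h|^{-b\log|z|}L_h(|z|-1)$ in the final bound, and it is treated exactly as in~\cite[proof of Proposition~5.6]{MPT}. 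All other contributions (the $O(|h|)$ remainder and the first term) are $O(|h|L_h)$ by the same routine argument.
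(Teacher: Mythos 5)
Your proposal matches the paper's proof essentially step for step: same identity $B_{t,n}(v)=Q(1_{\{R>n\}}v_{t,n})$, same difference formula for $\partial_j v_{t+h,n}-\partial_j v_{t,n}$, same invocation of Proposition~\ref{prop:Ruv} and Proposition~\ref{prop:g} to control the local Lipschitz constant, and the same reduction to the double sum $\sum_{m,n}\mu_{\overline Y}(\psi=m,R=n)\,m\min(|h|m,1)|z|^n$, which is then treated exactly as in Proposition~\ref{prop:furtherR}. No gaps; this is the paper's argument.
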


\begin{proof} 
%We provide a sketch of the proof.
Let $v\in\mathcal B_0$. 
	In the notation of Proposition~\ref{prop:B},
	\begin{equation}\label{djBth1}
	\partial_j \hB(z,t+h) (v)
	-\partial_j \hB(z,t)(v)=i\sum_{n\ge 1}z^nQ(1_{\{R>n\}}\left(\partial_jv_{t+h,n}-\partial_ jv_{t,n}\right))
	\end{equation}
with
	\[
	v_{t,n}(y):=e^{i\langle t,\overline\Psi_n(y,R(y)-n)\rangle}
	v(y,R(y)-n).
	\]
Therefore
\begin{equation}\label{diffdjv}
\partial_jv_{t+h,n}-\partial_ jv_{t,n}
=  \left(\overline\Psi_n(\cdot,R(\cdot)-n)e^{i\langle t,\overline\Psi_n(\cdot,R(\cdot)-n)\rangle}\right)_j
\left(e^{i\langle h,\overline\Psi_n(\cdot,R(\cdot)-n)\rangle}-1\right)
v(\cdot,R(\cdot)-n)\, .
\end{equation}
It follows from Proposition~\ref{prop:Ruv} that
	\begin{equation}\label{djBth2}
	\left\Vert Q(1_{\{R>n\}}\partial_jv_{t+h,n}-\partial_ jv_{t,n})\right\Vert_{\mathcal B_1({\overline Y})} \ll \left\Vert 1_{\{R>n\}}\left(|\partial_jv_{t+h,n}-\partial_ jv_{t,n})|+K_{\partial_jv_{t+h,n}-\partial_ jv_{t,n}}\right)\right\Vert_{L^1(\mu_{\overline Y})}\, .
	\end{equation}
We proceed as in the proof of Proposition~\ref{prop:furtherR}.
It follows from~\eqref{diffdjv} that
\begin{equation}\label{djBth3}
\left\Vert 1_{\{R>n\}}(\partial_jv_{t+h,n}-\partial_ jv_{t,n})\right\Vert_{L^1(\mu_{\overline Y})}
\le \left\Vert 1_{\{R>n\}}\overline\Psi_n(\cdot,R(\cdot)-n)
\left(e^{i\langle h,\overline\Psi_n(\cdot,R(\cdot)-n)\rangle}-1\right)\right\Vert_{L^1(\mu_{\overline Y})}
\Vert v\Vert_\infty\, .
\end{equation}
Furthermore, due to Proposition~\ref{prop:g}, for all $a\in\pi$ and all $y,y'\in a$,
\begin{align*}
&\left|(\partial_jv_{t+h,n}-\partial_ jv_{t,n})(y)-(\partial_jv_{t+h,n}-\partial_ jv_{t,n})(y')\right|\\
&\ll \beta^{s(y,y')}
\left\Vert 1_{R>n}\left(1+|\overline\Psi_n(\cdot,R(\cdot)-n)|\right)\left(e^{i\langle h,\overline\Psi_n(\cdot,R(\cdot)-n)\rangle}-1\right)\right\Vert_{L^1(\mu)}
(1+C_{\overline\Psi})\Vert v\Vert_{\mathcal B_0}\\
&+|h|\beta^{s(y,y')}C_{\overline\Psi}\left\Vert 1_{R>n}(\overline\Psi_n(\cdot,R(\cdot)-n))_j\right\Vert_{L^1(\mu)}\Vert v\Vert_\infty\, .
\end{align*}
This combined with~\eqref{djBth2} and~\eqref{djBth3} ensures that
\begin{align}
\nonumber&\left\Vert Q(1_{R>n}\partial_jv_{t+h,n}-\partial_ jv_{t,n})\right\Vert_{\mathcal B_1({\overline Y})} \\
&\ll \left\Vert 1_{\{R>n\}}(1+\overline\Psi_n(\cdot,R(\cdot)-n))
\left(e^{i\langle h,\overline\Psi_n(\cdot,R(\cdot)-n)\rangle}-1\right)\right\Vert_{L^1(\mu_{\overline Y})}
\Vert v\Vert_{\mathcal B_0}+|h|\Vert v\Vert_\infty\nonumber\\
&\ll \left\Vert 1_{\{R>n\}}\psi
\min(|h|\, |\psi|)\right\Vert_{L^1(\mu_{\overline Y})}
\Vert v\Vert_{\mathcal B_0}+|h|\Vert v\Vert_\infty\, .
\label{djBth4}
\end{align}
But
\[
\sum_{n\ge 1}\left\Vert z^n 1_{\{R>n\}}\psi
\min(|h|\, |\psi|)\right\Vert_{L^1(\mu_{\overline Y})}  =\sum_{m,n\ge 1}	 \mu_{\overline Y}(\psi=m,R=n) m \min\{|h|m,1\}|z|^n\, ,
\]
which can be estimated as in the proof of Proposition~\ref{prop:furtherR}.
We conclude by combining this estimate with~\eqref{djBth1} and~\eqref{djBth4}.
~\end{proof}

The rest of the proofs of~\cite{MPT} (corresponding to Section 5.2 therein that provide all the required spectral properties for $\hQ(z,t)$) go through unchanged.

\section{Smoothness of $\overline\tau$ and $\chi$}
\label{sec:app}
Recall that $\overline\Delta\subset\Delta$,
% and that the bases of the towers $\overline {\overline Y}\subset Y$ are contained in $M$. 
Using the fact that $T\circ \pi=\pi\circ f_{\Delta}$ on $\Delta$, and that $f_{\overline\Delta}=\overline\pi\circ f_\Delta$ on $\overline\Delta$, $\overline\tau$ and $\chi_0$ defined in Section~\ref{sec:proofjCLT} can be rewritten as follows
\[ 
\overline\tau:=\widetilde\tau\circ\pi +\sum_{n\ge 1}\left(\tau\circ T^{n}\circ \pi -\tau\circ T^{n-1}\circ\pi\circ\overline\pi\circ f_{\Delta}\right)\quad\mbox{on }\overline\Delta
\]
and
\[
\chi_0:=\sum_{n\ge 0}\chi_{0,n},\quad\mbox{with }\chi_{0,n}:=\left(\tau\circ T^n\circ \pi-\tau\circ T^n\circ\pi\circ \overline\pi\right)\quad\mbox{on }\Delta\, .
\]
First, observe that, for every $x\in\Delta$, $\pi(x)$ and $\pi(\overline\pi(x))$ are in the same stable manifold, thus $d(T^n(\pi(x)),T^n(\pi(\overline\pi(x))))\le C_1\beta_1^n$, and so
\begin{align}
\forall x\in\Delta,\quad |\chi_{0,n}|\le 2C_1\beta_1^{n} \, .\label{majochi0}
\end{align} 
Analogously, for all $x\in\overline\Delta$,
\begin{align}
| \tau(T^n(\pi(x)))-\tau(T^{n-1}(\pi(\overline\pi(f_\Delta(x)))))|
&=| \tau(T^{n-1}(\pi(f_\Delta(x))))-\tau(T^{n-1}(\pi(\overline\pi(f_\Delta(x)))))|\\
&\le 2C_1\beta_1^{n-1}\, .
\end{align}
This ensures that $\chi_0$ and $\overline\tau$ are well defined and we have proved the identity
\[
\widetilde\tau\circ\pi-\chi_0+\chi_0\circ f_\Delta=\widetilde\tau\circ\pi\circ \overline\pi+\sum_{n\ge 0}\left(\tau\circ T^{n+1}\circ\pi\circ \overline\pi-\tau\circ T^n\circ \pi\circ \overline\pi\circ f_\Delta\right)=\overline\tau\circ\overline\pi
\]
since $\overline\pi\circ f_\Delta= f_{\overline\Delta}\circ\overline\pi$.

For any $x,y\in\overline\Delta$ such that $\overline s(x,y)= N\ge 2k$, 
%Then $T(\pi(f_\Delta^k(x)))$ and $\pi(\overline\pi(f_\Delta(x)))$
%then $T^n(\pi(f_\Delta^k(x)))=T^{n+k}(\pi(x))$ and
%$T^{n-1}\pi(\overline\pi(f_\Delta^{k+1}(x))))=T^{n+k}(\pi(x))$
then $T^N(\pi(x))$ and $T^N(\pi(y))$ are in a same unstable  manifold and
 the same holds true for $T^{N-1}(\overline\pi(f_\Delta(x)))$ and
 $T^{N-1}(\overline\pi(f_\Delta(y)))$.
This implies that, for every $n=0,...,N$, 
$d(T^n(\pi(x)),T^n(\pi(y)))\le C_1\beta_1^{N-n}$ and
$d(T^{n-1}(\pi(\overline \pi(f_\Delta(y)))),T^{n-1}(\pi(\overline \pi(f_\Delta(x))))\le C_1\beta_1^{N-n-1}$. Therefore
\begin{align*}
&\left|\overline\tau(x)-\overline\tau(y)\right|\le \left|\widetilde\tau(\pi(x))-\widetilde\tau(\pi(y))\right|
+2\sum_{n\ge\lceil\overline s(x,y)/2\rceil+1}\Vert
\tau\circ T^n\circ\pi-\tau\circ T^{n-1}\circ\pi\circ\overline\pi\circ f_\Delta\Vert_\infty\\
&\ \ \ \ \ +\sum_{n=1}^{\lceil\overline s(x,y)/2\rceil}
\left|  \tau(T^n(\pi(x)))-\tau(T^n(\pi(y)))-[\tau(T^{n-1}( \pi(\overline\pi( f_\Delta(x)))))-\tau(T^{n-1}( \pi(\overline\pi( f_\Delta(y)))))]\right|\\
&\le 2 \left(\beta_1^{\overline s(x,y)}+2\sum_{n\ge\lceil\overline  s(x,y)/2\rceil+1}\beta_1^{n}+2\sum_{n=1}^{\lceil\overline  s(x,y)/2\rceil}\beta_1^{\overline s(x,y)-n-1} \right)\\
&\le  2 \beta_1^{\frac{\overline s(x,y) }2}\left(1+4 \frac{\beta_1^{-1}}{1-\beta_1}\right)\, .
\end{align*}
%since, whenever $1\le k<\overline s(x,y)$, $\overline  s(f_\Delta^k(x),f_\Delta^k(y))=\overline s(x,y)-k$
%and
%\begin{align*}
%\overline s(x,y)=\overline s(f_\Delta(x),f_\Delta(y))+1 &=\hat s(\overline\pi(f_\Delta(x)),\overline\pi (f_\Delta(y)))+1\\ &=\overline s(f_\Delta^{k-1}(\overline\pi(f_\Delta(x))),f_\Delta^{k-1}(\overline\pi(f_\Delta(x))))+k\, .\end{align*}
%Recall that $
%\chi:=\sum_{n\ge 0}\left(\tau\circ\pi\circ f_\Delta^n-\tau\circ\pi\circ f_\Delta^{n}\circ \overline\pi\right)$.
Now, let us prove that $\chi_0$ satisfies
\[
\sup_{k\ge 1}
\sup_{x,y:s(x,y)>2k}\frac{|\chi_0(f_\Delta^k(x))-\chi_0(f_\Delta^k(y))|}{\beta^k}<\infty\, .
\]
Observe that
\begin{equation}\label{chi0fk}
\chi_0\circ f_\Delta^k=\sum_{n\ge 0}\chi_{0,n}\circ f_\Delta^k,\quad\mbox{and }\chi_{0,n}\circ f_\Delta^k=\tau\circ T^{n+k}\circ\pi-
     \tau\circ T^n\circ\pi\circ\overline\pi\circ f_\Delta^k\, .
\end{equation}
Let $x,y\in\Delta$ be such that $s(x,y)=N>2k$.
\begin{itemize}
\item
for $n> k/2$, we observe that 
it follows from~\eqref{majochi0} that
\begin{equation}
|\chi_{0,n}(f_\Delta^k(x))-\chi_{0,n}(f_\Delta^k(y))|\le 2\Vert \chi_{0,n}\Vert_\infty
\le 4C_1\beta_1^{n}\, .\label{DIFF0}
\end{equation}

%$|\tau(\pi(f^n_\Delta(x)))-\tau(\pi(f^n_\Delta(\overline \pi(x))))|\le 2C_1\beta_1^n$.
%$\pi(f_\Delta^n(x))$ and $\pi(f_\Delta^n(\overline\pi(x)))$ belong to the same stable curve and to the same connected component of $M\setminus\bigcup_{m=-n}^n T^{-m}\mathcal S_0$, so 
%$|\tau(\pi(f^n_\Delta(x)))-\tau(\pi(f^n_\Delta(y)))|\ll\beta_1^n \$
\item for $n=0,...,k/2$, we observe that
$\pi(x),\pi(\overline\pi(x))$ are in a same stable manifold, $\pi(y),\pi(\overline\pi(y))$ are also in a same stable manifold, 
and that $T^N(\pi(\overline\pi(x)),T^N(\pi(\overline\pi(y))))$ are in a same unstable manifold. Therefore
\[
|\tau(T^{n+k}(\pi(x)))-\tau(T^{n+k}(\pi(\overline\pi(x))))|\le 2C_1\beta_1^{n+k}\, ,
\]
\[
|\tau(T^{n+k}(\pi(y)))-\tau(T^{n+k}(\pi(\overline\pi(y))))|\le 2C_1\beta_1^{n+k}\, ,
\]
\[
|\tau(T^{n+k}(\pi(\overline\pi(x))))-\tau(T^{n+k}(\pi(\overline\pi(y))))|\le 2C_1\beta_1^{N-n-k}\le 2C_1\beta_1^{k-n}
\]
Thus
\begin{equation}\label{DIFF1}
|\tau\circ T^{n+k}(\pi(x))-\tau\circ T^{n+k}(\pi(y))|\le 6C_1\beta_1^{k-n}\, .
\end{equation}
Furthermore $x_k'=\overline\pi(f_{\Delta}^k(x)),y_k'=\overline\pi(f_{\Delta}^k(y))\in\overline \Delta$ and $\overline s(x_k',y'_k)=N-k$. Thus  $T^{N-k}(\pi(\overline\pi(f_{\Delta}^k(x))))$ and $T^{N-k}(\pi(\overline\pi(f_{\Delta}^k(y))))$ are in the same unstable manifold and so
\begin{equation}\label{DIFF2}
|\tau(T^{n}(\pi(\overline\pi(f_{\Delta}^k(x)))))-\tau(T^{n}(\pi(\overline\pi(f_{\Delta}^k(x)))))|\le 2C_1\beta_1^{N-k-n}\le  2C_1\beta_1^{k-n}
\end{equation}
It follows from~\eqref{chi0fk} combined with~\eqref{DIFF0},~\eqref{DIFF1} and~\eqref{DIFF2} that
\end{itemize}
\begin{align*}
\left|\chi_0(f_\Delta^k(x))-\chi_0(f_\Delta^k(y))\right|
&\le \sum_{n>k/2}4C_1\beta_1^n+\sum_{n=0}^{k/2}6C_1\beta_1^{k-n} \ll \beta_1^{\frac k2}\, .
\end{align*}
We conclude since $ \beta_1^{\frac 12}\le \beta$.\\

{\bf Acknowledgements:}
We would like to thank CIRM, the Erd\"os Center, Brest and also to
Leiden and Utrecht universities for their hospitality.
We are grateful to Ian Melbourne for interesting discussions and for the important role he played in the LLD result obtained in~\cite{MPT} which is one of the crucial ingredients used in the present work.
FP conduced this work within the framework of the Henri
Lebesgue Center ANR-11-LABX-0020-01 and is supported by the ANR projects GALS (ANR 23-CE40-0001) and RAWABRANCH (ANR-23-CE40-0008).


\begin{thebibliography}{10}

\bibitem{AN17}
J.~Aaronson and H.~Nakada, On multiple recurrence and other properties of ‘nice’
infinite measure-preserving transformations, \emph{Erg. Th. Dynam. Systems} \textbf{37} (2017),
1345--1368.



\bibitem{AT20} J.~Aaronson and D.~Terhesiu.
Local limit theorems for fibred semiflows.
\emph{Discrete Contin. Dyn. Syst.}  \textbf{40} (2020) 6575--6609.

\bibitem{BalintGouezel06}
P.~B{\'a}lint and S.~Gou{\"e}zel. Limit theorems in the stadium billiard.
  \emph{Commun. Math. Phys.} \textbf{263} (2006) 461--512.

\bibitem{BBM19}
P.~B{\'a}lint, O.~Butterley, and I.~Melbourne. Polynomial decay of correlations
  for flows, including Lorentz gas examples. \emph{Comm. Math. Phys.}
  \textbf{368} (2019) 55--111.
  
  \bibitem{BBY} Q.~Berger, M.~Birkner, L.~Yuan.
Collective vs.\ individual behaviour for sums of i.i.d.\ random variables: appearance of the one-big-jump phenomenon.
To  appear in
Ann. Fac. Sci. Toulouse.
  
\bibitem{BS81}L.~A.~Bunimovich and Ya.~G.~Sinai, Statistical properties of Lorentz gas with
periodic configuration of scatterers, \emph{Comm. Math. Phys.} \textbf{78} (1981), No. 4, 479–497.

\bibitem{BCS91} L. A. Bunimovich, Ya. G. Sinai and N. I. Chernov, Statistical properties of two-dimensional hyperbolic billiards, (Russian) \emph{Uspekhi Mat. Nauk} \textbf{46} (1991), No. 4 (280), 43–92,
192; translation in Russian Math. Surveys 46 (1991), No. 4, 47–106.

  
  \bibitem{Chernov99}
N.~Chernov. Decay of correlations and dispersing billiards. \emph{J. Statist.
  Phys.} \textbf{94} (1999) 513--556.

\bibitem{ChDo09} N.\ Chernov, D.\ Dolgopyat,
\emph{Anomalous current in periodic Lorentz gases with
	infinite horizon.}
Russian Math. Surveys\  {\bf 64-4} (2009),  651--699

%\bibitem{Chernov07} N.~Chernov. A stretched exponential bound on time correlations for billiard flows, \emph{J. Statist. Phys.}, \textbf{127} (2007) 21--50. 

\bibitem{ChernovMarkarian} 
N.~Chernov, and R.~Markarian.
Chaotic billiards. 
Mathematical Surveys and Monographs 127. Providence, American Mathematical Society (AMS) xii, 316 p. (2006).


\bibitem{DN16} D.~Dolgopyat and P.~N{\'a}ndori.
 Non equilibrium density profiles in Lorentz tubes with thermostated
boundaries. Commun. Pure Appl. Math. \textbf{69} (2016) 649--692.




\bibitem{DN20} D.~Dolgopyat and P.~N{\'a}ndori.
On mixing and the local central limit theorem for hyperbolic flows.
\emph{Annales l'Institut Henri Poincar\'e, Prob. et Stat.} \textbf{40} (2020) 142--174.

\bibitem{DNL21} D.~Dolgopyat, P.~N{\'a}ndori and M~Lenci.
 Global observables for random walks: law of large numbers.
\emph{} \textbf{57} (2021) 94--115.

\bibitem{DN22} D.~Dolgopyat and P.~N{\'a}ndori.
 Infinite measure mixing for some mechanical systems
\emph{Adv. in Math.}\textbf{410 B} (2022) 108--757

\bibitem{DNP22}D.~Dolgopyat, P.~N{\'a}ndori and  F. P{\`e}ne.
Asymptotic expansion of correlation functions for $\mathbb Z^d$-covers of hyperbolic ﬂows.
\emph{Annales l'Institut Henri Poincar\'e, Prob. et Stat.} \textbf{58} (2022), No. 2, 1244--1283.


\bibitem{Gouezel07}
S.~Gou{\"e}zel, {Statistical properties of a skew product with a curve of
  neutral points}, \emph{Ergodic Theory Dynam. Syst.} \textbf{27} (2007),
  123--151.
  
\bibitem{Gouezel11} S.~Gou{\"e}zel,
\emph{Correlation asymptotics from large deviations in dynamical systems with infinite measure,}
Colloquium Math.\  \textbf{125} (2011), 193--212.


 \bibitem{Krickeberg67}
K.~Krickeberg, Strong mixing properties of {M}arkov chains with infinite
   invariant measure, \emph{Proc. {F}ifth {B}erkeley {S}ympos. {M}ath.
   {S}tatist. and {P}robability ({B}erkeley, {C}alif., 1965/66), {V}ol. {II}:
   {C}ontributions to {P}robability {T}heory, {P}art 2}, Univ. California Press,
  Berkeley, Calif., 1967, pp.~431--446.

\bibitem{Lenci10}M.~Lenci, On infinite-volume mixing, Comm. Math. Phys. 298 (2010), no. 2, 485--514.

\bibitem{Lorentz05}H.~A.~Lorentz. 
The motion of electrons in metallic bodies.
\emph{Koninklijke Nederlandse Akademie van Wetenschappen (KNAW), proceeding of the section of sciences} \textbf{7} (1905), No. 2, 438--593.

\bibitem{MT04}
I.~Melbourne and A.~T{\" o}r{\" o}k, {Statistical limit theorems for suspension
  flows}, \emph{Israel J. Math.} \textbf{144} (2004), 191--209.


\bibitem{MPT}
I. Melbourne, F. P{\`e}ne and D. Terhesiu. {Local large deviations for periodic infinite horizon Lorentz gases.} To appear in J. d'Analyse.

\bibitem{MT12}
 I.\ Melbourne, D.\ Terhesiu. 
{Operator renewal theory and mixing rates for dynamical systems with infinite measure},  
\emph{Invent.\ Math.\ } \textbf{189}, (2012) no.\ 1, 61--110.

\bibitem{MT17} I.~Melbourne, D.~Terhesiu. 
{Renewal theorems and mixing for non Markov flows with infinite measure.} 
\emph{Ann.\ Inst.\ H.\ Poincar{\'e} (B) Probab.\ Stat.\ }\textbf{56} (2020) 449--476.


\bibitem{MTejp} I.~Melbourne and D.~Terhesiu. 
Analytic proof of multivariate stable local large deviations and application to deterministic dynamical systems.
\emph{Electr.\ J.\ Probab.\ } \textbf{27} (2022) 1--17. 




\bibitem{FP09AIHP} F.~P{\`e}ne.
Planar Lorentz process in random scenery.
\emph{Ann.\  Inst.\ Henri Poincar{\'e}, Prob.\ and Stat.\ }\textbf{45} (2009) 818--839.

\bibitem{Pene18b}F.~P\`ene. Mixing in infinite measure for Zd-extensions, application to the periodic Sinai billiard. 
\emph{Chaos Solitons Fractals} \textbf{106} (2018), 44--48.

\bibitem{Pene18} F.~P{\`e}ne.
Mixing and decorrelation in infinite measure: the case of the periodic Sinai Billiard.
\emph{Annales de l'Institut Henri Poincar{\'e}, Prob. and Stat.} \textbf{55}, (2019) 378--411




\bibitem{PeneTerhesiu21}
F.  P{\`e}ne and  D. Terhesiu.
Sharp error term in local limit theorems and mixing for Lorentz gases with infinite horizon.
\emph{Commun. Math. Phys.} \textbf{382} (2021) 1625--1689.

\bibitem{Resnick} S. Resnick, Extreme Values, Regular Variation, and Point Processes, Springer Series in
Operation Research and Financial Engineering, Springer, New York, 2008.

\bibitem{Roz90} L.~V.~Rozovskii.
Probabilities of large deviations of sums of independent random variables with common distribution function in the domain of attraction
of the normal law.
 Teor.\ Veroyatnost.\ i Primenen.\ \textbf{34} (1989), no. 4, 686–-705; translation in Theory Probab.\ Appl.\ \textbf{34} (1989), no. 4, 625


\bibitem{Sinai70}Ya. G. Sinai.
 Dynamical systems with elastic reflections.
\emph{Russ. Math. Surv.} \textbf{25} (1970), No.2, 137--189.

\bibitem{SV04} D.~Sz\'{a}sz, T.~Varj\'{u}, 
 Local limit theorem for the Lorentz process and its recurrence in the plane,
\emph{Ergodic Theory Dynam. Systems} \textbf{24} (2004) 254--278.


\bibitem{SV07}
D. Sz{\'a}sz and T. Varj{\'u}. Limit Laws and Recurrence for the Planar Lorentz Process with Infinite Horizon. 
\emph{J. Statist. Phys. } \textbf{129} (2007) 59--80.  

\bibitem{Thaler00}
M.~Thaler. The asymptotics of the {P}erron-{F}robenius operator of a class of
  interval maps preserving infinite measures. \emph{Studia Math.} \textbf{143}
  (2000) 103--119.
  
  
 \bibitem{T22} 
 D. Terhesiu. Krickeberg mixing for Z extensions of Gibbs Markov semiflows. 
\emph{Monatsh. Math.}\textbf{198} (2022)  859--893 


\bibitem{Young98} L.-S.\ Young.
Statistical properties of dynamical systems with some hyperbolicity.
\emph{Ann.\ of Math.} {\bf 147} (1998) 585--650.


\end{thebibliography}
 \end{document}